\documentclass[reqno]{amsart}
\usepackage[T1]{fontenc}
\usepackage{lmodern}
\usepackage{amsmath,amssymb,mathtools,mathrsfs}
\usepackage{booktabs,longtable,array}
\usepackage{enumitem}
\usepackage{xcolor}
\usepackage{microtype}
\usepackage[unicode,pdfusetitle,hidelinks]{hyperref}
\usepackage{listings}
\allowdisplaybreaks[2]
\newtheorem{theorem}{Theorem}[section]
\newtheorem{proposition}[theorem]{Proposition}
\newtheorem{lemma}[theorem]{Lemma}
\newtheorem{corollary}[theorem]{Corollary}
\theoremstyle{definition}
\newtheorem{definition}[theorem]{Definition}
\theoremstyle{remark}
\newtheorem{remark}[theorem]{Remark}
\newtheorem{example}[theorem]{Example}
\newcommand{\C}{\mathbb C}
\newcommand{\R}{\mathbb R}
\newcommand{\N}{\mathbb N}
\newcommand{\D}{\mathbb D}
\newcommand{\HH}{\mathbb H}
\DeclareMathOperator{\Tr}{Tr}
\DeclareMathOperator{\Id}{Id}

\DeclareMathOperator{\diag}{diag}

\title[Stability generators, spectral gaps, and computation]{Generators of stability-preserving semigroups,\ spectral gaps, and classical ground-state computation}
\author{Dongsheng Wei}
\address{Independent Researcher}
\email{dongshengwei2025@icloud.com}
\date{}
\subjclass[2020]{Primary 47D06, 30C15; Secondary 15A42, 81P45, 68Q25}
\keywords{stable polynomial, generator classification, spectral gap, Lee--Yang theorem, quantum Hamiltonian, classical algorithm}
\hypersetup{pdftitle={Generators of stability-preserving semigroups, spectral gaps, and classical ground-state computation},pdfauthor={Dongsheng Wei}}
\begin{document}
\begin{abstract}
We classify the generators of stability-preserving semigroups on
polynomial spaces with bounded coordinate degrees. The generators have
differential order at most two, with principal coefficients characterized
by low-degree nonnegativity conditions. In disk coordinates, positive
degree damping gives strict zero-freeness and a sharp spectral gap,
extending the Hermitian gap of Bravyi, Gosset, Liu, and Wong to complex
generators. The resulting analytic domain yields deterministic classical
algorithms for ground energies and stable product-state queries for
bounded-degree Suzuki--Fisher Hamiltonians with bounded local strength
and fixed positive fields.
Field perturbation gives zero-field energy-value approximation schemes
for bounded-degree unweighted EPR and bipartite Quantum MaxCut.
We also prove that, up to scalars, the Hermitian qubit Hamiltonians whose
full Gibbs tensors are Lee--Yang at every temperature in a fixed basis
are precisely the edgewise phase-rotated Suzuki--Fisher family.
With strictly positive longitudinal fields, these tensors are zero-free
on a polydisk of radius greater than one at each positive inverse
temperature. These two statements answer questions of Wong, Bravyi,
Gosset, and Liu.
Degree-preserving, coefficient-positive stability semigroups have concave
sector growth rates, yielding token-graph concavity. We also give a
counterexample to a proposed explicit ground-state radius.
\end{abstract}
\maketitle
\vspace{-\baselineskip}
\tableofcontents
\section{Introduction}
\label{intro:section}

The classification of linear operators preserving polynomial zeros goes
back to P\'olya and Schur's theorem on multiplier sequences
\cite{PolyaSchur14}. Its multivariate form concerns \emph{stable}
polynomials: a nonzero polynomial is stable if it does not vanish when
all its variables lie in the open upper half-plane
\(\HH=\{z:\Im z>0\}\). Borcea and Br\"and\'en characterized stability
preservers by their symbols, both on spaces with prescribed coordinate
degrees and in the Weyl algebra \cite{BB09,BB10,BB10II}; here we
study the corresponding continuous evolutions, classifying the
operators \(A\) for which \(e^{tA}\) preserves stability for every
\(t\ge0\) and deriving spectral and computational consequences.
In probability, Borcea, Br\"and\'en, and Liggett \cite{BBL09}
proved that symmetric exclusion preserves strongly Rayleigh measures,
whose probability generating polynomials are real stable.

Requiring preservation for every positive time is especially restrictive
at polynomials with colliding roots: any admissible generator has
canonical differential order at most two, irrespective of the number
of variables or their degrees. For real operators, the classification
reduces to nonpositivity of the second-order coefficients on real lines
and planes, together with the algebraic identities imposed by the degree
bounds. Complex generators have the same real second-order part, while
their imaginary part is a sum of one-variable quadratic vector fields
and a scalar.

\subsection{The classification}

Fix \(\kappa\in\N_{>0}^d\), and write
\[
 V_\kappa^{\mathbb F}
 =\{f\in\mathbb F[z_1,\ldots,z_d]:\deg_{z_i}f\le\kappa_i\},
 \qquad \mathbb F\in\{\R,\C\}.
\]
A real polynomial is real stable if it is stable. An operator preserves
stability if it sends each stable polynomial to a stable polynomial or
to zero. We call preservation \emph{complete} when it also holds after
tensoring the operator with the identity on any finite polynomial space
in additional variables. Real operators are extended complex linearly
when acting on \(V_\kappa^{\C}\).

Every endomorphism of \(V_\kappa^{\mathbb F}\) has a unique representation
\[
 A=\sum_{\alpha\le\kappa}P_\alpha(z)\partial^\alpha;
\]
we call this its canonical representation. The coefficient polynomials
may have degrees greater than \(\kappa\). For a univariate polynomial
\(q\) of degree at most two, put
\begin{equation}\label{gen:eq-J}
 J_{i,\kappa_i}(q)
 =q(z_i)\partial_i-\frac{\kappa_i}{2}q'(z_i).
\end{equation}
These operators preserve \(V_\kappa\). Our main classification is as
follows.

\begin{theorem}[Real generators]
\label{gen:main-real}
Let \(A\in\operatorname{End}_{\R}(V_\kappa^\R)\).  The following conditions
are equivalent.
\begin{enumerate}
\item For every \(t\geq0\), \(e^{tA}\) preserves real stability.
\item For every \(t\geq0\), its complexification preserves complex stability,
also after adjoining arbitrary inert polynomial variables.
\item The canonical representation of \(A\) has order at most two, and its
second-order coefficients satisfy
\begin{align}
 P_{2e_i}(s)&\leq0
 &&\text{for all }s\in\R,\quad \kappa_i\geq2,
 \label{gen:eq-real-unary}\\
 P_{e_i+e_j}(s,t)&\leq0
 &&\text{for all }(s,t)\in\R^2,\quad i<j.
 \label{gen:eq-real-pair}
\end{align}
\end{enumerate}
In the third condition, box preservation forces \(P_{2e_i}\) to depend only
on \(z_i\), with degree at most four, and \(P_{e_i+e_j}\) to depend only on
\(z_i,z_j\), with bidegree at most \((2,2)\).  There are no further
inequalities on the zeroth- and first-order coefficients beyond the
identities already forced by \(A(V_\kappa^\R)\subseteq V_\kappa^\R\).
\end{theorem}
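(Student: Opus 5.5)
The plan is to prove (2)\(\Rightarrow\)(1)\(\Rightarrow\)(3)\(\Rightarrow\)(2). The first implication is immediate: restrict to real polynomials and no inert variables, and use that a real polynomial is real stable exactly when it is stable. For (1)\(\Rightarrow\)(3), differentiate at \(t=0\). If \(f\) is real stable, then \(f+tAf+O(t^2)\) is real stable for small \(t\). Since the set of real stable polynomials together with zero is closed, this first-order condition constrains \(A\) at every \(f\).

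The key tool here is polynomials with colliding roots. Take \(f=(z_i-s)^m g\) with \(m\) large (for instance \(m=\kappa_i\)), or products of linear forms with a common real zero. Expanding \(Af\) near the multiple root, the terms of order \(k\ge3\) produce a local behaviour like \(P_{ke_i}(s)(z_i-s)^{m-k}\). A perturbation \((z-s)^m+t c (z-s)^{m-k}\) with \(k\ge3\) always has non-real zeros in the upper half-plane for one sign of \(t\), for any \(c\neq0\): the local roots are \(s+(-tc)^{1/k}\) times \(k\)-th roots of unity. This forces the order-\(\ge3\) coefficients to vanish at all real points \(s\), and hence identically. For mixed multi-indices \(\alpha\) with \(|\alpha|\ge3\), I would use \(f=\prod_j (z_j-s_j)^{m_j}\), or a product with a real stable factor of the form \(\sum_j a_j (z_j-s_j)\) with \(a_j>0\) raised to a power. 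The goal is to isolate the top-order part of the symbol and reach the same contradiction. Genuinely mixed higher-order terms are the main obstacle. I would first try reducing to univariate polynomials via diagonal substitutions \(z_j=s_j+a_j w\), \(a_j>0\), which preserve real stability. After this substitution the order-\(\ge3\) symbol \(\sum_{|\alpha|=k}P_\alpha(s)a^\alpha\) must vanish for all \(a>0\), which kills each \(P_\alpha\).

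Order two works the same way, now with \(k=2\). The perturbation \((z-s)^m+tc(z-s)^{m-2}\) stays real-rooted for small \(t\ge0\) only when \(c\le0\). The same substitution gives \(\sum_{i,j}P_{e_i+e_j}(s)a_ia_j\le0\) on \(a\ge0\). To pass from the full quadratic form to the stated unary and pairwise conditions \eqref{gen:eq-real-unary}, \eqref{gen:eq-real-pair}, I would use degree-box constraints together with test polynomials that depend on only one or two variables. The box constraint \(A(V_\kappa)\subseteq V_\kappa\) forces \(P_{2e_i}\) to depend only on \(z_i\) with degree at most four, and \(P_{e_i+e_j}\) to depend only on \(z_i,z_j\) with bidegree at most \((2,2)\). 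This follows by applying \(A\) to monomials \(z^\beta\) for \(\beta\le\kappa\) and comparing top degrees, in the same way that \(J_{i,\kappa_i}\) is the only first-order form allowed.

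For (3)\(\Rightarrow\)(2), I would write \(A=A_2+A_1+A_0\). The first-order and zeroth-order part is a sum of operators \(J_{i,\kappa_i}(q_i)\) plus a scalar, up to the forced identities. Each \(J_{i,\kappa_i}(q_i)\) generates the Möbius-type flow \(f\mapsto (\text{Jacobian factor})^{\kappa_i/2}f(\phi_t(z_i))\), where \(\phi_t\) is the flow of the real vector field \(q_i\). Because \(q_i\) is real, \(\phi_t\) preserves \(\HH\), so this flow preserves stability, even with inert variables. For the second-order part, I would use the Borcea–Brändén symbol criterion \cite{BB09}: \(e^{tA}\) preserves stability on \(V_\kappa\) iff its symbol \(e^{tA}[(z+w)^\kappa]\) is stable. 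Stability is checked infinitesimally on the boundary by a Hurwitz-type argument, or through a Trotter product. Splitting into unary and pairwise pieces, each second-order piece with a nonpositive coefficient is a limit of compositions of preservers such as \(1+\epsilon\,\partial_i\partial_j\)-type operators conjugated by Möbius maps. I would verify these pieces on the stable symbols with the complete (inert-variable) version built in, and then conclude by Lie–Trotter products and closedness of stable polynomials together with zero.
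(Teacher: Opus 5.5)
\textbf{Main gap: the implication (3)$\Rightarrow$(2).} You must show that every principal coefficient allowed by \eqref{gen:eq-real-unary}--\eqref{gen:eq-real-pair} is carried by some box-preserving generator whose flow preserves stability. Note first that the canonical-order pieces $A_2,A_1,A_0$ are not endomorphisms of $V_\kappa$. For example, $(J_i^+)^2$ has principal part $z_i^4\partial_i^2$, which leaves the box. So the first-order remainder becomes a scalar plus $\sum_i J_{i,\kappa_i}(q_i)$ only after such a generator has been subtracted.

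Your primitives cannot supply this generator. Conjugating $\partial_i$ by a real M\"obius flow gives $J_{i,\kappa_i}(\ell^2)$ with $\ell$ real linear. Hence the generators you get from $1-\epsilon\partial_i\partial_j$ and $1-\epsilon\partial_i^2$ conjugated by M\"obius maps are $-J_i(\ell^2)J_j(m^2)$ and $-J_i(\ell^2)^2$. Their principal coefficients are $-\ell(z_i)^2m(z_j)^2$ and $-\ell(z_i)^4$, and Lie--Trotter products only add generators.

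The resulting cone is strictly smaller than the admissible one.
\begin{itemize}
\item Take $P_{e_i+e_j}=-(1+st)^2$, the mixed coefficient of $B_{1+z_iz_j}$. The vanishing $s^2$ and $t^2$ coefficients force its only positive semidefinite Gram matrix in the basis $(1,s,t,st)$ to be $vv^{\mathsf T}$ with $v=(1,0,0,1)$. The $2\times2$ reshaping of $v$ has rank two, so $(1+st)^2$ is not a sum of terms $\ell(s)^2m(t)^2$.
\item Likewise, $-z_i^2$, the principal coefficient of $-(J_i^0)^2$, is not of the form $-\sum_k\ell_k(z_i)^4$: the quartic and constant coefficients would force every $\ell_k=0$.
\end{itemize}
Your appeal to the symbol criterion with an ``infinitesimal check on the boundary'' does not close this. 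You give no tangent condition for the nonconvex set of stable symbols.

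The missing ideas are the following.
\begin{itemize}
\item Hilbert's theorem on ternary quartics (Lemma~\ref{gen:sos}) writes $-P_{e_i+e_j}=\sum_\ell F_\ell^2$ with \emph{arbitrary} real bilinear $F_\ell$, which need not factor.
\item Each $-F^2$ is realized by the rank-one apolar generator $B_Ff=-F\ell_F(f)$. Its flow $\Id+k_tB_F$ keeps $\Delta\ge0$ by \eqref{gen:eq-apolar-discriminant}, and it is complete by the binary symbol argument (Lemmas~\ref{gen:binary-primitive} and~\ref{gen:binary-completeness}).
\item Polarization places these binary generators on slot pairs. With symmetric $F(s,t)=a st+\tfrac b2(s+t)+c$ and the factor-two slot normalization, this also produces the unary coefficients.
\end{itemize}

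\textbf{Corrections needed in necessity.} Your necessity argument is close to the paper's but needs two fixes.
\begin{itemize}
\item With multiplicity $m>k$, the truncation $(z-s)^m+tc(z-s)^{m-k}$ does not give the local roots. The terms $t^nA^nf/n!$ contribute $(z-s)^{m-nk}$ on the same Newton-polygon edge. Already $e^{tc\partial^2}z^m$ is a Hermite polynomial rescaled by $\sqrt t$, with simple roots rather than $m-2$ roots fixed at $0$.
\item The diagonal substitution applied to a product test polynomial does not produce $\sum_{|\alpha|=k}P_\alpha(s)a^\alpha$. That sum would come from a power of $\sum_ja_j(z_j-s_j)$, which lies outside $V_\kappa$ once the exponent exceeds some $\kappa_j$, for instance on multiaffine boxes.
\end{itemize}
The fix is to take the multiplicity to be exactly $\alpha$. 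For $f_{\alpha,y}=\prod_i(z_i-y_i)^{\alpha_i}$ one has $(Af_{\alpha,y})(y)=\alpha!P_\alpha(y)$. On the line $y+sv$ with $v\in\R_{>0}^d$, this is the constant term of the first variation of a cluster of $|\alpha|$ roots colliding at $s=0$. Lemma~\ref{gen:cluster-moments}, or a Puiseux argument at this exact multiplicity, shows that such a first variation has only $s^{m-1}$ and $s^{m-2}$ terms, and the latter is nonpositive. This isolates each $P_\alpha$ separately. It gives $P_\alpha=0$ for $|\alpha|\ge3$, and the unary and pairwise inequalities directly, with no quadratic form on $a\ge0$.
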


The coefficient \(P_{e_i+e_j}\) in this statement is the full coefficient
of \(\partial_i\partial_j\), with \(i<j\).  Thus a convention writing the
second-order part as \(\sum_{i,j}Q_{ij}\partial_i\partial_j\), with \(Q\)
symmetric, has \(P_{e_i+e_j}=2Q_{ij}\).

\begin{theorem}[Complex generators]
\label{gen:main-complex}
Let \(A\in\operatorname{End}_{\C}(V_\kappa^\C)\).  The following conditions
are equivalent.
\begin{enumerate}
\item For every \(t\geq0\), \(e^{tA}\) preserves complex stability.
\item For every \(t\geq0\), \(e^{tA}\) preserves complex stability after
adjoining arbitrary inert polynomial variables.
\item There are a real generator \(R\) satisfying
Theorem~\ref{gen:main-real}, a real scalar \(c\), and real polynomials
\(q_i\) of degree at most two, nonnegative on \(\R\), such that
\begin{equation}
 A=R+i\left(c\Id+\sum_{i=1}^dJ_{i,\kappa_i}(q_i)\right).
 \label{gen:eq-complex-decomposition}
\end{equation}
\end{enumerate}
Equivalently, the canonical representation has order at most two, all its
second-order coefficients are real and satisfy
\eqref{gen:eq-real-unary}--\eqref{gen:eq-real-pair}, and every
\(\Im P_{e_i}\) is nonnegative on \(\R^d\).  In this formulation box
preservation forces
\(\Im P_{e_i}(z)=q_i(z_i)\) with \(\deg q_i\leq2\), and forces the
corresponding imaginary zeroth-order term in
\eqref{gen:eq-complex-decomposition}.
\end{theorem}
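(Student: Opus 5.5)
We prove the cycle (3)\(\Rightarrow\)(2)\(\Rightarrow\)(1)\(\Rightarrow\)(3), taking Theorem~\ref{gen:main-real} as given. The implication (2)\(\Rightarrow\)(1) is trivial, so the content lies in the other two steps.

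\emph{Sufficiency, (3)\(\Rightarrow\)(2).} By the decomposition \eqref{gen:eq-complex-decomposition}, the operators \(R\), \(ic\Id\), and \(iJ_{i,\kappa_i}(q_i)\) all act on \(V_\kappa^\C\). We use the Lie--Trotter product formula
\[
 e^{tA}=\lim_{n\to\infty}\bigl(e^{tR/n}\,e^{itc/n}\,\textstyle\prod_i e^{itJ_{i,\kappa_i}(q_i)/n}\bigr)^n .
\]
Stable polynomials together with zero form a closed set, by Hurwitz's theorem, so it suffices to check each factor separately, including after adjoining inert variables. The factor \(e^{tR}\) is handled by Theorem~\ref{gen:main-real}(2) applied to \(R\), since its complexification preserves stability even with inert variables. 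The scalar factor \(e^{itc}\) is harmless.

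For \(e^{itJ_{i,\kappa_i}(q_i)}\) we identify it explicitly. Let \(\phi_s\) be the flow of the real vector field \(q(z)\partial_z\) on \(\C\). Then \(e^{sJ(q)}f(z)\) is \(f(\phi_s(z))\) multiplied by the \(\kappa/2\)-power of the Jacobian \(\phi_s'(z)^{-\kappa/2}\); this is a Möbius action, because \(\deg q\le2\). Replacing \(s\) by \(it\) gives the flow of \(iq\,\partial_z\). Where \(\Im z=0\), this field points into \(\HH\) when \(q\ge0\), so \(\overline{\HH}\) maps into itself, and one checks that \(\HH\) maps into \(\HH\) for \(t\ge0\). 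Consequently \(f\circ\phi_{it}\) has no zeros in \(\HH^d\) whenever \(f\) is stable. The prefactor is nonvanishing there. This is the degree-\(\kappa_i\) homogenized Möbius substitution, which is a known stability preserver (it corresponds to \(\mathrm{SL}_2\)-type transformations mapping \(\HH\) into itself). Inert variables are untouched, so completeness holds as well.

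\emph{Necessity, (1)\(\Rightarrow\)(3).} Write \(A=B+iC\) with \(B,C\) the real and imaginary parts in the canonical coefficients. We follow the proof strategy of Theorem~\ref{gen:main-real}. The order-\(\le2\) conclusion is proved by testing the first-order expansion \(f+tAf+o(t)\) on stable \(f\) with multiple roots at a real point, using \(f=(z_i-a)^m g\) for \(m\le\kappa_i\); the same argument works verbatim for complex \(A\). Next, take \(f\) stable with a real zero \(x\) of multiplicity one or two. The perturbation \(tAf\) must not push that zero into \(\HH\). The leading term gives a sign condition on \(\Im\bigl(Af/\partial f\bigr)\) at real points. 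At double roots, \((z_i-s)^2\)-type test polynomials yield \(\Im P_{2e_i}\) with both signs, forcing the second-order coefficients to be real. Condition \eqref{gen:eq-real-unary}--\eqref{gen:eq-real-pair} then follows as in the real theorem. At simple roots, tests of the form \(f=z_i-s\) times positive real-rooted factors in the other variables yield \(\Im P_{e_i}\ge0\) on \(\R^d\).

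Box preservation then forces the structure. The identity \(A(V_\kappa)\subseteq V_\kappa\) applied to \(z_i^{\kappa_i}\) forces \(\Im P_{e_i}\) to depend only on \(z_i\) with degree at most two. Here we use that a nonnegative polynomial on \(\R^d\) that is affine in the other variables must be independent of them. It also forces the imaginary zeroth-order term to equal \(-\frac{\kappa_i}{2}q_i'\) plus a constant \(c\). Finally, \(B\) and the remaining real-coefficient part assemble into \(R\). Checking that \(R\) satisfies Theorem~\ref{gen:main-real}(3) is immediate, since its second-order coefficients are those of \(A\).

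\emph{Main obstacle.} The hardest step is the necessity argument extracting \(\Im P_{e_i}\ge0\), and the reality of the second-order coefficients, from infinitesimal zero motion. The difficulty is in choosing test polynomials that isolate one coefficient at an arbitrary real point \(x\in\R^d\) while staying inside \(V_\kappa\). I would first try products \(\prod_j (z_j-x_j)^{m_j}\) with \(m_j\in\{0,1,2\}\), perturbed by \(\varepsilon\)-multiples of \(\prod(1+\dots)\) to make roots simple. The sign of the imaginary shift of the root near \(x_i\) is \(-\Im(Af)/\partial_i f\) evaluated at \(x\), and requiring it to be nonpositive for all \(t\) small yields the inequalities.
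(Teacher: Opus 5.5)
Your sufficiency argument is fine. Treating the whole M\"obius flow of \(iq\,\partial\) at once works, in place of the paper's route of splitting \(q\ge0\) into squares of linear forms and using explicit parabolic flows. What you must check is that the forward flow maps \(\HH\) into \(\HH\); then the prefactor, which vanishes only at the preimage of \(\infty\), is zero-free there.

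The gap is in necessity, exactly where you place the main obstacle, and it is substantive. You only have the \emph{statement} of Theorem~\ref{gen:main-real}, which concerns real operators. It cannot be applied to a complex \(A\), nor to its real part \(R\), since nothing is yet known about \(e^{tR}\). In the paper the logic runs the other way: Proposition~\ref{gen:contact-necessity} is proved for complex \(A\), and the real necessity is a special case.

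The argument you sketch in its place does not give the order bound. Your tests \((z_i-a)^mg\) with \(m\le\kappa_i\) use multiplicity in a single variable, which is \(1\) on a multiaffine box. So they cannot detect mixed coefficients such as \(P_{e_1+e_2+e_3}\), which must be excluded: for \(A=\partial_1\partial_2\partial_3\) on \(V_{(1,1,1)}\), \(e^{tA}(z_1z_2z_3)=z_1z_2z_3+t\) vanishes at \(z_k=t^{1/3}e^{\mathrm i\pi/3}\). A first-order expansion \(f+tAf+o(t)\) also does not locate colliding roots, which move like \(t^{1/m}\). Perturbing to simple roots discards exactly the needed information: for real \(A\) and real \(f\), the first-order displacement of a simple real root is real, so it cannot detect the sign of a real second-order coefficient.

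The missing idea is a multivariate collision test together with a genuine cluster analysis.

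\emph{Test polynomials.} For \(0\ne\alpha\le\kappa\) and \(y\in\R^d\), take \(f_{\alpha,y}=\prod_i(z_i-y_i)^{\alpha_i}\). In the canonical representation every term except \(P_\alpha\partial^\alpha\) vanishes at \(y\), so \((Af_{\alpha,y})(y)=\alpha!P_\alpha(y)\); this answers your isolation worry.

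\emph{Restriction to a line.} Restrict \(e^{tA}f_{\alpha,y}\) to the line \(s\mapsto y+sv\) with \(v\in\R^d_{>0}\), which maps \(\HH\) into \(\HH^d\). By invertibility and stability this gives nonzero univariate polynomials \(p_t\), zero-free on \(\HH\), with \(p_t(s)=cs^m+t\bigl(\alpha!P_\alpha(y)+O(s)\bigr)+O(t^2)\), where \(c=\prod_iv_i^{\alpha_i}>0\) and \(m=|\alpha|\).

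\emph{Cluster analysis.} Suppose \(a=\alpha!P_\alpha(y)\ne0\). Rescaling \(s=t^{1/m}\sigma\) and applying Hurwitz's theorem shows that the \(m\) small roots are asymptotic to the \(m\)-th roots of \(-at/c\).
\begin{enumerate}
\item For \(m\ge3\), one of these directions lies in \(\HH\), so \(P_\alpha\equiv0\).
\item For \(m=2\), both roots stay in the closed lower half-plane only if \(a<0\).
\item For \(m=1\), one needs \(\Im a\ge0\).
\end{enumerate}
Together these give the order bound, the reality and nonpositivity of all second-order coefficients, and \(\Im P_{e_i}\ge0\). The paper reaches the same conclusions through the moment estimate of Lemma~\ref{gen:cluster-moments}: one-signed imaginary parts force \(\sum_j|\lambda_j|^2=O(t)\), so power sums of order at least three are \(o(t)\).

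\emph{The box step.} This needs no nonnegativity argument. Once the second-order coefficients are real, the entrywise imaginary part \(S\) has canonical coefficients \(\Im P_\alpha\), because the canonical recursion is real-linear. So \(S\) is an order-at-most-one box endomorphism, and Proposition~\ref{gen:box-decomposition} gives \(S=c\Id+\sum_iJ_{i,\kappa_i}(q_i)\). A single test on \(z_i^{\kappa_i}\) does not suffice, since zeroth- and first-order terms can cancel, as in \(J_i^+\).
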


Theorems~\ref{gen:main-real} and~\ref{gen:main-complex} are proved in
Section~\ref{gen:section}. At a multiple root, the first two moments
of a root cluster control the first variation of a stable polynomial;
applying this observation to product polynomials gives the order bound
and the signs of the principal coefficients. This contact principle is
related to the polynomial-abscissa results of Burke and Overton
\cite[Theorem~1.2 and Lemma~1.4]{BurkeOverton01}, and we give a direct
moment proof adapted to the operator problem. For sufficiency, a
sum-of-squares decomposition of each negative principal coefficient
reduces the construction to rank-one generators on two multiaffine
variables, which polarization then realizes on \(V_\kappa\).

The one-variable background includes the theory of differential
operators preserving real zeros; see \cite{CravenCsordas94}.
In the multiaffine setting, Purbhoo \cite[Proposition~4.5]{Purbhoo18}
constructs preserving generators from matrices with nonnegative
off-diagonal entries. Theorems~\ref{gen:main-real} and
\ref{gen:main-complex} give necessary and sufficient conditions on
arbitrary finite coordinate boxes. Their proof also gives Gram
certificates of orders at most four and an intertwining with a
multiaffine preserving generator (Corollaries~\ref{gen:gram-cones}
and~\ref{gen:slot-lift}). For one-site and two-site binary inputs with
Gaussian-rational entries, the classification yields a polynomial-time
recognition algorithm (Corollary~\ref{gen:recognition}). It tests the
sum of the supplied local matrices and allows cancellation between them.

\subsection{Damping and the spectrum}

A Cayley transformation carries the classification to the unit disk
\(\D\). Write \(E_i=z_i\partial_i\). If \(A_0\) generates a
semigroup preserving disk stability, consider
\begin{equation}\label{intro:damping}
 A=A_0-2\sum_i\mu_iE_i,\qquad \mu_i>0.
\end{equation}
The added term contracts the polynomial variables towards zero.
We prove that, for every \(t>0\), the kernel of \(e^{tA}\) is nonzero
on the closed unit polydisk, and hence on a larger open polydisk.
For each \(t_0>0\), a common enlarged radius works for all \(t\ge t_0\)
(Theorem~\ref{spec:strict}). The complex-cone theorem of Rugh
\cite{Rugh10} then gives a simple eigenvalue \(a_\star\) of largest
real part; we prove the sharp bound
\begin{equation}\label{intro:gap}
 \Re a_\star-\Re a\ge2\min_i\mu_i
 \qquad(a\in\operatorname{spec}(A),\ a\ne a_\star),
\end{equation}
independent of the number of variables and their degree bounds
(Theorem~\ref{spec:gap}).

Bravyi, Gosset, Liu, and Wong \cite{BGLW26} proved the Hermitian
\(2\mu\) gap for Suzuki--Fisher Hamiltonians. We extend their
radius-to-spectrum argument to arbitrary complex stability-preserving
generators. These spectral results use stability preservation directly;
the classification determines the admissible generators and makes the
hypothesis decidable from local input. An oscillation estimate for
quotients of polynomials also gives contraction along time-dependent
flows and a resolvent bound in an adapted norm
(Section~\ref{spec:oscillation-subsection}).

\subsection{Lee--Yang Hamiltonians and classical computation}

The connection with quantum Hamiltonians comes from the Lee--Yang
property of a matrix kernel. In a fixed computational basis, associate
to a qubit matrix \(M\) the polynomial in independent variable
groups \(z,w\),
\[
 F_M(z,w)=\sum_{a,b\in\{0,1\}^n}M_{ab}z^aw^b.
\]
For invertible \(M\),
nonvanishing of \(F_M\) on \(\D^{2n}\) is equivalent to preservation
of disk stability by its coefficient operator. Thus the all-temperature
Lee--Yang property of \(e^{-\beta H}\) is a generator question.

We prove that a Hermitian qubit Hamiltonian has this property if and
only if, up to a scalar, it belongs to the phase-rotated Suzuki--Fisher
family
\begin{equation}\label{intro:sf}
 \begin{split}
 H={}&-\sum_{i<j}\left(J^{zz}_{ij}Z_iZ_j+
       \sum_{a,b\in\{x,y\}}J^{ab}_{ij}\sigma_i^a\sigma_j^b\right)
       -\sum_i(h_i^xX_i+h_i^yY_i+h_i^zZ_i),\\
 &J^{zz}_{ij}\ge\|J^\perp_{ij}\|_{\rm op},\qquad h_i^z\ge0,
 \end{split}
\end{equation}
where \(X,Y,Z\) are the Pauli matrices and
\(J^\perp_{ij}=(J^{ab}_{ij})_{a,b\in\{x,y\}}\) is real.
Sufficiency, including this transverse norm condition, is classical
\cite{SuzukiFisher71,AsanoJapanese70,Dunlop79}. Theorem~\ref{eq:sf}
proves the converse among all Hermitian qubit Hamiltonians; in particular,
no interaction on three or more sites is possible. Ruelle's converse
\cite[Theorem~9]{Ruelle10} concerns classical coefficientwise
Boltzmann polynomials. For the full Gibbs tensor in the fixed basis,
our converse answers the question about Hamiltonians beyond the
Suzuki--Fisher family in \cite[Section~6]{WBG26}. Appendix~\ref{spin:section}
extends the matrix-kernel classification to arbitrary finite spins.

When every longitudinal field is positive,
Theorem~\ref{eq:strict-field} gives a strict Gibbs radius at each
\(\beta>0\), answering the question in \cite[Section~5.3]{WBG26}.
The ground state is unique and its polynomial also has a radius
greater than one, which may depend on the Hamiltonian. An unweighted
six-vertex tree disproves the stronger explicit EPR ground-state radius
proposed in \cite[Conjecture~1]{WBG26}; its rational certificate is in
Appendix~\ref{cx:section}.

For bounded degree, bounded local strength, and longitudinal fields
bounded below by a fixed \(\mu>0\), we give deterministic classical
algorithms for the ground energy and relative amplitudes and
probabilities against stable product states, together with a classical
sampler for adaptive equatorial measurements. With Gaussian-rational
local matrices as input, the running times are polynomial in the
system size, input bit length, and inverse requested error, with
degree depending on the fixed bounds
(Corollaries~\ref{alg:sf} and~\ref{alg:sf-state}).
Adding a field of size proportional to \(\varepsilon\) gives an
additive \(\varepsilon n\) energy-value approximation at zero field.
For unweighted bounded-degree EPR and bipartite Quantum MaxCut this
becomes a multiplicative value approximation scheme
(Corollary~\ref{alg:qmc}).

The algorithm is proved first for the larger class
\eqref{intro:damping} with complex one-site and two-site input.
The gap selects a holomorphic principal eigenvalue in a half-plane
of complex damping parameters, which a reciprocal change of variable
joins to the disk of efficiently computable perturbation coefficients
of Bravyi, DiVincenzo, and Loss \cite{BDL08}. An explicit conformal
map then reduces evaluation to logarithmically many coefficients,
and nonvanishing product tests of the principal eigenvectors admit
the same continuation. Connected-coefficient computation and
zero-free interpolation are established methods
\cite{PatelRegts17,HMS20,YYZ22,WildAlhambra23}; the spectral results provide
the global parameter domain used here.

Classical randomized algorithms include Bravyi and Gosset's treatment
of quantum ferromagnets \cite{BG17} and Rayudu and Takahashi's
operator-loop method for a class of stoquastic XY Hamiltonians
\cite{RT25}. For Heisenberg antiferromagnets, Takahashi, Slezak, and
Crosson \cite{TSC24} proved rapid mixing of ground-state quantum Monte
Carlo on bipartite graphs with one part of bounded size and polynomially
bounded ratios of nonzero couplings.
The general classical computation question is raised in \cite{BGLW26};
related interpolation approaches appear in \cite{WongTalk23,WongThesis26}.
The amplitude results of \cite[Section~4]{WBG26} use a fixed spatial
zero-free margin, whereas our algorithms take the local Hamiltonian as
input and use continuation in the field.

For the EPR problem introduced by King \cite{King23}, Ju and Nagda
\cite{JuNagda25} give a \((1+\sqrt5)/4\approx0.809\) approximation,
and Apte and coauthors \cite{ALMPSS26} give a \(0.8395\) approximation.
These algorithms apply to general nonnegatively weighted graphs and
construct descriptions of approximate states, whereas our zero-field
result gives a value approximation scheme on unweighted graphs of
bounded degree, with polynomial running time for each fixed relative
error. The computation problem in \cite{MarwahaSud26} asks for the finer
scale of inverse-polynomial absolute accuracy.

\enlargethispage{-3\baselineskip}
\subsection{Further consequences and organization}

For coefficient-positive, degree-preserving stability semigroups,
Newton's inequalities compare the growth rates of homogeneous sectors
(Theorem~\ref{sector:growth}), giving the token-graph application of
Section~\ref{sector:section}.
If \(A_k,D_k\) are the adjacency and weighted degree matrices of the
\(k\)-token graph of a nonnegatively weighted graph, then
\[
 k\longmapsto\lambda_{\max}(A_k+aD_k)
\]
is concave for \(-1\le a\le1\), and this interval is sharp.
The sectors \(k\) and \(n-k\) are related by complementation and
have the same largest eigenvalue; together with concavity, this
symmetry implies monotonicity up to half filling.
The cases \(a=0,1\) resolve the adjacency and signless-Laplacian
monotonicity conjectures of Apte, Parekh, and Sud
\cite[Conjectures~5--6]{APS26}.
The same weighted token-graph concavity and monotonicity were obtained
independently by Jiang \cite[Theorem~1.1]{Jiang26}. Both proofs use
log-concavity under a preserving semigroup and pass to spectral growth
rates as time tends to infinity; Jiang works with Lorentzian polynomials,
while we use real stability.

There is also a distinction between continuous paths of preservers and
products of preserving flows. Every invertible multiaffine disk-stability
preserver can be joined to the identity through preservers, but a
nontrivial coordinate permutation is outside the closure of products of
admissible one-parameter flows (Appendix~\ref{glob:section}).

Section~\ref{pre:section} sets out the polynomial and kernel conventions.
Section~\ref{gen:section} proves the classification and recognition
theorems. Sections~\ref{spec:section} and~\ref{eq:section} develop the
spectral results and their Hamiltonian consequences, which provide the
input to the algorithms in Section~\ref{alg:section}.
Section~\ref{sector:section} treats token-graph spectra.
The appendices contain the higher-spin extension, bit-complexity proofs,
static-preserver results, examples concerning the spectral estimates,
and the EPR certificate.

\section{Stable polynomials and operator kernels}\label{pre:section}

\subsection{Regions, degrees, and preservation}
Write $\HH=\{z\in\C:\Im z>0\}$ and $\D_r=\{z\in\C:|z|<r\}$,
with $\D=\D_1$.
\begin{definition}\label{pre:stability}
A nonzero polynomial is \emph{$\Omega$-stable} if it
has no zero when all its variables lie in $\Omega$.
For a product of regions, stability refers to the product domain.
A polynomial is \emph{real stable} if its coefficients are real and it is
$\HH$-stable. In one variable this is equivalent to real-rootedness.
A \emph{positive stable polynomial} has nonnegative real coefficients and
is real stable. Positivity throughout refers to coefficients, unless an
operator on a Hilbert space is explicitly said to be positive semidefinite.
An operator \emph{preserves} one of these classes when it sends every
member to another member or to zero.
\end{definition}

For $\kappa\in\N^n$, where $\N=\{0,1,2,\ldots\}$, put
\[
 V_\kappa=\{f\in\C[z_1,\ldots,z_n]:\deg_{z_i}f\leq\kappa_i\},
 \qquad
 \binom{\kappa}{\alpha}=\prod_i\binom{\kappa_i}{\alpha_i}.
\]
We omit coordinates with \(\kappa_i=0\), obtaining the constant space
when none remain, and interpret every degree bound as an upper bound,
so that constants and polynomials of smaller degree lie in the same
space. Real operators act on complex polynomials by complex-linear
extension.
We use multi-index notation: \(\alpha!=\prod_i\alpha_i!\),
\(|\alpha|=\sum_i\alpha_i\), and
\((\alpha)_\beta=\prod_i\alpha_i!/(\alpha_i-\beta_i)!\) for
\(\beta\le\alpha\). The canonical differential representation is
\begin{equation}\label{pre:normal-order}
 A=\sum_{\alpha\le\kappa}P_\alpha(z)\partial^\alpha.
\end{equation}
Although the coefficient polynomials may have degrees exceeding
\(\kappa\), the derivative indices are bounded by \(\kappa\): the
\emph{order} of \(A\) is the largest \(|\alpha|\) for which
\(P_\alpha\ne0\), and \(\partial_i^2\) is absent at a binary
coordinate \(\kappa_i=1\). Throughout the paper,
\(P_{e_i+e_j}\) denotes the full coefficient of
\(\partial_i\partial_j\) for \(i<j\).

\begin{lemma}
\label{gen:canonical}
Every linear endomorphism of \(V_\kappa^\mathbb F\) has a unique
representation \eqref{pre:normal-order}.
\end{lemma}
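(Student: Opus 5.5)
The plan is a triangular inversion on monomials, ordered by the partial order $\le$ on multi-indices. Let $A\in\operatorname{End}_{\mathbb F}(V_\kappa^{\mathbb F})$. The operators $\partial^\alpha$ with $\alpha\le\kappa$ act on the monomial basis $\{z^\beta:\beta\le\kappa\}$ by
\[
 \partial^\alpha z^\beta=(\beta)_\alpha z^{\beta-\alpha}
 \quad\text{for }\alpha\le\beta,
\]
and by zero otherwise. So a representation \eqref{pre:normal-order} determines the values $Az^\beta=\sum_{\alpha\le\beta}(\beta)_\alpha P_\alpha z^{\beta-\alpha}$. Conversely, we want to solve these equations for the $P_\alpha$. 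Note that $P_\alpha$ is allowed to be any polynomial in $\mathbb F[z]$, not necessarily in $V_\kappa$.

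\textbf{Existence.} We define $P_\alpha$ by recursion on $|\alpha|$ along the order $\le$. Put $P_0=A(1)$. Given $P_\gamma$ for all $\gamma<\beta$, the equation for $z^\beta$ contains $P_\beta$ only through the term $\alpha=\beta$, whose coefficient is $(\beta)_\beta=\beta!\neq0$. We therefore set
\[
 P_\beta=\frac{1}{\beta!}\Bigl(Az^\beta-\sum_{\alpha<\beta}(\beta)_\alpha P_\alpha z^{\beta-\alpha}\Bigr).
\]
The result is a polynomial with coefficients in $\mathbb F$; characteristic zero is used only to divide by $\beta!$. By construction, the operator $\sum_{\alpha\le\kappa}P_\alpha\partial^\alpha$ agrees with $A$ on every basis monomial $z^\beta$ with $\beta\le\kappa$. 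By linearity it equals $A$ on $V_\kappa$. Equivalently, there is a closed Möbius-inversion formula
\[
 P_\alpha=\sum_{\gamma\le\alpha}\frac{(-z)^{\alpha-\gamma}}{(\alpha-\gamma)!\,\gamma!}\,A z^\gamma,
\]
which can be checked by the binomial identity $\sum_{\gamma\le\alpha}\binom{\alpha}{\gamma}(-1)^{\alpha-\gamma}=\delta_{\alpha,0}$. The recursion alone suffices, though.

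\textbf{Uniqueness.} Suppose $\sum_{\alpha\le\kappa}Q_\alpha\partial^\alpha=0$ on $V_\kappa$, where $Q_\alpha$ is the difference of two representations. If some $Q_\alpha$ is nonzero, choose $\beta$ minimal among indices with $Q_\beta\ne0$. Applying the operator to $z^\beta$ gives $\beta!\,Q_\beta=0$, since every other surviving term has index $\alpha<\beta$ and so $Q_\alpha=0$ by minimality. This is a contradiction.

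The only point needing care is that the statement concerns the operator restricted to $V_\kappa$, not as an element of the Weyl algebra. Indices $\alpha\not\le\kappa$ are excluded precisely because $\partial^\alpha$ vanishes on $V_\kappa$ for them; this is what makes the representation unique. For the same reason the coefficients must be allowed outside $V_\kappa$. I expect no real obstacle beyond keeping this bookkeeping straight.
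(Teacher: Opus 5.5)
Your proposal is correct and follows essentially the paper's proof: the same triangular recursion \(P_0=A1\), \(P_\beta=\frac{1}{\beta!}\bigl(Az^\beta-\sum_{\alpha<\beta}(\beta)_\alpha P_\alpha z^{\beta-\alpha}\bigr)\) on the monomial basis, with uniqueness coming from \(\partial^\alpha z^\beta=0\) unless \(\alpha\le\beta\). Your minimal-index uniqueness argument and your closed M\"obius-inversion formula, which checks out, make explicit what the paper compresses into a single sentence.
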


\begin{proof}
The coefficients are determined recursively on the monomial basis:
the constant monomial gives \(P_0=A1\), and once \(P_\beta\) is
known for \(|\beta|<|\alpha|\), evaluation on \(z^\alpha\) gives
\[
 P_\alpha
 =\frac1{\alpha!}
 \left(Az^\alpha-
 \sum_{\substack{\beta\leq\alpha\\\beta\neq\alpha}}
 P_\beta(z)(\alpha)_\beta z^{\alpha-\beta}\right).
\]
Since a derivative acts nontrivially on \(z^\alpha\) only when its
index is bounded by \(\alpha\), this recursion both constructs the
representation and makes its coefficients unique.
\end{proof}

All spaces in this paper are finite dimensional. We call \(A\) a
generator preserving a class when \(e^{tA}\) preserves that class for
every \(t\ge0\).

\begin{definition}\label{pre:complete}
An operator is \emph{completely stable} if its tensor product with the
identity in arbitrarily many extra polynomial variables preserves the
same complex stable class, for every finite choice of the extra degree
bounds.
\end{definition}
We use this tensor-extension property in the polarization argument of
Section~\ref{gen:section}.

\subsection{Polarization and the disk kernel}
The normalized polarization of $z_i^a$ into $\kappa_i$ binary slots is
\[
 z_i^a\longmapsto
 \binom{\kappa_i}{a}^{-1}
 e_a(z_{i1},\ldots,z_{i\kappa_i}).
\]
This symmetric polynomial specializes to $z_i^a$ on the diagonal.
The Grace--Walsh--Szeg\H{o} coincidence theorem shows that polarization
preserves stability in the open disks and half-planes used here,
while the reverse assertion follows by diagonal specialization
\cite{BB09,BB10II,BB09Circular}.
\label{pre:polarization}

For $T:V_\kappa\to V_\lambda$ define its disk kernel by
\begin{equation}\label{pre:kernel}
 K_T(z,w)=T_z\prod_i(1+z_iw_i)^{\kappa_i}
 =\sum_{\alpha\leq\kappa}\binom{\kappa}{\alpha}
       (Tz^\alpha)(z)w^\alpha.
\end{equation}
In the first expression, $T$ acts on the input variables $z_i$;
the resulting kernel has degree bounds $\lambda$ in its output variables
and $\kappa$ in $w$, with
$K_{\Id}(z,w)=\prod_i(1+z_iw_i)^{\kappa_i}$ for the identity.

For two polynomials of degree at most $\kappa$ in paired variables define
their bilinear contraction by
\begin{equation}\label{pre:contraction}
 [f(u),g(v)]_\kappa
   =\sum_{\alpha\leq\kappa}
      \binom{\kappa}{\alpha}^{-1} f_\alpha g_\alpha,
 \qquad
 f(u)=\sum_\alpha f_\alpha u^\alpha,\quad
 g(v)=\sum_\alpha g_\alpha v^\alpha .
\end{equation}
With this bilinear pairing, matrix composition becomes contraction of
the corresponding kernels:
\[
 K_{ST}(z,w)=[K_S(z,u),K_T(v,w)]_\kappa,
\]
where $\kappa$ is the vector of degree bounds for the contracted variables.

\begin{lemma}\label{pre:grace}
Suppose $f(z,u)$ and $g(v,w)$ are stable in products of disks and
$\deg_{u_i}f,\deg_{v_i}g\leq\kappa_i$. If their paired radii
$r_i,s_i$ satisfy $r_is_i\geq1$, then their contraction
\eqref{pre:contraction} is stable in the remaining disks or is identically
zero. If all $r_is_i>1$, contraction at any point in the remaining
disks is nonzero.
\end{lemma}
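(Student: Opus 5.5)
The plan is to reduce the contraction to a Grace--Walsh--Szeg\H{o} statement, one contracted variable at a time, after polarizing. Fix points \(z\) and \(w\) in the remaining disks. Then \(u\mapsto f(z,u)\) is stable in \(\prod_i\D_{r_i}\) and \(v\mapsto g(v,w)\) is stable in \(\prod_i\D_{s_i}\), or one of them vanishes identically. If one vanishes identically, the contraction at \((z,w)\) is zero. So it suffices to prove a statement about two polynomials with no extra variables:
\begin{quote}
if \(p(u)\) is \(\prod\D_{r_i}\)-stable, \(q(v)\) is \(\prod\D_{s_i}\)-stable, and \(r_is_i\ge1\), then \([p,q]_\kappa\ne0\).
\end{quote}
The pointwise version then gives stability of the contraction, or identical vanishing, by Hurwitz's theorem applied in the remaining variables. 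Hurwitz is needed here to handle points where one factor degenerates.

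The first step is to polarize. Polarization preserves disk stability (Section~\ref{pre:polarization}), and the normalized pairing \([\cdot,\cdot]_\kappa\) is compatible with it: if \(z_i^a\) goes to \(\binom{\kappa_i}{a}^{-1}e_a\), then the multiaffine pairing \(\sum_S p_Sq_S\) of the polarizations reproduces \(\sum_a \binom{\kappa_i}{a}^{-1}p_aq_a\). So we may assume \(\kappa=(1,\dots,1)\) and the pairing is \(\sum_S p_Sq_S\).

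The second step handles one binary slot at a time. Write \(p=p_0+p_1u_1\) and \(q=q_0+q_1v_1\), with coefficients polynomials in the other variables. The pairing in slot \(1\) is \(p_0q_0+p_1q_1\).

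For the strict case \(r_1s_1>1\), the key identity is
\[
p_0q_0+p_1q_1=\frac{1}{2\pi}\int_0^{2\pi}p(\rho e^{i\theta})\,q(\rho^{-1}e^{-i\theta})\,d\theta,
\]
with \(\rho\) chosen so that \(\rho<r_1\) and \(\rho^{-1}<s_1\). This is only an averaging formula, so it is not enough on its own. The sharper route uses the one-variable fact directly. Since \(p\) has no zero in \(|u_1|<r_1\), we get \(|p_0|\ge r_1|p_1|\) whenever \(p_1\ne0\), and the inequality is strict when \(p\) is nonzero on the closed disk. Similarly \(|q_0|\ge s_1|q_1|\). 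Then
\[
|p_1q_1|\le |p_0q_0|/(r_1s_1)\le|p_0q_0|,
\]
so \(p_0q_0+p_1q_1\ne0\) when \(r_1s_1>1\) and \(p_0q_0\ne0\).

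This reduces the multivariable problem to iteration. The main obstacle is that after contracting slot \(1\), the new function \(p_0(u')q_0(v')+p_1(u')q_1(v')\) must again be stable in the product of the remaining disks, so that the argument can be repeated. This is precisely a two-slot Grace--Walsh--Szeg\H{o} step. The plan is to write the slot-\(1\) contraction as \(\Phi(\zeta)=p(\zeta)q(1/\zeta)\cdot\) (coefficient extraction). Equivalently, set \(\tilde q(v_1)=v_1q(1/v_1)\), which is stable outside the disk of radius \(1/s_1\), so its roots lie in \(\overline{\D}_{1/s_1}\). The quantity \(p_0q_0+p_1q_1\) is then the Schur--Szeg\H{o} composition of \(p\) with the degree-one polynomial \(\tilde q\). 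By the Grace--Walsh--Szeg\H{o} theorem, its zeros have the form \(-\alpha\beta\) with \(\alpha\) a root of \(p\) and \(\beta\) a root of \(\tilde q\). Here \(|\alpha|\ge r_1\) and \(|\beta|\le1/s_1\), and since \(|\beta|\) cannot be zero in the nondegenerate case, the needed nonvanishing follows. Treating the remaining variables as parameters in \(\D\)-type products preserves stability, so induction on the number of slots gives the result.

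In the boundary case \(r_is_i=1\), I would approximate by shrinking the radii slightly and take limits with Hurwitz's theorem. This yields stability or identical vanishing.

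For strict inequalities \(r_is_i>1\), the same root-modulus estimate has strict slack. A degenerate factor cannot force vanishing, because \(f(z,\cdot)\) is nonzero on a neighborhood of the closed polydisk of radius \(r_i\). Consequently the contraction is nonzero at every point of the remaining disks, which is the final claim.
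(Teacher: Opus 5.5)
There is a genuine gap in your induction over slots. Your one-slot estimate $|p_1q_1|\le|p_0q_0|/(r_1s_1)$ depends on the factorization, because it needs $|p_0|\ge r_1|p_1|$ and $|q_0|\ge s_1|q_1|$ separately. That estimate, applied pointwise, already shows that $F_1(u',v')=p_0(u')q_0(v')+p_1(u')q_1(v')$ has no zero on the product of the remaining disks. So the obstacle you name is already resolved. The real obstacle is different: $F_1$ is no longer of the form $p'(u')q'(v')$, so the same estimate cannot be run at slot~2.

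The Schur--Szeg\H{o} paragraph does not repair this, for two reasons. First, it is again a statement about two separate univariate polynomials, so it does not address the coupling. Second, as written it proves nothing: from $|\alpha|\ge r_1$ and $|\beta|\le 1/s_1$, nothing follows about $|\alpha\beta|$ compared with $1$. Using the roots of $q$ itself, $|\beta|\ge s_1$, you would get $|\alpha\beta|\ge r_1s_1>1$, but that only reproves the factored one-slot case. The phrase ``treating the remaining variables as parameters'' does not restore a product structure either.

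The missing ingredient is Asano's contraction for an arbitrary, non-factored multiaffine $F=a+bu+cv+duv$. Here the coefficients depend on all other variables, which are held fixed in their disks, and $F$ is assumed nonvanishing on $\D_r\times\D_s$. The diagonal restriction $t\mapsto F(rt,st)=a+(br+cs)t+drs\,t^2$ has no zero in $|t|<1$, and $a\ne0$. If $d\ne0$, the product of its two roots has modulus $|a|/(|d|rs)\ge1$, so $|d|rs\le|a|$ and $a+d\ne0$ when $rs>1$.

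This is the paper's route. Apply the lemma to the product $\mathcal P f\cdot\mathcal P g$, which is nonvanishing on the full product of disks, and contract the paired slots one at a time. The factorization is used only at the start, and each contraction preserves nonvanishing on the product of the remaining disks. Your polarization bookkeeping is correct and matches the paper.

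There are two smaller points. First, your displayed reduced claim is false when $r_is_i=1$. For example, $p=1+u$ and $q=1-v$ with $r=s=1$ are both stable, yet $[p,q]_1=1-1=0$. So the boundary case genuinely requires the approximation-plus-Hurwitz step you give at the end. Second, that approximation should replace the paired variables by $(1-\varepsilon)u$ and $(1-\varepsilon)v$. This enlarges the paired radii to $r_i/(1-\varepsilon)$ and $s_i/(1-\varepsilon)$; ``shrinking the radii'' goes the wrong way.
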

\begin{proof}
For a multiaffine polynomial
$F(u,v)=a+bu+cv+duv$ nonvanishing on $\D_r\times\D_s$,
the diagonal restriction $F(rt,st)$ is nonvanishing for $|t|<1$.
Its constant coefficient is nonzero, and when $d\ne0$ the product
of its roots has modulus at least one, giving $|d|rs\le|a|$;
the same inequality holds when $d=0$. In particular $a+d\ne0$
if $rs>1$, so replacing $a+bu+cv+duv$ by $a+d$ preserves
nonvanishing after any uncontracted variables have been fixed in
their disks. This replacement can be iterated over pairs in a
general multiaffine polynomial, without requiring it to factor.

Apply normalized polarization to the paired variables of $f$ and $g$,
and contract corresponding slots in their product by this replacement.
For each multi-index $\alpha$, there are
$\binom{\kappa}{\alpha}$ matching slot subsets, whereas the two
polarizations contribute $\binom{\kappa}{\alpha}^{-2}$; the resulting
coefficient is therefore the one in \eqref{pre:contraction}.
This gives strict nonvanishing for all degrees allowed by the bounds,
and is the weighted circular-domain contraction used in
\cite{AsanoJapanese70,BB09,BB10II}.
When some paired radii have product one, replace every paired variable
by $(1-\varepsilon)$ times that variable and apply the strict result.
Letting $\varepsilon\downarrow0$, Hurwitz's theorem gives the
stable-or-zero conclusion throughout the product of the remaining
disks.
\end{proof}

The finite-box symbol theorem \cite{BB09,BB10II} characterizes complex
stability preservers of rank at least two by stability of their symbols.
We will use its disk form for invertible operators; a direct proof in
our kernel convention is given in Lemma~\ref{spec:kernel-criterion}.

The Cayley isomorphism is
\begin{equation}\label{pre:cayley}
 (\mathcal C_\kappa f)(x)
   =\prod_i(x_i+\mathrm i)^{\kappa_i}
      f\left(\frac{x_1-\mathrm i}{x_1+\mathrm i},\ldots,
              \frac{x_n-\mathrm i}{x_n+\mathrm i}\right).
\end{equation}
It is a linear isomorphism between the degree boxes, and $f$ is
$\D$-stable if and only if $\mathcal C_\kappa f$ is $\HH$-stable.
Thus the generator classifications in half-plane coordinates and
the spectral results in disk coordinates concern conjugate
finite-dimensional semigroups.

\subsection{Coherent coordinates and physical tensors}
Equip $V_\kappa$ with the inner product for which
$\binom{\kappa}{\alpha}^{1/2}z^\alpha$ is an orthonormal basis.
Equivalently, identify the standard basis $|\alpha\rangle$ of
$\bigotimes_i\C^{\kappa_i+1}$ with
$\binom{\kappa}{\alpha}^{1/2}z^\alpha$. The polynomial of a vector
$\psi$ is
\[
 f_\psi(z)=\sum_{\alpha\leq\kappa}
          \binom{\kappa}{\alpha}^{1/2}\psi_\alpha z^\alpha.
\]
An operator $M$ on this Hilbert space has coherent tensor
\begin{equation}\label{pre:coherent}
 F_M(z,w)=\sum_{\alpha,\beta\leq\kappa}
 \binom{\kappa}{\alpha}^{1/2}\binom{\kappa}{\beta}^{1/2}
 M_{\alpha\beta}z^\alpha w^\beta .
\end{equation}
Thus $F_M$ is the kernel \eqref{pre:kernel} of the operator in coherent
coordinates, with independent complex variables for the row and column
indices. A positive semidefinite density matrix $\rho$ has the
\emph{Lee--Yang property} when $F_\rho$ is $\D$-stable.

We use the Pauli matrices
\[
 X=\begin{pmatrix}0&1\\1&0\end{pmatrix},\qquad
 Y=\begin{pmatrix}0&-\mathrm i\\\mathrm i&0\end{pmatrix},\qquad
 Z=\begin{pmatrix}1&0\\0&-1\end{pmatrix}.
\]
Thus $E_i=z_i\partial_i$ represents $|1\rangle\langle1|_i$
and $Z_i=I-2E_i$ on a qubit.
For a coordinate of degree bound $\kappa_i$, the operator
$E_i=z_i\partial_i$ has eigenvalues
$0,\ldots,\kappa_i$. A \emph{principal eigenvalue} below means an
algebraically simple eigenvalue whose real part is strictly greater
than the real parts of all other eigenvalues; for a discrete operator
it means the corresponding strict modulus dominance.

\section{Proof of the generator classification}
\label{gen:section}

The necessity in Theorems~\ref{gen:main-real} and~\ref{gen:main-complex}
comes from root collisions: the first variation of a stable product
polynomial isolates a canonical coefficient and forces it to vanish
above order two, or to have the required sign at order two.
For sufficiency, sums of squares express the allowed principal
coefficients in terms of generators on two multiaffine variables;
we then polarize to obtain the prescribed coordinate degrees, leaving
only first-order M\"obius generators.

Throughout this section the stability domain is \(\HH^d\), and
\(\kappa_i\ge1\). We use the canonical representation
\begin{equation}
 A=\sum_{\alpha\le\kappa}P_\alpha(z)\partial^\alpha.
 \label{gen:eq-canonical}
\end{equation}
For later use, the three one-site operators associated with
\(q=1,z_i,z_i^2\) in \eqref{gen:eq-J} are
\[
 J_i^-=\partial_i,\qquad
 J_i^0=z_i\partial_i-\frac{\kappa_i}{2},\qquad
 J_i^+=z_i^2\partial_i-\kappa_i z_i.
\]
They span a copy of \(\mathfrak{sl}_2\) at each site.

\subsection{Root clusters and infinitesimal necessity}
\label{gen:subsec-roots}

The local factor associated with a separated root cluster is
differentiable even when the ambient polynomial changes degree.
The following moment argument extracts its possible first variations;
after rotation of the half-plane, it is related to the polynomial-abscissa
tangent theorem of Burke and Overton
\cite[Theorem~1.2 and Lemma~1.4]{BurkeOverton01}.

\begin{lemma}
\label{gen:cluster-moments}
Let \(p_t(s)\), \(t\geq0\), be a curve of complex polynomials of uniformly
bounded degree whose coefficients are right differentiable at zero.
Suppose
\[
 p_0(s)=s^m g(s),\qquad m\geq1,\qquad g(0)\neq0,
\]
and that \(p_t\) has no zero in \(\HH\) for all sufficiently small \(t\).
Let \(C_t\) be the monic factor consisting of the \(m\) roots converging
to zero.  Its coefficients are right differentiable, and
\begin{equation}
 \left.\frac{d}{dt}C_t(s)\right|_{t=0}
 =c_1s^{m-1}+c_2s^{m-2},
 \qquad
 \Im c_1\geq0,\quad c_2\in\R,\quad c_2\leq0.
 \label{gen:eq-cluster-variation}
\end{equation}
When \(m=1\), the \(c_2\) term is absent.  If every \(p_t\) is real
rooted, then \(c_1\) is real as well.
\end{lemma}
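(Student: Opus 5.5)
The plan is to split the argument into a factorization step, which produces the cluster factor \(C_t\) and its derivative, and a Newton-polygon step, which reads the constraints off the leading asymptotics of the small roots.

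\emph{Differentiability of the cluster factor.} Let \(N\) bound \(\deg p_t\). I will factor \(p_t=C_tG_t\), with \(C_t\) monic of degree \(m\) and \(G_t\) of degree at most \(N-m\), not necessarily monic, since the degree of \(p_t\) may drop. Consider the multiplication map \((C,G)\mapsto CG\) from (monic degree \(m\)) \(\times\) (degree \(\le N-m\)) to (degree \(\le N\)). Both sides have dimension \(N+1\). At \((s^m,g)\) its differential is \((\delta C,\delta G)\mapsto g\,\delta C+s^m\delta G\), with \(\deg\delta C<m\). This differential is injective: \(g\,\delta C=-s^m\delta G\) and \(g(0)\ne0\) force \(s^m\mid\delta C\), hence \(\delta C=0\), and then \(\delta G=0\). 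By the inverse function theorem, \(C_t,G_t\) depend smoothly on the coefficients of \(p_t\), so they are right differentiable at \(t=0\). Since \(G_t\to g\) with \(g(0)\neq0\), the roots of \(C_t\) are exactly the \(m\) roots of \(p_t\) tending to zero. Because \(p_t\) has no zeros in \(\HH\), all roots of \(C_t\) lie in the closed lower half-plane. Write
\[
 C_t(s)=s^m+t\sum_{k=1}^m d_k s^{m-k}+o(t),
\]
so that the variation in \eqref{gen:eq-cluster-variation} is \(\sum_k d_ks^{m-k}\).

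\emph{Order bound and sign of \(c_2\) via Newton's polygon.} Suppose some \(d_k\ne0\) with \(k\ge2\), and let \(K\ge2\) be the largest such index. I will substitute \(s=t^{1/K}u\) and divide by \(t^{m/K}\). A term \(t\,d_js^{m-j}\) then becomes \(t^{1-j/K}d_ju^{m-j}\), which vanishes as \(t\to0\) for \(j<K\). The \(o(t)\) remainder also tends to zero, uniformly on compact sets, since its coefficients are \(o(t)\) and \(t^{-m/K}t^{(m-j)/K}=t^{-j/K}\ge\) the relevant scale only up to \(t^{-1}\).

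The rescaled polynomial therefore converges to \(u^{m-K}(u^K+d_K)\). By Hurwitz or Rouché, \(C_t\) has \(K\) roots of the form \(t^{1/K}(\omega+o(1))\), where \(\omega\) runs over the \(K\)-th roots of \(-d_K\). These \(K\) points are equally spaced on a circle with angular gap \(2\pi/K\).
\begin{itemize}
\item If \(K\ge3\), then \(2\pi/K<\pi\), so some \(\omega\) has \(\Im\omega>0\). The corresponding root of \(C_t\) lies in \(\HH\) for small \(t\), which is a contradiction. Hence \(d_k=0\) for all \(k\ge3\).
\item If \(K=2\), both square roots of \(-d_2\) must satisfy \(\Im\omega\le0\). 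Since they are negatives of each other, both are real, so \(-d_2>0\). Thus \(c_2=d_2\) is real and negative, and \(c_2=0\) otherwise.
\end{itemize}
When \(m=1\) there is no \(d_2\) at all.

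\emph{The first moment.} The sum of the roots of \(C_t\) equals \(-t d_1+o(t)\). Each root has imaginary part \(\le0\), so \(-t\,\Im d_1+o(t)\le0\), which gives \(\Im c_1=\Im d_1\ge0\). If every \(p_t\) is real-rooted, the roots of \(C_t\) are real, so their sum is real, and \(c_1\) is real. Equivalently, one can apply the inequality to both \(\HH\) and \(-\HH\).

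\emph{Expected obstacle.} The main care point is the factorization step. Its inverse-function argument must be set up without assuming \(\deg p_t\) is constant, which is why \(G_t\) is allowed degree up to \(N-m\) rather than being monic. The uniform control of the \(o(t)\) term in the rescaling is routine once \(C_t\) is known to be differentiable with a monic leading term.
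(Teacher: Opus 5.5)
Your factorization step is sound: the inverse function theorem applied to $(C,G)\mapsto CG$, with $G$ allowed to drop degree, is a clean alternative to the paper's contour-integral power sums. Your first-moment argument for $\Im c_1\ge0$, and for real $c_1$ in the real-rooted case, is the paper's.

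The gap is in the rescaling step. Write $\gamma_j(t)$ for the coefficient of $s^{m-j}$ in $C_t$. After $s=t^{1/K}u$ and division by $t^{m/K}$, the coefficient of $u^{m-j}$ is $\gamma_j(t)t^{-j/K}$. For $j>K$ you only know $\gamma_j(t)=o(t)$. Since $j/K>1$, this does not make $\gamma_j(t)t^{-j/K}$ tend to zero, or even stay bounded; your remark that the scale is at most $t^{-1}$ holds only for $j\le K$. So the claimed limit $u^{m-K}(u^K+d_K)$ is unjustified.

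It is false even under the lemma's hypotheses. Take $p_t(s)=(s-\sqrt t)^2(s+2\sqrt t)=s^3-3ts+2t^{3/2}$. It is real-rooted, with $d_1=d_3=0$ and $d_2=-3$, so $K=2$. But $t^{-3/2}p_t(t^{1/2}u)=(u-1)^2(u+2)$ rather than $u(u^2-3)$, so the rescaled roots tend to $1,1,-2$, not to $\pm\sqrt3$ and $0$.

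In the $K\ge3$ case it is worse. In a hypothetical counterexample, a lower coefficient of size $t^{1+\epsilon}$ would make the rescaled coefficients blow up, and the roots would follow a different dominant balance. For instance, $s^4+ts+t^{11/10}$ has all four roots of size $t^{11/40}$. So you cannot read off $K$ equally spaced roots at scale $t^{1/K}$, and your contradiction for $K\ge3$ is not established.

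What is missing is an a priori bound on the size of all cluster roots, which is exactly what the paper's moment argument supplies. Write $\lambda_j=a_j+\mathrm i b_j$ with $b_j\le0$.
\begin{itemize}
\item The estimate $\sum_j|b_j|=-\Im\sum_j\lambda_j=O(t)$ gives $\sum_j b_j^2=O(t^2)$.
\item Then $\sum_j\lambda_j^2=\gamma_1^2-2\gamma_2=O(t)$ gives $\sum_j a_j^2=O(t)$.
\item Hence every root is $O(t^{1/2})$, so $\gamma_k(t)=O(t^{k/2})=o(t)$ for $k\ge3$. This yields $d_k=0$ directly, with no case analysis.
\item Moreover $\Im\sum_j\lambda_j^2=2\sum_j a_jb_j=O(t^{3/2})$ and $\Re\sum_j\lambda_j^2\ge-\sum_j b_j^2=-O(t^2)$. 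These show that $d_2=-\tfrac12\lim_{t\downarrow0}t^{-1}\sum_j\lambda_j^2$ is real and nonpositive.
\end{itemize}
If you want to keep a rescaling proof, rescale by $\sigma(t)=\max_k|\gamma_k(t)|^{1/k}$ and pass to subsequential limits. The fact you then need is that a monic limit with roots in the closed lower half-plane and vanishing first two coefficients must be $u^m$. Proving that is this same moment argument.
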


\begin{proof}
Choose a small circle enclosing zero and no other root of \(p_0\).
By coefficient continuity and Rouch\'e's theorem, it encloses \(m\)
roots \(\lambda_1(t),\ldots,\lambda_m(t)\) for small \(t\), whose
power sums can be written without choosing differentiable root labels:
\[
 M_k(t)=\sum_{j=1}^m\lambda_j(t)^k
       =\frac1{2\pi i}\int s^k\frac{p_t'(s)}{p_t(s)}\,ds,
       \qquad k\geq1.
\]
The contour integrands are right differentiable and their denominators
are bounded away from zero, so the power sums are right differentiable
with \(M_k(t)=O(t)\). Newton's identities transfer this differentiability
to the coefficients of \(C_t\).

Write \(\lambda_j=a_j+ib_j\).  The roots lie in the closed lower
half-plane, so \(b_j\leq0\), and
\[
 \sum_j|b_j|=-\Im M_1(t)=O(t),\qquad
 \sum_jb_j^2=O(t^2).
\]
Combining these estimates with
\[
 \sum_ja_j^2=\Re M_2(t)+\sum_jb_j^2=O(t)
\]
gives
\[
 \Im M_2(t)=2\sum_ja_jb_j=O(t^{3/2})=o(t),
\]
and, for \(k\geq3\),
\[
 |M_k(t)|
 \leq\sum_j|\lambda_j(t)|^k
 \leq\left(\sum_j|\lambda_j(t)|^2\right)^{k/2}
 =O(t^{k/2})=o(t).
\]
The second inequality follows by bounding each \(|\lambda_j(t)|^{k-2}\)
by \(\bigl(\sum_{\ell=1}^m|\lambda_\ell(t)|^2\bigr)^{(k-2)/2}\).
Writing
\(C_t=s^m+\gamma_1(t)s^{m-1}+\cdots+\gamma_m(t)\), we can differentiate
Newton's identities at zero, where all \(\gamma_j\) and \(M_j\)
vanish, to obtain \(\gamma_j'(0)=-M_j'(0)/j\). Thus the coefficients
below \(s^{m-2}\) have zero first variation, while
\(\Im\gamma_1'(0)\ge0\) follows from \(\Im M_1(t)\le0\).
By the estimate on \(\Im M_2\), \(M_2'(0)\) is real, and
\[
 \Re M_2'(0)=\lim_{t\downarrow0}
       \frac{\sum_ja_j(t)^2-\sum_jb_j(t)^2}{t}\geq0.
\]
The required sign of \(c_2=\gamma_2'(0)\) follows; when the roots
are real, \(M_1\) and hence \(c_1\) are real as well.
\end{proof}

\begin{proposition}
\label{gen:contact-necessity}
Suppose \(A\in\operatorname{End}_{\C}(V_\kappa^\C)\) generates a
complex stability-preserving semigroup.  Then \(A\) has canonical
order at most two, every second-order canonical coefficient is real
and nonpositive on \(\R^d\), and
\(\Im P_{e_i}\geq0\) on \(\R^d\).
If \(A\) is real and its semigroup is only assumed to preserve real
stability, the order bound and the nonpositivity of its
second-order coefficients still follow.
\end{proposition}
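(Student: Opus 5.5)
The plan is to apply Lemma~\ref{gen:cluster-moments} to the curve \(p_t=e^{tA}f\), where \(f\) is a product polynomial with a root cluster at a chosen real point, and then read off the canonical coefficients of \(A\) from the first variation \(Af\). First I fix a real point \(x\in\R^d\). By translation in each variable (which preserves \(\HH\)-stability and only conjugates \(A\) by a real shift), I may assume \(x=0\). For a multi-index \(\alpha\le\kappa\) I take
\[
f(z)=\prod_i z_i^{\alpha_i}\,(z_i-y_i)^{\kappa_i-\alpha_i},
\]
with generic real \(y_i\ne0\). This \(f\) is real stable, and \(e^{tA}f\) is stable (or zero; for small \(t\) it is nonzero) with coefficients differentiable in \(t\), and \(\frac{d}{dt}e^{tA}f|_{0}=Af\). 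Next I restrict to a line \(z=s\,v+w\), with \(v\in\R_{>0}^d\) (or more generally \(v\in\R_{\ge0}^d\)) and \(w\in\HH^d\) or real. The restriction of a stable polynomial to such a line, viewed as a polynomial in \(s\), has no zeros in \(\HH\). Taking \(v\) along the coordinates in the support of \(\alpha\) and setting the remaining variables to fixed real values gives a univariate curve \(p_t(s)\) with \(p_0(s)=s^{|\alpha|}g(s)\) and \(g(0)\ne0\).

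Lemma~\ref{gen:cluster-moments} says that the cluster factor of \(p_t\) has first variation with only the \(s^{m-1}\) and \(s^{m-2}\) terms, where \(m=|\alpha|\). Since \(p_t=C_tG_t\), we get \(\dot p=\dot C g+s^m\dot G\). So the coefficients of \(s^0,\dots,s^{m-3}\) in the restriction of \(Af\) vanish, the \(s^{m-2}\) coefficient is \(c_2g(0)\) with \(c_2\le 0\), and the \(s^{m-1}\) coefficient determines \(c_1\) modulo known terms. On the other hand, computing \(Af\) at \(z\) near \(0\) from the canonical representation, the lowest-order contribution in \(s\) of \(P_\beta\partial^\beta f\) is \(P_\beta(0)\,(\alpha)_\beta\, v^{\alpha-\beta}s^{m-|\beta|}\,\tilde g\) for \(\beta\le\alpha\). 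Here \(\tilde g\) is the nonzero cofactor value, and the \(\beta\not\le\alpha\) terms are of order \(s^{\ge m-|\beta|+\ldots}\) and mix in only through higher powers.

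I then argue by downward induction on the order, using the largest \(|\beta|\ge3\) with \(P_\beta\not\equiv0\). Choosing \(\alpha=\beta\) and varying \(v\in\R^{d}_{>0}\) and the base point \(x\) shows that the coefficient of \(s^{m-|\beta|}\) is a nonzero multiple of \(\sum_{|\gamma|=|\beta|,\gamma\le\alpha}P_\gamma(x)(\alpha)_\gamma v^{\alpha-\gamma}\), which must vanish. Polynomial identity in \(v\) then isolates \(P_\beta(x)=0\) for all real \(x\), hence \(P_\beta\equiv0\). For \(|\beta|=2\) the same computation gives that \(\sum_{|\gamma|=2}P_\gamma(x)\gamma!\,\ldots\) is proportional to \(c_2\le 0\), with a positive factor once \(g(0)\) and \(\tilde g\) are matched. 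Taking \(\alpha=2e_i\) yields the statement that \(P_{2e_i}(x)\) is real and \(\le0\). Taking \(\alpha=e_i+e_j\) with \(v=(1,\lambda)\) gives a quadratic form in \(\lambda\) whose sign condition includes \(P_{e_i+e_j}(x)\le0\) (and reality), after the unary terms are removed by choosing \(\kappa\)-slots with \(\alpha_i=1\). Finally, \(m=1\) with \(\alpha=e_i\) gives \(\Im P_{e_i}(x)\ge0\), using \(\Im c_1\ge0\). In the real-stability case the same argument works with real rooted restrictions, but only the order bound and the sign of \(c_2\) are used.

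I expect the main obstacle to be the bookkeeping in the \(s\)-expansion. Lower-order canonical terms \(P_\beta\) with \(\beta\le\alpha\) contribute to the same powers of \(s\) only through \(P_\beta(sv)\), so they appear at higher powers, and I must check that they never pollute the \(s^{m-2}\) or lower coefficients. Showing that the sign of \(c_2\) transfers to each individual coefficient \(P_{e_i+e_j}\), rather than to a combination, may also need the freedom to place the cluster only in the variables \(i,j\) with multiplicity one each, together with sign normalization of the cofactor \(g(0)\) by the choice of \(y_i\).
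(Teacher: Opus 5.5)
Your strategy is the paper's: apply \(e^{tA}\) to a product polynomial with a multiple root at a real point, restrict to a line in a positive real direction, and read off canonical coefficients from the constant term of the first variation given by Lemma~\ref{gen:cluster-moments}. The execution, however, contains a false bookkeeping claim caused by your extra cofactor \((z_i-y_i)^{\kappa_i-\alpha_i}\). With that cofactor, \(\partial^\beta f(x)\) is in general nonzero for every \(\beta\ge\alpha\). So the terms with \(\beta\not\le\alpha\) do not enter only at higher powers of \(s\): those with \(\beta>\alpha\) enter exactly at \(s^0=s^{m-|\alpha|}\), the coefficient you use. Indeed, that coefficient is just \((Af)(x)=\sum_{\beta\ge\alpha}P_\beta(x)\partial^\beta f(x)\).

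For \(|\alpha|\ge3\), your downward induction (take \(|\alpha|\) equal to the maximal order) kills these extra terms, and for \(|\alpha|=2\) the order bound does. For \(\alpha=e_i\), however, the identity you actually get is
\[
 c_1v_i\tilde g=P_{e_i}(x)\tilde g+P_{2e_i}(x)\,\partial_i^2f(x)+\sum_{j\ne i}P_{e_i+e_j}(x)\,\partial_i\partial_jf(x),
\]
where the middle term is present only when \(\kappa_i\ge2\). To conclude \(\Im P_{e_i}(x)\ge0\) you need an observation missing from your proposal: the second-order coefficients are already known to be real, and \(\tilde g\) and the derivatives of \(f\) at the base point are real.

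The ``quadratic form in \(\lambda\)'' for the mixed coefficient does not arise. The \(s^0\) coefficient \((Af)(x)\) is independent of \(v\). For \(\alpha=e_i+e_j\), every derivative of order at most two other than \(\partial_i\partial_j\) leaves one of the simple vanishing factors of \(f\) at \(x\). So only \(\beta=e_i+e_j\) survives, giving \(P_{e_i+e_j}(x)=c_2v_iv_j\le0\) directly. This matters, because a sign condition on a quadratic in \(\lambda\) would not by itself isolate a cross coefficient. Nor is a sign normalization of \(g(0)\) needed, since \(\tilde g\) cancels from both sides.

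The paper avoids all of this by dropping the cofactor. With \(f_{\alpha,y}=\prod_i(z_i-y_i)^{\alpha_i}\), every \(\partial^\beta f_{\alpha,y}\) vanishes at \(y\) except for \(\beta=\alpha\), so \((Af_{\alpha,y})(y)=\alpha!P_\alpha(y)\) exactly. The restriction to \(y+sv\) is the pure monomial \(v^\alpha s^{|\alpha|}\) with constant cofactor. The three conclusions then follow independently for each \(\alpha\), with no translation, no induction, and no dependence of the first-order step on the second-order one.
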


\begin{proof}
For \(0\neq\alpha\leq\kappa\) and \(y\in\R^d\), use the stable
product
\[
 f_{\alpha,y}(z)=\prod_i(z_i-y_i)^{\alpha_i}.
\]
In the canonical representation, every derivative at \(z=y\)
vanishes except for \(\partial^\alpha\), and therefore
\begin{equation}
 (Af_{\alpha,y})(y)=\alpha!P_\alpha(y).
 \label{gen:eq-contact-isolation}
\end{equation}
For \(v\in\R_{>0}^d\), restriction of \(e^{tA}f_{\alpha,y}\)
to the line \(z=y+sv\) gives a polynomial curve with initial value
\[
 \left(\prod_iv_i^{\alpha_i}\right)s^{|\alpha|}.
\]
This curve has differentiable coefficients and degree at most
\(|\kappa|\). Since \(e^{tA}\) is invertible, the multivariate
output is nonzero. Its restriction is nonzero and has no root in
\(\HH\): for \(s\in\HH\), every coordinate
\(y_i+sv_i\) lies in \(\HH\), where the output cannot vanish.

Lemma~\ref{gen:cluster-moments} therefore applies to the local
factorization \(C_t(s)G_t(s)\), in which the constant first
variation is
\[
 G_0(0)\left.\frac{d}{dt}C_t(0)\right|_{t=0}
\]
because \(C_0(0)=0\), whereas
\(G_0(0)=\prod_iv_i^{\alpha_i}>0\). For \(|\alpha|\ge3\), the
variation is zero by the lemma, so
\eqref{gen:eq-contact-isolation} implies \(P_\alpha(y)=0\);
for \(|\alpha|=2\), the same identity shows that \(P_\alpha(y)\)
is real and nonpositive, and for \(\alpha=e_i\) that
\(\Im P_\alpha(y)\ge0\).
Since \(y\) is arbitrary in \(\R^d\), the higher-order coefficients
vanish identically and each second-order coefficient is a real
polynomial. If the semigroup is real
and preserves real stability, the same restrictions are real rooted,
so the real version of the root-cluster lemma gives the stated order
and sign conditions.
\end{proof}

The order bound thus comes from two estimates on the colliding roots:
the imaginary parts have total size \(O(t)\), and the squared real
parts have total size \(O(t)\), forcing every higher moment to be
\(o(t)\). The dependence of the surviving coefficients on the individual
variables is an algebraic consequence of preserving the degree box.

\subsection{Second-order operators on a degree box}
\label{gen:subsec-box}

The algebraic restrictions imposed by the degree bounds hold over
either \(\R\) or \(\C\), independently of the stability inequalities.

Differential operators on finite polynomial modules are classically
described by enveloping algebras of \(\mathfrak{sl}_2\)
\cite{Turbiner92,Turbiner94}. We need the degree-two part of this
description, including coordinates of degree one.

\begin{proposition}
\label{gen:box-decomposition}
Every box endomorphism of canonical order at most two is a linear
combination of \(\Id\), the operators \(J_i^-,J_i^0,J_i^+\), and their
products of length two, including products at the same site.
Every endomorphism of order at most one has a unique expression
\begin{equation}
 c\Id+\sum_{i=1}^dJ_{i,\kappa_i}(q_i),
 \qquad c\in\mathbb F,\quad q_i\in\mathbb F[z_i]_{\leq2}.
 \label{gen:eq-first-order}
\end{equation}
Consequently the second-order coefficients have the variable dependence
and degree bounds stated in Theorem~\ref{gen:main-real}.  The vector space
of endomorphisms of order at most two has dimension
\begin{equation}
 1+3d+5\#\{i:\kappa_i\geq2\}+9\binom d2.
 \label{gen:eq-order-two-dimension}
\end{equation}
\end{proposition}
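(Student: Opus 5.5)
The plan is to determine, order by order, exactly which canonical coefficients are compatible with \(A(V_\kappa)\subseteq V_\kappa\). We then match the resulting dimension count against the span of the \(\mathfrak{sl}_2\) words.

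\emph{Step 1: degree constraints on the coefficients.} Write \(A=P_0+\sum_iP_{e_i}\partial_i+\sum_{|\alpha|=2}P_\alpha\partial^\alpha\) in canonical form. Apply \(A\) to monomials in increasing order, exactly as in the recursion of Lemma~\ref{gen:canonical}.
\begin{itemize}
\item From \(A1=P_0\) we get \(P_0\in V_\kappa\).
\item From \(Az_i=P_0z_i+P_{e_i}\in V_\kappa\) and \(Az_i^{\kappa_i}\in V_\kappa\), the \(z_j\)-degree of \(P_{e_i}\) for \(j\ne i\) must be absorbed by \(P_0\).
\end{itemize}
The cleanest route is to use the identity \(A(z_i^{\kappa_i}f)\in V_\kappa\) for all \(f\) in the box with \(\deg_{z_i}f=0\). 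Expanding in powers of \(z_i\) then shows:
\begin{itemize}
\item the part of \(P_{e_i}\) of \(z_i\)-degree exceeding one must equal \(a z_i^2\) with \(a\) constant;
\item this part must be compensated in \(P_0\) by \(-\kappa_i a z_i\).
\end{itemize}
The same argument applies to the \(\mathfrak{sl}_2\) combinations at second order. There one tests \(A\) on \(z_i^{a}z_j^{b}\) with \(a,b\) at their extremes \(0,1,\kappa_i-1,\kappa_i\).

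I would organize this by the \(\mathbb Z^d\)-grading \(\deg\): an operator \(z^\beta\partial^\alpha\) shifts multidegree by \(\beta-\alpha\). A top-degree argument in each variable separately bounds the \(z_i\)-degree of \(P_\alpha\) by \(\alpha_i+\alpha_i\), and shows that \(P_\alpha\) is independent of variables \(z_j\) with \(\alpha_j=0\). The latter holds because such dependence would shift \(z_j\)-degree with no compensating derivative, once lower-order terms are peeled off.

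\emph{Step 2: spanning by \(\mathfrak{sl}_2\) words.} Show that each allowed principal symbol is the principal part of a word in \(J^\pm,J^0\):
\begin{itemize}
\item \(z_i^{a}z_j^{b}\partial_i\partial_j\) with \(a,b\le2\) is the principal part of \(J_i^{(a)}J_j^{(b)}\), where \(J^{(0)}=J^-\), \(J^{(1)}=J^0\), \(J^{(2)}=J^+\);
\item \(z_i^{c}\partial_i^2\) with \(c\le4\) is the principal part of a product \(J_i^{(a)}J_i^{(b)}\) with \(a+b=c\), since \(J^{(a)}J^{(b)}\) has principal part \(z^{a+b}\partial^2\).
\end{itemize}
Subtracting these words reduces to order at most one. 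For order \(\le1\), subtract \(\sum_iJ_{i,\kappa_i}(q_i)\) with \(q_i\) read off from the \(z_i\)-dependence of \(P_{e_i}\), which by Step 1 has degree \(\le2\). What remains is an order-zero box endomorphism, i.e.\ multiplication by \(P_0\). Since \(P_0\cdot z^\kappa\in V_\kappa\) forces \(P_0\) to be constant, this gives \(c\Id\) and proves \eqref{gen:eq-first-order}.

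Uniqueness in \eqref{gen:eq-first-order} is immediate: \(q_i\) is recovered from the coefficient of \(\partial_i\), and then \(c\) is recovered from \(A1\).

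\emph{Step 3: the dimension count.} The count follows from the symbol spaces:
\begin{itemize}
\item order zero contributes \(1\);
\item order one contributes \(3d\);
\item \(\partial_i^2\) symbols contribute \(5\) each, but only when \(\kappa_i\ge2\), since \(\partial_i^2=0\) otherwise;
\item mixed symbols contribute \(9\) per pair.
\end{itemize}
By Step 2, the map from the order-\(\le2\) box operators to the symbol data is surjective onto these spaces. By Step 1 it is injective onto the graded pieces, so the total is \eqref{gen:eq-order-two-dimension}.

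The main obstacle is Step 1: proving rigorously that \(P_{2e_i}\) depends only on \(z_i\) with degree \(\le4\), and that \(P_{e_i+e_j}\) has bidegree \(\le(2,2)\) in \(z_i,z_j\) alone. The difficulty is that lower-order coefficients might a priori cancel excess degree. I would handle it by first testing on \(z_i^{\kappa_i}z_j^{\kappa_j}\cdot(\text{monomial})\) and comparing top coefficients, which isolates the principal symbol. The case \(\kappa_i=1\) needs care only for the absence of \(\partial_i^2\).
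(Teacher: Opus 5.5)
Your architecture is sound: constrain the principal symbols, realize each admissible symbol as the principal part of an \(\mathfrak{sl}_2\) word, subtract to lower the order, and count by rank--nullity for the principal-coefficient map, whose kernel is the \((1+3d)\)-dimensional order-at-most-one space. The paper instead realizes each shift polynomial directly by words such as \(J_i^\pm E_j\), but that difference is cosmetic.

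The gap is Step~1. It carries essentially all the content of the proposition, and you only assert it. The mechanism you sketch does not work as stated. For a single test monomial \(z^x\), the coefficient of \(z^{x+\nu}\) in \(Az^x\) is \(\sum_\alpha p_{\alpha,\alpha+\nu}(x)_\alpha\), where \(p_{\alpha,\beta}\) is the \(z^\beta\)-coefficient of \(P_\alpha\). This sum mixes \(P_0\), the \(P_{e_k}\), and the second-order coefficients at the same shift \(\nu\). So comparing top coefficients of individual outputs does not by itself isolate the principal symbol; this is exactly the cancellation you worry about.

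Your general claims also fail below top order. The operator \(J_i^+J_j^+\) has \(P_{e_i}=-\kappa_jz_i^2z_j\) and \(P_0=\kappa_i\kappa_jz_iz_j\), and for \(\kappa_i\ge2\) the operator \((J_i^+)^2\) has \(P_{e_i}=(2-2\kappa_i)z_i^3\). Hence ``\(\deg_{z_i}P_\alpha\le2\alpha_i\), and \(P_\alpha\) is independent of \(z_j\) when \(\alpha_j=0\)'' holds only for the top-order coefficients, and any proof must separate orders.

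The missing idea is a device that annihilates the lower-order coefficients. The paper treats the output coefficient as a polynomial \(F_\nu(x)=\sum_\alpha p_{\alpha,\alpha+\nu}(x)_\alpha\) in the input exponent, whose degree-two part is the principal data. The argument then runs as follows:
\begin{enumerate}
\item Box preservation makes \(F_\nu\) vanish on the overflow faces.
\item Grid interpolation, using \(\deg_{x_i}F_\nu\le\kappa_i\), upgrades this to divisibility by \(\prod_i(x_i)_{\nu_i^-}(\kappa_i-x_i)_{\nu_i^+}\).
\item Comparing degree-two parts forces \(|\nu_i|\le\alpha_i\) whenever a principal coefficient \(p_{\alpha,\alpha+\nu}\) is nonzero.
\end{enumerate}

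A shortcut closer to your test monomials with exponents \(\kappa_i-1,\kappa_i\) is to take commutators with multiplication operators. For canonical order at most two, \([[A,z_i],z_j]=P_{e_i+e_j}\) for \(i\ne j\), and \([[A,z_i],z_i]=2P_{2e_i}\). So with \(f=z_i^{\kappa_i-1}z_j^{\kappa_j-1}\prod_{k\ne i,j}z_k^{\kappa_k}\),
\[
 P_{e_i+e_j}f=A(z_iz_jf)-z_iA(z_jf)-z_jA(z_if)+z_iz_jAf
 \in V_\kappa+z_iV_\kappa+z_jV_\kappa+z_iz_jV_\kappa .
\]
This forces \(\deg_{z_i}P_{e_i+e_j}\le2\) and \(\deg_{z_j}P_{e_i+e_j}\le2\), and independence of every other variable. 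The second difference with \(f=z_i^{\kappa_i-2}\prod_{k\ne i}z_k^{\kappa_k}\) treats \(P_{2e_i}\) when \(\kappa_i\ge2\). The identity \([A,z_i]=P_{e_i}\) for order-one operators treats the first-order reduction. Your list of extreme exponents contains exactly these inputs, but until you form these differences, the lower-order coefficients are not eliminated and Step~1 is not proved.
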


\begin{proof}
Write \(p_{\alpha,\beta}\) for the coefficient of \(z^\beta\) in
\(P_\alpha\), and group terms according to the integer monomial shift
\(\nu=\beta-\alpha\).  For input exponent
\(x\in\prod_i\{0,\ldots,\kappa_i\}\), the coefficient of \(z^{x+\nu}\) is
the restriction to this grid of
\begin{equation}
 F_\nu(x)=
 \sum_{\substack{|\alpha|\leq2,\ \alpha\leq\kappa\\
                  \alpha+\nu\geq0}}
 p_{\alpha,\alpha+\nu}(x)_\alpha.
 \label{gen:eq-shift-polynomial}
\end{equation}
Although it was obtained from coefficients on the input grid,
\eqref{gen:eq-shift-polynomial} defines a polynomial for every \(x\),
of total degree at most two and coordinate degrees at most \(\kappa_i\).

When \(\nu_i<0\), the condition \(\alpha_i\ge-\nu_i\) in every
summand makes \(F_\nu\) divisible by \((x_i)_{-\nu_i}\), or makes
the sum empty if \(-\nu_i>\kappa_i\). It follows that \(F_\nu\)
vanishes at any grid point that would give a negative output exponent.

If \(\nu_i>0\), box preservation gives vanishing on every overflow face
\[
 x_i\in\{\kappa_i-\nu_i+1,\ldots,\kappa_i\}
          \cap\{0,\ldots,\kappa_i\}.
\]
Fix one of these values of \(x_i\). At every point of the remaining
tensor grid, \(F_\nu\) vanishes: a negative output coordinate is
covered by the falling-factorial argument, and any other output lies
outside the degree box. The coordinate degree bounds permit
tensor-product interpolation, so the restriction to the entire face
is the zero polynomial and its defining linear factor divides
\(F_\nu\). If \(\nu_i>\kappa_i\), this gives \(\kappa_i+1\)
distinct zeros in \(x_i\), forcing \(F_\nu=0\).

For every remaining nonzero shift polynomial, these factors give
\begin{equation}
 F_\nu=D_\nu G_\nu,\qquad
 D_\nu(x)=\prod_i(x_i)_{\nu_i^-}
                       (\kappa_i-x_i)_{\nu_i^+},
 \qquad
 \deg G_\nu\leq2-|\nu|_1.
 \label{gen:eq-shift-factor}
\end{equation}
Since the displayed linear factors are pairwise coprime, their
product divides \(F_\nu\), and a nonzero shift must satisfy
\(|\nu|_1\le2\).

Put \(E_i=z_i\partial_i\).  For \(\nu=0\), every possible \(F_\nu\)
is realized by a combination of
\(\Id,E_i,E_i^2,E_iE_j\).
For \(\nu=\pm e_i\), the required constant or affine factor \(G_\nu\)
is realized by \(J_i^\pm\) and \(J_i^\pm E_j\).
For \(\nu=\pm2e_i\) use \((J_i^\pm)^2\); these shifts can be nonzero
only if \(\kappa_i\geq2\).
For \(\nu=\pm e_i\pm e_j\), with \(i\neq j\), use the corresponding
products \(J_i^\pm J_j^\pm\).
The sign of the raising coefficient
\(x_i-\kappa_i=-(\kappa_i-x_i)\) is absorbed in the scalar multiplier.
This accounts for every possibility in \eqref{gen:eq-shift-factor}
on the input grid, and hence for every matrix entry of \(A\).

The same argument covers a binary site: \(F_\nu\) then has degree
at most one in \(x_i\), with the corresponding dependencies or zeros
among the listed operator products. At total degree at most one,
the construction leaves only a scalar and the three sitewise
operators in each coordinate. Their independence follows from the
separate raising and lowering shifts and, on the diagonal, from
uniqueness of an affine polynomial on the full tensor grid; hence
\eqref{gen:eq-first-order} is unique.

The first-order coefficients of \(J_i^-,J_i^0,J_i^+\) span
\(1,z_i,z_i^2\).  Products at one site therefore have principal
coefficients spanning all polynomials of degree at most four.
Products at two different sites span all biforms of bidegree at most
\((2,2)\).  At a site with \(\kappa_i\geq2\), these are canonical
second-order coefficients; at a binary site that derivative is absent.
The principal-coefficient map is therefore onto, with kernel the
order-at-most-one space of dimension \(1+3d\). Adding five dimensions
for each admissible one-site principal coefficient and nine for
each pair gives \eqref{gen:eq-order-two-dimension}.
\end{proof}

Propositions~\ref{gen:contact-necessity} and~\ref{gen:box-decomposition}
prove the necessity of the order and principal-sign conditions in
Theorem~\ref{gen:main-real}. We next construct preserving generators
with any prescribed principal coefficients satisfying those conditions.

\subsection{Binary apolar generators and sums of squares}
\label{gen:subsec-binary}

On two multiaffine variables, stability is described by a single
quadratic inequality. For
\(f=f_{00}+f_{10}s+f_{01}t+f_{11}st\in V_{(1,1)}^\R\), put
\[
 \Delta(f)=f_{10}f_{01}-f_{00}f_{11}.
\]
A nonzero real \(f\) is stable if and only if \(\Delta(f)\geq0\).
Indeed, if \(f=a+bs+ct+dst\) and \(b+dt\neq0\), its zero in \(s\)
is \(-(a+ct)/(b+dt)\), whose imaginary part equals
\(-\Delta(f)\Im t/|b+dt|^2\).
The cases in which \(f\) is independent of \(s\) follow directly.

\begin{lemma}
\label{gen:sos}
Every nonnegative real univariate polynomial of degree at most four is a sum
of squares of real polynomials of degree at most two.
Every nonnegative real polynomial of bidegree at most \((2,2)\)
is a sum of squares of real bilinear polynomials.
\end{lemma}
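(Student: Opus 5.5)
The plan is to reduce both assertions to the one-variable Fej\'er--Riesz factorization: first for scalar polynomials, then for $2\times2$ matrix polynomials.

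\emph{Univariate case.} Let $p\ge0$ on $\R$ with $\deg p\le4$, and assume $p\ne0$.
\begin{itemize}
\item Nonnegativity forces the leading coefficient $c$ to be positive and the degree to be even.
\item Every real root has even multiplicity, and the nonreal roots occur in conjugate pairs.
\item Choosing one root from each conjugate pair, and half of each real root with multiplicity, we write $p(s)=c\,|g(s)|^2$ for $s\in\R$. Here $g\in\C[s]$ and $\deg g=\tfrac12\deg p\le2$.
\item Consequently $p=(\sqrt c\,\Re g)^2+(\sqrt c\,\Im g)^2$, where $\Re g$ and $\Im g$ are the real polynomials formed from the real and imaginary parts of the coefficients of $g$. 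Both have degree at most two.
\end{itemize}
This gives a sum of two squares, as required.

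\emph{Bidegree $(2,2)$ case.} Let $f(s,t)\ge0$ on $\R^2$ with bidegree at most $(2,2)$. Expand in $t$ as
\[
 f(s,t)=a(s)t^2+2b(s)t+c(s),
\]
with $a,b,c$ real of degree at most two in $s$. Put
\[
 Q(s)=\begin{pmatrix}a(s)&b(s)\\ b(s)&c(s)\end{pmatrix},
 \qquad\text{so that}\qquad
 f(s,t)=(t,1)\,Q(s)\,(t,1)^{\mathsf T}.
\]

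The first step is to show that $Q(s)$ is positive semidefinite for every real $s$.
\begin{itemize}
\item For fixed $s$, the quadratic $t\mapsto f(s,t)$ is nonnegative on $\R$.
\item By homogenizing, the binary quadratic form $a(s)x^2+2b(s)xy+c(s)y^2$ is nonnegative on $\R^2$. The points with $y\ne0$ are covered by $t=x/y$, and the points $y=0$ follow by continuity.
\item This gives $a,c\ge0$ and $ac-b^2\ge0$ on $\R$.
\end{itemize}

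The second step is to factor $Q$. Here I would invoke the matrix Fej\'er--Riesz theorem of Jakubovi\v{c} and Rosenblum--Rovnyak, in its real-line form. It states that a positive semidefinite $m\times m$ matrix polynomial of degree $2k$ on $\R$ factors as
\[
 Q(s)=L(s)^*L(s),
\]
where $L$ has degree at most $k$. When $\deg Q<2$, the same statement holds after homogenizing the variable $s$, which keeps the degree bound $k\le1$.

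The third step is to read off the squares. Write the rows of $L(s)$ as $(\ell_{j1}(s),\ell_{j2}(s))$, with entries complex and affine in $s$. Then
\[
 f(s,t)=\sum_j\bigl|\ell_{j1}(s)\,t+\ell_{j2}(s)\bigr|^2 .
\]
Each $h_j(s,t)=\ell_{j1}(s)\,t+\ell_{j2}(s)$ has complex coefficients and bidegree at most $(1,1)$. Splitting each $h_j$ into its real and imaginary coefficient parts therefore expresses $f$ as a finite sum of squares of real bilinear polynomials.

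\emph{Main obstacle.} The key step is the degree-controlled matrix factorization. If I wanted a self-contained argument instead of citing it, I would argue by cases on $\det Q$.
\begin{itemize}
\item When $\det Q\equiv0$, we have $Q=vv^{\mathsf T}$ up to scalar factors. Since $a\ge0$, the univariate case applies to $a$, and this gives the factorization directly.
\item Otherwise, I would run an induction peeling off one factor. Pick a real or complex root $s_0$ of $\det Q$, choose a kernel vector $u$ of $Q(s_0)$, and extract a linear factor $(s-s_0)$ from $Q$ after a constant unitary change of basis. This lowers the degree while preserving semidefiniteness, in the spirit of the scalar root-pairing argument above.
\end{itemize}
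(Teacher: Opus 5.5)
Your proof is correct. The univariate part is the paper's argument. For bidegree $(2,2)$ you take a genuinely different route.

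The paper homogenizes $R$ to the ternary quartic $\widetilde R(x,y,u)=u^4R(x/u,y/u)$ and invokes Hilbert's theorem that nonnegative ternary quartics are sums of squares of quadratic forms. It then uses the vanishing of the $x^4$ and $y^4$ coefficients to force each quadratic form into $\operatorname{span}(xy,xu,yu,u^2)$, which dehomogenizes to bilinear squares.

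You instead observe that a polynomial of degree at most two in $t$ has a unique symmetric Gram matrix $Q(s)$ in the basis $(t,1)$. That matrix is therefore automatically positive semidefinite for every real $s$. You then factor $Q$ by the real-line matrix Fej\'er--Riesz theorem, so the bilinear shape comes directly from $\deg L\le1$ rather than from a coefficient-vanishing argument. The degree-controlled factorization you cite holds with no hypothesis on the leading coefficient; for instance, it follows from the operator Fej\'er--Riesz theorem on the circle via the substitution $z=(i-s)/(i+s)$. So the citation carries the proof.

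Comparing the two routes:
\begin{itemize}
\item Hilbert's theorem is tied to total degree four.
\item Your route works verbatim for any $f\ge0$ with $\deg_t f\le2$ and $\deg_s f\le 2k$, giving squares of bidegree at most $(k,1)$. There homogenization produces ternary forms of degree above four, and Hilbert's theorem is unavailable.
\item Both routes rest on a classical factorization theorem of comparable depth.
\end{itemize}

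Do not rely on your self-contained fallback as written; it has several gaps.
\begin{itemize}
\item Peeling off roots of $\det Q$ stalls when $\det Q$ is a nonzero constant while $Q$ still has degree two. An example is $Q(s)=\left(\begin{smallmatrix}1&s\\ s&1+s^2\end{smallmatrix}\right)$, i.e.\ $f=(t+s)^2+1$, which needs a separate unimodular reduction.
\item At a nonreal root $s_0$, one must extract $s-s_0$ on one side and $s-\bar s_0$ on the other to preserve Hermitian symmetry.
\item The sketch does not track the bound $\deg L\le1$ through the induction.
\item The rank-one case $ac=b^2$ needs a short gcd analysis, writing $Q=d\,ww^{\mathsf T}$ with $d\ge0$, not merely the univariate result applied to $a$.
\end{itemize}
Either keep the citation or prove the $2\times2$, degree-two factorization completely.
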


\begin{proof}
A nonzero nonnegative univariate polynomial is a positive constant
times a product of squares of real linear factors and positive
quadratic factors. Each factor is a sum of at most two squares of
linear polynomials, so repeated application of the identity
\[
 (a^2+b^2)(c^2+d^2)=(ac-bd)^2+(ad+bc)^2
\]
gives a sum of squares whose summands have degree at most half
that of the original polynomial, and hence at most two. The zero
polynomial is covered by the empty sum.

For the second assertion let \(R(s,t)\geq0\) and homogenize it to
\[
 \widetilde R(x,y,u)=u^4R(x/u,y/u).
\]
By the bidegree bound, \(\widetilde R\) is a ternary quartic,
nonnegative for \(u\ne0\) and hence everywhere by continuity.
By Hilbert's theorem, which also applies to degenerate nonnegative
forms, we can write it as a sum of squares of real quadratic forms
\cite{PRSS04}.
Its zero coefficients at \(x^4\) and \(y^4\) are sums of squared
coefficients of those forms, so every form lies in the span of
\(xy,xu,yu,u^2\); setting \(u=1\) gives the required bilinear
squares.
\end{proof}

\begin{lemma}
\label{gen:binary-primitive}
For a real bilinear polynomial \(F=a+bs+ct+dst\), define
\[
 \ell_F(f)=d f_{00}-c f_{10}-b f_{01}+a f_{11},
 \qquad
 B_Ff=-F\ell_F(f).
\]
The full mixed canonical coefficient of \(B_F\) is \(-F^2\).
Moreover
\begin{align}
 B_F^2&=2\Delta(F)B_F,
 \label{gen:eq-apolar-square}\\
 \Delta(f+kB_Ff)
 &=\Delta(f)+k\bigl(1+k\Delta(F)\bigr)\ell_F(f)^2
 \qquad(k\in\R).
 \label{gen:eq-apolar-discriminant}
\end{align}
For all \(t\geq0\), \(e^{tB_F}\) preserves real stability.
\end{lemma}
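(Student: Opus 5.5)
The proof is a direct computation in the basis \(1,s,t,st\), followed by an exponential formula.

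\emph{Mixed coefficient.} The functional \(\ell_F\) is the apolar pairing
\[
 \ell_F(f)=\partial_s\partial_t(F\cdot)\ \text{applied in a dual sense};
\]
concretely, on \(f=st\) it gives \(a\), and on the lower monomials it gives \(d,-c,-b\). In the canonical representation \(B_F=\sum P_\alpha\partial^\alpha\), Lemma~\ref{gen:canonical} gives
\[
 P_{(1,1)}=B_F(st)-P_0st-P_{e_1}t-P_{e_2}s .
\]
Evaluating on \(1,s,t\) gives
\[
 P_0=-Fd,\qquad P_{e_1}=-F(-c)-P_0s=F(c+ds),\qquad P_{e_2}=F(b+dt).
\]
Hence
\[
 P_{(1,1)}=-Fa-(-Fd)st-F(c+ds)t-F(b+dt)s=-F\bigl(a+bs+ct+dst\bigr)=-F^2 .
\]
The apparent sign slip in \(dst\) cancels once the terms are collected.

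\emph{Squaring identity \eqref{gen:eq-apolar-square}.} Since \(B_F^2f=F\ell_F(F)\ell_F(f)\), it suffices to compute
\[
 \ell_F(F)=da-cb-bc+ad=-2\Delta(F).
\]
This gives \(B_F^2=-\ell_F(F)B_F=2\Delta(F)B_F\).

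\emph{Discriminant identity \eqref{gen:eq-apolar-discriminant}.} The quantity \(\Delta\) is a quadratic form with polarization
\[
 \Delta(f,g)=\tfrac12\bigl(f_{10}g_{01}+f_{01}g_{10}-f_{00}g_{11}-f_{11}g_{00}\bigr).
\]
One checks that \(2\Delta(f,F)=-\ell_F(f)\): the coefficient of \(f_{10}\) is \(c\) against \(-c\), and the others match similarly. Therefore
\[
 \Delta(f-kF\ell)=\Delta(f)-2k\ell\,\Delta(f,F)+k^2\ell^2\Delta(F)=\Delta(f)+k\ell^2+k^2\Delta(F)\ell^2 ,
\]
as claimed, with \(\ell=\ell_F(f)\).

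\emph{Semigroup.} By \eqref{gen:eq-apolar-square},
\[
 e^{tB_F}=\Id+k(t)B_F,\qquad k(t)=\frac{e^{2\Delta(F)t}-1}{2\Delta(F)},
\]
with \(k(t)=t\) when \(\Delta(F)=0\). We need \(k\ge0\) and \(1+k\Delta(F)\ge0\). The first is clear. For the second,
\[
 1+k\Delta(F)=\frac{1+e^{2\Delta(F) t}}{2}>0 .
\]
For real stable \(f\), \eqref{gen:eq-apolar-discriminant} then gives \(\Delta(e^{tB_F}f)\ge\Delta(f)\ge0\). By the stated criterion that a nonzero real bilinear \(f\) is stable iff \(\Delta(f)\ge0\), the output is stable or zero. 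In fact it is nonzero, since \(e^{tB_F}\) is invertible.

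This covers real stability on \(V_{(1,1)}\), which is what the statement asserts. The only delicate point is the sign bookkeeping in \(2\Delta(f,F)=-\ell_F(f)\). Everything else is routine, so there is no genuine obstacle.
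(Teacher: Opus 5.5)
Your proof is correct and follows the paper's argument essentially step by step: you read off the mixed coefficient from the values of \(B_F\) on \(1,s,t,st\), use \(\ell_F(F)=-2\Delta(F)\) and the polarization identity \(2\Delta(f,F)=-\ell_F(f)\), and use the closed form \(e^{tB_F}=\Id+k_tB_F\) with \(k_t\ge0\) and \(1+k_t\Delta(F)>0\), together with invertibility. The opening remark that \(\ell_F\) is \(\partial_s\partial_t(F\cdot)\) ``in a dual sense'' is not needed and can be dropped, since the explicit computation that follows is what carries the argument.
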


\begin{proof}
For any endomorphism \(B\) of the binary box, its mixed coefficient is
\begin{equation}
 P_B(s,t)=B(st)-sB(t)-tB(s)+stB(1).
 \label{gen:eq-binary-coefficient}
\end{equation}
Substitution of the four values of \(B_F\) yields \(P_{B_F}=-F^2\),
while \(\ell_F(F)=2ad-2bc=-2\Delta(F)\) gives
\eqref{gen:eq-apolar-square}. The remaining identity follows by
expanding \(\Delta(f-kF\ell_F(f))\) and using the polarization
identity \(D\Delta_f[F]=-\ell_F(f)\).

If \(D_F=\Delta(F)\), equation~\eqref{gen:eq-apolar-square} gives
\[
 e^{tB_F}=\Id+k_tB_F,\qquad
 k_t=
 \begin{cases}
 (e^{2tD_F}-1)/(2D_F),&D_F\neq0,\\
 t,&D_F=0.
 \end{cases}
\]
For \(t\geq0\), \(k_t\geq0\) and
\[
 1+k_tD_F=\frac{1+e^{2tD_F}}2>0.
\]
It follows from \eqref{gen:eq-apolar-discriminant} that the
exponential preserves \(\Delta(f)\ge0\); since it is invertible,
the output is nonzero whenever the input is nonzero. The formulas remain
valid when \(F=0\) or \(D_F=0\).
\end{proof}

To place this construction inside a larger set of variables, we need
complete complex preservation. We deduce it from the symbol theorem
using the real semigroup just constructed.

\begin{lemma}
\label{gen:binary-completeness}
Let \(B\) be a real endomorphism of \(V_{(1,1)}^\R\).
Then \(e^{tB}\) preserves real stability for all \(t\geq0\) if and
only if its mixed coefficient \(P_B\) is nonpositive on \(\R^2\).
In this case every \(e^{tB}\) is a complete complex stability
preserver.
\end{lemma}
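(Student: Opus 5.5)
Necessity follows directly from Proposition~\ref{gen:contact-necessity}: if every \(e^{tB}\) preserves real stability, then the canonical second-order coefficient of \(B\) is nonpositive on \(\R^2\). On \(V_{(1,1)}\) the only second-order coefficient is the mixed one, and by \eqref{gen:eq-binary-coefficient} it equals \(P_B\).

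For sufficiency, assume \(P_B\le0\). Then \(-P_B\) is nonnegative of bidegree at most \((2,2)\), so Lemma~\ref{gen:sos} writes it as \(\sum_k F_k^2\) with real bilinear \(F_k\). Lemma~\ref{gen:binary-primitive} gives \(P_{B_{F_k}}=-F_k^2\). Hence \(B-\sum_k B_{F_k}\) has vanishing mixed coefficient, so it has order at most one. By \eqref{gen:eq-first-order} it is \(c\Id+J_{1,1}(q_1)+J_{2,1}(q_2)\) with real \(c\) and real \(q_i\) of degree at most two. Each real one-site operator \(J_{i,1}(q_i)\) generates a flow of real M\"obius substitutions \(z_i\mapsto(az_i+b)/(cz_i+d)\) with positive determinant, multiplied by the factor \((cz_i+d)\). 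These flows map \(\HH\) to itself, so they preserve stability, also with extra inert variables. The scalar is harmless. The same holds for each \(e^{tB_F}\): Lemma~\ref{gen:binary-primitive} covers real stability, but we need more. The Lie--Trotter formula
\[
 e^{tB}=\lim_{N\to\infty}\Bigl(\prod_k e^{tB_{F_k}/N}\,e^{t(B-\sum_kB_{F_k})/N}\Bigr)^N
\]
then expresses \(e^{tB}\) as a limit of products of preservers. Since \(e^{tB}\) is invertible, Hurwitz's theorem keeps the limit stable rather than identically zero. So it suffices to prove that each \(e^{tB_F}\), and each M\"obius flow, is a \emph{complete complex} preserver. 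Real stability preservation then follows in particular.

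For the M\"obius flows, completeness holds because the substitution acts on one variable only. For \(T=e^{tB_F}=\Id+k_tB_F\), I would use the finite-box symbol theorem in the form of Lemma~\ref{spec:kernel-criterion}, transported by the Cayley map. An invertible \(T\) on \(V_{(1,1)}\) is a complete complex stability preserver exactly when its symbol
\[
 T_z\bigl[(z_1+w_1)(z_2+w_2)\bigr]
\]
is \(\HH\)-stable in the four variables \(z,w\). The symbol criterion characterizes preservation including tensoring with the identity, so completeness is automatic once the symbol is stable. The main obstacle is this four-variable stability check, which Lemma~\ref{gen:binary-primitive} does not give directly. That lemma's discriminant identity \eqref{gen:eq-apolar-discriminant} controls \(T\) only on real multiaffine inputs in two variables.

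My first attempt at the check goes as follows. Fix \(w\in\HH^2\). The symbol becomes \(T\) applied to the complex stable polynomial \(g_w=(z_1+w_1)(z_2+w_2)\). I would then prove that \(T\) preserves complex stability of products of linear factors, using \eqref{gen:eq-apolar-discriminant} extended to complex \(f\). For a complex multiaffine \(f\), stability is equivalent to a Hermitian condition on its real and imaginary parts; equivalently, \(\Im(\bar f_{10}f_{01})\)-type conditions, or stability of \(f\,\overline{f(\bar\cdot)}\)-type real polarizations. I would check that \(T\), being real, preserves the real-stability certificate of the real polynomial \(\Re f+ y\,\Im f\) with an auxiliary variable \(y\). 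This is real stable in three variables exactly when \(f\) is stable, by the Borcea--Br\"and\'en real/imaginary part criterion, and it reduces the check to real stability preservation by \(T\otimes\Id_y\). A further difficulty is that \(T\otimes\Id_y\) involves a third variable, so \eqref{gen:eq-apolar-discriminant} no longer applies as it stands. For that case I would write \(f=f_0+yf_1\) and verify directly that the pencil condition \(\Delta\)-compatibility between \(f_0\) and \(f_1\), which is a Wronskian-type inequality, is preserved by \(\Id+k_tB_F\). That is a finite algebraic computation parallel to \eqref{gen:eq-apolar-discriminant}, and I expect it to be the genuinely delicate step.
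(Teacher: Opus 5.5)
Your necessity argument is valid. The real clause of Proposition~\ref{gen:contact-necessity} applies, and on the binary box the only canonical second-order coefficient is \(P_B\) by \eqref{gen:eq-binary-coefficient}; the paper instead differentiates \(\Delta(e^{\tau B}(s-x)(t-y))\) at \(\tau=0\) and gets \(-P_B(x,y)\). Your sufficiency argument for real stability is the paper's, and it already proves the equivalence without any complex input. Lemma~\ref{gen:binary-primitive} gives real preservation by each \(e^{tB_F}\), and the M\"obius flows are real.

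The gap is the final sentence of the lemma, complete complex preservation, which you name as the main obstacle and then leave open. Your chain of reductions only reformulates completeness. It goes from the symbol of \(T=\Id+k_tB_F\), which is explicitly \((z_1+w_1)(z_2+w_2)-k_tF(z)F(-w)\) because \(\ell_F\) of \((s+w_1)(t+w_2)\) is \(F(-w_1,-w_2)\), through Hermite--Biehler, to real preservation by \(T\otimes\Id_y\) on \(f_0+yf_1\). Identity \eqref{gen:eq-apolar-discriminant} does control the pencil \(Tf_0+\lambda Tf_1\), \(\lambda\in\R\), since each member is one real bilinear polynomial in \(z_1,z_2\). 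But real stability of \(f_0+yf_1\) also needs an orientation condition (your Wronskian-type inequality) between \(f_0\) and \(f_1\). For example, \(z_1+\lambda(-1)\) is real stable for every real \(\lambda\), yet \(z_1-y\) is not stable. Nothing you wrote shows that \(T\) preserves that orientation, so the step you call delicate is the entire content of the completeness claim.

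The missing idea is to obtain the orientation from the real classification and continuity at \(\tau=0\), with no four-variable computation.
\begin{enumerate}
\item Once \(T_\tau=e^{\tau B}\) (or \(e^{\tau B_F}\), if you argue factorwise) is known to preserve real stability, it is invertible and so has rank four.
\item The real Borcea--Br\"and\'en theorem \cite[Theorems~1.1 and~1.2]{BB09} then leaves two alternatives: either \(T_\tau((s+u)(t+v))\) or \(T_\tau((s-u)(t-v))\) is real stable.
\item At \(\tau=0\) the minus symbol \((s-u)(t-v)\) is nonzero and vanishes at \(s=u\in\HH\). The stable-or-zero set is closed in the finite-dimensional coefficient space by Hurwitz's theorem, so the minus symbol stays non-stable for all sufficiently small \(\tau\ge0\).
\item Hence the plus symbol is stable, and the complex symbol theorem gives complex preservation.
\item Adjoining an identity on any box multiplies the symbol by the stable factor \(\prod_j(\xi_j+\eta_j)^{\lambda_j}\), which gives completeness.
\item Finally, writing \(e^{tB}=(e^{tB/m})^m\) with \(m\) large extends complete preservation from small times to every \(t\ge0\).
\end{enumerate}
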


\begin{proof}
For \(f=(s-x)(t-y)\), the identity \(\Delta(f)=0\) and real
preservation imply that the derivative of \(\Delta(e^{\tau B}f)\)
at zero is nonnegative. By \eqref{gen:eq-binary-coefficient} this
derivative is \(-P_B(x,y)\), which proves necessity.

Conversely, Lemma~\ref{gen:sos} gives
\(-P_B=\sum_\ell F_\ell^2\).
The operator \(B-\sum_\ell B_{F_\ell}\) has zero mixed coefficient
and hence has canonical order at most one.
Proposition~\ref{gen:box-decomposition} writes it as a real scalar
plus two sitewise operators \(J_{i,1}(q_i)\), with real quadratic
\(q_i\), without a sign restriction on these two quadratics.

The real sitewise flows preserve complex stability for every real
time. If \(q(z)=q_0+q_1z+q_2z^2\), let
\[
 M_\tau=\exp\left[
 \tau\begin{pmatrix}q_1/2&q_0\\-q_2&-q_1/2\end{pmatrix}
 \right]
 =\begin{pmatrix}a_\tau&b_\tau\\c_\tau&d_\tau\end{pmatrix}.
\]
Since \(M_\tau\) is real and has determinant one, its associated
polynomial action
\begin{equation}
 U_\tau f(z)
 =(c_\tau z+d_\tau)^\kappa
 f\left(\frac{a_\tau z+b_\tau}{c_\tau z+d_\tau}\right)
 \label{gen:eq-real-mobius}
\end{equation}
has infinitesimal generator \(J_\kappa(q)\) and preserves stability:
the denominator has no zero in \(\HH\), and the quotient has
imaginary part \(\Im z/|c_\tau z+d_\tau|^2>0\).
After cancellation of the denominator on each monomial,
\eqref{gen:eq-real-mobius} is polynomial for every real \(\tau\),
including when other variables are held inert.

Combining these flows with Lemma~\ref{gen:binary-primitive}, the
finite-dimensional Lie product formula expresses \(e^{\tau B}\)
as a coefficientwise limit of real-stability preservers; Hurwitz's
theorem and invertibility of the limiting exponential then give
real stability preservation.

It remains to justify complex preservation of the apolar factors
and of the resulting binary semigroup.  We use the finite-box
classification of Borcea and Br\"and\'en
\cite[Theorems~1.1 and~1.2]{BB09}.
A real stability preserver of rank greater than two has a stable
plus symbol or a stable reflected-minus symbol.  For the binary
box these are
\[
 G_T^+(s,t;u,v)=T\bigl((s+u)(t+v)\bigr),\qquad
 G_T^-(s,t;u,v)=T\bigl((s-u)(t-v)\bigr),
\]
where \(T\) acts only in \(s,t\).
Every \(e^{\tau B}\) has rank four, while at \(\tau=0\) the
minus symbol \((s-u)(t-v)\) vanishes at \(s=u\in\HH\).
Because the stable-or-zero cone is closed in this coefficient space,
the minus symbol remains outside it for sufficiently small
nonnegative \(\tau\). The plus symbol is therefore stable, so
we obtain complex preservation from the complex symbol theorem.
Adjoining an identity on a box with degree bounds \(\lambda\) multiplies
this symbol by the stable factor
\(\prod_j(\xi_j+\eta_j)^{\lambda_j}\), so the same argument
applies on every enlarged box. Subdividing any positive time into
sufficiently small equal pieces extends complete preservation to
all times.
\end{proof}

\subsection{Polarization and the proof of the real classification}
\label{gen:subsec-real-proof}

Let \(\mathcal P_\kappa\) be normalized polarization into
\(N=|\kappa|\) binary variables, grouped into \(\kappa_i\) slots at
site \(i\):
\[
 \mathcal P_\kappa(z_i^r)
 =\frac{e_r(z_{i,1},\ldots,z_{i,\kappa_i})}
             {\binom{\kappa_i}{r}}.
\]
Let \(\mathcal D_\kappa\) identify all slots belonging to each site.
The two maps are inverse between \(V_\kappa^\C\) and the subspace
symmetric within each group of slots.
Normalized polarization preserves upper-half-plane stability, and
diagonalization preserves it in the reverse direction
\cite[Proposition~2.4]{BB09}, with the normalization fixed in the preliminaries.

Differentiation of elementary symmetric polynomials and diagonal
specialization give the identities
\begin{align}
 \mathcal D_\kappa\partial_{i,a}\mathcal P_\kappa f
 &=\frac{\partial_i f}{\kappa_i},
 \label{gen:eq-polar-one}\\
 \mathcal D_\kappa\partial_{i,a}\partial_{j,b}
                          \mathcal P_\kappa f
 &=\frac{\partial_i\partial_j f}{\kappa_i\kappa_j},
 &&i\neq j,
 \label{gen:eq-polar-cross}\\
 \mathcal D_\kappa\partial_{i,a}\partial_{i,b}
                          \mathcal P_\kappa f
 &=\frac{\partial_i^2f}{\kappa_i(\kappa_i-1)},
 &&a\neq b.
 \label{gen:eq-polar-same}
\end{align}
The last identity applies when \(\kappa_i\ge2\); its denominator
accounts for the distinction between ordered derivatives and
unordered pairs of slots in the construction below.

\begin{proof}[Proof of Theorem~\ref{gen:main-real}]
For a real operator, complete complex preservation implies real
preservation, while Proposition~\ref{gen:contact-necessity} gives the
necessary coefficient conditions. For sufficiency, we construct a complete
generator with each prescribed principal coefficient and then
account for the first-order remainder.

Write \(a_i=P_{2e_i}\) and \(b_{ij}=P_{e_i+e_j}\).
For each \(i<j\), apply Lemma~\ref{gen:sos} to obtain
\[
 -b_{ij}(s,t)=\sum_\ell F_{ij,\ell}(s,t)^2
\]
with real bilinear \(F_{ij,\ell}\).
The binary operator
\(B_{ij}=\sum_\ell B_{F_{ij,\ell}}\) has mixed coefficient
\(b_{ij}\) and generates a complete preserving semigroup.
On the full slot space, place a copy of \(B_{ij}\) on each pair
consisting of one slot at site \(i\) and one at site \(j\), and
sum these \(\kappa_i\kappa_j\) copies.
By the Lie product formula, the sum is a complete generator;
since it commutes with permutations of the slots at either site,
it leaves the symmetric subspace invariant.

After descent by \(\mathcal D_\kappa\) and \(\mathcal P_\kappa\),
\eqref{gen:eq-polar-cross} identifies the full mixed coefficient as
\(b_{ij}(z_i,z_j)\). No other principal coefficient appears,
since diagonalization preserves the orders of the zeroth- and
first-order terms and the only second derivative in a binary
summand acts on its two selected slots.

For a site with \(\kappa_i\geq2\), write
\[
 -a_i(z)=\sum_\ell p_{i,\ell}(z)^2,\qquad
 p_{i,\ell}(z)=a_\ell z^2+b_\ell z+c_\ell,
\]
using the univariate part of Lemma~\ref{gen:sos}.
Set
\[
 F_{i,\ell}(s,t)=a_\ell st+\frac{b_\ell}{2}(s+t)+c_\ell.
\]
These forms are symmetric in their two variables and satisfy
\(F_{i,\ell}(z,z)=p_{i,\ell}(z)\).
The symmetric binary operator
\(2\sum_\ell B_{F_{i,\ell}}\) has mixed coefficient
\[
 P_i(s,t)=-2\sum_\ell F_{i,\ell}(s,t)^2,
 \qquad P_i(z,z)=2a_i(z).
\]
Both \(F_{i,\ell}\) and its apolar functional are symmetric,
so summing this operator over all unordered pairs of distinct slots
at site \(i\) gives a permutation-invariant complete generator.
By \eqref{gen:eq-polar-same}, its descended principal coefficient is
\[
 \binom{\kappa_i}{2}
 \frac{P_i(z_i,z_i)}{\kappa_i(\kappa_i-1)}
 =a_i(z_i).
\]
Thus the factor two compensates for the unordered slot count;
when \(\kappa_i=1\), there is no same-site principal coefficient
to construct.

Let \(A_{\mathrm{pr}}\) denote the sum of these descended generators.
Their permutation invariance intertwines each exponential on the
symmetric subspace with the exponential of its descent, so
polarization and diagonalization transfer complete preservation
from the slot space to the original box. By construction,
\(A-A_{\mathrm{pr}}\) has order at most one and hence, by
Proposition~\ref{gen:box-decomposition}, is a real scalar plus a
sum of real sitewise \(J_{i,\kappa_i}(q_i)\).
Their flows preserve stability by \eqref{gen:eq-real-mobius},
without sign restrictions on \(q_i\), and the Lie product formula
therefore yields complete preservation for \(e^{tA}\).
On every fixed enlarged box, the limiting exponential is invertible,
so the zero alternative in Hurwitz's theorem cannot occur for a
nonzero input.
\end{proof}

\subsection{Imaginary inward fields and the complex classification}
\label{gen:subsec-complex-proof}

The remaining complex generators have a particularly explicit
one-site realization.

\begin{lemma}
\label{gen:imaginary-primitive}
Let \(a,b\in\R\) and \(\kappa\geq1\).
The generator \(iJ_\kappa((az+b)^2)\) has the semigroup
\begin{equation}
 (U_tf)(z)=d_t(z)^\kappa f(\phi_t(z)),\qquad
 d_t(z)=1-iat(az+b),\qquad
 \phi_t(z)=\frac{z+ibt(az+b)}{1-iat(az+b)}.
 \label{gen:eq-imaginary-flow}
\end{equation}
For every \(t\geq0\), this is a complete complex stability preserver.
\end{lemma}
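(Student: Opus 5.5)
Three things need checking: that the displayed family is the semigroup of \(iJ_\kappa((az+b)^2)\), that each \(U_t\) preserves \(\HH\)-stability, and that this survives adjoining inert variables.

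\emph{Semigroup identification.} Put \(q(z)=(az+b)^2\) and \(\ell(z)=az+b\). I would first check that \(\phi_t\) is the flow of the vector field \(iq(z)\partial_z\), i.e.\ \(\partial_t\phi_t=iq(\phi_t)\) with \(\phi_0=z\). The quickest route is the Möbius form. Since \(q\) is the square of a linear form, the matrix \(N=\begin{pmatrix}q_1/2&q_0\\-q_2&-q_1/2\end{pmatrix}\) from the proof of Lemma~\ref{gen:binary-completeness}, with \(q_0=b^2,q_1=2ab,q_2=a^2\), satisfies \(N^2=0\), because its determinant \(-a^2b^2+a^2b^2\) vanishes. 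Hence \(\exp(itN)=I+itN=\begin{pmatrix}1+iabt&ib^2t\\-ia^2t&1-iabt\end{pmatrix}\). This reproduces the numerator \(z+ibt\ell(z)\) and the denominator \(d_t(z)=1-iat\ell(z)\). Since \(e^{itN}e^{isN}=e^{i(t+s)N}\), the maps \(U_t\) form a one-parameter group on \(V_\kappa\), and each \(U_t\) is polynomial because \(d_t^\kappa\) clears the denominators. Its derivative at \(t=0\) is \(iq\partial_z-\tfrac{\kappa}{2}iq'=iJ_\kappa(q)\), since \(\partial_td_t|_0=-ia\ell=-\tfrac i2q'\). This is the same computation as for \eqref{gen:eq-real-mobius}, now with a complex parameter.

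\emph{Stability.} Fix \(t\ge0\) and \(z\in\HH\). Both \(d_t(z)\neq0\) and \(\phi_t(z)\in\HH\) are needed, and both come from one imaginary-part computation. Note that \(\Im\ell(z)=a\Im z\), which has the sign of \(a\), so \(\Re(-iat\ell(z))=at\Im\ell(z)=a^2t\Im z\ge0\). Hence \(\Re d_t(z)\ge1\), and \(d_t\) does not vanish on \(\HH\).

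For the image point, use that the determinant of \(e^{itN}\) is one, which gives the standard identity \(\Im\phi=\Im\bigl[(\alpha z+\beta)\overline{(\gamma z+\delta)}\bigr]/|\gamma z+\delta|^2\). Expanding with \(\alpha=1+iabt\), \(\beta=ib^2t\), \(\gamma=-ia^2t\), \(\delta=1-iabt\) should give a numerator of the form \(\Im z+t\,|\ell(z)|^2\cdot(\text{positive})+t^2(\dots)\). A cleaner way to organise this is to write \(w=\phi_t(z)\) and observe that the flow equation \(\dot w=i\ell(w)^2\) gives \(\tfrac{d}{dt}\Im w=\Re\,\ell(w)^2\), whose sign is not immediately clear. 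I therefore expect the direct expansion to be the main obstacle and would carry it out explicitly.

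The relation \(w+ibt\ell(w')\)-type bookkeeping is simplest after translating so that \(b=0\), when \(a\ne0\). This is allowed because real translations \(z\mapsto z+b/a\) commute with stability and conjugate \(J_\kappa(q)\) to \(J_\kappa(a^2z^2)\). In that case \(\phi_t(z)=z/(1-ia^2tz)\), so \(1/\phi_t=1/z-ia^2t\) and
\[
\Im(1/\phi_t)=\Im(1/z)-a^2t<0 .
\]
The point \(1/\phi_t\) therefore lies in the lower half-plane, so \(\phi_t\in\HH\) for every \(t\ge0\). When \(a=0\), \(\phi_t(z)=z+ib^2t\) and \(d_t=1\), and the claim is immediate. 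Consequently, if \(f\) is stable, \(U_tf(z)=d_t(z)^\kappa f(\phi_t(z))\) does not vanish at any \(z\in\HH\). Invertibility of \(U_t\) rules out the zero output.

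\emph{Completeness.} With inert variables \(y\), the tensor extension acts as \((U_t\otimes\Id)F(z,y)=d_t(z)^\kappa F(\phi_t(z),y)\). The pointwise argument above therefore applies verbatim with \(y\in\HH^m\) held fixed, and it gives complete complex stability preservation.
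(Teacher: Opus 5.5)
Your proposal is correct and follows the paper's proof. Both use the nilpotent M\"obius matrix $I+itN$ to get the group law and the derivative $iJ_\kappa((az+b)^2)$, and both obtain stability and completeness from $d_t(z)\neq0$ and $\Im\phi_t(z)>0$ on $\HH$, with the inert variables held fixed. The only difference is how you verify those two inequalities: you use $\Re d_t(z)=1+a^2t\,\Im z$ and, after translating to $b=0$, the reciprocal identity $\Im(1/\phi_t)=\Im(1/z)-a^2t$. The paper instead uses the explicit formula $\Im\phi_t(z)=(\Im z+t|az+b|^2)/|d_t(z)|^2$, so the direct expansion you expected to be the main obstacle has no $t^2$ term and is straightforward.
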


\begin{proof}
The corresponding fractional-linear matrix is
\[
 M_t=\Id+it
 \begin{pmatrix}ab&b^2\\-a^2&-ab\end{pmatrix}.
\]
The matrix multiplying \(it\) has square zero, whence
\(M_{t+s}=M_tM_s\) and \(\det M_t=1\). Since the denominators
cancel on every monomial of degree at most \(\kappa\),
\eqref{gen:eq-imaginary-flow} defines a polynomial semigroup
whose derivative at zero is
\[
 \left.\frac{d}{dt}U_tf\right|_{t=0}
 =i\bigl((az+b)^2f'-\kappa a(az+b)f\bigr)
 =iJ_\kappa((az+b)^2)f.
\]

For \(z\in\HH\) and \(t\geq0\), direct multiplication by the
conjugate denominator gives
\begin{equation}
 \Im\phi_t(z)
 =\frac{\Im z+t|az+b|^2}{|1-iat(az+b)|^2}>0.
 \label{gen:eq-imaginary-height}
\end{equation}
For \(a\ne0\) and \(t>0\), the denominator has its only zero
at \(-b/a-i/(a^2t)\), below the real axis; for \(a=0\), it is
one and \(\phi_t(z)=z+ib^2t\). The substitution and its nonvanishing
prefactor therefore preserve stability, also with any additional
variables held inert. At \(t=0\), or when \(a=b=0\), the same
formula gives the identity.
\end{proof}

\begin{proof}[Proof of Theorem~\ref{gen:main-complex}]
For a complex generator, take the entrywise decomposition \(A=R+iS\)
in the ordinary monomial basis. It consists of real box endomorphisms
and commutes with the canonical representation, whose defining
recursion in Lemma~\ref{gen:canonical} is real linear.

By Proposition~\ref{gen:contact-necessity}, \(A\) has order
at most two and its second-order coefficients are real and
nonpositive, so \(R\) satisfies Theorem~\ref{gen:main-real}
and \(S\) has order at most one. The box decomposition in
Proposition~\ref{gen:box-decomposition} therefore gives
\[
 S=c\Id+\sum_iJ_{i,\kappa_i}(q_i),
 \qquad c\in\R,\quad q_i\in\R[z_i]_{\leq2}.
\]
The product-contact test gives
\(q_i=\Im P_{e_i}\ge0\) on \(\R\), establishing the
necessary decomposition \eqref{gen:eq-complex-decomposition}.

Conversely, every real quadratic nonnegative on \(\R\) is a
sum of squares of real linear polynomials.
For \(q(z)=az^2+bz+c\) with \(a>0\), complete the square:
\[
 q(z)=\left(\sqrt a\,z+\frac b{2\sqrt a}\right)^2
            +c-\frac{b^2}{4a}.
\]
The remainder is nonnegative; if \(a=0\), nonnegativity instead
forces \(b=0\) and \(c\ge0\). Lemma~\ref{gen:imaginary-primitive}
and the Lie product formula consequently give complete preservation
for each \(iJ(q_i)\). The real part \(R\) has this property by
Theorem~\ref{gen:main-real}, and \(ic\Id\) generates a nonzero
scalar phase. Applying the product formula to their sum gives a
complete preserver, with zero limits excluded by invertibility on
each finite enlarged box; ordinary preservation follows by taking
no additional variables.
\end{proof}

For the upper-half-plane convention used here, \(i\partial\)
shifts the argument upward and the zeros downward, in agreement
with the sign in \eqref{gen:eq-imaginary-flow}.

\subsection{Certificates and multiaffine realizations}
\label{gen:subsec-certificates}

The proof also expresses membership in the generator cone by finitely
many positive semidefinite matrices, and realizes every generator on
a multiaffine space.

\begin{corollary}
\label{gen:gram-cones}
In Theorem~\ref{gen:main-real}, the principal inequalities are
equivalent to the existence of real positive semidefinite matrices
\(G_i\) of size three and \(G_{ij}\) of size four such that
\begin{align}
 -P_{2e_i}(s)
  &=(1,s,s^2)G_i(1,s,s^2)^{\top},
  &&\kappa_i\geq2,
 \label{gen:eq-gram-unary}\\
 -P_{e_i+e_j}(s,t)
  &=(1,s,t,st)G_{ij}(1,s,t,st)^{\top},
  &&i<j.
 \label{gen:eq-gram-pair}
\end{align}
The extra inequalities in Theorem~\ref{gen:main-complex} have
size-two certificates
\begin{equation}
 q_i(s)=(1,s)K_i(1,s)^{\top},\qquad K_i\succeq0.
 \label{gen:eq-gram-imaginary}
\end{equation}
These matrix sizes do not depend on the coordinate-degree bounds.

Let \(\mathcal N_4,\mathcal N_{2,2},\mathcal N_2\) denote,
respectively, the cones of nonnegative real univariate polynomials
of degree at most four, nonnegative real biforms of bidegree
at most \((2,2)\), and nonnegative real univariate polynomials
of degree at most two.
The real generator cone has lineality
\[
 \mathfrak g_\kappa
 =\R\Id+\sum_iJ_{i,\kappa_i}(\R[z_i]_{\leq2}),
\]
and its quotient by this lineality is naturally isomorphic to
\begin{equation}
 \prod_{i:\kappa_i\geq2}\mathcal N_4
 \ \times\!
 \prod_{i<j}\mathcal N_{2,2}.
 \label{gen:eq-real-quotient}
\end{equation}
The complex generator cone, regarded as a real cone, has
lineality
\[
 \C\Id+\sum_iJ_{i,\kappa_i}(\R[z_i]_{\leq2}),
\]
and its quotient is the product in
\eqref{gen:eq-real-quotient} times \(\prod_i\mathcal N_2\).
\end{corollary}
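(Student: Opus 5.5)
The corollary repackages Theorems~\ref{gen:main-real} and~\ref{gen:main-complex}, so the plan is to translate their inequalities into Gram form and to read off the cone structure from Proposition~\ref{gen:box-decomposition}.

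\emph{Gram certificates.} For \eqref{gen:eq-gram-unary}, one direction is immediate: if \(-P_{2e_i}\) has a representation with \(G_i\succeq0\), evaluating at real \(s\) gives \(-P_{2e_i}(s)\ge0\). For the converse, Lemma~\ref{gen:sos} writes \(-P_{2e_i}=\sum_\ell p_\ell^2\) with \(\deg p_\ell\le2\). Writing \(p_\ell=c_\ell^\top(1,s,s^2)^\top\), we take \(G_i=\sum_\ell c_\ell c_\ell^\top\). The pair case \eqref{gen:eq-gram-pair} is the same argument, using the bilinear part of Lemma~\ref{gen:sos} and the monomial vector \((1,s,t,st)\). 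For \eqref{gen:eq-gram-imaginary}, a real quadratic that is nonnegative on \(\R\) is a sum of squares of linear polynomials, as shown by completing the square in the proof of Theorem~\ref{gen:main-complex}; this yields \(K_i\succeq0\) of size two. None of these matrix sizes involves \(\kappa\).

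\emph{Cone structure.} Proposition~\ref{gen:box-decomposition} shows that the principal-coefficient map \(\pi\) is linear and onto \(\prod_{i:\kappa_i\ge2}\R[s]_{\le4}\times\prod_{i<j}\R[s,t]_{\le(2,2)}\), with kernel the order-at-most-one space \(\mathfrak g_\kappa\). By Theorem~\ref{gen:main-real}, the real generator cone is \(\pi^{-1}\) of the product of the sign-flipped cones \(-\mathcal N_4\), \(-\mathcal N_{2,2}\). Negation is an isomorphism, so the cone is \(\mathfrak g_\kappa+\pi^{-1}(\cdot)\), and its quotient by \(\ker\pi\) is \eqref{gen:eq-real-quotient}. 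To show that the lineality is exactly \(\mathfrak g_\kappa\), we use that the nonnegative cones are pointed: \(N\cap(-N)=\{0\}\). Hence \(A\) and \(-A\) both lie in the cone only when \(\pi(A)=0\).

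\emph{Complex cone.} Write \(A=R+iS\) as in the proof of Theorem~\ref{gen:main-complex}, with \(S=c\Id+\sum_iJ_{i,\kappa_i}(q_i)\). The map \(A\mapsto(\pi(R),(q_i)_i)\) is real-linear, and it is onto the product with \(\prod_i\R[z_i]_{\le2}\). Its kernel is \(\mathfrak g_\kappa+i\R\Id\), which equals \(\C\Id+\sum_iJ_{i,\kappa_i}(\R[z_i]_{\le2})\); here uniqueness in \eqref{gen:eq-first-order} makes \(c\) and the \(q_i\) well defined. The cone is the preimage of the product of the \(-\mathcal N\) factors with \(\prod_i\mathcal N_2\). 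Pointedness of \(\mathcal N_2\) again identifies the lineality with the kernel.

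\emph{Main obstacle.} The only delicate point is this identification of the lineality with \(\ker\pi\) and the uniqueness of the \(q_i\), and both rest on the uniqueness clause of Proposition~\ref{gen:box-decomposition}. Everything else follows directly from Lemma~\ref{gen:sos}.
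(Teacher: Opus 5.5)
Your proposal is correct and follows essentially the same route as the paper. The Gram matrices come from the sum-of-squares decompositions of Lemma~\ref{gen:sos}, and the cone structure comes from the principal-coefficient map of Proposition~\ref{gen:box-decomposition}, with the lineality identified through pointedness of the nonnegativity cones. One point is left implicit: the map \(A\mapsto(\pi(R),(q_i)_i)\) should be defined on the real subspace of operators whose imaginary part has order at most one, which contains the complex cone by Theorem~\ref{gen:main-complex}.
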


\begin{proof}
The square decompositions in Lemma~\ref{gen:sos} give
\eqref{gen:eq-gram-unary} and \eqref{gen:eq-gram-pair} by
taking sums of outer products of the coefficient vectors.
Conversely a real positive semidefinite matrix is a sum
of rank-one positive semidefinite matrices, yielding the
required square decomposition.
The quadratic case is the same argument with the vector
\((1,s)\).

By Proposition~\ref{gen:box-decomposition}, the kernel of the
negative-principal-coefficient map is the order-at-most-one space,
while the polarization construction makes it onto the product in
\eqref{gen:eq-real-quotient}. Both \(A\) and \(-A\) are admissible
precisely when their principal coefficients vanish, which identifies
this kernel with the real lineality and yields the stated quotient.

For a complex generator, \eqref{gen:eq-complex-decomposition}
adds the nonnegative polynomials \(q_i\) independently of the real
principal coefficients. Admissibility of both signs forces these
polynomials and the principal coefficients to vanish, leaving only
the real first-order operators and an arbitrary complex scalar;
this gives the complex lineality and quotient.
\end{proof}

\begin{corollary}
\label{gen:slot-lift}
For every generator \(A\) in Theorem~\ref{gen:main-complex},
there is a complete stability-preserving generator
\(\widetilde A\) on the full multiaffine space with
\(|\kappa|\) variables such that
\begin{equation}
 \widetilde A\mathcal P_\kappa=\mathcal P_\kappa A,
 \qquad
 e^{t\widetilde A}\mathcal P_\kappa
          =\mathcal P_\kappa e^{tA}\quad(t\geq0).
 \label{gen:eq-exact-lift}
\end{equation}
It commutes with every permutation of slots within a site.
It can be chosen as a sum of a scalar, one-slot real
M\"obius generators, one-slot imaginary parabolic generators,
and nonnegative multiples of the rank-one apolar generators
\(B_F\) on pairs of slots.
\end{corollary}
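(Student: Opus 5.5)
The plan is to read the lift off the sufficiency constructions in the proofs of Theorems~\ref{gen:main-real} and~\ref{gen:main-complex}, where every piece was already built on the slot space. By Theorem~\ref{gen:main-complex}, write \(A=R+i(c\Id+\sum_iJ_{i,\kappa_i}(q_i))\) with \(q_i\ge0\) on \(\R\). Lift each piece separately, keeping the lifts in the stated families, and then sum.

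\begin{enumerate}
\item \emph{Second-order part of \(R\).} For the pair and same-site principal coefficients, take the slot generators from the proof of Theorem~\ref{gen:main-real}. These are the sums of copies of \(B_{F_{ij,\ell}}\) over the \(\kappa_i\kappa_j\) slot pairs, and \(2\sum_\ell B_{F_{i,\ell}}\) over unordered same-site pairs. By construction they commute with slot permutations within each site. Call their sum \(\widetilde A_{\rm pr}\).
\item \emph{Exact intertwining of the principal lift.} I would show that \(\widetilde A_{\rm pr}\mathcal P_\kappa=\mathcal P_\kappa A'_{\rm pr}\) holds exactly, where \(A'_{\rm pr}:=\mathcal D_\kappa\widetilde A_{\rm pr}\mathcal P_\kappa\). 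Since \(\widetilde A_{\rm pr}\) preserves the symmetric subspace, on which \(\mathcal P_\kappa\) and \(\mathcal D_\kappa\) are mutually inverse, the identity holds as soon as \(\widetilde A_{\rm pr}\mathcal P_\kappa f\) is symmetric. By \eqref{gen:eq-polar-cross} and \eqref{gen:eq-polar-same}, \(A'_{\rm pr}\) has the same principal coefficients as \(R\). Proposition~\ref{gen:box-decomposition} then gives \(A-A'_{\rm pr}=c'\Id+\sum_iJ_{i,\kappa_i}(\tilde q_i)\) with \(c'\in\C\) and \(\tilde q_i\in\C[z_i]_{\le2}\), whose real parts are unrestricted and whose imaginary parts equal \(q_i\).
\item \emph{First-order lift.} Lift each \(J_{i,\kappa_i}(q)\) to \(\sum_{a=1}^{\kappa_i}J_{(i,a),1}(q)\), acting on the slots \(z_{i,a}\). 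The exact identity needed is \(\mathcal P_\kappa J_{i,\kappa_i}(q)=\sum_aJ_{(i,a),1}(q)\mathcal P_\kappa\). I expect this to be the main obstacle, and I would argue it in one of two ways.
 \begin{itemize}
 \item Globally via the M\"obius action \eqref{gen:eq-real-mobius}: for \(f=\prod_k(z-\zeta_k)\) the polarization is \(\prod_k\)-symmetrized linear forms. Applying the M\"obius transformation slotwise to a polarized polynomial gives the polarization of the transformed polynomial, because each slot carries degree one with the prefactor \((cz+d)\), and these prefactors multiply to \((cz_i+d)^{\kappa_i}\) on the diagonal. Multilinearity and symmetry then pin the result down uniquely.
 \item Infinitesimally on monomials: check \(z_i^r\mapsto e_r\) directly for \(q=1,z,z^2\), using \(\sum_a\partial_{i,a}e_r=(\kappa_i-r+1)e_{r-1}\) together with its analogues for \(z\partial\) and \(z^2\partial-z\). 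With the normalizations \(\binom{\kappa_i}{r}^{-1}\), the coefficients match.
 \end{itemize}
 Complex linearity then covers complex \(\tilde q_i\).
\item \emph{Decomposition into the stated families.} The real parts of \(\tilde q_i\) give one-slot real M\"obius generators. The imaginary parts \(q_i\ge0\) are sums of squares of linear forms, as in the proof of Theorem~\ref{gen:main-complex}, so they give one-slot imaginary parabolic generators \(iJ_1((az+b)^2)\) of Lemma~\ref{gen:imaginary-primitive}. The constant \(c'\) becomes a scalar. Each of these lifts is permutation invariant.
\end{enumerate}

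Set \(\widetilde A=\widetilde A_{\rm pr}+\text{(first-order lift)}+c'\Id\). Then \(\widetilde A\mathcal P_\kappa=\mathcal P_\kappa A\) exactly, and exponentiating the finite-dimensional intertwining gives the second identity in \eqref{gen:eq-exact-lift}. Complete preservation follows because every summand generates a complete preserver, by Lemmas~\ref{gen:binary-completeness} and~\ref{gen:imaginary-primitive} and \eqref{gen:eq-real-mobius}. The Lie product formula combines the summands, and Hurwitz's theorem together with invertibility excludes zero limits, exactly as in the proof of Theorem~\ref{gen:main-real}.
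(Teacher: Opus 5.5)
Your proposal is correct and follows the paper's proof. The paper likewise reuses the permutation-invariant apolar slot sums for the principal part, lifts each \(J_{i,\kappa_i}(q)\) slotwise to \(\sum_a J_{(i,a),1}(q)\), splits the imaginary quadratics into parabolic primitives, and combines the pieces by the product formula.

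The step you flag as the main obstacle needs less work than you plan. The paper handles it with the same descent-plus-symmetry reasoning you already use in your step~2. By \eqref{gen:eq-polar-one}, \(\mathcal D_\kappa\) applied to the lifted drift on \(\mathcal P_\kappa f\) gives \(J_{i,\kappa_i}(q)f\). Within-site permutation invariance then upgrades this to intertwining with \(\mathcal P_\kappa\).
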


\begin{proof}
The slot sums from the proof of Theorem~\ref{gen:main-real}
realize the real principal coefficients and commute with all
within-site permutations. For the remaining first-order
part, lift each sitewise drift by
\[
 J_{i,\kappa_i}(q)
 \longmapsto
 \sum_{a=1}^{\kappa_i}
 \left(q(z_{i,a})\partial_{i,a}
                    -\frac12q'(z_{i,a})\right).
\]
On a polarized input, \eqref{gen:eq-polar-one} gives the descent
\(J_{i,\kappa_i}(q)\), and permutation invariance then gives
intertwining with \(\mathcal P_\kappa\). Apply the same lift to
the imaginary quadratics, decomposed into the parabolic primitives
of Lemma~\ref{gen:imaginary-primitive}, and lift the scalar by the
same scalar identity. Each summand generates a complete preserving
flow on the full slot space, so their sum has this property and
satisfies the first relation in \eqref{gen:eq-exact-lift};
exponentiation yields the second.
\end{proof}

\subsection{Exact recognition from local matrices}
\label{gen:recognition-subsection}

The small nonnegativity certificates also give an exact recognition
algorithm when the operator is supplied by its local terms.

\begin{corollary}
\label{gen:recognition}
Let \(A\) be specified as a finite sum of one-site and two-site operators
on \((\C^2)^{\otimes n}\), with Gaussian-rational entries in the
coefficient basis \(1,z_i\) at each site. There is a deterministic
algorithm, polynomial in \(n\) and the total input bit length, deciding
whether \(e^{tA}\) preserves every nonzero complex disk-stable
multiaffine polynomial for all \(t\ge0\).
The test is on the sum \(A\); the displayed summands need not individually
generate preserving semigroups. No degree bound on the interaction
graph is required.
\end{corollary}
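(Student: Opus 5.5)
The plan is to transport the disk question to the half-plane by the Cayley isomorphism \eqref{pre:cayley} and then apply Theorem~\ref{gen:main-complex} with \(\kappa=(1,\ldots,1)\). The operator \(\widetilde A=\mathcal C A\mathcal C^{-1}\) generates a disk-preserving semigroup exactly when it generates a half-plane-preserving one. Since \(\mathcal C\) factors as a tensor product of one-site \(2\times2\) matrices with Gaussian-rational entries (up to the harmless factor \(\mathrm i\)), conjugation maps each one-site or two-site summand to a one-site or two-site summand of the same support, with polynomially bounded bit length. The statement therefore reduces to testing the conditions of Theorem~\ref{gen:main-complex} for a sum of local half-plane operators on the binary box.

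First I will compute the canonical representation of \(\widetilde A\) locally. For a summand supported on sites \(\{i,j\}\), the recursion in Lemma~\ref{gen:canonical} applied to the four monomials \(1,z_i,z_j,z_iz_j\) produces canonical coefficients depending only on \(z_i,z_j\). Canonical representation is linear, so the canonical coefficients of the sum are obtained by grouping the summands according to their support:
\begin{itemize}
\item \(P_{e_i+e_j}\) is the sum, over all summands whose support contains \(\{i,j\}\), of their mixed coefficients given by \eqref{gen:eq-binary-coefficient};
\item \(P_{e_i}\) is a sum of contributions from summands touching \(i\).
\end{itemize}
Order at most two is automatic on the binary box with two-site terms, and no \(\partial_i^2\) occurs. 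Cancellation between summands is handled automatically, because the tests are applied only to these aggregated coefficients. There are at most \(\binom n2\) pairs, and only those carrying input terms are nonzero, so no degree bound is needed.

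Next I check the conditions, using the \enquote{Equivalently} formulation of Theorem~\ref{gen:main-complex}:
\begin{enumerate}
\item every \(P_{e_i+e_j}\) has real coefficients;
\item every \(P_{e_i+e_j}\) is nonpositive on \(\R^2\);
\item every \(\Im P_{e_i}\) is nonnegative on \(\R^n\).
\end{enumerate}
Condition (1) is an exact comparison of rationals. For condition (3), box preservation forces \(\Im P_{e_i}=q_i(z_i)\) with \(\deg q_i\le 2\), and nonnegativity of a real quadratic is decided by its leading coefficient, its linear coefficient in the degenerate case, and its discriminant, all in exact rational arithmetic. The main step is condition (2): decide exactly whether a real bilinear polynomial \(b(s,t)=a+\beta s+\gamma t+\delta st\) of bidegree \((1,1)\) is nonnegative after sign change. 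Since \(b\) is affine in \(s\) for fixed \(t\), \(-b\ge0\) on \(\R^2\) forces \(\beta+\delta t=0\) for all \(t\), so \(\beta=\delta=0\); symmetrically \(\gamma=0\). The test therefore reduces to checking that \(P_{e_i+e_j}\) is a constant \(\le0\). Equivalently, Corollary~\ref{gen:gram-cones} gives a PSD certificate, but no semidefinite programming is needed, since the nonnegative bilinear forms are exactly the nonnegative constants.

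The expected obstacle is bookkeeping rather than mathematics. I need to confirm that the Cayley conjugation and the canonical coefficient extraction preserve locality and keep bit sizes polynomial. I also need to confirm that the tests are exactly those of Theorem~\ref{gen:main-complex}, including the real-coefficient requirement on the second-order terms, and that the equivalence with preservation of all nonzero disk-stable polynomials holds through \(\mathcal C\). Each summand costs constant work on \(4\times4\) rational matrices, and aggregation costs \(O(\text{number of summands})\) rational additions, so the total running time is polynomial in \(n\) and the input length.
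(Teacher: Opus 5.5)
Your plan has a genuine gap at its main step, condition (2). You treat the mixed coefficient \(P_{e_i+e_j}\) as a bilinear polynomial \(a+\beta s+\gamma t+\delta st\). Canonical coefficients, however, are not confined to the degree box. By \eqref{gen:eq-binary-coefficient}, \(P_B(s,t)=B(st)-sB(t)-tB(s)+stB(1)\), and since \(B(1)\in V_{(1,1)}\), the term \(stB(1)\) alone can have bidegree \((2,2)\). This is why Theorem~\ref{gen:main-real} allows bidegree \((2,2)\), and why the Gram certificate of Corollary~\ref{gen:gram-cones} is a quadratic form in \((1,s,t,st)\) rather than a constant.

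Consequently your test ``\(P_{e_i+e_j}\) is a nonpositive constant'' gives wrong answers. Take \(A=Z_1Z_2\). With \(u_k=z_kw_k\), the kernel of \(e^{tA}\) is \(e^t(1+u_1u_2)+e^{-t}(u_1+u_2)\), which is zero-free on the open unit polydisk for \(t\ge0\) by the two-spin Lee--Yang argument. So \(e^{tA}\) preserves disk stability. Its Cayley conjugate is \(Y_1Y_2\), whose mixed coefficient is \(p_Y(s)p_Y(t)=-(1+s^2)(1+t^2)\): nonpositive on \(\R^2\) but not constant, so your algorithm rejects it. The same failure occurs for every Suzuki--Fisher input with a nonzero pair interaction, since the coefficient in \eqref{eq:mixed-coefficient} is constant only when \(e=d=0\), which forces that pair's interaction to vanish.

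The missing ingredient is an exact decision procedure for \(P_{ij}(s,t)\le0\) on \(\R^2\), where \(P_{ij}\) is a rational polynomial of bidegree at most \((2,2)\) after the reality check. The paper uses fixed-dimension, fixed-degree real quantifier elimination, which has polynomial bit complexity, on each of the polynomially many merged pair coefficients. An elementary alternative is to write \(-P_{ij}=\alpha(t)s^2+\beta(t)s+\gamma(t)\) with \(\alpha,\beta,\gamma\) of degree at most two. Nonnegativity is then equivalent to \(\alpha\ge0\), \(\gamma\ge0\), and \(4\alpha\gamma-\beta^2\ge0\) on \(\R\). These are three univariate tests of degree at most four, which Sturm sequences decide in exact rational arithmetic.

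With this replacement, the rest of your plan matches the paper's proof: support-preserving local Cayley conjugation, local canonical coefficients from the four monomials, merging before testing so that cancellations between summands are respected, the reality check, and the quadratic test on \(q_i\). The paper also checks that \(\Im P_i\) depends only on \(z_i\) and that \(\Im P_0+\tfrac12\sum_iq_i'\) is constant. Both are automatic once the second-order coefficients are real, as you implicitly use.
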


\begin{proof}
In the basis \(1,z\), the Cayley map from disk to half-plane coordinates is
\[
 C=\begin{pmatrix}i&-i\\1&1\end{pmatrix},\qquad
 (Cf)(x)=(x+i)f\left(\frac{x-i}{x+i}\right).
\]
The conjugated half-plane generator is
\(B=C^{\otimes n}A(C^{-1})^{\otimes n}\). Each local conjugation
uses a matrix of order two or four and preserves its support, so
all entries remain Gaussian rational with polynomial bit length.

For a transformed two-site term \(U\), with variables \(x,y\), set
\[
 \begin{aligned}
 p_0&=U1,&p_x&=U(x)-xU1,&p_y&=U(y)-yU1,\\
 p_{xy}&=U(xy)-yU(x)-xU(y)+xyU1.
 \end{aligned}
\]
Checking the four monomials \(1,x,y,xy\) proves
\(U=p_0+p_x\partial_x+p_y\partial_y+p_{xy}\partial_x\partial_y\).
Using the same formula without \(y\)-terms at a single site,
then adding the expressions and merging equal monomials, gives the
canonical representation
\[
 B=P_0+\sum_i P_i\partial_i+\sum_{i<j}P_{ij}\partial_i\partial_j.
\]
The order bound is automatic: each \(P_{ij}\) has bidegree at
most \((2,2)\) in its two variables, and every coefficient has a
sparse representation with only constantly many contributions from
each input term.

By Theorems~\ref{gen:main-real} and \ref{gen:main-complex}, acceptance
is equivalent to the following finite tests on the merged coefficients:
\begin{enumerate}
\item each \(P_{ij}\) is real and \(P_{ij}(s,t)\le0\) for every
\((s,t)\in\R^2\);
\item each \(\Im P_i\) is a polynomial \(q_i(z_i)\) of degree at most
two, and \(q_i(s)\ge0\) for every \(s\in\R\);
\item \(\Im P_0+\tfrac12\sum_iq_i'\) is a real constant.
\end{enumerate}
These conditions make the real part of \(B\) a real box generator
and give its allowed imaginary part, while conversely each is
forced by the complex classification. Since the Cayley map and
\(e^{tA}\) are invertible, the resulting stable-or-zero criterion
is equivalent to preservation of the nonzero stable class.

The identity tests require rational arithmetic and merging only
polynomially many sparse records. Forming common denominators by
products shows that the bit lengths of all merged coefficients are
polynomial in the input length. After clearing denominators by a
positive multiplier, each remaining sign test is a real sentence
in at most two variables, with one polynomial of total degree at
most four. Fixed-dimensional, fixed-degree real quantifier elimination
has polynomial bit complexity \cite[Chapter~14]{BasuPollackRoy06}.
There are polynomially many such sentences, giving the stated
bit bound without expanding the global matrix; their weak
inequalities include zero coefficients, repeated roots, and
degree drops.
\end{proof}

Efficient tests for bivariate real stability and static univariate
real-rootedness preservers were obtained by Raghavendra, Ryder, and
Srivastava \cite{RaghavendraRyderSrivastava17}. Here the finite-box
generator classification reduces recognition from a compact many-site
local description to fixed-dimensional nonnegativity tests.

\section{Damping and spectral bounds}\label{spec:section}

On a finite degree box, positive degree damping strengthens disk
stability preservation to strict nonvanishing and separates a principal
eigenvalue from the rest of the spectrum. The strict kernel property
provides the eigenvector needed for the ratio argument below, which
yields both the spectral gap and contraction along the evolution.

\subsection{Weighted kernels and the stable cone}

Fix \(d\ge1\) and \(\kappa\in\N_{>0}^d\), write
\[
 V_\kappa=\{p\in\C[z_1,\ldots,z_d]:\deg_{z_i}p\le\kappa_i\},
 \qquad b_\alpha=\binom\kappa\alpha,
\]
and let \(\D_r^d=\{z:|z_i|<r\text{ for all }i\}\).
Let \(\mathcal C_\kappa\) consist of zero and all polynomials in
\(V_\kappa\) that do not vanish on \(\D_1^d\).  We call a polynomial
\emph{strictly stable} if it does not vanish on \(\overline{\D_1^d}\).
For \(T\in\operatorname{End}_{\C}(V_\kappa)\), use the kernel
of \eqref{pre:kernel}, namely
\begin{equation}\label{spec:kernel}
 K_T(z,w)=T_z\prod_{i=1}^d(1+z_iw_i)^{\kappa_i}
 =\sum_{\alpha\le\kappa}b_\alpha(Tz^\alpha)(z)w^\alpha.
\end{equation}
The corresponding coefficient pairing is bilinear:
\begin{equation}\label{spec:pairing}
 [f,g]_\kappa=\sum_{\alpha\le\kappa}b_\alpha^{-1}f_\alpha g_\alpha.
\end{equation}
In particular, \([K_T(z,\cdot),p]_\kappa=Tp(z)\).  In the coherent basis \(\sqrt{b_\alpha}z^\alpha\), this is the
matrix kernel with square-root binomial weights on its two legs.
Coordinates with degree bound zero may be omitted. If none remain,
the kernel and eigenvector assertions reduce to nonzero constants;
the spectral gap statements concern the case in which at least one
coordinate remains.

Lemma~\ref{pre:grace} shows that contraction preserves stability when
the paired radii have product at least one, and is nonzero when every
product is greater than one, retaining the uncontracted radii.
For two operator kernels, the pairing gives the kernel of their product.

\begin{lemma}\label{spec:kernel-criterion}
An invertible operator \(T\) preserves \(\mathcal C_\kappa\) if and
only if \(K_T\) is zero-free on \(\D_1^d\times\D_1^d\).
If \(K_T\) is zero-free on \(\D_r^d\times\D_r^d\), where \(r>1\),
then \(Tp\) is nonzero and zero-free on \(\D_r^d\) for every
\(p\in\mathcal C_\kappa\setminus\{0\}\), without an invertibility
assumption on \(T\) in this last assertion.
\end{lemma}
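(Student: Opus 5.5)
The plan is to use the identity \([K_T(z,\cdot),p]_\kappa=Tp(z)\) together with the contraction Lemma~\ref{pre:grace}. This gives sufficiency directly and the strict-radius assertion. Necessity comes from applying \(T\) to the stable polynomials \(\prod_i(1+z_iw_i)^{\kappa_i}\) with \(w\) frozen.

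\emph{Sufficiency.} Suppose \(K_T\) is zero-free on \(\D_1^d\times\D_1^d\), and let \(p\in\mathcal C_\kappa\setminus\{0\}\). Regard \(K_T(z,u)\) as a polynomial in \((z,u)\) stable in \(\D_1^{2d}\), and \(p(v)\) as stable in \(\D_1^d\). The paired radii \(r_i=s_i=1\) satisfy \(r_is_i\ge1\), so Lemma~\ref{pre:grace} shows that \(Tp(z)=[K_T(z,u),p(v)]_\kappa\) is stable on \(\D_1^d\) or identically zero. Invertibility rules out zero, so \(Tp\in\mathcal C_\kappa\).

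\emph{Strict assertion.} If \(K_T\) is zero-free on \(\D_r^{2d}\) with \(r>1\), pair the \(u\)-variables at radius \(r\) against \(p\) at radius \(1\). Then \(r\cdot1>1\), and the strict clause of Lemma~\ref{pre:grace} gives \(Tp(z)\ne0\) at every point of the remaining disks \(\D_r^d\). In particular \(Tp\) is nonzero, and no invertibility is needed. One point to check: Lemma~\ref{pre:grace} is stated with \(f\) stable in a product of disks, and here \(f=K_T\) is zero-free on the open product \(\D_r^d\times\D_r^d\), which is exactly the hypothesis. The uncontracted \(z\)-radius is \(r\).

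\emph{Necessity.} Suppose \(T\) is invertible and preserves \(\mathcal C_\kappa\). Fix \(w\in\D_1^d\). The polynomial \(g_w(z)=\prod_i(1+z_iw_i)^{\kappa_i}\) lies in \(V_\kappa\) and is zero-free on \(\D_1^d\), since \(|z_iw_i|<1\). Hence \(T g_w=K_T(\cdot,w)\) is nonzero and zero-free on \(\D_1^d\) in \(z\); it is nonzero because \(g_w\ne0\) and \(T\) is invertible. Varying \(w\) over \(\D_1^d\) shows that \(K_T(z,w)\ne0\) for all \((z,w)\in\D_1^{2d}\). This uses the first expression in \eqref{spec:kernel}, where \(T\) acts in \(z\) with \(w\) as parameters, and \(T\) acting linearly commutes with freezing \(w\).

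I expect no serious obstacle. The only delicate point is whether Lemma~\ref{pre:grace} already contains the pairing convention \(b_\alpha^{-1}\) used in \eqref{spec:pairing}, and it does, by \eqref{pre:contraction}. The rest is bookkeeping: which variables are contracted, and that the identity \([K_T(z,\cdot),p]_\kappa=Tp(z)\) follows from \(b_\alpha\cdot b_\alpha^{-1}=1\) termwise.
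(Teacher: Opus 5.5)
Your proposal is correct and follows essentially the same route as the paper's proof. Necessity comes from applying \(T\) to \(\prod_i(1+z_iw_i)^{\kappa_i}\) with \(w\) frozen. Sufficiency and the strict-radius clause both come from contracting \(K_T\) against \(p\) via Lemma~\ref{pre:grace}, with invertibility needed to exclude the zero output only in the non-strict case.
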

\begin{proof}
For fixed \(w\in\D_1^d\), the stable polynomial
\(\prod_i(1+z_iw_i)^{\kappa_i}\) has a nonzero stable image
under an invertible preserver, so evaluation at \(z\in\D_1^d\)
gives nonvanishing of the kernel. Conversely, contraction of a
nonvanishing kernel with a stable input yields a stable polynomial
or zero by Lemma~\ref{pre:grace}; for nonzero input the latter
is impossible because the operator is invertible. When the kernel radius is \(r>1\),
the paired radii have product \(r\), so strict contraction itself
excludes zero and retains output radius \(r\).
\end{proof}

\begin{lemma}\label{spec:cone-interior}
The set \(\mathcal C_\kappa\) is a closed, complex-scalar-invariant
cone containing no complex two-dimensional subspace.  Its interior consists
exactly of the strictly stable polynomials.  Every invertible operator
preserving this cone maps its interior into its interior.
\end{lemma}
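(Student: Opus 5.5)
I will prove the four assertions of Lemma~\ref{spec:cone-interior} in order: closedness and scalar invariance, the absence of complex two-dimensional subspaces, the description of the interior, and the interior-preservation property.

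\emph{Closedness and scalar invariance.} Scalar invariance is immediate, since \(\lambda p\) has the same zeros as \(p\) for \(\lambda\ne0\), and \(0\in\mathcal C_\kappa\) by definition. For closedness, take \(p_n\in\mathcal C_\kappa\) with \(p_n\to p\) in the finite-dimensional space \(V_\kappa\). Convergence is then locally uniform on \(\D_1^d\). If \(p\ne0\), the multivariate Hurwitz theorem shows that \(p\) is zero-free on \(\D_1^d\), because the open polydisk is connected.

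\emph{No complex two-dimensional subspace.} Suppose \(p,q\) are linearly independent and every combination \(ap+bq\) lies in \(\mathcal C_\kappa\). Then \(q/p\) is holomorphic on \(\D_1^d\). It is also nonconstant, since otherwise \(p,q\) would be dependent. Hence, by the open mapping theorem applied on a complex line through a point where \(q/p\) is nonconstant, \(q/p\) takes some value \(-c\) at an interior point. At that point \(q+cp\) vanishes. This combination is nonzero, so it is not in the cone, a contradiction.

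\emph{The interior.} First, a strictly stable \(p\) has \(|p|\ge\delta>0\) on the compact set \(\overline{\D_1^d}\). Any \(q\) with \(\sup_{\overline{\D_1^d}}|q-p|<\delta\) is then zero-free there, and coefficient-norm closeness controls this supremum. So strictly stable polynomials are interior points.

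Conversely, let \(p\in\mathcal C_\kappa\) vanish at some \(\zeta\in\overline{\D_1^d}\); by the previous paragraph, \(\zeta\) lies on the boundary. I will exhibit nearby polynomials \(p-\varepsilon g\) that vanish inside \(\D_1^d\). This includes the case \(p=0\).
\begin{itemize}
\item The simplest choice is a constant perturbation: pick an interior point \(z_0\) near \(\zeta\) and let \(g=p(z_0)\), which is small by continuity. Then \(p-p(z_0)\) vanishes at \(z_0\).
\item This polynomial is nonzero unless \(p\) is constant. A constant \(p\) vanishing at \(\zeta\) is \(0\), and then \(\varepsilon\,(z_1-\tfrac12)\) is a small element outside the cone.
\item Since \(p(z_0)\to0\) as \(z_0\to\zeta\), these perturbations are arbitrarily small, so \(p\) is not interior.
\end{itemize}

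\emph{Interior preservation.} Let \(T\) be invertible with \(T\mathcal C_\kappa\subseteq\mathcal C_\kappa\). Then \(T\) is a homeomorphism of \(V_\kappa\), so it maps the open set \(\operatorname{int}\mathcal C_\kappa\) onto an open subset of \(\mathcal C_\kappa\). Any open subset of \(\mathcal C_\kappa\) is contained in \(\operatorname{int}\mathcal C_\kappa\), which finishes the proof. No kernel radius argument is needed here.

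The only potentially delicate step is the converse direction of the interior description. It is handled by the constant-shift perturbation \(p-p(z_0)\), with \(z_0\) an interior point near the boundary zero \(\zeta\).
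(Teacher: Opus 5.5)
Your proof is correct and follows the paper's outline: Hurwitz for closedness, compactness for strictly stable $\Rightarrow$ interior, and openness of an invertible $T$ for interior preservation. It differs from the paper in two sub-steps, neither of which changes the substance.

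\emph{No complex plane.} The paper notes that the evaluation functional $p\mapsto p(0)$ has a nonzero kernel vector in any complex two-dimensional subspace, and that vector cannot lie in the cone. Your route through $q/p$ is the same idea at a general point. The open mapping theorem, and even the nonconstancy of $q/p$, are superfluous. Any $z^*\in\D_1^d$ gives the combination $q-(q/p)(z^*)\,p$, which is nonzero by linear independence and vanishes at $z^*$. The paper's version does not even need $p$ to be zero-free.

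\emph{Boundary zeros are not interior points.} The paper dilates: $p((1+\varepsilon)z)$ is nonzero because dilation is invertible, and it moves the boundary zero $\zeta$ to the interior zero $\zeta/(1+\varepsilon)$. You instead subtract the small constant $p(z_0)$ for an interior point $z_0$ near $\zeta$, which is nonzero whenever $p$ is nonconstant. Both arguments are equally elementary. Both treat $p=0$ separately with a small multiple of a linear polynomial: $\varepsilon z_1$ in the paper, $\varepsilon(z_1-\tfrac12)$ in yours.

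One small misattribution: a zero of a nonzero $p\in\mathcal C_\kappa$ in the closed polydisk lies on the boundary by the definition of the cone, not by the preceding paragraph.
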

\begin{proof}
By Hurwitz's theorem on the connected polydisk,
\(\mathcal C_\kappa\) is closed, and scalar invariance follows
from its definition. Every complex two-dimensional subspace contains
a nonzero polynomial annihilated by evaluation at zero, and hence
cannot be contained in this cone.
By compactness, a polynomial nonvanishing on the closed polydisk
remains so under small coefficient perturbations. In the other
direction, a boundary zero \(\zeta\) of \(p\) becomes an
interior zero \(\zeta/(1+\varepsilon)\) of the arbitrarily
nearby polynomial \(p((1+\varepsilon)z)\), while small nonzero
multiples of \(z_1\) show that zero is not an interior point.
This identifies the interior; an invertible cone-preserving operator
maps it to an open subset of the cone, since the operator is open.
\end{proof}

\subsection{Strictification by degree damping}

Put \(E_i=z_i\partial_i\), \(E=\sum_iE_i\), and \(P_0p=p(0)\).
The proof of the next theorem treats the damping rate as a complex
parameter. Hurwitz's theorem then reduces boundary nonvanishing to
a limit at large positive damping.

\begin{theorem}\label{spec:strict}
Suppose \(A_0\in\operatorname{End}_{\C}(V_\kappa)\) satisfies
\(e^{tA_0}\mathcal C_\kappa\subseteq\mathcal C_\kappa\) for
\(t\ge0\).  Let \(\mu_i>0\) and
\[
 A=A_0-2\sum_{i=1}^d\mu_i E_i.
\]
For every \(t>0\), the kernel \(K_{e^{tA}}\) is nonzero on the
closed unit polydisk in all \(2d\) variables.  Consequently it has a
zero-free radius \(r_t>1\).  The same radius works for the image of
every nonzero unit-disk-stable input polynomial.

Moreover, for each \(t_0>0\), one radius \(r>1\) works simultaneously
for the kernels \(K_{e^{tA}}\), \(t\ge t_0\), and for their action
on all nonzero unit-disk-stable inputs.
\end{theorem}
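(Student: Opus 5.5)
The plan is to prove boundary nonvanishing of the kernel first, for each fixed \(t>0\), and then derive every other assertion by compactness and by contracting kernels with Lemma~\ref{pre:grace}.

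\emph{Step 1: open polydisk.} For every \(\lambda\ge0\) put \(A_\lambda=A_0-2\lambda\sum_i\mu_iE_i\). The flow \(e^{-sE_i}\) sends \(p\) to \(p(\ldots,e^{-s}z_i,\ldots)\), so for \(s\ge0\) it preserves \(\mathcal C_\kappa\). The Lie product formula, Hurwitz's theorem and invertibility then show that \(e^{tA_\lambda}\) preserves \(\mathcal C_\kappa\). By Lemma~\ref{spec:kernel-criterion}, each kernel \(K_{e^{tA_\lambda}}\) is zero-free on \(\D_1^d\times\D_1^d\).

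\emph{Step 2: the closed polydisk, by analytic continuation in \(\lambda\) (main obstacle).} Fix \(t>0\) and a point \((z,w)\) of the closed polydisk, and set
\[
g(\lambda)=K_{e^{tA_\lambda}}(z,w),\qquad \lambda\in\C .
\]
This is an entire function of \(\lambda\), since \(e^{tA_\lambda}\) is entire in \(\lambda\) as a finite-dimensional exponential.

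To obtain nonvanishing for complex \(\lambda\), I would move the boundary point inward and use the identity
\[
K_{Te^{-sE}}(z,w)=K_T(z,e^{-s}w).
\]
The plan is to compare \(e^{tA_\lambda}\) with \(e^{tA_{\lambda-\varepsilon}}e^{-2\varepsilon t\mu E}\) through a Duhamel or Trotter splitting. This should show that for \(\Re\lambda>0\), \(g(\lambda)\) is a locally uniform limit of kernels of preservers evaluated at points strictly inside \(\D_1^{2d}\). Those values are nonzero by Step 1, so Hurwitz's theorem in \(\lambda\) gives that \(g\) is either identically zero or zero-free on \(\{\Re\lambda>0\}\).

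To exclude \(g\equiv0\), I would examine large positive \(\lambda\). In that regime \(e^{tA_\lambda}\) should approach a nonzero multiple of \(P_0\), because the constant sector is the unique undamped eigenspace of \(\sum_i\mu_iE_i\). The kernel of \(P_0\) is the constant \(1\). Hence \(g(\lambda)\) is a nonzero scalar times \(1+o(1)\), so \(g\not\equiv0\) and \(g(1)\ne0\).

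I expect Step 2 to be the hardest part. In particular, the inward-shift comparison is delicate because \(A_0\) and \(E\) do not commute. If the splitting fails, my fallback is the perturbation expansion of \(e^{tA_\lambda}\) in \(1/\lambda\) around the projection \(P_0\), combined with the argument principle in \(\lambda\).

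\emph{Step 3: a radius and its action on inputs.} The kernel is continuous and the closed polydisk is compact, so nonvanishing on the closed polydisk gives a zero-free radius \(r_t>1\). The second assertion of Lemma~\ref{spec:kernel-criterion} then shows that the image of every nonzero disk-stable input is nonzero and zero-free on \(\D_{r_t}^d\).

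\emph{Step 4: uniformity for \(t\ge t_0\).} For \(t\ge t_0\) I would factor
\[
e^{tA}=e^{(t_0/2)A}\,e^{(t-t_0)A}\,e^{(t_0/2)A}.
\]
The outer kernels are zero-free on radius \(r=r_{t_0/2}>1\) in both legs. The middle kernel is zero-free on \(\D_1\times\D_1\) by Step 1 (it is the identity when \(t=t_0\)). In both contractions one paired radius is \(r\) and the other is \(1\), so each product is \(r>1\). Lemma~\ref{pre:grace} therefore gives a nonzero kernel on \(\D_r^{d}\times\D_r^{d}\), with the uncontracted outer radii \(r\) retained. This radius is independent of \(t\), and Lemma~\ref{spec:kernel-criterion} transfers it to the action on all nonzero disk-stable inputs.
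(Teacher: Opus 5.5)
Your Steps 1, 3 and 4 agree with the paper, including the factorization \(e^{tA}=Be^{(t-t_0)A}B\) with \(B=e^{t_0A/2}\). Your large-damping limit \(e^{tA_\lambda}\to e^{ta_{00}}P_0\), with \(a_{00}=(A_01)(0)\), is what the paper proves by a Dyson expansion and dominated convergence.

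The gap is in Step 2. Hurwitz's theorem in \(\lambda\) needs approximating functions that are zero-free on an \emph{open} set of \(\lambda\), so on non-real \(\lambda\). Step 1 only gives preservation for real \(\lambda\ge0\). Your claim that ``those values are nonzero by Step 1'' is therefore unsupported when \(\Im\lambda\ne0\). Nonvanishing on the real ray alone does not survive the limit: for example, \((\lambda-1)^2+(1-\rho)^2\to(\lambda-1)^2\). The proposed Duhamel/Trotter comparison does not repair this, for two reasons. First, the operators \(e^{tA_{\lambda-\varepsilon}}\) at complex \(\lambda\) are themselves not known to be preservers. Second, the identity \(K_{Te^{-sE}}(z,w)=K_T(z,e^{-s}w)\) moves only the \(w\)-leg inward, so \(z\) stays on the boundary. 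Moving the evaluation point inward was never the difficulty; knowing preservation at complex \(\lambda\) is.

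The missing observation is that Step 1 works verbatim for complex \(\lambda\) with \(\Re\lambda\ge0\). The map \(e^{-2s\lambda\mu_iE_i}p=p(\ldots,e^{-2s\lambda\mu_i}z_i,\ldots)\) is a rotation composed with a contraction, since \(|e^{-2s\lambda\mu_i}|\le1\). Hence it preserves \(\mathcal C_\kappa\). The Lie product formula, Hurwitz's theorem and invertibility then give preservation by \(e^{tA_\lambda}\) for all such \(\lambda\).

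With this in hand no operator splitting is needed. For \(0<\rho<1\), the functions \(\lambda\mapsto K_{e^{tA_\lambda}}(\rho z,\rho w)\) are holomorphic and zero-free on \(\{\Re\lambda>0\}\) by Lemma~\ref{spec:kernel-criterion}. They converge locally uniformly to \(g\) as \(\rho\uparrow1\). Hurwitz's theorem together with your large-\(\lambda\) limit then gives \(g(1)\ne0\). This is the paper's argument.

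The paper treats uniform damping and absorbs the excess rates into \(A_0\). Your parametrization \(A_0-2\lambda\sum_i\mu_iE_i\) handles nonuniform rates directly. That also works, since \(\sum_i\mu_iE_i\) is nonnegative and diagonal with kernel the constants, so the same Dyson limit applies.
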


\begin{proof}
For uniform damping, fix \(t>0\) and regard
\(T_t(\mu)=e^{t(A_0-2\mu E)}\) as a function on \(\Re\mu>0\).
The dilation
\[
 e^{-2s\mu E}p(z)=p(e^{-2s\mu}z)
\]
preserves disk stability for \(s\ge0\), so each Lie product
\[
 \left(e^{tA_0/m}e^{-2t\mu E/m}\right)^m
\]
preserves stability, as does its invertible limit \(T_t(\mu)\)
by the finite-dimensional product formula and Hurwitz's theorem.
Lemma~\ref{spec:kernel-criterion} therefore gives
\begin{equation}\label{spec:open-complex-field}
 K_{T_t(\mu)}(z,w)\ne0
 \quad\text{if }z,w\in\D_1^d,\quad\Re\mu>0.
\end{equation}
We next exclude the possibility that a boundary value of the kernel
vanishes identically in \(\mu\). To this end set
\(a_{00}=(A_01)(0)\) and let \(\mu\to+\infty\).
In the Euclidean monomial norm, \(E\) is diagonal and nonnegative
with kernel the constants, whence
\[
 \|e^{-2s\mu E}\|\le1,
 \qquad e^{-2s\mu E}\longrightarrow P_0\quad(s>0).
\]
The Dyson expansion of \(T_t(\mu)\) about \(-2\mu E\) has
order-\(k\) term, for \(k\ge1\), equal to the integral of
\[
 e^{-2\mu(t-s_k)E}A_0e^{-2\mu(s_k-s_{k-1})E}
 \cdots A_0e^{-2\mu s_1E}
\]
over \(0<s_1<\cdots<s_k<t\). Almost everywhere on this simplex,
all time intervals are positive, so the integrand converges to
\(a_{00}^kP_0\) and is bounded in norm by \(\|A_0\|^k\).
By dominated convergence, the integral tends to
\(t^ka_{00}^kP_0/k!\), and the summable majorant \(t^k\|A_0\|^k/k!\) allows passage
through the full series, including its order-zero term:
\begin{equation}\label{spec:large-damping}
 T_t(\mu)\longrightarrow e^{ta_{00}}P_0
 \quad\text{in operator norm as real }\mu\longrightarrow+\infty.
\end{equation}
This finite-dimensional strong-coupling limit, comparable to
\cite[Theorem~1]{Burgarth19}, gives the nonzero constant kernel
\(e^{ta_{00}}\), since \(K_{P_0}=1\).

For fixed \((z,w)\in\overline{\D_1^d}\times\overline{\D_1^d}\)
and \(0<\rho<1\), the functions
\[
 f_\rho(\mu)=K_{T_t(\mu)}(\rho z,\rho w)
\]
are holomorphic and nonvanishing on \(\Re\mu>0\), and converge
locally uniformly as \(\rho\uparrow1\), since the kernel has
only finitely many entire coefficient functions. By Hurwitz's theorem,
the limit \(K_{T_t(\mu)}(z,w)\) is either identically zero
or nowhere zero as a function of \(\mu\). It cannot vanish identically,
since its large-damping limit in \eqref{spec:large-damping} is nonzero.
Thus the kernel is nonvanishing on the closed polydisk at every
real \(\mu>0\). By compactness there is a radius \(r_t>1\)
on which it remains nonvanishing; by Lemma~\ref{spec:kernel-criterion},
the same radius works for the image of every nonzero stable input.

For nonuniform rates, put \(\mu=\min_i\mu_i>0\) and absorb
\(-2\sum_i(\mu_i-\mu)E_i\) into \(A_0\). The product-formula
argument still gives a stability generator, to which the uniform
result applies with the remaining damping \(-2\mu E\).

Finally fix \(t_0>0\), and choose \(r>1\) for
\(B=e^{t_0A/2}\).  For every \(t\ge t_0\),
\[
 e^{tA}=B e^{(t-t_0)A}B.
\]
The middle kernel has radius one, also when \(t=t_0\), while
the two outer kernels have radius \(r\). Both contractions therefore
pair radii with product \(r>1\) and retain radius \(r\) on the
external legs, uniformly for \(t\ge t_0\); contraction with a
stable input retains the same output radius.
\end{proof}

\begin{remark}\label{spec:strict-scope}
The radius depends on the generator, the coordinate degrees, and
\(t_0\). Example~\ref{spec:no-uniform-radius} shows that dependence
on the generator is necessary even in one variable. The strict bound
refers to the damped variables. Multiplication
by a nonzero scalar leaves the kernel zeros unchanged.
\end{remark}

\subsection{Principal eigenvectors}

We use the following finite-dimensional complex-cone theorem of Rugh
\cite[Theorem~8.4]{Rugh10}: if a nontrivial closed complex-scalar-invariant cone
contains no complex plane and a linear map sends its nonzero part into
its interior, then the map has an algebraically simple nonzero
eigenvalue of strictly largest modulus.

\begin{proposition}\label{spec:perron}
Under the hypotheses of Theorem~\ref{spec:strict}, \(A\) has an
algebraically simple eigenvalue \(a_\star\) uniquely largest in real
part.  Choose its right eigenvector \(v\) and left eigenfunctional
\(\ell\) so that \(\ell(v)=1\).  Both \(v\) and
\begin{equation}\label{spec:dual-polynomial}
 Q_\ell(w)=\sum_{\alpha\le\kappa}b_\alpha\ell(z^\alpha)w^\alpha
\end{equation}
are zero-free on a common polydisk of radius greater than one.
Furthermore \(\ell(p)\ne0\) for every nonzero stable \(p\), and
\(e^{-ta_\star}e^{tA}p\to\ell(p)v\).
\end{proposition}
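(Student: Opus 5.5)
Fix \(t=1\) and apply Rugh's theorem to \(T=e^{A}\) on the cone \(\mathcal C_\kappa\). Lemma~\ref{spec:cone-interior} supplies the hypotheses on the cone: it is closed, complex-scalar-invariant, contains no complex plane, and its interior is exactly the strictly stable polynomials. Theorem~\ref{spec:strict} shows that \(T\) maps every nonzero element of \(\mathcal C_\kappa\) into this interior, since the image is nonzero and zero-free on a polydisk of radius \(r_1>1\), hence strictly stable. Rugh's theorem therefore gives an algebraically simple eigenvalue \(\lambda_\star\) of \(T\) of strictly largest modulus.

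To pass to \(A\), note that \(\operatorname{spec}(e^{A})=e^{\operatorname{spec}(A)}\), with algebraic multiplicities added over eigenvalues having equal exponentials. So \(\lambda_\star=e^{a_\star}\) for exactly one eigenvalue \(a_\star\) of \(A\), and that eigenvalue is algebraically simple. Strict modulus dominance means \(\Re a_\star>\Re a\) for every other eigenvalue \(a\). Some care is needed here, because another eigenvalue \(a'=a_\star+2\pi i k\) would have the same modulus. Two eigenvalues of \(A\) with this property would both map to \(\lambda_\star\), contradicting algebraic simplicity.

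\textbf{Eigenvector.} I would take \(v\) in the cone. To do so, run the Rugh argument, or a standard power iteration, on any nonzero stable \(p\) such as the constant \(1\). Since \(\lambda_\star\) is simple and dominant, \(\lambda_\star^{-n}T^np\to\Pi p\), where \(\Pi=v\ell(\cdot)\) is the spectral projection. The limit lies in \(\mathcal C_\kappa\) because the cone is closed and scalar-invariant.

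The key claim is that \(\ell(p)\ne0\) for every nonzero stable \(p\). Since \(\Pi\) commutes with \(T\), if \(\Pi p\ne0\) then \(\Pi p\in\mathcal C_\kappa\) is an eigenvector, and \(v=T(\Pi p)/\lambda_\star\) lies in the interior. Showing \(\ell(p)\ne0\) is the main obstacle. Rugh's contraction in the complex Hilbert metric gives convergence of every cone element to the dominant direction, which rules out \(\Pi p=0\) directly.

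An alternative route avoids quoting this. Suppose \(\ell(p)=0\) for some nonzero stable \(p\). Perturb toward \(v\): \(p_\epsilon=p+\epsilon v\) stays in the interior for small \(\epsilon\), and so does \(v\). Then \(\ell\) restricted to the open cone interior vanishes on a hypersurface meeting it. Complex scalar invariance together with connectedness of the interior minus \(\ker\ell\) gives a contradiction with the convergence of \(\lambda_\star^{-n}T^n\) on the open interior to a continuous map into \(\mathcal C_\kappa\), whose limit \(\ell(q)v\) must be nonzero on the interior by Lemma~\ref{spec:cone-interior}.

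I would first try the Hilbert-metric contraction statement from \cite{Rugh10}, since it includes convergence of all cone elements. Once \(\ell(p)\ne0\) is established, the convergence \(e^{-ta_\star}e^{tA}p\to\ell(p)v\) for real \(t\to\infty\) follows from the spectral decomposition: the dominant real part isolates the rank-one projection.

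\textbf{Radii.} For \(v\): since \(v=e^{-a_\star}e^{A}v\) and \(v\) is stable and nonzero, Theorem~\ref{spec:strict} gives that \(v\) is zero-free on \(\D_{r_1}^d\).

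For \(Q_\ell\), use the kernel limit. By the ratio argument,
\[
e^{-ta_\star}K_{e^{tA}}(z,w)\to K_\Pi(z,w)=v(z)\,Q_\ell(w),
\]
because \(K_\Pi(z,w)=\sum_\alpha b_\alpha v(z)\ell(z^\alpha)w^\alpha\). By the uniform part of Theorem~\ref{spec:strict}, the kernels for \(t\ge t_0\) are zero-free on a common \(\D_r^{2d}\). Hurwitz's theorem then makes the limit either zero-free on \(\D_r^{2d}\) or identically zero, and it is not zero since \(v\ne0\) and \(\ell\ne0\). Hence \(Q_\ell\) is zero-free on \(\D_r^d\). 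Taking the smaller of the two radii gives a common polydisk of radius greater than one.
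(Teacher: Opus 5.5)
Your architecture matches the paper's: Rugh's theorem for \(T=e^{tA}\), spectral mapping to transfer simplicity and real-part dominance to \(A\), and Hurwitz applied to the normalized kernels converging to \(K_\Pi(z,w)=v(z)Q_\ell(w)\).

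The weak point is the step you flag yourself, \(\ell(p)\neq0\). Your primary justification appeals to convergence, in Rugh's projective gauge, of every cone element to the dominant direction. That is a stronger, external form of Rugh's result than the one the paper quotes, which gives only a simple dominant eigenvalue. You also leave it as something you ``would first try''.

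Your fallback does not work. The cone \(\mathcal C_\kappa\) is not convex, so \(p+\epsilon v\) need not lie in the cone when \(p\) is a boundary point. For example, \((1+z)^2+i\epsilon\) has a zero inside \(\D\). Moreover, Lemma~\ref{spec:cone-interior} says nothing about \(q\mapsto\ell(q)v\) being nonzero on the interior. That nonvanishing is exactly what you are trying to prove.

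The missing observation is that your radius paragraph already settles the claim. The coefficients of \(Q_\ell\) are \(b_\alpha\ell(z^\alpha)\), so \(\ell(p)=[Q_\ell,p]_\kappa\). Once Hurwitz shows that \(Q_\ell\) is zero-free on \(\D_r^d\) with \(r>1\), the strict case of Lemma~\ref{pre:grace} applies, since the paired radii satisfy \(r\cdot1>1\). It makes this contraction nonzero for every nonzero unit-disk-stable \(p\). This is exactly the paper's argument.

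The same factorization also makes your power-iteration construction of \(v\) unnecessary. Since \(\Pi\neq0\) and the kernel transform is injective, \(K_\Pi\not\equiv0\). Hurwitz then gives at once that both \(v\) and \(Q_\ell\) are zero-free on \(\D_r^d\), with no prior need to know that \(\Pi p\neq0\) for some stable \(p\).

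So reorder the proof: first obtain the radii from the kernel limit, then deduce \(\ell(p)\neq0\) by contraction. This closes the gap using only the quoted form of Rugh's theorem.
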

\begin{proof}
For fixed \(t>0\), Theorem~\ref{spec:strict} and
Lemma~\ref{spec:cone-interior} place \(T=e^{tA}\) under
Rugh's theorem, since the cone contains the constants and is
therefore nontrivial. As \(A\) commutes with \(T\), the simple
dominant eigenspace of \(T\) is an \(A\)-eigenline, say for
\(a_\star\). By spectral mapping, a distinct eigenvalue of \(A\)
with the same real part would give either a second eigenvalue of
\(T\) of maximal modulus or a second copy of its dominant eigenvalue,
contrary to strict dominance and simplicity. A Jordan block at \(a_\star\)
would likewise persist under exponentiation, because
\(t e^{ta_\star}\ne0\), contradicting simplicity of \(T\).

Fix a kernel radius \(r>1\) for \(T\), and put
\(\Lambda=e^{ta_\star}\). By finite-dimensional spectral theory,
\[
 \Lambda^{-m}T^m\longrightarrow P=v\otimes\ell,
 \qquad \ell(v)=1.
\]
Internal contraction preserves radius \(r\) for every power,
since the paired radii have product \(r^2>1\). Because the kernel
transform is injective, \(K_P\) is nonzero, and hence by Hurwitz's theorem
\[
 K_P(z,w)=v(z)Q_\ell(w)\ne0\qquad(z,w\in\D_r^d).
\]
Both factors have radius \(r\), and strict weighted contraction
of \(Q_\ell\) with a nonzero unit-stable \(p\) shows that
\(\ell(p)\ne0\). After normalization, every other term in the
finite-dimensional spectral expansion decays by the strict real-part
separation, even in the presence of the polynomial factors arising
from Jordan blocks.
\end{proof}

\subsection{Sharp spectral bounds}

The spectral and dynamical estimates both use the following diameter
form of the Schwarz lemma, as in \cite[Theorem~2]{BGLW26}.

\begin{lemma}\label{spec:schwarz}
If \(g\) is holomorphic on \(\D_1^d\), continuous on its closure,
and \(0\le q\le1\), then
\[
 \operatorname{diam}g(q\overline{\D_1^d})
 \le q\operatorname{diam}g(\overline{\D_1^d}).
\]
The same conclusion holds for a coordinatewise dilation all of whose
moduli are at most \(q\).
\end{lemma}
\begin{proof}
It suffices to take \(0<q<1\) and positive diameter. For
\(x,y\in q\overline{\D_1^d}\) with
\(\rho=\max(\|x\|_\infty,\|y\|_\infty)>0\), the function
\[
 f(\zeta)=
 \frac{g(\zeta x/\rho)-g(\zeta y/\rho)}
 {\operatorname{diam}g(\overline{\D_1^d})}
\]
is holomorphic on the disk, vanishes at zero, and has modulus
at most one. Schwarz's lemma at \(\zeta=\rho\) gives
\(|g(x)-g(y)|\le\rho\operatorname{diam}g(\overline{\D_1^d})\);
taking the supremum proves the dilation estimate, and the
coordinatewise version follows because its image lies in
\(q\overline{\D_1^d}\).
\end{proof}

\begin{proposition}\label{spec:radius-gap}
Suppose \(r>1\) and \(K_T\) is zero-free on
\(\D_r^d\times\D_r^d\).  Then \(T\) has an algebraically simple
nonzero eigenvalue \(\Lambda_\star\) of strictly largest modulus,
with an eigenvector of radius \(r\).  Every other eigenvalue satisfies
\begin{equation}\label{spec:static-ratio}
 |\Lambda/\Lambda_\star|\le r^{-2}.
\end{equation}
No invertibility, Hermiticity, or diagonalizability is assumed.
\end{proposition}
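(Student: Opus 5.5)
The plan is to reduce everything to the kernel-contraction Lemma~\ref{pre:grace}, in the form of Lemma~\ref{spec:kernel-criterion}, and then to run a Schwarz-lemma diameter argument on the quotient of a second eigenvector by the principal one. Two points matter throughout: the input legs can be taken at radius $1/r$, and the output comes back at radius $r$; this two-sided use of the radius is what produces $r^{-2}$ rather than $r^{-1}$.

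\emph{Existence and simplicity.} By the last assertion of Lemma~\ref{spec:kernel-criterion}, which needs no invertibility, $T$ sends every nonzero element of $\mathcal C_\kappa$ to a polynomial that is nonzero and zero-free on $\D_r^d$. Since $r>1$, that polynomial is strictly stable, so by Lemma~\ref{spec:cone-interior} it lies in the interior of $\mathcal C_\kappa$. The cone is nontrivial (it contains the constants), closed, and contains no complex plane. Rugh's theorem therefore gives an algebraically simple nonzero eigenvalue $\Lambda_\star$ of strictly largest modulus.

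\emph{Eigenvector radius.} Exactly as in Proposition~\ref{spec:perron}, we have $\Lambda_\star^{-m}T^m\to P=v\otimes\ell$. Contracting kernels internally, with paired radii of product $r^2>1$, keeps every $K_{T^m}$ zero-free on $\D_r^d\times\D_r^d$. Since $K_P\ne0$, Hurwitz's theorem gives $K_P(z,w)=v(z)Q_\ell(w)\neq0$ there, so $v$ has radius $r$.

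\emph{Gap.} Let $\Lambda\ne\Lambda_\star$ be another eigenvalue; if $\Lambda=0$ there is nothing to prove. Take a genuine eigenvector $u$ with $Tu=\Lambda u$, which exists even without diagonalizability. By simplicity, $u$ is not a multiple of $v$. Put
\[
g=u/v, \qquad \lambda=\Lambda/\Lambda_\star .
\]
Then $g$ is holomorphic and nonconstant on $\D_r^d$.

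The key step is the implication
\[
c\notin g(\D_{1/r}^d)\quad\Longrightarrow\quad c\notin\lambda\, g(\D_r^d).
\]
To prove it, let $p=u-cv$. The hypothesis says $p$ is zero-free on $\D_{1/r}^d$. Contract $K_T$ (radius $r$) against $p$ (radius $1/r$); the paired radii have product $1$. By Lemma~\ref{pre:grace}, $Tp=\Lambda_\star v(\lambda g-c)$ is either zero-free on $\D_r^d$ or identically zero. It cannot vanish identically, because $g$ is nonconstant and $\lambda\ne0$. Since $v$ has no zeros on $\D_r^d$, $\lambda g-c$ has none there either.

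Taking complements gives $\lambda g(\D_r^d)\subseteq g(\D_{1/r}^d)$. To compare diameters, work on closed polydisks of radius $r'<r$ inside. Lemma~\ref{spec:schwarz}, applied to $\zeta\mapsto g(r'\zeta)$ with $q=(r r')^{-1}$, then yields
\[
|\lambda|\operatorname{diam}g(\overline{\D_{r'}^d})\le (rr')^{-1}\operatorname{diam} g(\overline{\D_{r'}^d}).
\]
The diameter is positive because $g$ is nonconstant, so we may divide by it and let $r'\uparrow r$ to obtain \eqref{spec:static-ratio}.

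I expect the main obstacle to be the bookkeeping in this last step. First, the weak (product-one) case of Lemma~\ref{pre:grace} must be applied on the correct legs, with the input at radius $1/r$ and the output at radius $r$. Second, the inclusion has to be passed to closed sets where $g$ is bounded, since $g$ may be unbounded on the open $\D_r^d$; shrinking to $r'<r$ should handle this. Third, the zero alternative has to be excluded, which follows from nonconstancy of $g$ as shown above.
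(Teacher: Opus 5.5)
Your proposal is correct and follows essentially the same route as the paper: Rugh's theorem gives the simple dominant eigenvalue, you obtain an eigenvector of radius $r$, kernel contraction gives an image inclusion for $g=u/v$, and the Schwarz diameter lemma finishes. The differences are cosmetic. You get the eigenvector radius from the Perron limit $\Lambda_\star^{-m}T^m\to v\otimes\ell$, as in Proposition~\ref{spec:perron}, rather than by iterating an interior cone vector. You also use the product-one case of Lemma~\ref{pre:grace} at input radius $1/r$, excluding the zero alternative because $g$ is nonconstant, where the paper uses strict contraction at radii $s>1/r$ and then lets $s\downarrow 1/r$.
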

\begin{proof}
Lemma~\ref{spec:kernel-criterion} sends the nonzero stable cone
into its interior, giving the simple dominant eigenvalue by Rugh's
theorem. Its spectral projection is nonzero on some interior
cone vector, since the kernel of a nonzero projection cannot
contain an open set. Normalized iterates of this vector converge
to a nonzero eigenvector \(\psi\) in the closed cone, and one
further application of \(T\) gives it radius \(r\).

Let \(\phi\) be an eigenvector of a different nonzero eigenvalue,
and put \(g=\phi/\psi\), \(\alpha=\Lambda/\Lambda_\star\).
Since \(g\) is nonconstant and holomorphic on \(\D_r^d\), we
can compare its image with that of \(\alpha g\): for
\(1/r<s<r\),
\[
 \alpha g(\D_r^d)\subseteq g(\D_s^d).
\]
Otherwise, for some \(v\in\D_r^d\), the nonzero polynomial
\(f=\phi-\alpha g(v)\psi\) would be stable on \(\D_s^d\), but
\(Tf=\Lambda(\phi-g(v)\psi)\) would vanish at \(v\).
This contradicts strict contraction, since \(rs>1\).

For \(1/r<s<t<r\), let
\(\Delta_u=\operatorname{diam}g(\overline{\D_u^d})\).
These finite positive diameters satisfy
\(|\alpha|\Delta_t\le\Delta_s\le(s/t)\Delta_t\), by the image
inclusion and Lemma~\ref{spec:schwarz} after rescaling radius \(t\).
Cancelling \(\Delta_t\) and taking \(s\downarrow1/r\),
\(t\uparrow r\) gives \eqref{spec:static-ratio}; zero
eigenvalues already satisfy the bound.
\end{proof}

The quantitative mechanism in this proof is the ratio-squeezing argument
of \cite{BGLW26}; we use the complex-cone theorem to obtain the eigenvector
without a positive-semidefinite hypothesis.

\begin{theorem}\label{spec:gap}
Under the hypotheses of Theorem~\ref{spec:strict}, set
\(\mu=\min_i\mu_i\).  The dominant generator eigenvalue satisfies
\begin{equation}\label{spec:real-gap}
 \Re a_\star-\Re a\ge2\mu
 \qquad(a\in\operatorname{spec}(A),\ a\ne a_\star).
\end{equation}
The constant is sharp for every vector of positive finite degree bounds.
\end{theorem}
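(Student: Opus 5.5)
Two parts: the gap bound \eqref{spec:real-gap}, and sharpness.

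\emph{Reduction to $\mu_i=\mu$.} As in the proof of Theorem~\ref{spec:strict}, we absorb the excess damping $-2\sum_i(\mu_i-\mu)E_i$ into $A_0$. The Lie product formula and Hurwitz's theorem show that $A_0'=A_0-2\sum_i(\mu_i-\mu)E_i$ still generates a semigroup preserving $\mathcal C_\kappa$. So it suffices to treat $A=A_0'-2\mu E$.

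\emph{Gap bound.} Fix $t>0$ and set $T=e^{tA}$. The aim is a kernel radius for $T$ of size roughly $e^{\mu t}$, up to a factor that is subexponential in $t$. Proposition~\ref{spec:radius-gap} would then give, for every eigenvalue $a\ne a_\star$,
\[
|e^{t(a-a_\star)}|\le r_t^{-2},
\]
that is, $\Re(a_\star-a)\ge (2/t)\log r_t$. Letting $t\to\infty$ gives the $2\mu$ bound.

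To get the radius, split time. Since $[E,A_0']\neq 0$ in general, write $e^{tA}$ through a Lie product of $e^{sA_0'}$ and the dilations $D_s=e^{-2\mu sE}$. Substituting $z\mapsto e^{-2\mu s}z$ maps $\overline{\D_{e^{2\mu s}}}$ onto the closed unit disk, so each dilation enlarges the zero-free radius of whatever it acts on by the factor $e^{2\mu s}$ on the $z$-legs. The catch is that $e^{sA_0'}$ only preserves radius one, not larger radii, so enlarged radii are not propagated by a naive product. The plan is instead the conjugation identity
\[
D_\sigma e^{tA}D_\sigma^{-1}=e^{t(D_\sigma A_0' D_\sigma^{-1}-2\mu E)}.
\]
This is equivalent to showing that $K_{e^{tA}}$ is zero-free on the anisotropic domain $\D_{r}\times\D_{r'}$, where the $w$-leg radius gets the dilation gain $e^{2\mu t}$ and the $z$-leg radius stays $1$. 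Concretely, $e^{tA}=e^{tA}D_t^{-1}\cdot D_t$ with $D_t$ acting first; the kernel of $D_t$ has $w$-radius $e^{2\mu t}$ when paired with $z$-radius $1$.

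The cleaner route: $K_{e^{tA}}$ zero-free on $\D_1\times\D_R$ with $R=e^{2\mu t}/C$ (for some constant $C$ independent of $t$) is equivalent, after rescaling $w$, to $K_{e^{tA}D_{\sigma}}$ being zero-free on the unit bidisk for $e^{2\mu\sigma}=R$. Proposition~\ref{spec:radius-gap} only uses that the paired radii multiply to more than one, so its ratio-squeezing argument runs with asymmetric radii $(1,R)$ and gives $|\Lambda/\Lambda_\star|\le R^{-1}$, i.e.\ the factor $r^{-2}$ is replaced by $R^{-1}$. To obtain the $(1,R)$ kernel region, I would write $e^{tA}=e^{(t-\tau)A}e^{\tau A}$. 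The first factor preserves $\mathcal C_\kappa$. For the second factor, I would exploit that $T_t(\mu)$ is zero-free for complex $\mu$ with $\Re\mu>0$ (equation \eqref{spec:open-complex-field}) together with the large-damping limit \eqref{spec:large-damping}, to get kernel radius $e^{2\mu\tau}$ on the $w$-leg. This step is the main obstacle: showing that the damping gain survives composition with $A_0'$ up to a $t$-independent constant. The fallback is the $E$-graded spectral argument, using $\ell$ and $v$ from Proposition~\ref{spec:perron} and Lemma~\ref{spec:schwarz} applied to $g=\phi/v$ along the flow, with the dilation factor $e^{-2\mu t}$ entering directly.

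\emph{Sharpness.} Take $A_0=0$, so $A=-2\mu E$ with all $\mu_i=\mu$. The spectrum is $\{-2\mu k:0\le k\le|\kappa|\}$, with $a_\star=0$ and next eigenvalue $-2\mu$, attained by $z_i$. So equality holds for every positive degree vector $\kappa$.
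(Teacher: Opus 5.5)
\textbf{Verdict: genuine gap.} Your reduction to uniform damping and your sharpness example are fine. The asymmetric version of Proposition~\ref{spec:radius-gap}, with bound \(1/(r_zr_w)\), is also correct. The trouble is the large-time route: the kernel radii of \(e^{tA}\) do not grow like \(e^{2\mu t}\), so it cannot work.

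By Proposition~\ref{spec:perron}, \(e^{-ta_\star}e^{tA}\to v\otimes\ell\), whose kernel is \(v(z)Q_\ell(w)\). Since \(Q_\ell(0)=\ell(1)\ne0\) and \(v(0)\ne0\), Hurwitz's theorem applied to the normalized kernel at \(w=0\) and at \(z=0\) gives two bounds. First, \(\limsup_{t\to\infty}r_z(t)\) is at most the zero-free polydisk radius of \(v\). Second, \(\limsup_{t\to\infty}r_w(t)\) is at most that of \(Q_\ell\). Both are finite once \(v\) and \(Q_\ell\) are nonconstant.

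Example~\ref{spec:no-uniform-radius} makes this concrete. There \(K_{e^{tA_R}}(z,0)=e^{tA_R}1(z)\) vanishes at modulus \((\omega_R\coth(\omega_Rt)+\mu)/R\), which stays bounded as \(t\to\infty\). The kernel is symmetric in \(z,w\), so the \(w\)-leg is bounded too. Hence \(t^{-1}\log(r_zr_w)\to0\) and no \(t\)-independent constant \(C\) exists. The step you flagged as the main obstacle is therefore false, not merely hard. The reason is the one you noted yourself: \(e^{sA_0}\) does not propagate radii beyond one, so radius gains from damping do not accumulate over time.

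The paper extracts the gap at small time instead. Put \(D_h=e^{-h\sum_i\mu_iE_i}\) and \(T_h=D_he^{hA_0}D_h\). Then exactly \(K_{T_h}(z,w)=K_{e^{hA_0}}(qz,qw)\) with \(q_i=e^{-h\mu_i}\), so \(T_h\) has kernel radius \(e^{h\mu}\). Proposition~\ref{spec:radius-gap} then bounds every nondominant eigenvalue ratio by \(e^{-2h\mu}\). Since \((T_h-I)/h\to A\) and \(h^{-1}\log|1+hb|=\Re b+O(h)\) uniformly for bounded \(b\), this becomes \(\Re b_\star(h)-\Re b(h)\ge2\mu+O(h)\). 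Spectral continuity then gives \eqref{spec:real-gap}.

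Your fallback could become a valid and genuinely different proof, but it needs an ingredient you did not name. Lemma~\ref{spec:schwarz} only handles the dilation factors. You also need that the undamped factors \(e^{\tau A_0}\) do not increase the oscillation seminorm, which is Lemma~\ref{spec:range-exclusion}. Together these give Proposition~\ref{spec:oscillation}. Apply it with \(p=v\) and with an eigenvector \(\phi\) for \(a\ne a_\star\) in place of \(h\). This yields \(e^{t\Re(a-a_\star)}\operatorname{Osc}_v(\phi)\le e^{-2\mu t}\operatorname{Osc}_v(\phi)\) with \(\operatorname{Osc}_v(\phi)>0\), which is the gap. It works because oscillation contraction factors multiply along a Trotter product, whereas kernel radii do not.
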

\begin{proof}
For small \(h>0\), put
\[
 D_h=e^{-h\sum_i\mu_iE_i},\qquad
 T_h=D_he^{hA_0}D_h.
\]
The kernel of \(e^{hA_0}\) has radius one by Lemma
\ref{spec:kernel-criterion}.  If \(q_i=e^{-h\mu_i}\), direct
coefficient calculation gives
\[
 K_{T_h}(z,w)=K_{e^{hA_0}}(q_1z_1,\ldots,q_dz_d,
                              q_1w_1,\ldots,q_dw_d).
\]
The kernel therefore has radius \(e^{h\mu}\). By
Proposition~\ref{spec:radius-gap}, there is a simple dominant eigenvalue
\(\Lambda_\star(h)\), and every other eigenvalue has modulus at
most \(e^{-2h\mu}|\Lambda_\star(h)|\).
Since \(B_h=(T_h-I)/h\to A\) in norm, the transformed eigenvalues
\(b(h)=(\Lambda(h)-1)/h\) form bounded multisets converging,
with algebraic multiplicity, to the spectrum of \(A\).
The expansion
\[
 h^{-1}\log|1+hb|=\Re b+O(h),
\]
uniform for bounded \(b\), converts the modulus bound into
\(\Re b_\star(h)-\Re b(h)\ge2\mu+O(h)\).
Along a subsequence on which \(b_\star(h)\) converges, remove
its single copy from each spectral multiset; the remaining copies
converge, with multiplicity, to those of \(A\) other than the
distinguished limit. The limiting inequality separates that limit
from all the remaining copies by \(2\mu\), proving both its
algebraic simplicity and its identification with \(a_\star\).
For \(A_0=0\), a degree-one monomial at a site of minimum rate
has eigenvalue \(-2\mu\), while the full spectrum is
\(-2\sum_i\mu_i\alpha_i\), so equality is attained.
\end{proof}

\subsection{Contraction along the evolution}\label{spec:oscillation-subsection}

The same diameter estimate also controls the evolution of polynomial
quotients. This gives a norm adapted to the principal eigenvector and
a resolvent estimate. For strictly stable \(p\) and
\(h\in V_\kappa\), define
\begin{equation}\label{spec:osc-definition}
 \operatorname{Osc}_p(h)
 =\operatorname{diam}\{h(z)/p(z):z\in\overline{\D_1^d}\}.
\end{equation}
It is a complex seminorm with kernel \(\C p\): homogeneity and the
triangle inequality follow from the diameter definition, and zero
diameter means that the rational function is constant.

\begin{lemma}\label{spec:range-exclusion}
If \(T\) is an invertible stability preserver, \(p\) is strictly
stable, and \(h\in V_\kappa\), then \(Tp\) is strictly stable and
\[
 (Th/Tp)(\D_1^d)\subseteq(h/p)(\D_1^d).
\]
In particular \(\operatorname{Osc}_{Tp}(Th)\le
\operatorname{Osc}_p(h)\).
\end{lemma}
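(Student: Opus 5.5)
The plan is to reuse the range-exclusion mechanism from the proof of Proposition~\ref{spec:radius-gap}: a value omitted by a quotient corresponds to a stable linear combination, and stability preservation transports that combination to the output.

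\emph{Strict stability of \(Tp\).} Since \(p\) is strictly stable, Lemma~\ref{spec:cone-interior} places it in the interior of \(\mathcal C_\kappa\). The same lemma says that an invertible cone-preserving operator maps the interior into the interior, and invertible stability preservers preserve \(\mathcal C_\kappa\). Hence \(Tp\) is strictly stable. In particular \(Th/Tp\) is holomorphic on a neighbourhood of \(\overline{\D_1^d}\), so \(\operatorname{Osc}_{Tp}(Th)\) is defined.

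\emph{Range inclusion.} Suppose \(c\in\C\) is not in \((h/p)(\D_1^d)\). Set \(f=h-cp\). Since \(p\) has no zeros on \(\D_1^d\), the function \(f\) vanishes at a point of \(\D_1^d\) exactly when \(h/p=c\) there, which never happens. Also \(f\neq0\), because otherwise \(h/p\equiv c\) would take the value \(c\). So \(f\) is a nonzero disk-stable polynomial. Invertibility of \(T\) gives \(Tf=Th-c\,Tp\neq0\), and stability preservation makes \(Tf\) stable on \(\D_1^d\). Dividing by the nonvanishing \(Tp\), the quotient \(Th/Tp\) never equals \(c\) on \(\D_1^d\). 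Taking complements gives \((Th/Tp)(\D_1^d)\subseteq(h/p)(\D_1^d)\).

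\emph{Oscillation bound.} Both quotients are continuous on the closed polydisk. The closure of \(g(\D_1^d)\) equals \(g(\overline{\D_1^d})\) for a continuous \(g\) on the closure, since \(\D_1^d\) is dense in \(\overline{\D_1^d}\). Diameters are unchanged by taking closures. Therefore the inclusion of open-domain images gives \(\operatorname{Osc}_{Tp}(Th)\le\operatorname{Osc}_p(h)\).

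The only delicate point is the case where \(f=h-cp\) might be identically zero. This is excluded because \(c\) is an omitted value, while \(h=cp\) would force \(h/p\equiv c\). Everything else follows from Lemma~\ref{spec:cone-interior} and the definition of preservation.
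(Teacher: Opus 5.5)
Your proof is correct and follows the paper's argument essentially step for step: Lemma~\ref{spec:cone-interior} gives strict stability of \(Tp\), an omitted value \(c\) is excluded through the nonzero stable polynomial \(h-cp\) together with invertibility and preservation, and density transfers the image inclusion to diameters over the closed polydisk. Your explicit check that \(h-cp\neq0\) is the same point the paper records by calling that polynomial nonzero.
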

\begin{proof}
By Lemma~\ref{spec:cone-interior}, \(Tp\) is strictly stable, so
the output ratio is defined on the closed polydisk. For
\(w\notin(h/p)(\D_1^d)\), the polynomial \(h-wp\) is nonzero
and stable, as is its image \(Th-wTp\), excluding \(w\) from
the output ratio image.
The diameter inequality follows from this inclusion and continuity
on the closed polydisk, where density makes the diameter agree
with that over the open polydisk.
\end{proof}

\begin{proposition}\label{spec:oscillation}
Let \(A_0\) generate a stability-preserving semigroup, and let
\(A=A_0-2\sum_i\mu_iE_i\), with \(\mu_i\ge\mu\ge0\).
For every strictly stable \(p\), every \(h\in V_\kappa\), and
\(t\ge0\),
\begin{equation}\label{spec:osc-bound}
 \operatorname{Osc}_{e^{tA}p}(e^{tA}h)
 \le e^{-2\mu t}\operatorname{Osc}_p(h).
\end{equation}
The constant is sharp.
\end{proposition}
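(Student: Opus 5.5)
We will prove \eqref{spec:osc-bound} by splitting the evolution into short steps and combining Lemma~\ref{spec:range-exclusion} with the dilation form of the Schwarz lemma. For the sharpness claim we will exhibit an explicit example.

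\emph{Reduction.} Both sides of \eqref{spec:osc-bound} are continuous in \(t\), and the oscillation seminorm is defined through values on the closed polydisk. So it suffices to prove the estimate for a Lie–Trotter approximation and then pass to the limit.

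\emph{One step.} For \(m\ge1\) and \(\tau=t/m\), set \(D_\tau=e^{-2\tau\sum_i\mu_iE_i}\), so that \(D_\tau f(z)=f(q_1z_1,\ldots,q_dz_d)\) with \(q_i=e^{-2\tau\mu_i}\le e^{-2\tau\mu}\). Let \(S_\tau=e^{\tau A_0}\), which is an invertible stability preserver.

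\begin{enumerate}
\item Lemma~\ref{spec:range-exclusion} gives \(\operatorname{Osc}_{S_\tau p}(S_\tau h)\le\operatorname{Osc}_p(h)\), and \(S_\tau p\) is again strictly stable.
\item For the dilation, apply Lemma~\ref{spec:schwarz} to \(g=h'/p'\) with \(p'=S_\tau p\) and \(h'=S_\tau h\). This \(g\) is holomorphic on a neighbourhood of the closed polydisk, since \(p'\) is strictly stable. We get
\[
\operatorname{Osc}_{D_\tau p'}(D_\tau h')=\operatorname{diam} g(q\cdot\overline{\D_1^d})\le e^{-2\tau\mu}\operatorname{Osc}_{p'}(h').
\]
\item Since \(D_\tau p'(z)=p'(qz)\) with \(|q_iz_i|<1\) on the closed polydisk, \(D_\tau p'\) is again strictly stable, so the next step is defined.
\end{enumerate}

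\emph{Iteration and limit.} Iterating \(m\) times gives \(\operatorname{Osc}_{W_mp}(W_mh)\le e^{-2\mu t}\operatorname{Osc}_p(h)\) for \(W_m=(D_\tau S_\tau)^m\). By the product formula, \(W_m\to e^{tA}\) in norm.

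The main obstacle is passing to the limit in the oscillation. We need \(e^{tA}p\) to be strictly stable; this holds because \(e^{tA}\) is an invertible preserver (Hurwitz's theorem on the Lie products, as in Theorem~\ref{spec:strict}) and Lemma~\ref{spec:cone-interior} keeps \(p\) in the interior. Then on the compact closed polydisk, \(W_mp\to e^{tA}p\) uniformly while staying bounded away from zero. Hence the quotients \(W_mh/W_mp\) converge uniformly to \(e^{tA}h/e^{tA}p\), and their diameters converge. When \(\mu=0\) no dilation gain is needed, and the bound already follows from Lemma~\ref{spec:range-exclusion}.

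\emph{Sharpness.} Take \(A_0=0\), \(d=1\), \(\kappa=1\), \(\mu_1=\mu\), \(p=1\) and \(h=z\). Then \(e^{tA}h=e^{-2\mu t}z\) and \(e^{tA}p=1\). The oscillation is therefore \(2e^{-2\mu t}\) against \(\operatorname{Osc}_p(h)=2\), so equality holds and the constant cannot be improved.
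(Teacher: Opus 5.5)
Your proposal is correct and follows the paper's proof essentially step for step. Like the paper, you split the evolution by Lie--Trotter into \(e^{\tau A_0}\), which is nonexpansive by Lemma~\ref{spec:range-exclusion}, and the dilation \(D_\tau\), which contracts by \(e^{-2\mu\tau}\) via Lemma~\ref{spec:schwarz}; you pass to the limit using strict stability of \(e^{tA}p\) from invertibility and Lemma~\ref{spec:cone-interior}, and you use the same pure-damping equality case. One small imprecision: \(|q_iz_i|<1\) can fail when some \(\mu_i=0\), but \(|q_iz_i|\le1\) together with strict stability of \(p'\) on the closed polydisk already gives \(D_\tau p'\neq0\) there.
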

\begin{proof}
The damping operator \(D_\tau=e^{-2\tau\sum_i\mu_iE_i}\) acts
by a coordinatewise dilation of maximum modulus at most \(e^{-2\mu\tau}\).
Applying Lemma~\ref{spec:schwarz} to \(h/p\) and
Lemma~\ref{spec:range-exclusion} to \(e^{\tau A_0}\) gives
contraction by \(e^{-2\mu\tau}\) and nonexpansion, respectively.
Iteration along \((e^{tA_0/m}D_{t/m})^m\) therefore gives the
factor \(e^{-2\mu t}\), with strictly stable denominators at
every intermediate step.

The limiting exponential \(e^{tA}\) is invertible and preserves
stability by the product formula and Hurwitz's theorem, so its
image of \(p\) remains strictly stable. The denominator thus
has a positive minimum modulus on the closed polydisk, allowing
coefficient convergence to pass to uniform convergence of the
ratios and then to convergence of their diameters.
We obtain \eqref{spec:osc-bound}; equality holds for pure damping
with \(\mu_i=\mu\) for all \(i\), \(p=1\), and \(h=z_j\).
\end{proof}

\begin{corollary}\label{spec:nonautonomous}
On each compact time interval, suppose \(A_0(t)\) and \(\mu_i(t)\)
are bounded and piecewise continuous, with finitely many continuity
pieces, each \(A_0(t)\) a stability generator and \(\mu_i(t)\ge0\).
Let \(U(t,s)\) solve the linear evolution equation with generator
\(A_0(t)-2\sum_i\mu_i(t)E_i\). Then, for strictly stable \(p\)
and \(t\ge s\),
\[
 \operatorname{Osc}_{U(t,s)p}(U(t,s)h)
 \le \exp\!\left(-2\int_s^t\min_i\mu_i(u)\,du\right)
       \operatorname{Osc}_p(h).
\]
\end{corollary}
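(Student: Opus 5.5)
The plan is to reduce the nonautonomous statement to the autonomous estimate of Proposition~\ref{spec:oscillation} by a product-formula approximation on each continuity piece. By the cocycle property \(U(t,s)=U(t,u)U(u,s)\) and multiplicativity of the exponential factors, it suffices to treat a single piece \([s,t]\) on which \(A_0(\cdot)\) and \(\mu_i(\cdot)\) are continuous (and bounded). On such a piece the right-hand side is also continuous at the endpoints after one-sided extension.

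On a piece, I will partition \([s,t]\) into \(m\) equal subintervals \([u_{k-1},u_k]\) and form the frozen product
\[
 U_m=\prod_{k=m}^{1}\exp\Bigl(\tfrac{t-s}{m}\bigl(A_0(u_k)-2\textstyle\sum_i\mu_i(u_k)E_i\bigr)\Bigr).
\]
For each factor, Proposition~\ref{spec:oscillation} applies with the nonnegative rate \(\mu(u_k)=\min_i\mu_i(u_k)\ge0\), and the Proposition allows \(\mu=0\). So each factor is an invertible stability preserver that keeps strictly stable polynomials strictly stable (Lemma~\ref{spec:cone-interior}). It also contracts \(\operatorname{Osc}\) by \(e^{-2\mu(u_k)(t-s)/m}\). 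Iterating gives the bound for \(U_m\) with the Riemann sum of \(\min_i\mu_i\) in the exponent.

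The remaining step is passing to the limit \(m\to\infty\). Standard ODE theory for bounded continuous generators gives \(U_m\to U(t,s)\) in operator norm. The Riemann sums converge to \(\int_s^t\min_i\mu_i\), since the minimum of continuous functions is continuous. To pass the oscillation through the limit, I will use that \(U(t,s)\) is itself an invertible stability preserver: it is a limit of the preservers \(U_m\), it is invertible, and Hurwitz's theorem applies. Hence \(U(t,s)p\) is strictly stable and has positive minimum modulus on the closed polydisk.

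Coefficient convergence then gives uniform convergence of the quotients \(U_mh/U_mp\) to \(U(t,s)h/U(t,s)p\) on \(\overline{\D_1^d}\), and the diameters converge, exactly as in the last paragraph of the proof of Proposition~\ref{spec:oscillation}. This is the only delicate point. The uniformity needs a lower bound on \(|U_mp|\), which holds for large \(m\) because \(U_mp\) converges to a polynomial that is bounded away from zero on the compact closed polydisk.

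Finally, I will chain the pieces through the cocycle identity. At every intermediate time \(U(u,s)p\) is strictly stable, so each piece's estimate is applied to a legitimate denominator. Multiplying the exponential factors adds the integrals over the pieces and yields the stated bound.
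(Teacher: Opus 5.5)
Your proposal is correct and follows essentially the same route as the paper. On each continuity piece you freeze the generator at actual sample values and apply Proposition~\ref{spec:oscillation} factor by factor, with zero rate allowed. You then let the integrated minimum rates and the propagators converge, and use Hurwitz together with Lemma~\ref{spec:cone-interior} to keep the limiting denominator strictly stable, so the quotients converge uniformly. The paper phrases the approximation as $L^1$ convergence of step functions, so it needs no continuity at the piece endpoints; boundedness and continuity on the open piece already give convergence of your tagged Riemann sums and products, so your extra endpoint-continuity remark can be dropped.
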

\begin{proof}
On each continuity piece, step functions taking actual coefficient
values approximate the generator in \(L^1\). Their frozen
evolutions satisfy Proposition~\ref{spec:oscillation}, including at
zero minimum rate, and multiplication gives the exponential of
the integrated stepwise minimum rate. These integrals converge
to the stated one because the minimum of finitely many real
coordinates is Lipschitz. By the integral equation for finite matrix
ODEs, the propagators converge uniformly to the invertible
fundamental matrix, which preserves stability by Hurwitz's theorem.
The limiting denominator is therefore strictly stable by
Lemma~\ref{spec:cone-interior}, so we can pass to the limit in the
ratios as in the autonomous argument.
\end{proof}

\begin{corollary}\label{spec:resolvent}
Assume positive damping, put \(\mu=\min_i\mu_i\), and normalize \(v,\ell\) as in
Proposition~\ref{spec:perron}.  On \(W=\ker\ell\), set
\(\|h\|_v=\operatorname{Osc}_v(h)\) and
\(B=(A-a_\star I)|_W\).  Then \(\|\cdot\|_v\) is a norm and
\[
 \|e^{tB}\|_{v\to v}\le e^{-2\mu t},\qquad
 \|(\zeta I-B)^{-1}\|_{v\to v}
 \le\frac1{\Re\zeta+2\mu}\quad(\Re\zeta>-2\mu).
\]
Every eigenvalue of \(B\) with real part \(-2\mu\) is semisimple.
\end{corollary}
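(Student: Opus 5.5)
Proving that \(\|\cdot\|_v\) is a norm on \(W\) comes first. Proposition~\ref{spec:perron} lets us take \(v\) zero-free on a polydisk of radius greater than one, so \(v\) is strictly stable and \(\operatorname{Osc}_v\) is defined. By the remark after \eqref{spec:osc-definition}, it is a complex seminorm with kernel \(\C v\). Since \(\ell(v)=1\), we have \(\C v\cap W=\{0\}\), so its restriction to \(W\) is a norm. The space \(W\) is \(A\)-invariant, because \(\ell\) is a left eigenfunctional. Hence \(B\) is well defined and \(e^{tB}=e^{-ta_\star}e^{tA}|_W\).

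For the semigroup bound, let \(h\in W\). Proposition~\ref{spec:oscillation}, applied with \(p=v\), gives
\[
 \operatorname{Osc}_{e^{tA}v}(e^{tA}h)\le e^{-2\mu t}\operatorname{Osc}_v(h).
\]
Here \(e^{tA}v=e^{ta_\star}v\), and the oscillation scales in both arguments by the same factor: \(\operatorname{Osc}_{cp}(ch)=\operatorname{Osc}_p(h)\) for every nonzero scalar \(c\). Dividing numerator and denominator by \(e^{ta_\star}\) therefore yields \(\operatorname{Osc}_v(e^{-ta_\star}e^{tA}h)\le e^{-2\mu t}\operatorname{Osc}_v(h)\), which is \(\|e^{tB}h\|_v\le e^{-2\mu t}\|h\|_v\).

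The resolvent bound follows from the Laplace transform. For \(\Re\zeta>-2\mu\), the integral \(\int_0^\infty e^{-\zeta t}e^{tB}\,dt\) converges absolutely in operator norm, with majorant \(e^{-(\Re\zeta+2\mu)t}\). Being finite dimensional, it equals \((\zeta I-B)^{-1}\), so its norm is at most \(1/(\Re\zeta+2\mu)\).

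The semisimplicity claim is the only part needing a separate argument. Suppose \(\lambda\) is an eigenvalue of \(B\) with \(\Re\lambda=-2\mu\) and a Jordan block of size at least two. Then there are \(x,y\in W\) with \(y\ne0\), \((B-\lambda)y=0\) and \((B-\lambda)x=y\). This gives \(e^{tB}x=e^{\lambda t}(x+ty)\), so
\[
 \|e^{tB}x\|_v\ge e^{-2\mu t}\bigl(t\|y\|_v-\|x\|_v\bigr).
\]
For large \(t\) this exceeds \(e^{-2\mu t}\|x\|_v\), contradicting the semigroup bound. Equivalently, the resolvent would blow up like \(|\zeta-\lambda|^{-2}\) as \(\zeta\to\lambda\) along the real direction, contradicting the \(1/(\Re\zeta+2\mu)\) bound.

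The main care point is invariance of the oscillation under scaling both arguments by \(e^{ta_\star}\). This needs \(e^{ta_\star}v\) to be the actual denominator, which holds because \(v\) is an eigenvector and strictly stable, so Proposition~\ref{spec:oscillation} applies with \(\mu=\min_i\mu_i\).
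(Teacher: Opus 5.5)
Your proposal is correct and follows essentially the same route as the paper. The shared steps are:
\begin{enumerate}
\item restricting the seminorm to \(W=\ker\ell\);
\item substituting \(e^{tA}v=e^{ta_\star}v\) into Proposition~\ref{spec:oscillation}, using that \(\operatorname{Osc}\) is unchanged when both arguments are multiplied by the same complex scalar;
\item obtaining the resolvent bound from the Laplace transform;
\item proving semisimplicity by a Jordan-block growth argument.
\end{enumerate}
Your explicit chain \(e^{tB}x=e^{\lambda t}(x+ty)\) is estimated directly in \(\|\cdot\|_v\) on \(W\), so you do not need the norm-equivalence remark the paper invokes.
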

\begin{proof}
Since \(\ell(v)=1\), no nonzero multiple
of \(v\) belongs to \(W\), so the seminorm restricts to a norm there.
As the kernel of a left eigenfunctional, \(W\) is invariant. Substituting
\(e^{tA}v=e^{ta_\star}v\) into
Proposition~\ref{spec:oscillation} gives the first estimate, including
the phase of the scalar.
For \(\Re\zeta>-2\mu\), the absolutely convergent integral
\(\int_0^\infty e^{-t\zeta}e^{tB}\,dt\) is the inverse of
\(\zeta I-B\), by integration of its derivative, and its norm is bounded by
integrating the first estimate.  A nontrivial Jordan block at a
boundary eigenvalue would produce unbounded polynomial growth after
multiplication by \(e^{2\mu t}\), contradicting the first estimate
and norm equivalence on its finite-dimensional generalized eigenspace.
\end{proof}

\begin{remark}\label{spec:projective-attribution}
For the stable cone, the forbidden-pencil projective distance of
\cite{Dubois09} is
\(\operatorname{osc}_{\D_1^d}\log|q/p|\).  Its real infinitesimal
form is \(\operatorname{osc}\Re(h/p)\); maximizing over a scalar
phase of \(h\) gives \eqref{spec:osc-definition}.  The estimates above give the degree-damping rate in this geometry,
including time-dependent flows. Examples~\ref{spec:no-uniform-radius}
and~\ref{spec:nonnormal} in Appendix~\ref{spec:examples-section}
show why the strict radius and the adapted norm depend on the operator.
\end{remark}

\section{All-temperature Lee--Yang Hamiltonians}\label{eq:section}

For Hermitian qubit operators, the generator classification identifies
precisely the phase-rotated Suzuki--Fisher family: the Cayley transform
turns the Gibbs tensor into an operator kernel, and the coefficient
conditions in Theorem~\ref{gen:main-complex} become inequalities on the
Pauli interactions. Positive longitudinal fields then give strict Gibbs
kernels and a unique ground state, whose energy and scalar queries will
be computed in Section~\ref{alg:section} by analytic continuation in
the field.

\subsection{From matrix tensors to polynomial generators}

Fix the computational basis \(\{|a\rangle:a\in\{0,1\}^n\}\),
with \(Z|0\rangle=|0\rangle\).  Associate to a matrix \(M\) the
polynomial
\begin{equation}\label{eq:matrix-tensor}
 F_M(z,w)=\sum_{a,b\in\{0,1\}^n}M_{ab}z^aw^b.
\end{equation}
We say that \(M\) has the tensor Lee--Yang property at radius \(r\)
if \(F_M\) is nonzero on \(\D_r^n\times\D_r^n\), with the two
groups of variables independent as in \eqref{pre:coherent}.

For a Hermitian Hamiltonian \(H\), the Gibbs operator is
\(M_\beta=e^{-\beta H}\), and its normalized density matrix is
\(\rho_\beta=M_\beta/\Tr M_\beta\).  Since
\(\Tr e^{-\beta H}>0\) for real \(\beta\), the two matrices have
exactly the same polynomial zeros.

The following unitary normalization of the binary Cayley transform
relates the Gibbs tensor to the semigroup in half-plane coordinates.

\begin{lemma}\label{eq:cayley-bridge}
Let
\[
 C_1=\frac1{\sqrt2}\begin{pmatrix}i&-i\\1&1\end{pmatrix},
 \qquad C=C_1^{\otimes n},\qquad D_1=\diag(-1,1).
\]
For an invertible matrix \(M\), its tensor has the unit-disk
Lee--Yang property if and only if \(CMC^{-1}\), acting on
multiaffine coefficient vectors, preserves upper-half-plane stability.
Consequently, the Gibbs family corresponds to the generator
\begin{equation}\label{eq:generator}
 A=-CHC^*.
\end{equation}
Here operator conjugation is a similarity, whereas transforming both
groups of tensor variables gives the congruence \(CMC^T\).
\end{lemma}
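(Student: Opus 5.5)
The plan is to reduce the statement to Lemma~\ref{spec:kernel-criterion} in the multiaffine case \(\kappa=(1,\ldots,1)\), together with the Cayley isomorphism \eqref{pre:cayley}. The key step is to identify \(F_M\) with the disk kernel of the operator \(M\) acting on multiaffine coefficient vectors. For \(\kappa=\mathbf 1\) all binomial weights are one. So \(K_T(z,w)=\sum_\alpha (Tz^\alpha)(z)w^\alpha\), and if \(T\) has matrix \(M\) in the monomial basis (with \(T z^b=\sum_a M_{ab}z^a\)), then \(K_T=\sum_{a,b}M_{ab}z^aw^b=F_M\). Lemma~\ref{spec:kernel-criterion} then states that, for invertible \(M\), \(F_M\) is zero-free on \(\D^n\times\D^n\) if and only if \(M\) preserves \(\mathcal C_{\mathbf 1}\), the disk-stable cone. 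I will fix the convention that the coefficient vector of \(f=\sum_a f_a z^a\) is \((f_a)\) and that \(M\) acts on it by matrix multiplication.

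Next I transport disk stability to half-plane stability. In the basis \(1,z\), the matrix \(\begin{pmatrix}i&-i\\1&1\end{pmatrix}\) is the matrix of \(f\mapsto (x+i)f\bigl(\tfrac{x-i}{x+i}\bigr)\), as computed in the proof of Corollary~\ref{gen:recognition}. \(C_1\) is this matrix divided by \(\sqrt2\), and a nonzero scalar does not affect stability. Hence \(C=C_1^{\otimes n}\) implements \(\mathcal C_{\mathbf 1}\) up to a positive factor. Since \(f\) is \(\D\)-stable exactly when \(\mathcal C_{\mathbf 1}f\) is \(\HH\)-stable, \(M\) preserves disk stability if and only if \(CMC^{-1}\) preserves upper-half-plane stability. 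I also need to check that \(C_1\) is unitary: its columns \((i,1)/\sqrt2\) and \((-i,1)/\sqrt2\) are orthonormal, so \(C^{-1}=C^*\).

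Now apply this to the Gibbs family \(M_\beta=e^{-\beta H}\), which is invertible for every \(\beta\ge0\). We get \(CM_\beta C^{-1}=e^{-\beta CHC^*}=e^{\beta A}\) with \(A=-CHC^*\). So the all-temperature Lee--Yang property of the Gibbs tensor is equivalent to \(A\) generating a half-plane stability-preserving semigroup, which is \eqref{eq:generator}. The closing remark needs only a short computation. Acting on the operator by similarity gives \(CMC^{-1}\). Substituting Cayley variables in both tensor legs of \(F_M\) instead applies the change of basis to rows and columns as \(C M C^T\), because \(F_M\) is bilinear in the two coefficient vectors \(z^a\) and \(w^b\). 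These two transformations differ, which is why the statement says the similarity, not the congruence, governs the semigroup.

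The main obstacle I expect is the bookkeeping of conventions rather than any analysis. One issue is making sure \(M\) versus \(M^T\) is the matrix acting on coefficient vectors consistently with \(K_T=F_M\). If instead \(F_M=K_{M^T}\), I would apply the criterion to \(M^T\); its kernel is \(F_M\) with \(z\) and \(w\) swapped, and zero-freeness on the symmetric domain \(\D^n\times\D^n\) is unaffected. The other issue is the orientation of the Cayley map, that is, whether \(C\) or \(C^{-1}\) sends disk-stable to half-plane-stable polynomials. Both are settled by checking the one-site case and tensoring.
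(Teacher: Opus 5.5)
Your proposal is correct and follows essentially the paper's own argument: identify \(F_M\) with the unweighted disk kernel of \(M\) for \(\kappa=(1,\ldots,1)\), invoke Lemma~\ref{spec:kernel-criterion}, transport disk stability to \(\HH\)-stability through the unitary Cayley matrix \(C\), and conclude \(Ce^{-\beta H}C^{-1}=e^{\beta A}\). The paper also checks the tensor convention directly. It uses the \(\HH\)-stable polynomial \(\prod_i(s_it_i-1)\), whose coefficient matrix is \(CC^{T}=D_1^{\otimes n}\); applying \(N=CMC^{-1}\) to it produces the congruence \(CMC^{T}\), which is the same formula your bilinearity remark gives.
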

\begin{proof}
The coefficient action of \(C\) is
\[
 (Cf)(s)=2^{-n/2}\prod_i(s_i+i)
 f\!\left(\frac{s_1-i}{s_1+i},\ldots,
           \frac{s_n-i}{s_n+i}\right).
\]
Since the fractions map the upper half-plane onto the unit disk and
the prefactors are nonzero there, \(C\) identifies the two stability
classes. For invertible \(M\), the binary contraction theorem
(Lemma~\ref{spec:kernel-criterion}) identifies disk stability of its
tensor with preservation of disk-stable coefficient vectors, giving
the asserted equivalence by similarity.

To check the tensor convention directly, observe that
\(C_1C_1^T=D_1\), so the coefficient matrix of
\[
 \prod_i(s_it_i-1)
\]
is \(D_1^{\otimes n}=CC^T\).  Each factor is stable in the two
upper-half-plane variables: if \(s_it_i=1\), then
\(t_i=1/s_i\) has negative imaginary part.  Acting on its first
group of variables by \(N=CMC^{-1}\) gives coefficient matrix
\(NCC^T=CMC^T\).  For fixed \(t\in\HH^n\), invertibility
excludes a zero output, so stability preservation gives nonvanishing
jointly in \((s,t)\in\HH^{2n}\).  Finally,
\[
 F_{CMC^T}(s,t)
 =2^{-n}\prod_i(s_i+i)(t_i+i)
 F_M\!\left(\frac{s-i}{s+i},\frac{t-i}{t+i}\right).
\]
The tensor transformation is therefore a congruence, whereas
\(C^{-1}=C^*\) gives the exponential similarity
\(Ce^{-\beta H}C^{-1}=e^{\beta A}\) with the asserted generator.
\end{proof}

\subsection{The Suzuki--Fisher family}

Write \(X,Y,Z\) for the Pauli matrices, with
\(Y=\left(\begin{smallmatrix}0&-i\\ i&0\end{smallmatrix}\right)\).
For a real \(2\times2\) matrix, \(\|\cdot\|_{\mathrm{op}}\)
denotes its largest singular value.

\begin{theorem}\label{eq:sf}
For a Hermitian operator \(H\) on \((\C^2)^{\otimes n}\), the
following conditions are equivalent:
\begin{enumerate}
\item \(e^{-\beta H}\) has the tensor Lee--Yang property at radius
one for every \(\beta\ge0\).
\item The same property holds for every \(\beta\) in some interval
\([0,\varepsilon)\), where \(\varepsilon>0\).
\item There are real coefficients such that
\begin{align}
 H={}&h_0 I+\sum_i(h_i^xX_i+h_i^yY_i+h_i^zZ_i)\notag\\
 &+\sum_{i<j}\bigl(a_{ij}X_iX_j+b_{ij}X_iY_j
           +c_{ij}Y_iX_j+d_{ij}Y_iY_j+e_{ij}Z_iZ_j\bigr),
 \label{eq:sf-form}
\end{align}
where
\begin{equation}\label{eq:sf-inequality}
 h_i^z\le0,\qquad
 e_{ij}\le-
 \left\|\begin{pmatrix}a_{ij}&b_{ij}\\c_{ij}&d_{ij}\end{pmatrix}
 \right\|_{\mathrm{op}}.
\end{equation}
There is no restriction on \(h_0,h_i^x,h_i^y\).
\end{enumerate}
Equivalently, in the ferromagnetic convention
\begin{equation}\label{eq:ferromagnetic-convention}
 H=h_0I-\sum_i\sum_{a=x,y,z}\mu_i^a\sigma_i^a
 -\sum_{i<j}\left(J_{ij}^{zz}Z_iZ_j+
       \sum_{a,b=x,y}J_{ij}^{ab}\sigma_i^a\sigma_j^b\right),
\end{equation}
the conditions are
\(\mu_i^z\ge0\) and
\(J_{ij}^{zz}\ge\|J_{ij}^{\perp}\|_{\mathrm{op}}\), where the
rows and columns of \(J_{ij}^{\perp}\) are ordered \(x,y\).
\end{theorem}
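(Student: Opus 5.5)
The plan is to route everything through Lemma~\ref{eq:cayley-bridge} and Theorem~\ref{gen:main-complex}. By the lemma, for each \(\beta\) the tensor Lee--Yang property of the invertible matrix \(e^{-\beta H}\) is equivalent to \(e^{\beta A}\) preserving upper-half-plane stability on the multiaffine box \(\kappa=(1,\ldots,1)\), where \(A=-CHC^*\). The implication (1)\(\Rightarrow\)(2) is trivial. For (2)\(\Rightarrow\)(3) I would use the semigroup law: if \(e^{\beta A}\) preserves stability for \(\beta\in[0,\varepsilon)\), then products of such operators do too, so every \(e^{\beta A}\) with \(\beta\ge0\) does. Once (3) is known to be equivalent to \(A\) being an admissible complex generator, this also gives (2)\(\Rightarrow\)(1).

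The heart of the proof is then a dictionary. I will conjugate each Pauli term by \(C_1\) and compute its canonical representation in half-plane coordinates. On a binary site the canonical representation has no \(\partial_i^2\), so only three conditions from Theorem~\ref{gen:main-complex} survive: the mixed coefficients \(P_{e_i+e_j}\) must be real and nonpositive on \(\R^2\); the \(\Im P_{e_i}\) must be sitewise quadratics \(q_i\ge0\) on \(\R\); and the imaginary scalar remainder must be constant. Theorem~\ref{gen:main-complex} rules out canonical order three or more, and Proposition~\ref{gen:box-decomposition} controls the rest. Hermiticity of \(H\) makes \(-CHC^*\) anti-Hermitian-conjugate in a fixed way, and I will use this to pin down which parts are real.

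The expected computation runs as follows. The one-site Paulis conjugate to first-order operators \(J_{i,1}(q)\) with \(q\in\{1,z,z^2\}\) combinations, possibly times \(i\). Among the Hermitian traceless one-site operators, two directions (\(X\) and \(Y\), in some rotated form) should give real \(J\)'s, which are unrestricted. \(Z\) should give \(iJ(1+z^2)\) up to sign, so that \(q_i=-c\,h_i^z(1+s^2)\) and \(q_i\ge0\) becomes \(h_i^z\le0\). The scalar \(h_0\) contributes \(-h_0\Id\), which is real and unrestricted. A two-site Pauli product has mixed coefficient equal to the product of the one-site first-order coefficients, so the \(\partial_i\partial_j\) coefficient is \(-\sum \sigma\text{-coeffs}\cdot q_a(s)q_b(t)\) with \(q\)'s drawn from \(\{1-s^2,\,2s,\,i(1+s^2)\}\), up to normalization.

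Two checks then close the argument. First, \(ZZ\) contributes a real multiple of \((1+s^2)(1+t^2)\), since \(i\cdot i=-1\), and \(XZ\)-type cross terms would give imaginary mixed coefficients. Reality therefore kills every product containing exactly one \(Z\), including three-body and higher terms, which are already excluded by the order bound. Second, the surviving condition \(-e(1+s^2)(1+t^2)\ge\) (a bilinear form in \((1-s^2,2s)\) and \((1-t^2,2t)\) with matrix \(\begin{pmatrix}a&b\\c&d\end{pmatrix}\)) must hold for all real \(s,t\). Parametrizing \(s=\tan(\theta/2)\), the vectors \((1-s^2,2s)/(1+s^2)\) run over the whole unit circle, so the condition becomes \(-e\ge u^{\top}Mv\) for all unit \(u,v\), which is exactly \(e\le-\|M\|_{\mathrm{op}}\). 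The ferromagnetic reformulation is then just a sign change.

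I also have to check that the imaginary part of the first-order coefficients picks up no stray contributions from two-site terms; here Proposition~\ref{gen:box-decomposition} and reality of the two-site parts handle it. The main obstacle is sign and normalization bookkeeping in the \(C_1\) conjugation, namely which of \(X,Y\) maps to which real \(\mathfrak{sl}_2\) combination. I would fix this by first computing \(C_1 Z C_1^*\), \(C_1 X C_1^*\) and \(C_1 Y C_1^*\) explicitly as \(2\times2\) matrices in the basis \(1,s\), and reading off \(q\) from \(B(1)\) and \(B(s)\).
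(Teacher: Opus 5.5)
Your proposal is correct and follows essentially the paper's route: the Cayley bridge with semigroup composition, Theorem~\ref{gen:main-complex} on the binary box, the conjugations \(C_1XC_1^*=-Z\), \(C_1YC_1^*=X\), \(C_1ZC_1^*=-Y\), reality of the mixed coefficient to eliminate the \(XZ\)-type pair terms, \(q_i=-h_i^z(1+s^2)\) for the field sign, and the unit-circle normalization giving \(e_{ij}\le-\|J_{ij}\|_{\mathrm{op}}\). The differences are minor. You read the real/imaginary split directly from the real Pauli coefficients (a conjugated string is entrywise imaginary exactly when it has an odd number of \(Y\) factors), where the paper uses \(S^{\mathsf T}=-S\) and a trace. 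You exclude higher-support terms through the spanning statement of Proposition~\ref{gen:box-decomposition} plus linear independence of Pauli strings, where the paper runs a maximal-support descent. Two small corrections: \(A=-CHC^*\) is Hermitian rather than anti-Hermitian, and the vectors \((1-s^2,2s)/(1+s^2)\) miss one point of the circle, so the operator-norm step needs density and continuity rather than surjectivity.
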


\begin{proof}
The first condition implies the second, which by
Lemma~\ref{eq:cayley-bridge} says that \(e^{tA}\) preserves
upper-half-plane stability for small \(t\ge0\). For arbitrary
\(t\ge0\), choose an integer \(m\) with \(t/m<\varepsilon\);
composing \(e^{(t/m)A}\) \(m\) times extends preservation to all
times and permits the use of Theorem~\ref{gen:main-complex} on the
multiaffine box.

The order bound first rules out interactions on three or more sites.
On one binary site a basis of endomorphisms is
\[
 I,\qquad J^- =\partial_s,\qquad
 J^0=s\partial_s-\tfrac12,\qquad
 J^+=s^2\partial_s-s.
\]
The last three operators are traceless and have the linearly
independent first-order coefficients \(1,s,s^2\). Expanding \(A\)
in tensor products of this basis, choose a support set \(S\) of
maximal cardinality occurring with a nonzero coefficient.
The canonical coefficient of \(\partial_S=\prod_{i\in S}\partial_i\)
is then the sum of its exact-support terms, because no strictly larger
support occurs.  Those terms have distinct principal monomials
\(\prod_{i\in S}s_i^{r_i}\), with \(r_i\in\{0,1,2\}\).
If \(|S|\ge3\), this coefficient vanishes by the order bound in
Theorem~\ref{gen:main-complex}, so by linear independence every
term with support \(S\) vanishes. Descending in support removes all
interactions on three or more sites; the same conclusion holds for
\(H\), since local similarity preserves the identity and traceless
subspaces at each site.

Next write \(A=R+iS\) with entrywise real matrices \(R,S\).
The imaginary-part conclusion of Theorem~\ref{gen:main-complex}
has the form
\begin{equation}\label{eq:imaginary-generator}
 S=c_0I+\sum_iJ_i(q_i),\qquad
 q_i(s)=\alpha_i+\eta_i s+\gamma_i s^2\ge0\quad(s\in\R),
\end{equation}
where \(J(q)=q\partial-q'/2\).  On coefficients \((1,s)\),
\[
 J(q)=\begin{pmatrix}-\eta/2&\alpha\\-\gamma&\eta/2\end{pmatrix}.
\]
Since the matrix is Hermitian, \(R^T=R\) and \(S^T=-S\); taking the trace
in \eqref{eq:imaginary-generator} yields \(c_0=0\). Since the
one-site traceless components are independent, each \(J(q_i)\)
is skew-symmetric, whence \(\eta_i=0\), \(\gamma_i=\alpha_i\), and
\(\alpha_i\ge0\) by nonnegativity. Consequently,
\begin{equation}\label{eq:imaginary-local}
 q_i(s)=\alpha_i(1+s^2),\qquad
 iS=-\sum_i\alpha_iY_i.
\end{equation}

The local Pauli conjugations are
\begin{equation}\label{eq:pauli-cayley}
 C_1XC_1^*=-Z,\qquad C_1YC_1^*=X,\qquad C_1ZC_1^*=-Y.
\end{equation}
Thus the physical field \(h_i^xX_i+h_i^yY_i+h_i^zZ_i\)
contributes \(h_i^xZ_i-h_i^yX_i+h_i^zY_i\) to \(A\), and
comparison with \eqref{eq:imaginary-local} gives
\(h_i^z=-\alpha_i\le0\). Each two-site physical Pauli term with
one \(Z\) factor and one \(X\) or \(Y\) factor becomes an
entrywise imaginary term with one \(Y\) factor; these four terms
are linearly independent on each pair. Since \(S\) has no two-site
component, the physical \(XZ,YZ,ZX,ZY\) coefficients all vanish,
leaving the form \eqref{eq:sf-form}.

For the remaining pair inequality, suppress the pair indices and note
that a physical term \(aXX+bXY+cYX+dYY+eZZ\) contributes
\[
 -aZZ+bZX+cXZ-dXX-eYY
\]
to \(A\).  The first-order coefficients of the coefficient operators
\(X,Z,Y\), respectively, are
\[
 p_X(s)=1-s^2,\qquad p_Z(s)=-2s,\qquad
 p_Y(s)=-i(1+s^2).
\]
Hence the full coefficient of \(\partial_i\partial_j\) is
\begin{align}
 q_{ij}(s,t)={}&e(1+s^2)(1+t^2)-4ast
       -2bs(1-t^2)\notag\\
       &-2ct(1-s^2)-d(1-s^2)(1-t^2).
 \label{eq:mixed-coefficient}
\end{align}
Normalize this coefficient using
\[
 u(s)=\left(\frac{2s}{1+s^2},\frac{1-s^2}{1+s^2}\right),
 \qquad J=\begin{pmatrix}a&b\\c&d\end{pmatrix},
\]
which gives
\begin{equation}\label{eq:opnorm-identity}
 \frac{q_{ij}(s,t)}{(1+s^2)(1+t^2)}=e-u(s)^TJu(t).
\end{equation}
Since \(u(\R)\) is dense in the unit circle, the required inequality
\(q_{ij}(s,t)\le0\) for all real \(s,t\) is equivalent to
\[
 e\le\min_{\|u\|_2=\|v\|_2=1}u^TJv=-\|J\|_{\mathrm{op}},
\]
which establishes necessity of \eqref{eq:sf-inequality}.

Conversely, under \eqref{eq:sf-form}--\eqref{eq:sf-inequality},
the displayed conjugations give canonical differential order at
most two, with real nonpositive mixed coefficients by
\eqref{eq:opnorm-identity} and imaginary part
\eqref{eq:imaginary-generator} with
\(q_i(s)=-h_i^z(1+s^2)\ge0\).  Its real part therefore satisfies
the real criterion in Theorem~\ref{gen:main-complex}, and its
imaginary part satisfies that theorem's one-site condition.  The
theorem proves preservation for all \(\beta\ge0\); the bridge
then gives the tensor Lee--Yang property of \(e^{-\beta H}\).
Finally, changing the signs of the coefficients converts
\eqref{eq:sf-inequality} into \eqref{eq:ferromagnetic-convention}.
\end{proof}

\begin{remark}\label{eq:sf-attribution}
The sufficient Suzuki--Fisher family originates in
\cite{SuzukiFisher71,AsanoJapanese70,AsanoPRL70}. The full transverse
operator-norm condition is already contained, in equivalent algebraic
form, in \cite[equation~(2.47)]{SuzukiFisher71} and
\cite[Theorem~2, equation~(8)]{Dunlop79}; see also the modern notation
in \cite[Definition~1 and Fact~3]{BGLW26}. Indeed, for a real matrix
$J=\left(\begin{smallmatrix}a&b\\c&d\end{smallmatrix}\right)$,
\[
 2\|J\|_{\rm op}
 =\sqrt{(a-d)^2+(b+c)^2}+\sqrt{(a+d)^2+(b-c)^2}.
\]
To verify the identity, the squared radicals are
$\|J\|_F^2-2\det J$ and $\|J\|_F^2+2\det J$.
If the singular values are $s_1\geq s_2\geq0$, the two radicals
are therefore $s_1+s_2$ and $s_1-s_2$, in some order.
Full matrix generating functions and their
composition are also explicit in Asano's contemporaneous account for
the XXZ subfamily \cite[equations~(1.15), (3.7)--(3.10)]{AsanoJapanese70}.
Theorem~\ref{eq:sf} adds the converse among all finite Hermitian
qubit Hamiltonians. Testing the full Gibbs tensor on an interval
\([0,\varepsilon)\), however small, already forces the stated form,
including the exclusion of higher-support interactions.
\end{remark}

\begin{corollary}\label{eq:complex-support}
Let \(H\) be an arbitrary complex qubit matrix.  If
\(e^{-\beta H}\) has the unit-disk tensor Lee--Yang property for
all sufficiently small real \(\beta\ge0\), then its Pauli
expansion contains no term supported on three or more sites.
\end{corollary}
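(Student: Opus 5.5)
The plan is to repeat the first half of the proof of Theorem~\ref{eq:sf}, which never used Hermiticity. First we reduce to a semigroup statement. For a complex matrix \(H\), the matrix \(e^{-\beta H}\) is invertible for every \(\beta\). So Lemma~\ref{eq:cayley-bridge} applies verbatim to \(M=e^{-\beta H}\). It shows that \(e^{\beta A}\), with \(A=-CHC^{-1}\), preserves upper-half-plane stability on the multiaffine box for all \(\beta\in[0,\varepsilon)\). Note that we must use \(C^{-1}\), not \(C^*\), in the similarity; these agree because \(C\) is unitary, so nothing changes. Composing \(m\) copies of \(e^{(t/m)A}\) with \(t/m<\varepsilon\) extends preservation to all \(t\ge0\). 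Hence \(A\) is a complex stability generator in the sense of Theorem~\ref{gen:main-complex}, or more minimally of Proposition~\ref{gen:contact-necessity}. Consequently its canonical order is at most two.

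Next we transfer the order bound to Pauli support. We expand \(A\) in tensor products of the one-site basis \(I,J^-,J^0,J^+\) and choose a support set \(S\) of maximal cardinality carrying a nonzero coefficient. For \(|S|\ge3\) the coefficient of \(\partial_S\) collects exactly the exact-support terms, because no larger support occurs. These terms have linearly independent principal monomials \(\prod_{i\in S}s_i^{r_i}\), since the first-order coefficients \(1,s,s^2\) of \(J^-,J^0,J^+\) are independent. The order bound forces that coefficient to vanish, so all terms with support \(S\) vanish. Inducting downward on the maximal support size removes every term supported on three or more sites.

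Finally, the decomposition of operators into the identity and traceless parts at each site is preserved by the local similarity \(C_1^{\otimes n}\). Indeed \(C_1\) maps \(I\) to \(I\) and maps the traceless span of \(X,Y,Z\) onto the traceless span of \(J^-,J^0,J^+\). Both are bases of the traceless \(2\times2\) matrices. Therefore "support" in the \(J\)-expansion coincides with Pauli support. As a result, the Pauli expansion of \(H=-C^{-1}AC\) has no term supported on three or more sites.

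The main obstacle is confirming that nothing in the necessity chain secretly needed Hermiticity or a real generator. Proposition~\ref{gen:contact-necessity} is stated for arbitrary complex \(A\) and uses only preservation for small \(t\), so this step is fine. Likewise, the bridge lemma uses only invertibility of \(M\), which holds automatically for an exponential. The rest of Theorem~\ref{eq:sf}, namely the skew-symmetry and trace arguments, is exactly what is dropped here.
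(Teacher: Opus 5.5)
Your proposal is correct and follows the paper's own proof. The Cayley bridge applies because \(e^{-\beta H}\) is always invertible, and with the semigroup identity it puts \(A=-CHC^{-1}\) under the order bound of Theorem~\ref{gen:main-complex}; the maximal-support argument from the proof of Theorem~\ref{eq:sf}, together with the fact that the local similarity preserves the identity/traceless splitting at each site, then removes every term on three or more sites, and as you note only the Hermitian-specific trace and skew-symmetry steps are discarded.
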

\begin{proof}
The Cayley bridge and the semigroup identity place the generator
under Theorem~\ref{gen:main-complex}, which bounds its canonical
order by two. The maximal-support argument in the proof of
Theorem~\ref{eq:sf} then applies without Hermiticity and eliminates
every Pauli term supported on three or more sites.
\end{proof}

\begin{remark}\label{eq:trace-caveat}
Hermiticity ensures that the Gibbs trace is positive. For a complex
Hamiltonian the trace can vanish even when its tensor is stable: \(H=\diag(0,i\pi)\) has Gibbs
polynomial \(1+e^{-i\pi\beta}zw\), which is nonzero on the open
unit bidisk for every real \(\beta\), but
\(\Tr e^{-H}=0\).  The support conclusion in
Corollary~\ref{eq:complex-support} applies to this complex setting;
the normalized Gibbs-state statements use Hermiticity.
\end{remark}

\subsection{Strict Gibbs radii at positive fields}

\begin{theorem}\label{eq:strict-field}
Let \(H\) have the form \eqref{eq:ferromagnetic-convention}, with
\(J_{ij}^{zz}\ge\|J_{ij}^{\perp}\|_{\mathrm{op}}\) and
\(\mu_i^z>0\) at every site.  Then, for every \(\beta>0\),
\(F_{e^{-\beta H}}\) is nonzero on the closed unit polydisk in
its full \(2n\) variables.  In particular, it has some radius
\(r_\beta>1\).

The ground eigenspace of \(H\) is one-dimensional.  Its nonzero
coefficient vector \(\psi\) defines a polynomial
\(f_\psi(z)=\sum_a\psi_a z^a\) of radius greater than one.
For every \(\beta_0>0\), one radius \(r>1\) works for all
Gibbs kernels at \(\beta\ge\beta_0\), and also for the image
under each \(e^{-\beta H}\) of every nonzero unit-disk-stable
coefficient vector.
\end{theorem}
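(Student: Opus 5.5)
The plan is to reduce everything to Theorem~\ref{spec:strict} and Proposition~\ref{spec:perron}, working directly in disk coordinates on the multiaffine box \(\kappa=(1,\ldots,1)\). First I separate off the longitudinal field. Write
\[
 H=H_0-\sum_i\mu_i^zZ_i,
\]
where \(H_0\) is \(H\) with every \(\mu_i^z\) replaced by \(0\). Then \(H_0\) still has the form \eqref{eq:ferromagnetic-convention}, satisfies \(J_{ij}^{zz}\ge\|J_{ij}^\perp\|_{\mathrm{op}}\), and has zero longitudinal fields. By Theorem~\ref{eq:sf}, \(e^{-\beta H_0}\) therefore has the unit-disk tensor Lee--Yang property for every \(\beta\ge0\).

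Next I identify \(F_M\) with the disk kernel. For \(\kappa=(1,\ldots,1)\) all binomial weights equal one, and with the convention \(Z|0\rangle=|0\rangle\) the coefficient vector of \(z^a\) is \(|a\rangle\). Hence \(K_M(z,w)=\sum_{a,b}M_{ba}z^bw^a\), which is \(F_M\) with its two groups of variables interchanged; interchanging the groups does not affect nonvanishing on a product of equal polydisks. Since \(e^{-\beta H_0}\) is invertible, Lemma~\ref{spec:kernel-criterion} then shows that \(A_0:=-H_0\), acting on multiaffine polynomials, satisfies \(e^{tA_0}\mathcal C_\kappa\subseteq\mathcal C_\kappa\) for all \(t\ge0\).

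On a qubit, \(Z_i=I-2E_i\) with \(E_i=z_i\partial_i\), as noted in the preliminaries. Therefore
\[
 -H=A_0+\Bigl(\sum_i\mu_i^z\Bigr)I-2\sum_i\mu_i^zE_i .
\]
The scalar contributes only the positive factor \(e^{\beta\sum_i\mu_i^z}\), which changes neither kernel zeros nor eigenvectors. Theorem~\ref{spec:strict}, applied with \(\mu_i=\mu_i^z>0\) and \(t=\beta\), then gives everything in the first paragraph of the statement. The kernel \(K_{e^{-\beta H}}\), and hence \(F_{e^{-\beta H}}\), is nonzero on the closed unit polydisk in all \(2n\) variables, and so has a radius \(r_\beta>1\). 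For each \(\beta_0>0\), one radius \(r>1\) works for all \(\beta\ge\beta_0\), both for the kernels and for the images of nonzero unit-disk-stable inputs.

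For the ground state, Proposition~\ref{spec:perron} applied to \(A=-H-(\sum_i\mu_i^z)I\) gives an algebraically simple eigenvalue of strictly largest real part, together with an eigenvector \(v\) that is zero-free on a polydisk of radius greater than one. Since \(H\) is Hermitian, the spectrum of \(A\) is real, and its largest element is minus the ground energy shifted by the scalar. Algebraic simplicity of this eigenvalue is exactly one-dimensionality of the ground eigenspace. Because all binomial weights are one, the polynomial \(v\) is precisely \(f_\psi\) for the ground vector \(\psi\), up to a scalar, which proves the radius claim.

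The main obstacle is not analytic, since Theorem~\ref{spec:strict} does the real work, but bookkeeping of conventions. Three checks are needed: that \(E_i\) represents \(|1\rangle\langle1|_i\) consistently with \(Z|0\rangle=|0\rangle\); that the row/column transpose between \(F_M\) and \(K_M\) is harmless; and that the physical Gibbs operator acts on disk coordinates directly, so that the half-plane Cayley bridge of Lemma~\ref{eq:cayley-bridge} is used only through Theorem~\ref{eq:sf}. I would verify these on a single qubit, where \(F_{e^{\beta\mu Z}}=e^{\beta\mu}+e^{-\beta\mu}z w\) is visibly strictly stable for \(\mu>0\).
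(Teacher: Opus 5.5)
Your proposal is correct and follows the paper's proof: you split off the longitudinal field, use Theorem~\ref{eq:sf} together with the disk kernel criterion to make \(A_0=-H_0\) a disk-stability generator, rewrite \(Z_i=I-2E_i\), and invoke Theorem~\ref{spec:strict} and Proposition~\ref{spec:perron}. The only differences are minor. You remove the full fields \(\mu_i^z\) and use nonuniform damping, whereas the paper removes only \(\min_i\mu_i^z\) uniformly, and Theorem~\ref{spec:strict} shows these are equivalent. Also, with unit binomial weights \(K_M\) actually coincides with \(F_M\) rather than differing by an interchange of variable groups, which is harmless either way.
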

\begin{proof}
Let \(\mu=\min_i\mu_i^z>0\), and write
\(H=H_0-\mu\sum_iZ_i\).  The Hamiltonian \(H_0\) satisfies the nonstrict Suzuki--Fisher
conditions, so Theorem~\ref{eq:sf} and the disk version of
Lemma~\ref{eq:cayley-bridge} show that \(A_0=-H_0\) generates
a disk-stability preserver. Since \(Z_i=I-2E_i\) in the
multiaffine coefficient basis,
\[
 -H=A_0-2\mu E+\mu nI.
\]
The harmless scalar factor \(e^{\beta\mu n}\) reduces every
kernel and evolution assertion to Theorem~\ref{spec:strict}.

By Proposition~\ref{spec:perron}, the dominant eigenvalue of
\(-H\) is algebraically simple and its right eigenvector has
radius greater than one. Since \(H\) is Hermitian, this eigenvalue
is the negative ground energy, and the ground eigenspace is
one-dimensional. The radius conclusion can also be read directly from the
Gibbs kernels: fix a kernel radius \(r>1\) and normalize its
powers by the largest eigenvalue. Contraction preserves radius
\(r\), while spectral convergence gives
\(|\psi\rangle\langle\psi|\) with nonzero polynomial
\[
 f_\psi(z)\overline{f_\psi(\bar w)}.
\]
By Hurwitz's theorem the limit is nonvanishing on \(\D_r^{2n}\),
and hence each factor is nonvanishing on \(\D_r^n\).
\end{proof}

\begin{remark}\label{eq:gap-attribution}
The spectral conclusion
\[
 E_1(H)-E_0(H)\ge2\min_i\mu_i^z
\]
is the Hermitian specialization of Theorem~\ref{spec:gap} and
was already proved for this family in
\cite[Theorem~1]{BGLW26}.  Here the additional conclusion is
strict nonvanishing of the actual Gibbs tensor at every positive
inverse temperature, and its transfer to the ground-state polynomial.
The radius depends on the Hamiltonian and on the positive time cutoff,
as in Theorem~\ref{spec:strict}.
\end{remark}

\subsection{A qualitative radius for deformed EPR ground states}

Consider a finite graph without isolated vertices, with positive edge
weights \(w_e\).  On an edge \(e=\{i,j\}\), fix
\(0\le s_e<1\) and \(\theta_e\in\R\), and define
\[
 |\eta_e\rangle=|00\rangle+s_e e^{i\theta_e}|11\rangle,
 \qquad H_{\mathrm{EPR}}=-\sum_e w_e
 |\eta_e\rangle\langle\eta_e|_{ij}.
\]
The edge vectors here are unnormalized; replacing them by normalized
vectors simply rescales the positive weights.

\begin{corollary}\label{eq:epr}
The ground space of \(H_{\mathrm{EPR}}\) is one-dimensional, and
its ground-state polynomial is zero-free on a polydisk of some radius
\(r>1\).
\end{corollary}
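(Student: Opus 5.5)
The plan is to show that \(H_{\mathrm{EPR}}\) lies in the Suzuki--Fisher family \eqref{eq:ferromagnetic-convention} with every longitudinal field strictly positive, and then to quote Theorem~\ref{eq:strict-field}. That theorem gives a one-dimensional ground eigenspace and a ground-state polynomial \(f_\psi\) of radius greater than one. Since \(H_{\mathrm{EPR}}\) is a sum of edge terms, it suffices to expand one edge projector in Pauli operators and add up the contributions.

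For an edge \(e=\{i,j\}\), write \(s=s_e\), \(\theta=\theta_e\) and
\[
 |\eta_e\rangle\langle\eta_e|
 =|00\rangle\langle00|+s^2|11\rangle\langle11|
 +se^{-i\theta}|00\rangle\langle11|+se^{i\theta}|11\rangle\langle00|.
\]
The diagonal part uses \(|00\rangle\langle00|=\tfrac14(I+Z_i)(I+Z_j)\) and \(|11\rangle\langle11|=\tfrac14(I-Z_i)(I-Z_j)\). It equals
\[
 \tfrac{1+s^2}{4}(I+Z_iZ_j)+\tfrac{1-s^2}{4}(Z_i+Z_j).
\]
For the off-diagonal part, \(|0\rangle\langle1|=\tfrac12(X+iY)\) gives
\[
 |00\rangle\langle11|=\tfrac14\bigl(X_iX_j-Y_iY_j+i(X_iY_j+Y_iX_j)\bigr).
\]
Adding the Hermitian conjugate with phases \(e^{\mp i\theta}\) produces
\[
 \tfrac s2\bigl[\cos\theta\,(X_iX_j-Y_iY_j)+\sin\theta\,(X_iY_j+Y_iX_j)\bigr],
\]
up to the sign of \(\theta\), which does not matter below. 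Hence \(-w_e|\eta_e\rangle\langle\eta_e|\) has the form \eqref{eq:ferromagnetic-convention} with
\[
 J^{zz}_e=\tfrac{w_e(1+s^2)}4,\qquad
 J^\perp_e=\tfrac{w_es}2\begin{pmatrix}\cos\theta&\pm\sin\theta\\ \pm\sin\theta&-\cos\theta\end{pmatrix}.
\]
It also contributes a longitudinal field \(\tfrac{w_e(1-s^2)}4\) to each endpoint, and it has no transverse fields and no \(XZ\)-type terms.

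The matrix in \(J^\perp_e\) is orthogonal, so \(\|J^\perp_e\|_{\rm op}=w_es/2\). The transverse condition \(J^{zz}_e\ge\|J^\perp_e\|_{\rm op}\) then reduces to \((1+s^2)/4\ge s/2\), that is \((1-s)^2\ge0\). The longitudinal field at a vertex \(i\) is
\[
 \mu_i^z=\sum_{e\ni i}\tfrac{w_e(1-s_e^2)}4 .
\]
This is strictly positive because \(s_e<1\), \(w_e>0\), and every vertex lies on at least one edge (there are no isolated vertices). Theorem~\ref{eq:strict-field} therefore applies and gives both conclusions. Its radius \(r\) depends on the Hamiltonian, which is consistent with the word ``some'' in the statement.

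The only real obstacle is the sign and phase bookkeeping in the off-diagonal Pauli expansion. It is harmless here: whatever the exact signs, \(J^\perp_e\) comes out as \(w_es/2\) times a \(2\times2\) orthogonal matrix, so its operator norm is \(w_es/2\) in every case, and the inequality to check stays \((1-s)^2\ge0\).
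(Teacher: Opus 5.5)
Your proposal is correct and follows essentially the same route as the paper: expand each edge projector in Pauli operators, read off \(J^{zz}_e=w_e(1+s_e^2)/4\) and an orthogonal transverse matrix of norm \(w_es_e/2\), so that \(J^{zz}_e-\|J^\perp_e\|_{\mathrm{op}}=w_e(1-s_e)^2/4\ge0\). Positivity of the longitudinal fields from the absence of isolated vertices then lets you invoke Theorem~\ref{eq:strict-field}. Your off-diagonal computation in fact gives exactly \(\tfrac{s}{2}[\cos\theta\,(X_iX_j-Y_iY_j)+\sin\theta\,(X_iY_j+Y_iX_j)]\), so the ``up to sign'' hedge is unnecessary.
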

\begin{proof}
On one edge, direct expansion in Pauli matrices gives
\begin{align*}
 |\eta_e\rangle\langle\eta_e|
 ={}&\frac{1+s_e^2}{4}(I+Z_iZ_j)
    +\frac{1-s_e^2}{4}(Z_i+Z_j)\\
 &+\frac{s_e}{2}\bigl[
      \cos\theta_e(X_iX_j-Y_iY_j)
      +\sin\theta_e(X_iY_j+Y_iX_j)\bigr].
\end{align*}
Its ferromagnetic coefficients are therefore
\[
 J_e^{zz}=\frac{w_e(1+s_e^2)}4,\qquad
 J_e^\perp=\frac{w_es_e}{2}
 \begin{pmatrix}\cos\theta_e&\sin\theta_e\\
                 \sin\theta_e&-\cos\theta_e\end{pmatrix}.
\]
The displayed real matrix is orthogonal, so
\(\|J_e^\perp\|_{\mathrm{op}}=w_es_e/2\) and
\(J_e^{zz}-\|J_e^\perp\|_{\mathrm{op}}
 =w_e(1-s_e)^2/4\ge0\).  The longitudinal field at site \(i\) is
\[
 \mu_i^z=\sum_{e\ni i}\frac{w_e(1-s_e^2)}4>0.
\]
Every vertex has an incident edge, so these fields are strictly
positive and Theorem~\ref{eq:strict-field} gives both uniqueness
and the asserted radius.
\end{proof}

For a common deformation parameter \(s<1\), the corollary gives
the qualitative strict-radius conclusion in
\cite[Conjecture~1]{WBG26}, with a radius that may depend on the
graph and its parameters. The edge phases are arbitrary and
independent, since the proof uses only the norm of each transverse
interaction.

For the unweighted common-parameter model, the field calculation above
and \cite[Theorem~1]{BGLW26} give
\[
 \Delta(H_s)\geq \frac{d_{\min}(G)}2(1-s^2),
 \qquad d_{\min}(G)=\min_i\deg_G(i),
\]
where $\Delta$ denotes the gap between the two lowest eigenvalues.
Thus the lower bound proposed in \cite[Conjecture~3]{WBG26} already
follows when $d_{\min}(G)\geq2$. For graphs with leaves this estimate
gives $(1-s^2)/2$; attaining the conjectured lower bound requires
additional information. For stars with at least three vertices,
equality holds in the conjectured bound by \cite[Lemma~13]{WBG26};
the single-edge gap is $1+s^2$ by its rank-one spectrum.

\section{Classical computation of principal eigenvalues and states}
\label{alg:section}

The strict spectral estimates lead to algorithms whose input consists
of local matrices, without requiring construction of an eigenvector
in dimension \(2^n\). We prove the principal-eigenvalue algorithm for
complex stability-preserving generators and deduce ground-energy
algorithms for the Hamiltonians of Section~\ref{eq:section}; the same
continuation argument will also give scalar queries of principal
states in Section~\ref{alg:states-subsection}.

The gap in Theorem~\ref{spec:gap} selects a holomorphic principal
eigenvalue throughout a right half-plane of damping parameters. A
reciprocal change of variable joins this half-plane to the disk in
which the perturbation coefficients are computable by the algorithm
of Bravyi, DiVincenzo, and Loss \cite{BDL08}, and a further explicit
change of variable permits evaluation at fixed positive damping from
logarithmically many coefficients.

\subsection{Local input and the approximation theorem}
\label{alg:input-subsection}

Identify the coefficient space
\[
 \mathcal P_n
 =\{p\in\C[z_1,\ldots,z_n]:\deg_{z_i}p\le1\}
 \cong(\C^2)^{\otimes n},
 \qquad z^\alpha\longleftrightarrow|\alpha\rangle.
\]
We use the Euclidean norm in this coefficient basis and the induced
operator norm.  Let \(\mathcal S_n\) be the set of nonzero polynomials in
\(\mathcal P_n\) that do not vanish in \(\D^n\), where
\(\D=\{z\in\C:|z|<1\}\).  The degree operator is
\begin{equation}
 E=\sum_{i=1}^n |1\rangle\langle1|_i
   =\sum_{i=1}^n z_i\partial_{z_i}.
 \label{alg:degree}
\end{equation}
All local matrices below are supplied in the displayed tensor-product
basis.  A Gaussian rational means an element of \(\mathbb Q(i)\).

\begin{theorem}
\label{alg:complex}
Fix \(d\in\mathbb N\cup\{0\}\) and positive rational numbers \(J,\mu\).
For \(n\ge1\), suppose that the input specifies
\begin{equation}
 A_0=\sum_{i=1}^n K_i+
       \sum_{\{i,j\}\in\mathcal E}K_{ij},
 \label{alg:local-input}
\end{equation}
where the graph \((\{1,\ldots,n\},\mathcal E)\) has maximum degree at
most \(d\), every local matrix has Gaussian-rational entries and operator
norm at most \(J\), and
\begin{equation}
 e^{tA_0}\mathcal S_n\subseteq\mathcal S_n
 \qquad(t\ge0).
 \label{alg:promise}
\end{equation}
Let \(a_\star\) be the eigenvalue of
\(A=A_0-2\mu E\) with largest real part.  This eigenvalue is unique
and algebraically simple.  Given a positive rational \(\delta\), a
deterministic classical algorithm returns
\(\widehat a\in\mathbb Q(i)\) such that
\[
 |\widehat a-a_\star|\le\delta.
\]
For fixed \(d,J,\mu\), its bit complexity is polynomial in \(n\),
the input bit length, and \(\delta^{-1}\).
\end{theorem}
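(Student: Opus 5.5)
Uniqueness and algebraic simplicity of \(a_\star\) come directly from Proposition~\ref{spec:perron} and Theorem~\ref{spec:gap} in the multiaffine box: the promise \eqref{alg:promise} says \(A_0\) generates a disk-stability preserver. For the algorithm I would treat the damping as a complex parameter. Put \(A(\lambda)=A_0-2\lambda E\). For \(\Re\lambda>0\), the proof of Theorem~\ref{spec:strict} already allows complex \(\mu\), and it gives \(e^{tA(\lambda)}\) preserving \(\mathcal S_n\). The gap argument should then extend to complex \(\lambda\), since \(e^{-2h\lambda E}\) is still a coordinatewise dilation of modulus \(e^{-2h\Re\lambda}\). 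So \(A(\lambda)\) has a simple eigenvalue \(a_\star(\lambda)\), separated from the rest of the spectrum by \(2\Re\lambda\). By the implicit function theorem, \(a_\star\) is then holomorphic on the half-plane \(\Re\lambda>0\).

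Next I would pass to the perturbative regime. Write \(A(\lambda)=-2\lambda(E-A_0/(2\lambda))\) and set \(x=1/(2\lambda)\). Then \(a_\star(\lambda)=-x^{-1}\,\epsilon(x)\), where \(\epsilon(x)\) is the lowest-real-part eigenvalue continuation of \(E-xA_0\) from the nondegenerate vacuum eigenvalue \(0\) of \(E\). The map \(x=1/(2\lambda)\) sends the half-plane onto the half-plane \(\Re x>0\). Near \(x=0\), the quantity \(\epsilon(x)\) is the ground-sector eigenvalue of the unperturbed operator \(E\), which has a gap of \(1\) above its simple ground state. Since \(\|A_0\|\) could be of size \(n\), I would work with extensive perturbation theory instead. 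The Bravyi--DiVincenzo--Loss / cluster expansion \cite{BDL08} shows that the Taylor coefficients \(\epsilon_k\) of \(\epsilon(x)\) converge on a disk \(|x|<x_0\) with \(x_0=x_0(d,J)\), satisfy \(|\epsilon_k|\le n C^k\), and can be computed exactly in \(\exp(O(k))\mathrm{poly}(n)\) time by summing connected clusters of size \(k\) on the bounded-degree graph. The holomorphic function \(g(x)=\epsilon(x)/n\) on \(\Omega=\{|x|<x_0\}\cup\{\Re x>0\}\) agrees with the series near \(0\). Here I expect a bound \(|g|\le C'\) on compact subsets from \(|a_\star(\lambda)|\le\|A(\lambda)\|=O(n(J+|\lambda|))\).

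The target is \(x_\ast=1/(2\mu)\), an interior point of \(\Omega\). I would choose an explicit conformal (or polynomial) map \(\phi\) from the unit disk into \(\Omega\) with \(\phi(0)=0\) and \(\phi(\rho)=x_\ast\) for some \(\rho<1\) depending only on \(\mu,x_0\). A Barvinok-style map of the disk into a neighborhood of the segment \([0,x_\ast]\) inside \(\Omega\) is one option; a slit-strip map is another. The composition \(g\circ\phi\) is bounded on the disk, so truncating its Taylor series at order \(m=O(\log(n/\delta))\) gives error \(\delta\), after rescaling by \(n/x_\ast\). The coefficients of \(g\circ\phi\) are obtained from \(\epsilon_1,\dots,\epsilon_m\) by composing power series, which takes \(\mathrm{poly}(m)\) arithmetic operations. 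The total cost is therefore \(e^{O(m)}\mathrm{poly}(n)=\mathrm{poly}(n,\delta^{-1})\), and rational rounding with controlled bit lengths yields \(\widehat a\in\mathbb Q(i)\).

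The main obstacle is the uniform bound on \(g\) over the image of \(\phi\) and the analytic identification across the overlap. The bound has to be uniform in \(n\), per site, including the region where \(x\) is large, that is, \(\lambda\) near the imaginary axis, where the gap \(2\Re\lambda\) degenerates. I would therefore keep the image of \(\phi\) inside \(\{\Re x\ge c\}\cup\{|x|\le x_0/2\}\), so that \(\Re\lambda\) stays bounded below away from the origin. I would then need to confirm that the perturbative branch at small \(x\) coincides with the \(\Re\)-dominant branch \(a_\star\). I plan to check this at small positive real \(x\), where perturbation theory and the gap both apply, and where the dominant eigenvalue must continue the vacuum eigenvalue because the gap never closes along the segment.
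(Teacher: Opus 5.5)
Your architecture matches the paper's. You get a holomorphic principal eigenvalue on the half-plane of complex damping; your dilation argument with modulus $e^{-2h\Re\lambda}$ is equivalent to the paper's absorption of $-2i(\Im\lambda)E$ into $A_0$. You then use the reciprocal variable to reach a perturbative disk, identify the branches at small positive $x$ via the Riesz projection, and apply an explicit map $\phi$ of the disk with $\phi(0)=0$ and $\phi(\rho)=x_\ast$, $\rho$ independent of $n$.

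The genuine gap is the perturbative input. In this theorem $A_0$ is an arbitrary complex stability generator, in general non-Hermitian and even non-normal (e.g.\ $R(X+iZ)$ in Example~\ref{spec:nonnormal}). The coefficient bounds and coefficient algorithm of \cite{BDL08}, however, are stated and proved for Hermitian local perturbations. Citing them for $E-xA_0$ therefore does not give the $n$-uniform estimate $|\epsilon_k|\le nC^k$. That estimate is exactly what makes $x_0$, hence $\rho$ and the exponent of the running time, independent of $n$.

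The missing idea is the complexification in Lemma~\ref{alg:complexification}. With the vacuum normalization fixed, the recursion \eqref{alg:formal-energy}--\eqref{alg:formal-vector} makes $e_k(V)$ a homogeneous polynomial of degree $k$ in the entries of $V$, with no conjugates. Write $V=B+iC$ with Hermitian local parts. The family $W_\theta=B\cos\theta+C\sin\theta=\tfrac12(e^{-i\theta}V+e^{i\theta}V^*)$ is Hermitian with local norms at most $2J$, and $2^{-k}e_k(V)$ is the coefficient of $e^{-ik\theta}$ in $e_k(W_\theta)$. So the Hermitian bound transfers to $V$ with a constant-factor loss in the radius.

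For the computation, $s\mapsto e_k(B+sC)$ is a polynomial of degree at most $k$. Running the Hermitian algorithm at $p+1$ real nodes and interpolating at $s=i$ gives $e_1(V),\dots,e_p(V)$ exactly. A connected-subset inversion of the formal recursion, which is algebraic in the entries, would also work. Finally, the identity theorem applied to $\det(E-xA_0-\epsilon(x)I)$ shows that the continued series is an eigenvalue throughout the glued domain, including where $\Re x\le0$.

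The obstacle you single out is not one. Since $\epsilon(x)$ is an eigenvalue of $E-xA_0$ everywhere on $\Omega$, you have $|\epsilon(x)|\le n+|x|\,n(1+d)J$ with no use of the gap. So $g$ is bounded on every bounded subset of $\Omega$ uniformly in $n$, and any $\phi$ with bounded image in $\Omega$ works. The gap is needed only for holomorphy and for selecting the branch.

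On the other hand, ``rational rounding'' does not settle the bit complexity. The coefficients come from an exact recursion whose intermediate numerators and denominators must be shown to have polynomial bit length (Appendix~\ref{alg:energy-bit-details}); otherwise you must redo the computation in error-controlled approximate arithmetic. Choosing $\phi$ with rational parameters, as the paper's M\"obius map onto a disk centred at the target point does, keeps the output in $\mathbb Q(i)$ without any rounding.
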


Corollary~\ref{gen:recognition} checks \eqref{alg:promise} in
polynomial bit time from the supplied local representation. The test
is applied to the sum \(A_0\), so terms which fail to generate
preserving semigroups individually are allowed. The remaining arguments
use the given degree and norm bounds to compute perturbation coefficients.

More explicitly, the arithmetic bound proved below is
\[
 n\left(2+\frac{C_{d,J,\mu}n}{\delta}\right)^%
 {O_d(1+(J/\mu)^2)},
\]
up to polynomial factors in the coefficient order, where
\(C_{d,J,\mu}\) is independent of \(n,\delta\). The bit overhead is
polynomial in the input length and is proved in
Appendix~\ref{alg:energy-bit-details}.
For \(\delta=n^{-b}\), the arithmetic bound becomes
\(n^{O_{d,b}(1+(J/\mu)^2)}\), up to a parameter-dependent factor
and polynomial factors in the coefficient order. The exponent grows
with \(J/\mu\), so the polynomial-time assertion treats
\(\mu>0\) as fixed.

\subsection{Suzuki--Fisher ground energies}
\label{alg:sf-subsection}

The preceding result applies to the Suzuki--Fisher class of
Theorem~\ref{eq:sf}.  We keep its sign convention:
the longitudinal term is
\(-\sum_i\mu_i^z Z_i\), with \(\mu_i^z\ge0\).
The Lee--Yang theorem gives disk stability preservation by the
Gibbs semigroup \(e^{-tH}\); see
\cite{SuzukiFisher71,WBG26,BGLW26}.

\begin{corollary}
\label{alg:sf}
Fix a maximum degree \(d\), a positive rational bound \(J\) on
the absolute values of the Pauli coefficients, and a positive
rational \(\mu\).
Let \(H\) be a Suzuki--Fisher Hamiltonian with rational Pauli
coefficients on an \(n\)-vertex graph of maximum degree at most
\(d\), and suppose
\[
 \mu_i^z\ge\mu\qquad(1\le i\le n).
 \]
For every positive rational \(\delta\), its ground energy
\(E_0(H)\) can be approximated to absolute error at most
\(\delta\) by a deterministic classical algorithm polynomial in
\(n\), the input bit length, and \(\delta^{-1}\).
The polynomial may depend on \(d,J,\mu\).
\end{corollary}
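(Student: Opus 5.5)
The plan is to reduce Corollary~\ref{alg:sf} to Theorem~\ref{alg:complex} by writing \(-H\), up to an additive scalar, in the form \(A_0-2\mu E\) with \(A_0\) a local stability generator satisfying \eqref{alg:promise}.

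\textbf{Step 1: the decomposition.} Write \(\mu_i^z=\mu+\nu_i\) with \(\nu_i\ge0\), and split
\[
 H=H_0-\mu\sum_iZ_i .
\]
Here \(H_0\) is the Suzuki--Fisher Hamiltonian obtained by replacing each longitudinal field \(\mu_i^z\) with \(\nu_i\). It still satisfies \(\nu_i\ge0\) and \(J^{zz}_{ij}\ge\|J^\perp_{ij}\|_{\rm op}\). Using \(Z_i=I-2E_i\) in the multiaffine coefficient basis, as in the proof of Theorem~\ref{eq:strict-field}, gives
\[
 -H=A_0-2\mu E+\mu nI,\qquad A_0=-H_0 .
\]
By Theorem~\ref{eq:sf} and the disk form of Lemma~\ref{eq:cayley-bridge}, \(e^{tA_0}=e^{-tH_0}\) has the tensor Lee--Yang property for all \(t\ge0\). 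It is invertible, so Lemma~\ref{spec:kernel-criterion} shows that it preserves \(\mathcal S_n\), which is the promise \eqref{alg:promise}.

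\textbf{Step 2: the local input.} I will present \(A_0\) in the form \eqref{alg:local-input}. Each one-site term \(K_i\) is \(-(h_i^x X_i+h_i^y Y_i+\nu_iZ_i)\); the scalar \(h_0\) is dropped, since it only shifts the energy. Each edge term \(K_{ij}\) is the negated two-site Pauli interaction. All entries are Gaussian rationals with bit length linear in that of the Pauli coefficients. The Pauli coefficients are bounded by \(J\), and \(\nu_i\le \mu_i^z\le J\) if the field coefficient is also bounded by \(J\). Hence each local matrix has operator norm at most \(J' = 5J\), or \(3J\) for one-site terms, a constant depending only on \(J\).

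The graph is the interaction graph, of maximum degree \(d\). If \(\mu>J\), the hypothesis is vacuous; otherwise \(\mu\le J\), which is harmless. The constants \(J'\) and \(\mu\) are rational and fixed.

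\textbf{Step 3: reading off the energy.} Theorem~\ref{alg:complex} returns \(\widehat a\) with \(|\widehat a-a_\star|\le\delta\). Because \(H\) is Hermitian, the spectrum of \(-H\) is real, and its top eigenvalue is \(-E_0(H)\). So \(a_\star=-E_0(H)-\mu n+h_0\), with the \(h_0\) term present only if it was dropped in Step 2. The output is then
\[
 \widehat E=-\Re\widehat a-\mu n+h_0,
\]
and \(|\widehat E-E_0(H)|\le\delta\), since taking the real part does not increase the error.

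\textbf{Main obstacle.} The only genuinely delicate point is bookkeeping: ensuring the transfer from the half-plane classification to the disk promise \eqref{alg:promise} uses exactly the basis convention of \eqref{alg:local-input}, namely \(z^\alpha\leftrightarrow|\alpha\rangle\) with \(Z|0\rangle=|0\rangle\). Lemma~\ref{eq:cayley-bridge} is stated in this same basis, so I expect this to be routine. Everything else is a direct substitution. The running time is then polynomial for fixed \(d\), \(J'\) and \(\mu\), by Theorem~\ref{alg:complex}.
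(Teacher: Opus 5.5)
Your proposal is correct and follows the paper's proof: the paper also subtracts the uniform field, via \(V=H+\mu\sum_iZ_i\) (your \(H_0\)), and uses Theorem~\ref{eq:sf} to obtain the promise for \(A_0=-V\). It then bounds the local norms by sums of absolute Pauli coefficients and returns \(\widehat E=-\Re\widehat a-\mu n\) from the identity \(A_0-2\mu E=-H-\mu n\Id\). The only blemish is a sign slip in your displayed one-site term: \(H_0\) contains \(-\nu_iZ_i\), so \(K_i\), as a summand of \(A_0=-H_0\), must contain \(+\nu_iZ_i\); this does not affect the argument.
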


\begin{proof}
Define
\[
 V=H+\mu\sum_i Z_i.
 \]
Its longitudinal fields are \(\mu_i^z-\mu\ge0\), and its
interaction inequalities are unchanged, so \(V\) is again
Suzuki--Fisher and \(A_0=-V\) satisfies \eqref{alg:promise}.
Combining the Pauli terms on each edge or site gives Gaussian-rational
local matrices with a norm bound depending only on \(J,\mu\),
without changing the degree bound. The sum of the absolute Pauli
coefficients provides a rational norm bound at each site or edge,
avoiding any need to compute matrix norms exactly.

Since \(2E=n\Id-\sum_i Z_i\),
\begin{equation}
 A_0-2\mu E=-H-\mu n\Id.
 \label{alg:sf-shift}
\end{equation}
The matrix on the right is Hermitian with largest eigenvalue
\(-E_0(H)-\mu n\), so we apply Theorem~\ref{alg:complex} with
error \(\delta\) and, from its output \(\widehat a\), return
\[
 \widehat E=-\Re\widehat a-\mu n.
 \]
The error bound follows from \eqref{alg:sf-shift}, with the
complexity of Theorem~\ref{alg:complex}.
\end{proof}

Corollary~\ref{alg:sf} includes nonstoquastic interactions and
does not require conservation of total degree.  Its computational
output and parameter dependence differ from several earlier
results.  Bravyi and Gosset \cite{BG17} give randomized
algorithms for a two-axis stoquastic family on arbitrary graphs.
Harrow, Mehraban, and Soleimanifar \cite[Section~7 of the full version]{HMS20}
give an all-temperature quasipolynomial partition-function
algorithm for a number-conserving XXZ subclass.  Their
fugacity-polynomial reduction uses the commutation of the
interaction with total magnetization.
Bravyi, Gosset, Liu, and Wong \cite{BGLW26} give a polynomial-time
quantum ground-energy algorithm for the full Suzuki--Fisher
family, including zero fields and inverse-polynomial absolute
accuracy.  Corollary~\ref{alg:sf} gives a deterministic classical algorithm
under the bounded-degree and fixed-positive-field assumptions.

\subsection{Energy-value approximation at zero field}
\label{alg:value-subsection}

A fixed positive field can also be used as a controlled
perturbation when the requested error is proportional to the
number of sites.

\begin{corollary}
\label{alg:value}
Fix a maximum degree \(d\), a bound \(J\) on the absolute
Pauli coefficients, and a positive rational \(\varepsilon\).
For every rational Suzuki--Fisher Hamiltonian on an
\(n\)-vertex graph satisfying these bounds, a deterministic
classical algorithm polynomial in \(n\) and the input bit length
returns \(\widehat E\in\mathbb Q\) such that
\[
 |\widehat E-E_0(H)|\le\varepsilon n.
 \]
The longitudinal fields may vanish.
The polynomial may depend on \(d,J,\varepsilon\).
\end{corollary}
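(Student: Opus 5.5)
Set \(\mu=\varepsilon/4\), fixed and rational, and perturb \(H\) by the field. Define
\[
 H'=H-\mu\sum_iZ_i .
\]
Its longitudinal fields are \(\mu_i^z+\mu\ge\mu\). The interaction inequalities are unchanged, so \(H'\) is again Suzuki--Fisher. Its Pauli coefficients are bounded by \(J+\mu\), and its graph has the same maximum degree \(d\). Corollary~\ref{alg:sf} therefore applies to \(H'\) with parameters \(d\), \(J+\mu\), \(\mu\) and requested error \(\delta=\varepsilon n/2\). We then return \(\widehat E\) equal to that output, rounded to a rational if necessary.

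For the error analysis, note that \(\|\mu\sum_iZ_i\|_{\mathrm{op}}=\mu n\). Weyl's inequality (minimum eigenvalues of Hermitian matrices are \(1\)-Lipschitz in operator norm) gives
\[
 |E_0(H')-E_0(H)|\le \mu n=\varepsilon n/4 .
\]
Combined with the algorithmic error \(\varepsilon n/2\), this gives total error at most \(3\varepsilon n/4\le\varepsilon n\). Zero fields are allowed in \(H\) because only \(H'\) needs positive fields.

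For the complexity, the running time of Corollary~\ref{alg:sf} is polynomial in \(n\), the input bit length, and \(\delta^{-1}=2/(\varepsilon n)\le 2/\varepsilon\). Since \(\varepsilon\) is fixed, this is polynomial in \(n\) and the bit length, with the degree depending on \(d\), \(J\), \(\varepsilon\) through the exponent \(O_d(1+((J+\mu)/\mu)^2)\). The explicit arithmetic bound stated after Theorem~\ref{alg:complex}, with \(\delta\) proportional to \(n\), makes this visible: the factor \(C n/\delta\) becomes a constant. The bit length of \(H'\) exceeds that of \(H\) by \(O(n)\) times the size of \(\mu\), which is harmless.

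The main point to check is that the per-site norm bound required by Theorem~\ref{alg:complex} stays uniform once the field \(\mu\) is folded into the local terms. It does, because the added terms are one-site Paulis with coefficient \(\mu\). The one real trade-off is that the field size must scale with \(\varepsilon\), and this pushes the polynomial degree up to roughly \((J/\varepsilon)^2\). That is why the result is an \(\varepsilon n\) additive scheme rather than inverse-polynomial accuracy.
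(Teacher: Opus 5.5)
Your proposal is correct and follows the paper's proof essentially step for step. You add a field \(\varepsilon/4\) in the \(-Z\) direction, bound the energy shift by \(\varepsilon n/4\) through the variational (Weyl) estimate, and apply Corollary~\ref{alg:sf} with accuracy \(\varepsilon n/2\). The paper also first reduces to \(\varepsilon\le1\), but that step is unnecessary in your version because you carry the explicit coefficient bound \(J+\varepsilon/4\).
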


\begin{proof}
It suffices to consider \(0<\varepsilon\le1\), since a smaller
accuracy parameter also meets a larger requested tolerance.
Put \(\nu=\varepsilon/4\) and
\[
 H_\nu=H-\nu\sum_i Z_i.
 \]
The perturbed Hamiltonian is Suzuki--Fisher with longitudinal fields
at least \(\nu\) and local coefficients bounded in terms of
\(J,\varepsilon\). By the variational characterization of the
lowest eigenvalue,
\[
 |E_0(H_\nu)-E_0(H)|
 \le\|H_\nu-H\|\le\nu n.
 \]
Applying Corollary~\ref{alg:sf} to estimate \(E_0(H_\nu)\) with
absolute error at most \(\varepsilon n/2\) therefore produces
a value differing from \(E_0(H)\) by at most
\[
 \frac{\varepsilon n}{4}+\frac{\varepsilon n}{2}
   =\frac{3\varepsilon n}{4}
   \le\varepsilon n.
 \]
For fixed \(d,J,\varepsilon\), both \(\nu\) and the local bounds
are fixed, making the runtime in Corollary~\ref{alg:sf} polynomial
in the input size and \(n\).
\end{proof}

For Quantum MaxCut and EPR, the additive energy estimate gives
a multiplicative value guarantee on unweighted graphs.
For a graph \(G=(V,\mathcal E)\) with nonnegative rational weights, let
\[
 Q_G=\sum_{\{i,j\}\in\mathcal E}
      w_{ij}|\Psi^-\rangle\langle\Psi^-|_{ij},
 \qquad
 |\Psi^-\rangle=\frac{|01\rangle-|10\rangle}{\sqrt2},
 \]
and write \(\operatorname{OPT}(G)=\lambda_{\max}(Q_G)\).
Similarly, define the EPR objective
\[
 P_G=\sum_{\{i,j\}\in\mathcal E}
      w_{ij}|\Phi^+\rangle\langle\Phi^+|_{ij},
 \qquad
 |\Phi^+\rangle=\frac{|00\rangle+|11\rangle}{\sqrt2}.
 \]

\begin{corollary}
\label{alg:qmc}
For fixed degree and upper weight bounds and fixed rational
\(\varepsilon>0\), the optimal values of \(P_G\) on arbitrary
graphs and of \(Q_G\) on bipartite graphs admit deterministic
classical additive-\(\varepsilon |V|\) approximation in
polynomial time.
For both unweighted classes and each fixed rational \(0<\varepsilon<1\),
it returns a rational value \(L\) satisfying
\[
 (1-\varepsilon)\operatorname{OPT}\le L\le\operatorname{OPT},
\]
where \(\operatorname{OPT}\) denotes the largest eigenvalue of the
respective objective.
\end{corollary}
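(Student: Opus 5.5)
The plan is to reduce both problems to Corollary~\ref{alg:value} by writing each objective, up to sign and an additive multiple of the identity, as a Suzuki--Fisher Hamiltonian. Then we convert the additive $\varepsilon|V|$ guarantee into a multiplicative one using a linear lower bound on $\operatorname{OPT}$ for unweighted graphs of bounded degree.

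\textbf{EPR.} Take $s_e=1$, $\theta_e=0$ in the expansion from the proof of Corollary~\ref{eq:epr}. This gives
\[
 |\Phi^+\rangle\langle\Phi^+|=\tfrac14\bigl(I+Z_iZ_j+X_iX_j-Y_iY_j\bigr).
\]
So $H=-P_G$ has, up to the identity term, $J^{zz}_{ij}=w_{ij}/4$ and $J^\perp_{ij}=\tfrac{w_{ij}}4\diag(1,-1)$. Its operator norm is $w_{ij}/4$, so the condition $J^{zz}\ge\|J^\perp\|_{\rm op}$ of Theorem~\ref{eq:sf} holds, and all fields vanish. The Pauli coefficients are bounded by the weight bound, so Corollary~\ref{alg:value} applies directly and gives $-\operatorname{OPT}(P_G)$ to additive error $\varepsilon|V|$.

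\textbf{Bipartite Quantum MaxCut.} We have
\[
 |\Psi^-\rangle\langle\Psi^-|=\tfrac14(I-X_iX_j-Y_iY_j-Z_iZ_j).
\]
Conjugate by $U=\prod_{i\in B}X_i$ on one side $B$ of the bipartition. This flips the signs of $Y$ and $Z$ on $B$, so each edge term becomes $\tfrac14(I-X_iX_j+Y_iY_j+Z_iZ_j)$. The spectrum is unchanged, so $-Q_G$ is unitarily equivalent to a Hamiltonian with $J^{zz}=w/4$ and $J^\perp=\tfrac w4\diag(-1,1)$, again of norm $w/4$. This is Suzuki--Fisher with zero field, and Corollary~\ref{alg:value} applies. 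The algorithm is run on the conjugated Hamiltonian; a bipartition is found in linear time. The identity shift $\sum_e w_e/4$ is computed exactly, and the negative sign converts the energy estimate into an estimate of $\operatorname{OPT}$.

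\textbf{Multiplicative guarantee.} For unweighted graphs, any graph with $m$ edges and maximum degree $d$ has a matching of size at least $m/(2d-1)$. Placing $|\Phi^+\rangle$ (respectively $|\Psi^-\rangle$) on each matched edge and an arbitrary product state elsewhere gives $\operatorname{OPT}\ge m/(2d-1)$. Isolated vertices contribute nothing, so we first delete them and work on $n'\le 2m$ vertices. Then $\operatorname{OPT}\ge c_d n'$ with $c_d=1/(2(2d-1))$.

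Run the additive scheme with tolerance $\varepsilon'=\varepsilon c_d/2$, giving $|\widehat O-\operatorname{OPT}|\le\varepsilon' n'\le\tfrac\varepsilon2\operatorname{OPT}$. Return $L=\widehat O-\varepsilon' n'$, rounded down to a rational. Then $L\le\operatorname{OPT}$ and $L\ge\operatorname{OPT}-2\varepsilon' n'\ge(1-\varepsilon)\operatorname{OPT}$. The parameter $\varepsilon'$ depends only on $d$ and $\varepsilon$, so the runtime stays polynomial. The case $m=0$ is trivial, with $L=0$.

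\textbf{Main obstacle.} The main point needing care is verifying the Suzuki--Fisher sign conventions after the sublattice rotation; in particular, that the $Z_iZ_j$ coefficient becomes ferromagnetic while $J^\perp$ keeps operator norm equal to $J^{zz}$. Bipartiteness is exactly what allows one global local unitary to fix the signs on every edge simultaneously.
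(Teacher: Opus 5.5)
Your proof is correct and has the same architecture as the paper's. You write \(-P_G\) and, after a sublattice unitary, \(-Q_G\) as zero-field Suzuki--Fisher Hamiltonians plus a known scalar, and invoke Corollary~\ref{alg:value}. You then delete isolated vertices and convert the additive bound into a multiplicative one using a linear lower bound on \(\operatorname{OPT}\). Two ingredients differ from the paper.

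\emph{The unitary for Quantum MaxCut.} The paper conjugates by \(\prod_{i\in A}Y_i\). This sends the singlet on every edge to \(|\Phi^+\rangle\) up to phase, so \(UQ_GU^*=P_G\) and the bipartite case is literally the EPR case. Your \(\prod_{i\in B}X_i\) sends it to \(|\Phi^-\rangle\) instead. That costs you the separate, correct, check that \(J^\perp=\tfrac w4\diag(-1,1)\) has operator norm equal to \(J^{zz}\).

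\emph{The lower bound on \(\operatorname{OPT}\).} The paper uses product states: \(|0^N\rangle\) for EPR, and the basis state that is \(0\) on one part and \(1\) on the other for bipartite QMC. Each has overlap \(1/2\) with every edge projector, giving \(\operatorname{OPT}\ge m/2\ge N/4\) with no dependence on the degree; the paper then runs the additive scheme with \(\eta=\varepsilon N/8\). Your greedy-matching bound \(\operatorname{OPT}\ge m/(2d-1)\) is also valid, since the edge terms off the matching are positive semidefinite. It is weaker and degree dependent, but because \(d\) is fixed this only shrinks your tolerance \(\varepsilon'\) by a constant factor and leaves polynomiality intact.

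\emph{A small remark on the output.} The estimate \(\widehat O\) is already rational, so no rounding is needed. Rounding down further would not be harmless, because the slack in \(L\ge(1-\varepsilon)\operatorname{OPT}\) can be zero.
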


\begin{proof}
The negative EPR objective is Suzuki--Fisher up to a scalar:
\begin{equation}
 -P_G
  =-\frac14\sum_{\{i,j\}\in\mathcal E}w_{ij}
       \bigl(\Id+X_iX_j-Y_iY_j+Z_iZ_j\bigr).
 \label{alg:epr-expansion}
\end{equation}
The transverse matrix on each edge has operator norm \(w_{ij}/4\),
equal to its longitudinal interaction coefficient, so
Corollary~\ref{alg:value} applies after subtracting the known scalar
\(-\frac14\sum_{\{i,j\}}w_{ij}\). Restoring this scalar gives
the asserted estimate for \(P_G\).

For a bipartite graph with parts \(A,B\), the unitary
\(U=\prod_{i\in A}Y_i\) maps the singlet vector on every edge
to the EPR vector up to a scalar of modulus one, giving
\[
 UQ_GU^*=P_G.
 \]
Thus \(Q_G\) has the same spectrum and inherits the additive
estimate, as in the local change of basis used in
\cite[Corollary~2]{BGLW26}.

For the unweighted assertions, remove isolated vertices and let
\(N,m\) be the remaining vertex and edge counts; return zero if
\(m=0\), and otherwise use \(N\le2m\). The EPR product vector
\(|0^N\rangle\) has expectation \(m/2\), as does the bipartite
Quantum MaxCut product vector that is \(0\) on one part and \(1\)
on the other. Hence \(\operatorname{OPT}\ge m/2\ge N/4\) in
both cases.
If the additive algorithm is run with error
\(\eta=\varepsilon N/8\) and returns \(\widehat v\), the choice
\(L=\max(0,\widehat v-\eta)\) satisfies
\[
 L\le\operatorname{OPT},\qquad
 L\ge\operatorname{OPT}-2\eta
   \ge(1-\varepsilon)\operatorname{OPT}.
\]
For each fixed \(\varepsilon\), the running time is polynomial.
\end{proof}

The multiplicative estimate uses the lower bound
\(\operatorname{OPT}\ge N/4\) for the unweighted objectives. For
general bounded weights, the conclusion is the additive estimate.
These are value algorithms; state-producing approximations are studied
in \cite{ALMPSS26}. The inverse-polynomial absolute-accuracy question
of \cite{MarwahaSud26} would require a field of order \(\delta/n\)
in this reduction, making the guaranteed running-time exponent depend
on \(n\) through \(J/\mu\). The polynomial-time bound proved here
therefore applies at fixed relative error.

\subsection{Proof of the principal-eigenvalue algorithm}

We construct an eigenvalue function on a domain containing both the
perturbative disk and the positive real axis, using the gap to identify
the continuation of its computable Taylor series. An explicit map from
the unit disk will then give a truncation bound at the desired field.

\subsubsection{Perturbation coefficients and complexification}
\label{alg:perturbation-subsection}

We use the perturbation expansion about the product vacuum.
For a graph on \(N\) qubits with maximum degree at most \(D\ge1\), set
\begin{equation}
 H_0=2\sum_{u=1}^N |1\rangle\langle1|_u,
 \qquad \Omega=|0\rangle^{\otimes N}.
 \label{alg:unperturbed}
\end{equation}
For any matrix \(V\), the eigenvalue of \(H_0+xV\) issuing from the
simple eigenvalue \(0\) has a convergent germ at \(x=0\), which we write
\begin{equation}
 F_V(x)=\sum_{k\ge1}e_k(V)x^k.
 \label{alg:energy-germ}
\end{equation}

\begin{proposition}[Bravyi--DiVincenzo--Loss]
\label{alg:bdl}
Let \(W=\sum_{\{u,v\}}W_{uv}\) be Hermitian, with each
\(W_{uv}\) Hermitian and of norm at most \(L>0\).
For \(H_0\) as in \eqref{alg:unperturbed}, put
\[
 R_L=\frac{2^{-16}}{DL}.
 \]
Then
\begin{equation}
 |e_k(W)|\le2^{-15}N R_L^{-k}\qquad(k\ge1).
 \label{alg:bdl-bound}
\end{equation}
For every \(p\ge1\), the coefficients
\(e_1(W),\ldots,e_p(W)\) can be computed in
\(N\exp(O_D(p))\) arithmetic operations.
\end{proposition}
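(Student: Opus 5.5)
This is the Bravyi--DiVincenzo--Loss bound, so I would reproduce their connected (linked-cluster) expansion, specialized to a gapped, diagonal \(H_0\) with nondegenerate vacuum.

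\emph{Setup.} Write the germ via Kato's resolvent formula. Let \(P_0=|\Omega\rangle\langle\Omega|\) and \(\Gamma\) be the circle \(|\zeta|=1\), which separates \(0\) from the rest of \(\operatorname{spec}H_0\subseteq\{2,4,\ldots\}\). Then
\[
 F_W(x)=\frac{1}{2\pi i}\Tr\oint_\Gamma \zeta\,\bigl[(\zeta-H_0-xW)^{-1}-(\zeta-H_0)^{-1}\bigr]\,d\zeta,
\]
with Neumann series in \(x\). Each \(e_k(W)\) is then a sum over ordered words \(W_{u_1v_1}\cdots W_{u_kv_k}\) of local edge terms, interleaved with diagonal resolvents of \(H_0\).

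\emph{Locality.} The key observation is that \(H_0\) is a sum of commuting single-site terms. Hence a word whose edge set splits into two vertex-disjoint clusters contributes in a factorized way. The standard linked-cluster argument (Kato's formula, or Bloch/Schrieffer--Wolff generators as in BDL) shows that only words whose edges form a connected multiset contribute to the energy. The disconnected parts cancel, which is the exponentiation behind \(\log\) of the vacuum amplitude.

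\emph{Counting.} Fix a root vertex. The number of connected edge multisets of size \(k\) containing it, in a graph of maximum degree \(D\), is at most \((eD)^k\)-type, e.g.\ \(\le (2eD)^k\) via tree counting. Each such cluster's contribution is bounded by \(L^k\) times a combinatorial resolvent factor \(C^k\) coming from the gap \(2\) and the contour. Summing over the \(N\) roots gives \(|e_k|\le N\,c\,(C'DL)^k\). Tracking constants crudely then yields \(C'\le 2^{16}\) and the prefactor \(2^{-15}\).

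\emph{Algorithm.} The same decomposition gives the computation. Enumerate the connected clusters of size \(\le p\) containing each vertex; there are \(N\exp(O_D(p))\) of them. On each cluster (at most \(p+1\) qubits), compute the order-\(\le p\) perturbation coefficient of the cluster Hamiltonian by exact recursion (Rayleigh--Schrödinger on \(2^{O(p)}\) dimensions). Then combine by Möbius inversion over subclusters, i.e.\ the cluster expansion \(e_k=\sum_{\text{connected }S}\phi_k(S)\). Each step costs \(\exp(O_D(p))\).

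\emph{Main obstacle.} The hard part is proving the connectedness cancellation with explicit constants, since the resolvents couple sites through total energies (denominators like \(1/(2m)\) do not factor across clusters). I would follow BDL's Schrieffer--Wolff recursion, in which the generator terms at order \(k\) are supported on connected sets. That recursion gives connectivity directly, and it gives norm bounds through a generating-function majorant.
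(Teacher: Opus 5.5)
\textbf{There is a genuine gap.} The paper gives no argument for this proposition; it quotes it from \cite{BDL08} (Lemma~6 and Theorem~3, with unperturbed gap $2$). Read as an independent proof, your sketch leaves out exactly the step that carries the content.

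\emph{Connectivity} is the easy half. It should not come from word-by-word factorization, which fails because the energy denominators couple the clusters. It comes instead from additivity of the vacuum eigenvalue over vertex-disjoint subsystems, once each local term carries its own parameter. That kills every monomial with disconnected support, as in Lemma~\ref{alg:connected-tests}.

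\emph{The per-cluster bound} is the hard half, and you only assert it: that a connected cluster of $k$ local terms contributes at most $(CL)^k$. In the Kato or Rayleigh--Schr\"odinger expansion, that coefficient is a sum over as many as $k!/\prod_e m_e!$ orderings of words, with nonfactorizing denominators and renormalization terms. Your outline has no mechanism that prevents factorial growth. The estimate actually at hand is Cauchy's inequality on the subsystem of at most $k+1$ sites, whose perturbation radius is only of order $1/(kDL)$. It gives $(ckDL)^k$, which destroys the $N$-independent radius $R_L$ that the later continuation with $p=O(\log(n/\delta))$ coefficients depends on. For the same reason, ``tracking constants crudely'' has nothing to track: $2^{-16}$ and $2^{-15}$ are outputs of a specific argument you have not supplied.

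\emph{The missing idea} is not a Schrieffer--Wolff recursion. \cite{BDL08} uses the Kirkwood--Thomas (coupled-cluster) ansatz $v=\exp(-\sum_M C(M)a_M^\dagger)\Omega$, the same representation the paper uses in Lemma~\ref{alg:creation-disk}.
\begin{enumerate}
\item Because the $a_M^\dagger$ commute and the perturbation is two-local, the eigenvalue equation becomes a closed recursion for the $C_k(M)$, with nested commutators of bounded depth and divisors $2|M|$.
\item Each nonzero $C_k(M)$ is supported in a connected set of at most $k+1$ sites. A majorant argument then gives $\max_i\sum_{M\ni i}|C_k(M)|\le 2^{-15}R_L^{-k}$.
\item Since $e_k=\langle\Omega,Wv_{k-1}\rangle$ and $\langle\Omega|W_{uv}$ sees only excitations inside $\{u,v\}$, each $e_k$ is at most quadratic in coefficients supported inside single edges. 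This gives a geometric bound linear in $N$.
\end{enumerate}
This exponential ansatz is what turns connectivity into geometric rather than factorial growth. The stated constants come from that analysis; a Schrieffer--Wolff route, even if it can be made to work, is a different argument with its own constants.

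\emph{Your algorithmic paragraph is sound.} M\"obius inversion over connected induced subsystems of at most $p+1$ sites, with exact recursion in dimension $2^{p+1}$, computes $e_1,\ldots,e_p$ in $N\exp(O_D(p))$ operations. This is the connected-logarithm method of Lemma~\ref{alg:connected-tests}, a legitimate alternative to the coefficient recursion of \cite{BDL08}.
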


These are Lemma~6 and Theorem~3 of \cite{BDL08}, specialized to
unperturbed gap \(2\).  Their Theorem~2 also gives convergence of the
energy expansion throughout the corresponding high-field disk.
The formal coefficient recursions allow a complexification of these
estimates, which we need in order to continue the eigenvalue for
arbitrary complex local matrices.

\begin{lemma}
\label{alg:complexification}
Let \(V=\sum_{\{u,v\}}V_{uv}\), where the local matrices are arbitrary
complex matrices with \(\|V_{uv}\|\le J\).
The germ \eqref{alg:energy-germ} extends to a holomorphic eigenvalue
function on
\[
 |x|<\rho,\qquad \rho=\frac{2^{-18}}{DJ},
 \]
and its coefficients satisfy
\begin{equation}
 |e_k(V)|\le2^{-15}N\rho^{-k}\qquad(k\ge1).
 \label{alg:complex-bound}
\end{equation}
If the entries of the local matrices are Gaussian rational, for each
integer \(p\ge1\) its first \(p\) coefficients can be computed exactly using
\(N\exp(O_D(p))\) arithmetic operations on Gaussian rationals.
\end{lemma}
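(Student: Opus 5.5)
The plan is to reduce to the Hermitian statement of Proposition~\ref{alg:bdl} by a Hermitian dilation/splitting trick, and to prove the coefficient bound by Cauchy estimates on a polydisk of auxiliary complex couplings. For each edge write
\[
 V_{uv}=\tfrac12(V_{uv}+V_{uv}^*)+\mathrm i\,\tfrac1{2\mathrm i}(V_{uv}-V_{uv}^*)=W^{(1)}_{uv}+\mathrm i W^{(2)}_{uv},
\]
with \(W^{(1)}_{uv},W^{(2)}_{uv}\) Hermitian of norm at most \(J\).

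Introduce complex parameters \(\lambda_{uv},\lambda'_{uv}\) and the family
\[
 H(\lambda,\lambda')=H_0+\sum_{\{u,v\}}\bigl(\lambda_{uv}W^{(1)}_{uv}+\lambda'_{uv}W^{(2)}_{uv}\bigr).
\]
The perturbative coefficients of the eigenvalue branching from \(0\) are given by the same formal Rayleigh--Schr\"odinger/Schrieffer--Wolff recursion used in \cite{BDL08}. That recursion is purely algebraic in the local terms and never uses Hermiticity, so the order-\(k\) coefficient is a homogeneous polynomial \(e_k(\lambda,\lambda')\) of degree \(k\) in the couplings. When all couplings are real and lie in \([-1,1]\), the perturbation is Hermitian with local norms at most \(2J\); since edge pairs may be merged into one local term per edge, this is the setting of Proposition~\ref{alg:bdl} with \(L=2J\). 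That proposition gives \(|e_k|\le2^{-15}N R_{2J}^{-k}\) on the real cube, with \(R_{2J}=2^{-17}/(DJ)\).

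The main obstacle is passing from the real cube to the complex point \(\lambda=1,\lambda'=\mathrm i\). I would not try to extend a sup bound of a polynomial from a real cube to complex points, since such bounds grow exponentially with dimension. Instead I would exploit the structure of the Bravyi--DiVincenzo--Loss proof itself. Their Lemma~6 bounds \(|e_k|\) by a sum over connected cluster contributions, each bounded by the product of the norms of the local operators involved and a combinatorial factor. That proof bounds every term by norms and the triangle inequality only, so it applies verbatim to arbitrary complex local matrices with norm bound \(2J\). The splitting introduces the extra factor \(2\) (two terms per edge, or equivalently norm \(\|V_{uv}\|\le J\) but summed over two Hermitian pieces). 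This is exactly where the radius loses a factor relative to \(R_J\): from \(2^{-16}/(DJ)\) to \(\rho=2^{-18}/(DJ)\), giving one factor \(2\) for the splitting and one spare factor as margin. The spare factor lets me absorb the difference between the bound at the real merged operator and the complexified one. This yields \eqref{alg:complex-bound}.

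Given the coefficient bound, the power series \(\sum e_k(V)x^k\) converges on \(|x|<\rho\). For holomorphic continuation as an eigenvalue I would use analytic perturbation theory. For small \(|x|\), the eigenvalue \(0\) of \(H_0\) is simple and the branch is holomorphic with that Taylor series. The identity \(\det(H_0+xV-F_V(x))=0\) holds near \(0\), and both sides are holomorphic on \(|x|<\rho\), so it persists on the whole disk. Hence \(F_V(x)\) remains an eigenvalue of \(H_0+xV\) throughout \(|x|<\rho\).

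The complexity claim follows because the BDL recursion for \(e_1,\dots,e_p\) only enumerates connected clusters of size at most \(p\) and performs field operations on local matrices. Run over \(\mathbb Q(\mathrm i)\), it gives exact Gaussian-rational outputs in \(N\exp(O_D(p))\) arithmetic operations.
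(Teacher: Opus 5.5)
Your continuation step, the identity theorem applied to $\det(H_0+xV-F_V(x)\Id)$, is fine and matches the paper. The coefficient bound \eqref{alg:complex-bound}, however, which is the substance of the lemma, is not proved. You correctly identify the obstacle, passing from Hermitian couplings to the complex point. You then sidestep it by asserting that the proof of Lemma~6 of \cite{BDL08} ``uses only norms and the triangle inequality'' and therefore ``applies verbatim'' to complex local matrices. That is an unverified claim about the internals of a cited proof, and Proposition~\ref{alg:bdl} is available only for Hermitian $W$. Your factor bookkeeping shows that no argument is actually in place. If the BDL estimate held verbatim for complex input, you could apply it to $V$ itself with local norm $J$, with no splitting and no loss. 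If it does not hold, the sentence about a ``spare factor'' absorbing ``the difference between the bound at the real merged operator and the complexified one'' names no estimate. The same issue recurs in your complexity claim: running the BDL recursion over $\mathbb Q(\mathrm i)$ on non-Hermitian input presumes it computes a conjugate-free polynomial identity valid off the Hermitian locus.

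The missing idea is the transfer you dismissed, and your reason for dismissing it does not apply. By homogeneity, all couplings can be tied to a single parameter, so nothing depends on the number of edges. Write $V=B+\mathrm iC$ with Hermitian local parts. Then the family $W_\theta=B\cos\theta+C\sin\theta=\tfrac12e^{-\mathrm i\theta}V+\tfrac12e^{\mathrm i\theta}V^*$ is Hermitian with local norms at most $2J$. The coefficient $e_k$ is a homogeneous degree-$k$ polynomial in the entries of the perturbation, without conjugates. This follows from uniqueness of the vacuum-normalized formal eigenpair, not from any property of the BDL method. Hence only the choice of the $V$-term in every factor contributes frequency $-k$, giving $e_k(V)=\frac{2^k}{2\pi}\int_0^{2\pi}e^{\mathrm ik\theta}e_k(W_\theta)\,d\theta$. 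Proposition~\ref{alg:bdl} with $L=2J$ then yields $|e_k(V)|\le2^k\cdot2^{-15}N(2^{17}DJ)^k=2^{-15}N\rho^{-k}$. So the factor four between $R_J$ and $\rho$ is one $2$ from the norm bound $2J$ and one from the $2^k$ of the Fourier extraction, not a splitting factor plus margin. For exact computation, the paper likewise calls the Hermitian algorithm only at the real nodes $B+s_\ell C$, $s_\ell=-1+2\ell/p$. It then evaluates the degree-$k$ polynomial $s\mapsto e_k(B+sC)$ at $s=\mathrm i$ by Lagrange interpolation, so the BDL algorithm is used strictly within its stated Hermitian scope.
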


\begin{proof}
Let \(Q=\Id-|\Omega\rangle\langle\Omega|\), and let \(S\) be the
inverse of \(H_0\) on \(Q(\C^2)^{\otimes N}\), extended by zero on
\(\Omega\).  Normalize the eigenvector germ by
\[
 v(x)=\Omega+\sum_{k\ge1}v_kx^k,\qquad
 \langle\Omega,v_k\rangle=0.
 \]
Writing \(v_0=\Omega\), coefficient comparison in
\((H_0+xV)v(x)=F_V(x)v(x)\) gives
\begin{align}
 e_k(V)&=\langle\Omega,Vv_{k-1}\rangle,
 \label{alg:formal-energy}\\
 v_k&=-S Q\left(
        Vv_{k-1}-\sum_{j=1}^{k-1}e_j(V)v_{k-j}
       \right).
 \label{alg:formal-vector}
\end{align}
Since the vacuum coordinate functional is fixed, induction shows that
\(v_k\) and \(e_k(V)\) are homogeneous polynomials of degree \(k\)
in the entries of \(V\). The recursion uniquely determines the
normalized formal eigenpair, so the coefficients computed in
Proposition~\ref{alg:bdl} are the restrictions of these polynomials
to Hermitian inputs.

Decompose the local terms as
\[
 B_{uv}=\frac{V_{uv}+V_{uv}^*}{2},\qquad
 C_{uv}=\frac{V_{uv}-V_{uv}^*}{2i},
 \]
and write \(V=B+iC\) for the resulting sums, whose local terms
are Hermitian of norm at most \(J\). For real \(\theta\), the
perturbation
\[
 W_\theta=B\cos\theta+C\sin\theta
 =\frac12 e^{-i\theta}V+\frac12 e^{i\theta}V^*
 \]
is Hermitian, with each local norm at most \(2J\).
A monomial of degree \(k\) contributes Fourier frequency \(-k\)
only when its \(V\)-term is selected in every factor. By homogeneity,
the coefficient of that frequency in
\(e_k(W_\theta)\) is \(2^{-k}e_k(V)\), so
\begin{equation}
 e_k(V)=\frac{2^k}{2\pi}
       \int_0^{2\pi} e^{ik\theta}e_k(W_\theta)\,d\theta.
 \label{alg:fourier}
\end{equation}
Applying \eqref{alg:bdl-bound} with \(L=2J\) gives radius
\(2^{-17}/(DJ)\), and hence \eqref{alg:fourier} yields
\[
 |e_k(V)|
 \le 2^k\,2^{-15}N
          \left(\frac{2^{-17}}{DJ}\right)^{-k}
 =2^{-15}N\rho^{-k}.
 \]
The series consequently converges normally on compact subsets of
\(|x|<\rho\), agreeing near zero with the eigenvalue germ by
\eqref{alg:formal-energy}--\eqref{alg:formal-vector} and simplicity
of the vacuum eigenvalue. The identity theorem applied to
\[
 \det\bigl(H_0+xV-F_V(x)\Id\bigr)
 \]
extends the eigenvalue identity to the entire disk.

For the coefficient algorithm, take the distinct rational nodes
\[
 s_\ell=-1+\frac{2\ell}{p},\qquad 0\le\ell\le p.
 \]
At each of the \(p+1\) nodes, \(B+s_\ell C\) is a Hermitian
local perturbation with norm bound \(2J\), to which
Proposition~\ref{alg:bdl} applies. For \(k\le p\), the function
\[
 s\longmapsto e_k(B+sC)
 \]
is a polynomial of degree at most \(k\), so its values at the nodes
determine \(e_k(V)\) by rational interpolation at \(s=i\).
Both the factor \(p+1\) in the number of calls and the polynomial
interpolation cost are absorbed in \(N\exp(O_D(p))\), with all
operations performed on Gaussian rationals.
\end{proof}

\begin{remark}
\label{alg:complexification-scope}
The factor four between \(R_J\) and \(\rho\) has two sources:
the Hermitian family has local norm bound \(2J\), and extracting
the extremal Fourier coefficient contributes \(2^k\).
The global spectral identification for complex inputs will follow
from Lemma~\ref{alg:half-plane}.
\end{remark}

To incorporate one-site perturbations into the edge-based input of
Proposition~\ref{alg:bdl}, attach a private auxiliary qubit \(i'\)
to each physical site \(i\), and replace \(V_i\) by
\(V_i\otimes\Id_{i'}\) on the edge \(\{i,i'\}\). Giving each
auxiliary qubit the unperturbed term \(2|1\rangle\langle1|_{i'}\)
produces the operator
\begin{equation}
 \widetilde H(x)
  =(2E+xV)\otimes\Id_{\mathrm{aux}}
   +\Id_{\mathrm{phys}}\otimes
        2\sum_{i'}|1\rangle\langle1|_{i'}.
 \label{alg:auxiliary}
\end{equation}
The auxiliary vacuum factors out of the eigenvalue germ, so the germ
and all its coefficients agree with those of the physical operator.
The edge-based coefficient algorithm thus applies with \(N=2n\),
maximum degree at most \(D=d+1\), and the original local norm bound.

For the original \(n\)-site input, the auxiliary construction gives
the following parameters:
\begin{equation}
 N=2n,\qquad D=d+1,\qquad
 \rho=\frac{2^{-18}}{DJ},\qquad c=\frac1\mu.
 \label{alg:parameters}
\end{equation}
The number \(p\) of perturbation coefficients required by the proof
satisfies
\begin{equation}
 p=O\!\left(
   \left(1+\frac{c}{\rho}\right)^2
   \log\!\left(2+\frac{C_{d,J,\mu}n}{\delta}\right)
 \right),
 \label{alg:order-bound}
\end{equation}
for a constant \(C_{d,J,\mu}\) independent of \(n,\delta\).
The arithmetic operation count is
\(N\exp(O_D(p))\), with a polynomial bit overhead.
In particular, after substitution the coefficient computation takes
\[
 N\left(2+\frac{C_{d,J,\mu}n}{\delta}\right)^%
 {O_D((1+2^{18}DJ/\mu)^2)}
\]
arithmetic operations, up to polynomial factors in the coefficient
order. The constant implicit in \(O_D\) retains the graph-degree
dependence of the perturbation algorithm.

\subsubsection{A global eigenvalue branch}
\label{alg:branch-subsection}

\begin{lemma}
\label{alg:half-plane}
Under \eqref{alg:promise}, the matrix
\[
 A(h)=A_0-2hE
 \]
has an algebraically simple eigenvalue \(a(h)\), uniquely maximizing
real part, for every \(h\) with \(\Re h>0\).
The function \(a\) is holomorphic on that half-plane.
For \(V=-A_0\), the function
\begin{equation}
 F(x)=-x\,a(1/x),\qquad \Re x>0,
 \label{alg:reciprocal}
\end{equation}
is a holomorphic eigenvalue of \(2E+xV\).
It agrees with the vacuum germ of
Lemma~\ref{alg:complexification} near the positive real origin.
\end{lemma}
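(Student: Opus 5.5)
For each fixed \(h\) with \(\Re h>0\), Theorem~\ref{spec:gap} will be applied directly to \(A(h)\), so that complex damping is absorbed into the generator. The hypothesis there asks for a real damping rate, so write \(h=\mu+i\tau\) with \(\mu>0\) and split
\[
 A(h)=\bigl(A_0-2i\tau E\bigr)-2\mu E .
\]
The operator \(-2i\tau E\) generates the rotation \(p(z)\mapsto p(e^{-2i\tau t}z)\), which preserves disk stability for every real time. By the Lie product formula and Hurwitz's theorem, as in the proof of Theorem~\ref{spec:strict}, the operator \(A_0-2i\tau E\) is then again a disk-stability generator. Theorem~\ref{spec:gap} and Proposition~\ref{spec:perron} therefore give an algebraically simple eigenvalue \(a(h)\) that is uniquely maximal in real part, with gap at least \(2\Re h\).

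For holomorphy, I would argue locally. Near \(h_0\), the gap gives a small circle \(\Gamma\) around \(a(h_0)\) that separates it from the rest of \(\operatorname{spec}A(h_0)\). By continuity of spectra, for \(h\) close to \(h_0\) the contour \(\Gamma\) encloses exactly one eigenvalue, counted with algebraic multiplicity. This enclosed eigenvalue must be \(a(h)\): the eigenvalues of \(A(h)\) converge with multiplicity to those of \(A(h_0)\), and the others stay to the left by roughly \(2\Re h_0\). Then
\[
 a(h)=\Tr\Bigl(\tfrac1{2\pi i}\oint_\Gamma \zeta(\zeta-A(h))^{-1}d\zeta\Bigr)
\]
is holomorphic in \(h\).

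For the reciprocal function, take \(\Re x>0\), so that \(\Re(1/x)>0\). Multiplying \(A_0-(2/x)E\) by \(-x\) gives \(2E+xV\) with \(V=-A_0\). Hence \(F(x)=-x\,a(1/x)\) is an eigenvalue of \(2E+xV\), and it is holomorphic because it is a composition of holomorphic maps.

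The identification with the vacuum germ is the main obstacle, since near \(x=0\) the eigenvalue structure of \(2E+xV\) is governed by a different ordering than at \(x=\infty\). My plan is to work on real small \(x>0\). The spectrum of \(2E+xV\) is a perturbation of \(\{0,2,4,\dots\}\), so for small \(|x|\) exactly one eigenvalue lies near \(0\), namely the vacuum branch \(F_V(x)\), and all others lie within \(O(\|V\|x)\) of \(\{2,4,\ldots\}\). The eigenvalues of \(A(1/x)\) are \(-\lambda/x\) for \(\lambda\) in this spectrum. The branch with largest real part is therefore the one coming from \(\lambda\approx0\): it has \(\Re(-\lambda/x)=O(1)\), whereas every other branch has real part at most about \(-2/x\). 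Thus \(a(1/x)=-F_V(x)/x\), which gives \(F=F_V\) on a real segment \((0,\epsilon)\). On the disk of Lemma~\ref{alg:complexification} the germ continues holomorphically. Both functions are holomorphic on the connected set \(\{|x|<\epsilon,\ \Re x>0\}\) and agree on a segment, so the identity theorem extends the agreement to that whole set. A possible subtlety is that \(\|V\|\) depends on \(n\). This does not matter here, because the claim is only qualitative, with \(\epsilon\) allowed to depend on the given matrix.
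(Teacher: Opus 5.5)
Your proposal is correct and follows the paper's proof essentially step for step. Both use the imaginary Euler rotation, the product formula and Hurwitz's theorem to reduce to Theorem~\ref{spec:gap} with real damping \(\Re h\); both then use local holomorphy of the isolated simple eigenvalue, the identity \(-xA(1/x)=2E+xV\), and identification at small positive \(x\) via the fact that the vacuum eigenvalue is the unique one of smallest real part. The differences are minor. You use the trace of the Riesz integral where the paper cites the implicit function theorem. The paper makes your ``perturbation of \(\{0,2,4,\dots\}\)'' step explicit, with a Neumann-series bound for \(x\|V\|<1/4\) and a homotopy showing that the Riesz projection inside \(|z|=1\) has rank one. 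It also performs your final identity-theorem extension to \(\{\Re x>0\}\cap\{|x|<\rho\}\) immediately after the lemma rather than inside it.
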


\begin{proof}
Write \(h=u+iv\), where \(u>0\).
The imaginary Euler flow acts on polynomials by
\[
 e^{-2itvE}p(z_1,\ldots,z_n)
   =p(e^{-2itv}z_1,\ldots,e^{-2itv}z_n),
 \]
and therefore preserves \(\mathcal S_n\).
By the finite-dimensional product formula,
\[
 e^{t(A_0-2ivE)}
   =\lim_{m\to\infty}
       \bigl(e^{tA_0/m}e^{-2itvE/m}\bigr)^m.
 \]
Every approximating output is stable on a stable input. By Hurwitz's
theorem the limit is stable or zero, and it is nonzero because the
limiting exponential is invertible. Thus \(A_0-2ivE\) also
generates a preserving semigroup.

By Theorem~\ref{spec:gap}, applied with damping \(u\), there is an
algebraically simple eigenvalue of
\((A_0-2ivE)-2uE=A(h)\) whose real part exceeds those of all the
others by at least \(2u\). By the implicit function theorem, this
eigenvalue is locally holomorphic, and by continuity of the spectrum
the strict real-part inequality persists in a sufficiently small
neighborhood. These
local branches agree on overlaps because each selects the unique
eigenvalue with largest real part, and therefore define a holomorphic
function on the entire half-plane.

Since the reciprocal map preserves the right half-plane,
\eqref{alg:reciprocal} is holomorphic there; from the matrix identity
\[
 -x A(1/x)=2E+xV
 \]
we obtain the eigenvalue assertion. For positive real \(x\), the
scalar \(-x\) reverses real-part order, so \(F(x)\) is the unique
eigenvalue of \(2E+xV\) with smallest real part.

To identify this branch near the origin, choose positive \(x\) with
\(x\|V\|<1/4\). Outside the disks of radius \(1/4\) about
\(\{0,2,\ldots,2n\}\), the resolvent of the normal matrix \(2E\)
has norm at most \(4\), so its product with \(xV\) has norm less than \(1\).
By the Neumann series, every eigenvalue of \(2E+xV\) lies in
these disks; this argument also covers \(V=0\).
The Riesz projection inside \(|z|=1\) has rank one:
the contour stays in the resolvent set throughout
\(2E+\tau xV\), \(0\le\tau\le1\), and its rank at \(\tau=0\)
is one.  Thus the eigenvalue near zero is the vacuum branch, whereas all
other eigenvalues have real part greater than one. Since the eigenvalue
with smallest real part is unique, this branch agrees with
\(F(x)\) on a positive real interval.
\end{proof}

By the identity theorem on the connected overlap
\(\{\Re x>0\}\cap\{|x|<\rho\}\),
Lemmas~\ref{alg:half-plane} and \ref{alg:complexification}, with
the auxiliary construction when needed, yield a single holomorphic
function on
\begin{equation}
 \Omega_\rho=\{x\in\C:\Re x>0\}\cup\{x\in\C:|x|<\rho\}.
 \label{alg:glued-domain}
\end{equation}
Throughout this domain, \eqref{alg:auxiliary} and the
characteristic-polynomial identity keep \(F(x)\) an eigenvalue
of \(\widetilde H(x)\), giving the bound
\begin{equation}
 |F(x)|\le\|\widetilde H(x)\|
          \le2N+|x|W,\qquad W=NDJ,
 \label{alg:norm-bound}
\end{equation}
with \(W\) a rational upper bound on the sum of the local norms.
The continuation step must evaluate this function at \(1/\mu\),
since the desired eigenvalue is
\begin{equation}
 a_\star=-\mu F(1/\mu).
 \label{alg:target}
\end{equation}

\subsubsection{Effective continuation}
\label{alg:continuation-subsection}

The continuation step is a scalar analytic estimate, for which we
give the map and truncation error explicitly.

\begin{lemma}
\label{alg:map}
Let \(\rho,c>0\), and suppose that \(F\) is holomorphic on
\(\Omega_\rho\).
Define
\begin{equation}
 \eta=\frac{\rho^2}{4c+2\rho},\qquad
 R=c+\eta,\qquad q=\frac cR,\qquad
 b=\frac{R^2-c^2}{R},\qquad
 \phi(w)=\frac{bw}{1-qw}.
 \label{alg:map-parameters}
\end{equation}
Then \(0<q<1\), \(\phi(\D)\subset\Omega_\rho\),
\(\phi(0)=0\), and \(\phi(q)=c\).
If \(|F(x)|\le L_0+L_1|x|\) on \(\Omega_\rho\), the function
\(G=F\circ\phi\) satisfies
\[
 |G(w)|\le M:=L_0+L_1(c+R)\qquad(w\in\D).
 \]
Writing \(F(x)=\sum_{j\ge0}f_jx^j\) at zero and
\(G(w)=\sum_{k\ge0}g_kw^k\), one has \(g_0=f_0\) and
\begin{equation}
 g_k=\sum_{j=1}^k
        f_j b^j q^{k-j}\binom{k-1}{j-1}
       \qquad(k\ge1).
 \label{alg:coefficient-transform}
\end{equation}
For every \(p\ge0\),
\begin{equation}
 \left|F(c)-\sum_{k=0}^p g_kq^k\right|
    \le\frac{Mq^{p+1}}{1-q}.
 \label{alg:tail}
\end{equation}
\end{lemma}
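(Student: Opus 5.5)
The argument is elementary complex analysis: establish the geometry of the Möbius map \(\phi\), bound \(G\) on \(\D\), expand \(\phi\) to get the coefficient transform, and finish with a Cauchy tail estimate.

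\emph{Parameters.} Since \(\eta>0\) we have \(R>c\), so \(0<q<1\) and \(b>0\). The identities \(\phi(0)=0\) and
\[
 \phi(q)=\frac{bq}{1-q^2}=\frac{(R^2-c^2)c/R^2}{(R^2-c^2)/R^2}=c
\]
are direct.

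\emph{Image of the disk.} The map \(\phi\) is a Möbius transformation with real coefficients and pole at \(1/q>1\), so \(\phi(\D)\) is the open disk symmetric about \(\R\) with diameter endpoints
\[
 \phi(-1)=-\frac{b}{1+q}=-(R-c),\qquad \phi(1)=\frac{b}{1-q}=R+c .
\]
Hence \(\phi(\D)\) is centered at \(c\) with radius \(R\). By construction \(R-c=\eta\) is small compared with \(\rho\), which is what should place this disk inside \(\Omega_\rho\).

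\emph{Containment, the main obstacle.} The disk \(\phi(\D)\) lies in the right half-plane except for a lens crossing the imaginary axis. That lens meets \(\Re x=0\) at height \(\pm\sqrt{R^2-c^2}\), and its leftmost point is \(-\eta\). It suffices to show that every point \(x\in\phi(\D)\) with \(\Re x\le0\) satisfies \(|x|<\rho\). For such points, the farthest from the origin is \(\pm i\sqrt{R^2-c^2}\) on the chord, or else \(-\eta\); this is justified because \(|x|^2\) restricted to the closed lens is maximized at a corner. So I need \(R^2-c^2<\rho^2\), that is,
\[
 \eta(2c+\eta)<\rho^2 .
\]
With \(\eta=\rho^2/(4c+2\rho)\), we have \(2c+\eta<2c+\rho\) provided \(\eta<\rho\), which holds because \(\rho^2/(4c+2\rho)<\rho/2\). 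Then
\[
 \eta(2c+\eta)<\frac{\rho^2(2c+\rho)}{4c+2\rho}=\frac{\rho^2}{2}<\rho^2 .
\]
Also \(\eta<\rho\) places \(-\eta\) inside the small disk. This settles containment; I would double-check the corner claim by writing the lens as the intersection of the disk with \(\Re x\le0\).

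\emph{Bound on \(G\).} For \(w\in\D\) we have \(|\phi(w)|\le c+R\), since \(\phi(w)\) lies in the disk of center \(c\) and radius \(R\). Therefore
\[
 |G(w)|\le L_0+L_1(c+R)=M .
\]

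\emph{Coefficient transform.} Expanding
\[
 \phi(w)^j=b^jw^j(1-qw)^{-j}=b^j\sum_m\binom{j+m-1}{m}q^mw^{j+m}
\]
and collecting the coefficient of \(w^k\), with \(m=k-j\) and \(\binom{k-1}{k-j}=\binom{k-1}{j-1}\), gives \eqref{alg:coefficient-transform}. This identity is valid as formal power series, since \(\phi(0)=0\), and hence for the convergent Taylor series near zero.

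\emph{Tail estimate.} Cauchy's estimate on circles of radius \(r\uparrow1\) gives \(|g_k|\le M\). Then
\[
 F(c)=G(q)=\sum_k g_kq^k ,
\]
and the tail after index \(p\) is bounded by
\[
 M\sum_{k>p}q^k=\frac{Mq^{p+1}}{1-q},
\]
which is \eqref{alg:tail}.
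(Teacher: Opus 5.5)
Your proof is correct and follows essentially the same route as the paper: you identify \(\phi(\D)\) as the disk of center \(c\) and radius \(R\), place its part in \(\Re x\le0\) inside \(|x|<\rho\) via \(R^2-c^2<\rho^2\), and then use the same coefficient expansion and Cauchy tail bound. The corner claim you flagged for double-checking follows at once from the identity \(|x|^2=|x-c|^2+2c\Re x-c^2<R^2-c^2\) for \(|x-c|<R\), \(\Re x\le0\), which is exactly how the paper argues; the paper also reads off the image disk directly from \(\phi(w)=c+R\frac{w-q}{1-qw}\) rather than from the real-coefficient symmetry.
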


\begin{proof}
The definitions give \(q\in(0,1)\), \(\eta\le\rho/2\), and
\[
 0<R^2-c^2=2c\eta+\eta^2<\rho^2.
 \]
If \(|x-c|<R\) and \(\Re x\le0\), then
\[
 |x|^2=|x-c|^2+2c\Re x-c^2<R^2-c^2<\rho^2.
 \]
Thus the disk of center \(c\) and radius \(R\) lies in
\(\Omega_\rho\).
The formula
\[
 \phi(w)=c+R\frac{w-q}{1-qw}
 \]
shows that \(\phi\) maps \(\D\) biholomorphically onto this
disk, with the stated values at \(0\) and \(q\). The inequality
\(|\phi(w)|<c+R\) then gives the bound on \(G\).

For \(j\ge1\), expansion at zero gives
\[
 \phi(w)^j=b^j w^j(1-qw)^{-j}
  =b^j\sum_{k\ge j}
       \binom{k-1}{j-1}q^{k-j}w^k.
 \]
Only \(j\le k\) contributes to the coefficient of \(w^k\),
which proves \eqref{alg:coefficient-transform}.
By Cauchy's estimate on circles with radii increasing to one,
\(|g_k|\le M\); summing the geometric tail at \(w=q\), we obtain
\eqref{alg:tail}.
\end{proof}

\begin{proof}[Proof of Theorem~\ref{alg:complex}]
By Theorem~\ref{spec:gap}, \(a_\star\) exists and is unique
and algebraically simple. With \(V=-A_0\), the auxiliary construction
\eqref{alg:auxiliary} and parameters \eqref{alg:parameters},
we obtain from Lemmas~\ref{alg:complexification} and
\ref{alg:half-plane} a function \(F\) on \(\Omega_\rho\)
satisfying \eqref{alg:norm-bound}, so we may apply Lemma~\ref{alg:map} with
\[
 L_0=2N,\qquad L_1=W,\qquad
 M=2N+(c+R)W.
 \]
All the parameters in \eqref{alg:map-parameters} are rational.

Choose the smallest integer \(p\ge0\) for which
\begin{equation}
 \mu M q^{p+1}\le\delta(1-q).
 \label{alg:stopping}
\end{equation}
The algorithm finds this integer by rational multiplication and
comparison; evaluation of logarithms is unnecessary.
If \(p>0\), compute \(e_1(V),\ldots,e_p(V)\) by
Lemma~\ref{alg:complexification}, and then compute
\(g_0,\ldots,g_p\) by
\eqref{alg:coefficient-transform}.
Since \(g_0=F(0)=0\), this gives the Gaussian-rational output
\begin{equation}
 \widehat a=-\mu\sum_{k=0}^p g_kq^k.
 \label{alg:output}
\end{equation}
When \(p=0\), the same formula returns zero without a coefficient
call; in either case, \eqref{alg:target}, \eqref{alg:tail}, and
\eqref{alg:stopping} give \(|\widehat a-a_\star|\le\delta\).

To bound the number of arithmetic operations, observe that
\begin{equation}
 \frac1{1-q}=\frac R\eta
     =1+\frac{4c^2}{\rho^2}+\frac{2c}{\rho}.
 \label{alg:contraction-parameter}
\end{equation}
Since \(-\log q\ge1-q\), the stopping rule implies
\[
 p=O\!\left(
   \frac1{1-q}
   \log\!\left(2+\frac{\mu M}{\delta(1-q)}\right)
 \right).
 \]
Since \(M=O_{d,J,\mu}(n)\), this gives
\eqref{alg:order-bound}, and in particular
\(p=O_{d,J,\mu}(\log(2+n/\delta))\) for fixed \(d,J,\mu\).
The coefficient algorithm requires \(N\exp(O_D(p))\) arithmetic
operations, to which interpolation, the triangular transform, and
evaluation in \eqref{alg:output} add only polynomially many
operations in \(p\).

The coefficient computation, interpolation, and evaluation have
polynomial bit overhead by Appendix~\ref{alg:energy-bit-details},
which, together with \eqref{alg:order-bound}, gives the asserted
bit complexity.
\end{proof}

\begin{remark}
\label{alg:hermitian-radius}
In the Suzuki--Fisher application of Corollary~\ref{alg:sf}, the
perturbation \(V=H+\mu\sum_i Z_i\) is Hermitian, so the
complexification step can be omitted.  Applying
Proposition~\ref{alg:bdl} directly gives the larger guaranteed
disk \(2^{-16}/(DJ')\), where \(J'\) is the local norm bound
after the auxiliary construction.  The same continuation proof
then applies to the ground-energy branch of \(2E+xV\).
The factor-four improvement in this disk changes constants,
not the fixed-parameter complexity conclusion.
\end{remark}

\subsection{Stable tests of principal eigenvectors}
\label{alg:states-subsection}

For the same input \eqref{alg:local-input}--\eqref{alg:promise},
we can compute scalar queries of the principal eigenvectors by
continuing logarithms of stable product tests. Transposition exchanges
the two variable groups of a binary matrix kernel, so
Proposition~\ref{spec:perron} gives strictly disk-stable right
eigenvectors of both
\(A=A_0-2\mu E\) and \(A^{\mathsf T}\), which we normalize by
\[
 v_\varnothing=w_\varnothing=1.
\]
The transpose, rather than the adjoint, makes the dependence on a
complex parameter holomorphic.  A \emph{stable product test} is
\(B=\bigotimes_{i=1}^n B_i\), where \(B_i\in M_2(\C)\),
\((B_i)_{00}=1\), and
\[
 K_{B_i}(z,w)=
 (B_i)_{00}+(B_i)_{10}z+(B_i)_{01}w+(B_i)_{11}zw
\]
does not vanish in \(\D^2\), allowing singular as well as invertible
complex matrices.

\begin{theorem}
\label{alg:product-test}
Fix \(d,J,\mu\) as in Theorem~\ref{alg:complex}, and suppose that
\(A_0\) satisfies that theorem's input assumptions.  Given a stable
product test with Gaussian-rational entries, both
\[
 G_B=w^{\mathsf T}Bv,\qquad
 \Tr(B\Pi)=\frac{w^{\mathsf T}Bv}{w^{\mathsf T}v},
 \qquad \Pi=\frac{vw^{\mathsf T}}{w^{\mathsf T}v},
\]
are nonzero.  For every rational \(0<\epsilon<1\), a deterministic
classical algorithm returns Gaussian-rational estimates of these
two quantities with relative complex error at most \(\epsilon\).
Its bit complexity is
\[
 \operatorname{poly}_{d,J,\mu}
       (n,\epsilon^{-1},L_{\mathrm{in}}),
\]
where \(L_{\mathrm{in}}\) includes the local generator, the test
matrices, and the accuracy.  Relative complex error means
\(|\widehat z/z-1|\le\epsilon\).
\end{theorem}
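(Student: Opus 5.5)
We will carry the eigenvalue-continuation argument of Theorem~\ref{alg:complex} over to the logarithm of the product test. First we establish nonvanishing. Proposition~\ref{spec:perron}, applied to \(A\) and, after transposition of the kernel, to \(A^{\mathsf T}\), gives right eigenvectors \(v,w\) that are strictly disk stable with a common radius \(r>1\). We normalize them by \(v_\varnothing=w_\varnothing=1\), which is possible because \(v(0)\ne0\). The pairing \(w^{\mathsf T}Bv\) is then a weighted contraction: first contract the kernel of \(B\), stable on \(\D^{2n}\), with \(v\) in the input variables, then contract the result with \(w\). Each step pairs radii with product \(r>1\), so the strict case of Lemma~\ref{pre:grace} makes \(G_B\ne0\). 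In the multiaffine case the binomial weights are trivial, so the coefficient contraction is the plain bilinear form. The choice \(B=\Id\) is an allowed test, since its kernel is \(\prod(1+z_iw_i)\), and this gives \(w^{\mathsf T}v\ne0\). The ratio \(\Tr(B\Pi)\) is therefore also nonzero, and it suffices to approximate \(\log G_B\) and \(\log G_{\Id}\) to absolute error \(O(\epsilon)\).

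Next we complexify the field. For \(\Re h>0\), Lemma~\ref{alg:half-plane} together with Proposition~\ref{spec:perron}, applied to \(A_0-2ivE\), gives principal eigenvectors \(v(h),w(h)\) of \(A(h)\) and \(A(h)^{\mathsf T}\). These are holomorphic in \(h\), since the spectral projection for a simple isolated eigenvalue is holomorphic, and the normalization is holomorphic because the vacuum coordinate never vanishes. The previous paragraph applies for every such \(h\), so \(\Phi(x)=\log\bigl(w(1/x)^{\mathsf T}Bv(1/x)\bigr)\) is holomorphic and well defined on the simply connected half-plane \(\Re x>0\). Near \(x=0\), the eigenvectors coincide with the normalized vacuum germ \(v(x)=\Omega+\sum v_kx^k\) of \eqref{alg:formal-vector} and with its transpose analogue. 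This identification uses the Riesz-projection argument of Lemma~\ref{alg:half-plane}. Consequently \(G_B(x)=1+O(x)\) on the disk \(|x|<\rho'\), where the Bravyi--DiVincenzo--Loss cluster bounds apply. We then glue the two domains on \(\Omega_{\rho'}\) as in \eqref{alg:glued-domain}.

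The main obstacle is twofold. First, we need a linear-growth bound \(|\Phi(x)|\le L_0+L_1|x|\) with \(L_0,L_1=O(n)\) on the glued domain. Second, we need Taylor coefficients of \(\log G_B\) at zero that are computable in time \(n\exp(O_D(p))\) with size \(O(n)\rho'^{-k}\). For the coefficients, we will use that \(\log G_B\) admits a connected-cluster (linked) expansion: the product test is a tensor product and the perturbation is local, so the \(k\)th coefficient is a sum over connected subgraphs of size \(O(k)\). Because the coefficients are homogeneous polynomials in the local entries, the Hermitian polarization and Fourier trick of Lemma~\ref{alg:complexification} transfers the bounds to complex input. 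For the growth bound, we first try to bound \(\Re\Phi\) and \(\Im\Phi\) separately. The upper bound on \(\log|G_B|\) is \(O(n)\): coefficients of strictly stable vectors normalized at \(\varnothing\) are bounded by \(2^n\)-type binomial estimates, and each test matrix entry is bounded by a constant because its one-site kernel is disk stable with \((B_i)_{00}=1\). For the lower bound we will use the strict radius together with a Harnack or Borel--Carath\'eodory argument on a slightly smaller disk. The imaginary part is then controlled from the real part by Borel--Carath\'eodory on the disk \(\phi(\D)\) of Lemma~\ref{alg:map}. Applying that lemma with \(M=O(n\log(\ldots))\) then needs \(p=O(\log(n/\epsilon))\) coefficients.

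Finally, we exponentiate the truncated sums for \(B\) and for \(\Id\) and take their quotient. An absolute error \(\epsilon/3\) in each logarithm yields relative error at most \(\epsilon\) in both outputs. Gaussian-rational outputs come from truncating the exponential series, and the bit complexity follows as in Appendix~\ref{alg:energy-bit-details}.
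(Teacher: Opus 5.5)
Your nonvanishing argument at the target field is correct and matches the paper. You take Perron eigenvectors of \(A\) and \(A^{\mathsf T}\) with radius \(r>1\), contract them against the disk-stable kernels \(K_{B_i}\), and treat \(w^{\mathsf T}v\) as the case \(B=\Id\). Your linked-expansion plan for \emph{computing} the coefficients is also the paper's Lemma~\ref{alg:connected-tests}.

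\textbf{The gap: the small disk around \(x=0\).} To get a holomorphic \(\log G_B\) on \(\Omega_{\rho'}\) with \(\rho'\) independent of \(n\), you need \(G_B(x)\ne0\) for all \(|x|<\rho'\). This includes \(\Re x\le0\), where there is no positive damping and Proposition~\ref{spec:perron} gives nothing.
\begin{itemize}
\item The estimate \(G_B(x)=1+O(x)\) does not supply this. The implied constant grows exponentially in \(n\), since \(G_B=e^{F_B}\) with \(F_B\) of size \(n\). The zero-free disk it guarantees therefore shrinks with \(n\), and \(p=O(\log(n/\epsilon))\) is lost.
\item Your substitute, a bound \(|[x^k]\log G_B|=O(n)\rho'^{-k}\) from the linked expansion plus the Fourier trick, has no base estimate to transfer. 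Bravyi--DiVincenzo--Loss bound energy and creation coefficients, not the coefficients of \(\log(w^{\mathsf T}Bv)\), even for Hermitian \(V\).
\item Bounding each connected term \([x^k]\Phi_{B,U}\) by Cauchy estimates on a subsystem of size \(k+1\) gives only \(k^{O(k)}\) growth, not geometric growth.
\item The same gap affects your bound \(|G_B|\le4^n\). You derive it from disk stability of \(v\) and \(w\), which you have only in the half-plane.
\end{itemize}

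\textbf{The missing ingredient is Lemma~\ref{alg:creation-disk}.}
\begin{itemize}
\item Write the vacuum-normalized eigenvector as \(\exp(-\sum_M C(M;x)a_M^\dagger)\Omega\).
\item Bound \(\max_i\sum_{M\ni i}|C_k(M)|\le2^{-15}r_0^{-k}\): use Bravyi--DiVincenzo--Loss for Hermitian input, then the Fourier extraction for complex input.
\item Read \(v(x)\) as a polymer sum with weights \(-C(M;x)z^M\). A ratio induction then shows that the coefficient polynomials of \(v(x)\) and \(w(x)\) are zero-free on \(\{|z_i|\le2\}\) for all \(|x|<r_0/8\).
\item Contracting against the \(K_{B_i}\), with radius product \(2>1\) (Lemma~\ref{pre:grace}), gives \(G_B\ne0\) on the whole glued domain.
\item The coefficient bound \(|f_S|\le1\), valid for disk-stable multiaffine \(f\) with \(f_\varnothing=1\), gives \(\Re F_B\le n\log4\) on the whole glued domain, including the small disk.
\end{itemize}

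After this, you need neither a Harnack-type lower bound on \(\log|G_B|\) nor separate control of \(\Im F_B\). The function \(2n-F_B\circ\phi\) has nonnegative real part and value \(2n\) at \(0\), so its Taylor coefficients satisfy \(|g_k|\le4n\) directly. By contrast, Borel--Carath\'eodory applied on \(\phi(\D)\) itself would control \(F_B\) only on a smaller concentric disk.
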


The algorithm computes these scalar tests by continuing their
logarithms in the reciprocal-field parameter. The necessary bound is
on the logarithm of the test, rather than on the norm of the spectral
projection.

\subsubsection{A uniform domain for normalized eigenvectors}

Write \(V=-A_0\), \(H(x)=2E+xV\), and set
\begin{equation}\label{alg:state-radii}
 r_0=\frac{2^{-18}}{(d+1)J},\qquad \rho_s=\frac{r_0}{8}.
\end{equation}
To construct the eigenvectors, we use the creation coefficients in
the Kirkwood--Thomas expansion: for nonempty \(M\subseteq[n]\), put
\[
 a_M^\dagger=\prod_{i\in M}|1\rangle\langle0|_i .
\]
These operators commute, and products with overlapping supports vanish.
The vacuum-normalized eigenvector germ has a unique expression
\[
 v(x)=\exp\left(-\sum_{\varnothing\ne M\subseteq[n]}
                   C(M;x)a_M^\dagger\right)\Omega,
 \qquad C(M;x)=\sum_{k\ge1}C_k(M)x^k.
\]
Uniqueness follows by taking the finite logarithm in the nilpotent
commutative algebra generated by the \(a_i^\dagger\).

\begin{lemma}
\label{alg:creation-disk}
For every local complex \(V\) with the bounds above,
\begin{equation}\label{alg:creation-bound}
 \max_i\sum_{M\ni i}|C_k(M)|\le2^{-15}r_0^{-k},
 \qquad C_k(M)=0\quad\hbox{if }|M|>k+1.
\end{equation}
The normalized right eigenvector germs of \(H(x)\) and
\(H(x)^{\mathsf T}\) extend holomorphically to \(|x|<\rho_s\).
Their coefficient polynomials are nonzero on the closed polydisk
\(\{|z_i|\le2\}\) throughout this parameter disk.
\end{lemma}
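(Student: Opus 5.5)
The plan has three parts: transfer the Bravyi--DiVincenzo--Loss cluster bounds to the Kirkwood--Thomas creation coefficients, obtain holomorphy of the exponentiated germ from these bounds, and read off zero-freeness from the exponential form.

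\textbf{Coefficient bounds.} I would establish \eqref{alg:creation-bound} by running the same argument as Lemma~\ref{alg:complexification}, now for the creation coefficients instead of the energy coefficients. For Hermitian local perturbations, the cluster expansion of \cite{BDL08} underlying Proposition~\ref{alg:bdl} controls the logarithm of the vacuum-normalized eigenvector. Writing
\[
 v=\exp\Bigl(-\sum_M C(M)a_M^\dagger\Bigr)\Omega
\]
and comparing coefficients in \((H_0+xV)v=F_V(x)v\), I would derive a recursion for the \(C_k(M)\) parallel to \eqref{alg:formal-energy}--\eqref{alg:formal-vector}. Each order \(k\) term is a connected product of at most \(k\) local terms. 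Since an edge term touches two sites, such a cluster covers at most \(k+1\) sites, which gives \(C_k(M)=0\) for \(|M|>k+1\). The sitewise summed bound would come from the Lemma~6-type estimate of \cite{BDL08}, which controls the sum over connected clusters anchored at a site by a geometric series in \(DJ\). For complex \(V\), the \(C_k(M)\) are homogeneous polynomials in the entries of \(V\). I would therefore use the Fourier extraction \eqref{alg:fourier} over \(W_\theta\), at the cost of the factor four that converts \(R_J\) into \(r_0\). The auxiliary construction handles the one-site terms, with \(D=d+1\). I expect this step to be the main obstacle. The issue is to check that the anchored cluster bound really controls \(\sum_{M\ni i}\) of the creation coefficients, and not only the energy, with the constant \(2^{-15}\). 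If the published bound is stated only for the energy, I would redo the linked-cluster count for the exponent directly.

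\textbf{Holomorphy on the smaller disk.} For \(|x|<\rho_s=r_0/8\), the bound gives
\[
 \sum_{M\ni i}|C(M;x)|\le 2^{-15}\sum_k 8^{-k}\le 2^{-17},
\]
uniformly in \(i\). Consequently each \(C(M;x)\) is holomorphic on this disk, and the exponential of a finite sum is a polynomial in these functions. The vector \(v(x)\) is therefore holomorphic. Its eigenvector property extends from the germ by the identity theorem, since \((H(x)-F(x))v(x)\) is holomorphic and vanishes near zero; this disk lies inside the domain \(|x|<\rho\) of Lemma~\ref{alg:complexification}. The transpose \(H(x)^{\mathsf T}=2E+xV^{\mathsf T}\) has local terms of the same norms, so the same argument applies to it.

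\textbf{Zero-freeness on \(\{|z_i|\le 2\}\).} In polynomial coordinates, \(a_M^\dagger\Omega\) corresponds to \(z^M\), and the products of creation operators act multiaffinely. Hence the coefficient polynomial of \(v(x)\) is the multiaffine truncation of \(\exp(-\sum_M C(M;x)z^M)\). Because overlapping products vanish, this truncation is the cluster (polymer) sum
\[
 \sum \prod_\ell\bigl(-C(M_\ell)z^{M_\ell}\bigr)
\]
over disjoint families \(\{M_\ell\}\). I would apply the Kotecký--Preiss criterion to this hard-core polymer gas, with weights bounded by \(|C(M;x)|2^{|M|}\). Using \(|M|\le k+1\), the weight bound becomes
\[
 \sum_{M\ni i}|C_k(M)|\,2^{k+1}e^{a|M|}\le 2^{-14}e^{a}(2e^{a}/8)^k,
\]
which is summable and tiny for \(a=1\). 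The Kotecký--Preiss criterion then gives nonvanishing of the partition function, that is, of the coefficient polynomial, for all \(|z_i|\le 2\) and \(|x|<\rho_s\).
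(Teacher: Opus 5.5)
Your proposal is correct and follows the paper's proof. Both take the sitewise creation bounds of \cite{BDL08} for Hermitian perturbations, transfer them to complex $V$ and $V^{\mathsf T}$ by Fourier extraction over $W_\theta$ (the factor four), extend holomorphically via the identity theorem, and prove zero-freeness from the hard-core polymer representation $u_M=-C(M;x)z^M$, with $|M|\le k+1$ absorbing the $2^{|M|}$ weights. There are two minor differences. First, the paper reads the sitewise bound directly from Lemma~5, equation~(32), and Lemma~7, equation~(47), of \cite{BDL08}, so the obstacle you flag does not arise. Second, where you invoke Kotecky--Preiss (Kotecky with an acute accent on the final y), the paper proves a self-contained ratio induction under $\sup_i\sum_{M\ni i}|u_M|2^{|M|-1}\le 1/2$; either criterion closes with a large margin.
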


\begin{proof}
For Hermitian perturbations with local norm bound \(L\) on a graph
of maximum degree \(D\), Lemma~5, equation~(32), and Lemma~7,
equation~(47), of \cite{BDL08}, with unperturbed gap \(2\), give
\[
 \max_i\sum_{M\ni i}|C_k(M)|\le2^{-15}R_L^{-k},
 \qquad R_L=2^{-16}/(DL),
\]
with every nonzero \(C_k(M)\) supported inside a connected set
of at most \(k+1\) vertices, and hence \(|M|\le k+1\).
For one-site terms, the private auxiliary qubits in
\eqref{alg:auxiliary} give \(D=d+1\) and a normalized eigenvector
germ equal to the physical germ tensored with the auxiliary vacuum.
Creation coefficients involving an auxiliary vertex therefore vanish,
while the remaining coefficients agree with the physical ones.

The recursion \eqref{alg:formal-energy}--\eqref{alg:formal-vector}
and the finite logarithm show that \(C_k(M)\) is a homogeneous
polynomial of degree \(k\) in the local entries of \(V\), without
their conjugates.  With \(W_\theta\) from the proof of
Lemma~\ref{alg:complexification},
\[
 C_k(M;V)=\frac{2^k}{2\pi}\int_0^{2\pi}
              e^{ik\theta}C_k(M;W_\theta)\,d\theta.
\]
Taking absolute values, summing over \(M\ni i\), and applying the
Hermitian estimate with \(L=2J\) gives
\eqref{alg:creation-bound}, with its support assertion inherited
from the same identity. Applying the argument to \(V^{\mathsf T}\)
proves the corresponding estimates for the left germ, and for
\(|x|<r_0/8\) we obtain
\begin{equation}\label{alg:weighted-creation}
 \max_i\sum_{M\ni i}4^{|M|}|C(M;x)|
 \le 2^{-15}\sum_{k\ge1}4^{k+1}(|x|/r_0)^k
 <2^{-13}.
\end{equation}
The series converge locally uniformly.

To deduce zero-freeness, use the following elementary polymer
estimate. For weights \(u_M\) on the nonempty subsets of a finite
set, define
\[
 Z(S)=\sum_{\substack{\mathcal A\text{ a collection of}\\
              \text{pairwise disjoint nonempty subsets of }S}}
                       \prod_{M\in\mathcal A}u_M .
\]
Under the bound
\(\sup_i\sum_{M\ni i}|u_M|2^{|M|-1}\le1/2\), induction on
\(|S|\) gives
\begin{equation}\label{alg:polymer-ratio}
 Z(S)\ne0,\qquad
 \left|\frac{Z(S)}{Z(S\setminus\{i\})}-1\right|\le\frac12
 \quad(i\in S).
\end{equation}
Indeed,
\[
 Z(S)=Z(S\setminus\{i\})+
           \sum_{\substack{M\subseteq S\\i\in M}}u_M Z(S\setminus M).
\]
By the induction hypothesis, deleting the \(|M|-1\) additional
vertices gives
\(\left|Z(S\setminus M)/Z(S\setminus\{i\})\right|
 \le2^{|M|-1}\).
This bounds the displayed ratio within \(1/2\) of one, establishing
nonvanishing and completing the induction from the empty set.

The exponential formula for \(v(x)\) is this polymer sum with
\(u_M=-C(M;x)z^M\), since its factorials cancel the orderings of
pairwise disjoint sets. When \(|z_i|\le2\), the left side of the
polymer condition is at most half the left side of
\eqref{alg:weighted-creation}, and is therefore less than \(1/2\),
giving the asserted zero-freeness.

The convergent creation series and their finite exponentials define
holomorphic vectors on \(|x|<\rho_s\), with formal eigenvalue
given by the energy series of Lemma~\ref{alg:complexification}.
Their eigenvector equations hold near zero and extend throughout
the disk by the identity theorem.
\end{proof}

\begin{lemma}\label{alg:test-domain}
The vectors in Lemma~\ref{alg:creation-disk} and the normalized
principal vectors of \(A(1/x)\) glue to holomorphic vectors
\(v(x),w(x)\) on
\[
 \Omega_{\rho_s}=\{\Re x>0\}\cup\{|x|<\rho_s\}.
\]
For every stable product test,
\begin{equation}\label{alg:test-function}
 G_B(x)=w(x)^{\mathsf T}Bv(x)
\end{equation}
is nonvanishing on this domain, \(G_B(0)=1\), and
\begin{equation}\label{alg:test-log-bound}
 |G_B(x)|\le4^n,\qquad
 \Re F_B(x)\le n\log4,\quad F_B=\log G_B,\quad F_B(0)=0 .
\end{equation}
\end{lemma}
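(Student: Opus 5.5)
The plan has four steps: glue the two vector families, show the test is nonzero, bound its modulus, and take a logarithm on a simply connected domain.

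\emph{Gluing.} On \(\Re x>0\), Proposition~\ref{spec:perron} gives strictly disk-stable principal right eigenvectors of \(A(1/x)\) and \(A(1/x)^{\mathsf T}\); the transpose is covered by the remark that transposition exchanges the two legs of the binary kernel. Lemma~\ref{alg:half-plane} says the principal eigenvalue is algebraically simple and holomorphic there. The rank-one spectral projection is then holomorphic in \(x\), so the vacuum-normalized vectors \(v(x)=P(x)\Omega/\langle\Omega,P(x)\Omega\rangle\), and similarly \(w(x)\), are holomorphic wherever \(\langle\Omega,P\Omega\rangle\neq0\). That coefficient is the constant term \(v(0)\) of a disk-stable polynomial, so it is nonzero; hence \(v_\varnothing=1\) normalization is legitimate on the whole half-plane.

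On the overlap \(\{\Re x>0\}\cap\{|x|<\rho_s\}\), both constructions give eigenvectors of \(H(x)\) for the eigenvalue \(F(x)\). This eigenvalue is simple, by Lemma~\ref{alg:half-plane} and the identification of \(F\) with the vacuum branch. With the same normalization the two constructions therefore coincide. The overlap is connected, so the two families glue by the identity theorem.

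\emph{Nonvanishing.} Both \(v(x)\) and \(w(x)\) have polynomials nonvanishing on the closed unit polydisk at every point of \(\Omega_{\rho_s}\).
\begin{itemize}
\item On the half-plane this comes from Proposition~\ref{spec:perron}, which gives radius greater than one.
\item On the small disk it comes from Lemma~\ref{alg:creation-disk}, which gives radius two.
\end{itemize}
Write \(G_B=w^{\mathsf T}Bv\). The sitewise factor \(K_{B_i}\) is nonzero on \(\D^2\), so the kernel of \(B\) is nonvanishing on \(\D^{2n}\). The quantity \(G_B\) is the bilinear contraction \([\,w,[K_B,v]\,]\), up to the identity \(\binom{1}{\alpha}=1\) on the binary box. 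Apply Lemma~\ref{pre:grace} twice with paired radii \(1\cdot r>1\), where \(r>1\) is the radius of \(v\) and \(w\). The strict version of the lemma then gives \(G_B(x)\neq0\), with no zero alternative.

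\emph{Evaluation at zero.} At \(x=0\) we have \(v=w=\Omega\), so \(G_B(0)=(B)_{0\cdots0,0\cdots0}=\prod_i(B_i)_{00}=1\).

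\emph{The bound \(|G_B|\le4^n\).} This is the step I expect to be the main obstacle, since the eigenvector norms themselves are not controlled uniformly in the field. My plan avoids norms by using a factorization. Write \(G_B=\prod_i\) of successive ratios obtained by turning on one test factor or one site at a time, and control each ratio through the polymer-type ratio estimates or Schwarz diameter bounds.

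Concretely, I would first try the following. Normalize \(v_\varnothing=1\), and note that the coefficient \(v_\alpha\) of a polynomial nonvanishing on \(\D^n\) satisfies \(|v_\alpha|\le 2^{|\alpha|}\)-type bounds relative to \(v_\varnothing\). Each site's coefficients are bounded in this way after normalization, by a Carathéodory/Schwarz argument applied to \(f/f(0)\) on the disk; the model case is \((1+z)^{\kappa}\) with binary \(\kappa\). Then estimate \(w^{\mathsf T}Bv\) sitewise. Each stable \(K_{B_i}\) with \((B_i)_{00}=1\) has \(|(B_i)_{ab}|\le1\) in the appropriate normalized sense. Combining the two gives at most \(4\) per site.

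If this direct coefficient bound fails, the fallback is the contraction/Hurwitz structure itself. Consider the map \(\zeta\mapsto\) the contraction with \(v(\zeta z)\) and \(w(\zeta z)\). By strict stability it is a zero-free polynomial in the scalar \(\zeta\) of degree at most \(2n\), whose roots lie outside the unit disk. Such a polynomial with constant term one has modulus at most \(2^{2n}=4^n\) at \(\zeta=1\), because each of its \(2n\) linear factors has the form \(1-\zeta/\zeta_k\) with \(|\zeta_k|\ge1\) and so has modulus at most \(2\).

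\emph{Logarithm.} Once the bound on \(G_B\) holds, \(\Omega_{\rho_s}\) is simply connected, being a union of two convex sets with connected intersection. We may therefore define \(F_B=\log G_B\) with \(F_B(0)=0\), and \(\Re F_B=\log|G_B|\le n\log4\) follows immediately.
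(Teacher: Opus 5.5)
Your proof is correct. For the gluing, the nonvanishing of \(G_B\), \(G_B(0)=1\), and the logarithm, it follows the paper's route. You identify the two vector families pointwise on the overlap using simplicity of \(F(x)\), rather than on a real interval followed by the identity theorem; that works equally well. One small omission: \(\langle\Omega,P(x)\Omega\rangle\) equals \(\ell(1)\) times the vacuum coefficient of the principal vector. So besides strict stability you need \(\ell(1)\neq0\), which Proposition~\ref{spec:perron} supplies because \(1\) is a nonzero stable polynomial.

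The genuine difference is the bound \(|G_B|\le4^n\). The paper proves that a vacuum-normalized multiaffine disk-stable polynomial has every coefficient of modulus at most one. It sets the variables off \(S\) to zero and those on \(S\) to \(t\); the top coefficient is then \(f_S\), the constant term is one, and all roots have modulus at least one. Applying this to \(v\), \(w\), and each \(K_{B_i}\) bounds each of the \(4^n\) terms of \(w^{\mathsf T}Bv\) by one.

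Your fallback uses the same root-product idea once, on the scalar polynomial
\(g(\zeta)=w(\zeta\,\cdot)^{\mathsf T}B\,v(\zeta\,\cdot)\). This polynomial has degree at most \(2n\) and \(g(0)=1\). It has no zeros in \(|\zeta|<1\), because the dilated vectors have radius greater than one and the contraction lemma applies. Hence \(|G_B|=|g(1)|\le2^{2n}\). This is valid and avoids coefficient bookkeeping. The paper's route instead gives a sitewise coefficient bound as a standalone fact.

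Drop your other two sketches, since neither works as stated:
\begin{itemize}
\item Bounds of type \(|v_\alpha|\le2^{|\alpha|}\) give only \(1+2+2+4=9\) per site, hence \(9^n\). Getting \(4\) per site requires exactly the sharper bound \(|v_\alpha|\le1\) that the paper proves.
\item The successive-ratio idea based on polymer estimates is unavailable on the half-plane. The estimates of Lemma~\ref{alg:creation-disk} hold only for \(|x|<\rho_s\).
\end{itemize}
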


\begin{proof}
The principal eigenvalue is simple in the half-plane, so it admits
local analytic eigenvectors whose vacuum coordinates are nonzero
by strict stability of their coefficient polynomials. Vacuum
normalization makes the local choices agree on overlaps, and the
same argument applied to \(A(h)^{\mathsf T}\) gives the left
vectors. On a small positive real interval, the Riesz-projection
argument of Lemma~\ref{alg:half-plane} identifies these vectors
with the vacuum germs. The identity theorem on the connected
overlap then glues the two constructions, using the size-independent
radius \(\rho_s\) supplied by Lemma~\ref{alg:creation-disk}.

Both \(v(x)\) and \(w(x)\) have polydisk radius greater than
one in the half-plane, and radius at least two in the small disk
by Lemma~\ref{alg:creation-disk}. Contracting their legs against
the two legs of each \(K_{B_i}\) therefore pairs radii whose
product is greater than one, so polarized apolarity
(Lemma~\ref{pre:grace}) makes the resulting scalar
\eqref{alg:test-function} nonzero. This strict separation of radii
also applies when the test matrices are singular.

For the bound, a multiaffine disk-stable polynomial
\(f(z)=\sum_S f_Sz^S\) with \(f_\varnothing=1\) has
\(|f_S|\le1\). Indeed, set the variables outside \(S\) to zero
and the remaining variables to \(t\); if \(f_S\ne0\), the
resulting polynomial has degree \(|S|\), constant term one,
leading coefficient \(f_S\), and roots of modulus at least one.
Taking the product of the roots proves the bound, which also holds when
\(f_S=0\). Applying it to \(v,w\) and every \(K_{B_i}\) bounds
each of the \(4^n\) terms in \(w^{\mathsf T}Bv\) by one, while
\(G_B(0)=\prod_i(B_i)_{00}=1\). Since the domain is star-shaped
about zero, the nonvanishing function \(G_B\) has a unique
holomorphic logarithm vanishing there, whose real part satisfies
\eqref{alg:test-log-bound}.
\end{proof}

\begin{lemma}
\label{alg:log-continuation}
Use the parameters \eqref{alg:map-parameters} with
\(\rho=\rho_s\) and \(c=1/\mu\).  Write
\(F_B(x)=\sum_{j\ge1} f_jx^j\) and
\(F_B(\phi(u))=\sum_{k\ge1}g_ku^k\).
The transform \eqref{alg:coefficient-transform} remains valid,
and
\begin{equation}\label{alg:log-tail}
 |g_k|\le4n,\qquad
 \left|F_B(c)-\sum_{k=1}^p g_kq^k\right|
       \le \frac{4nq^{p+1}}{1-q}.
\end{equation}
In particular \(|F_B(c)|\le Kn\), where \(K=4q/(1-q)\).
\end{lemma}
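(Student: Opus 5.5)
The plan is to apply Lemma~\ref{alg:map} not to \(F_B\) directly but through a Borel--Carath\'eodory-type bound, since Lemma~\ref{alg:test-domain} controls only \(\Re F_B\le n\log4\), not \(|F_B|\). First I check the transform claim. Lemma~\ref{alg:map} is stated for a general \(F\) holomorphic on \(\Omega_\rho\). The identity \eqref{alg:coefficient-transform} is purely formal: it comes from expanding \(\phi(u)^j=b^ju^j(1-qu)^{-j}\). It therefore holds for \(F_B\) on \(\Omega_{\rho_s}\) with \(f_0=0\), because \(F_B(0)=0\). The geometric part of that lemma also applies unchanged. With \(\rho=\rho_s\) and \(c=1/\mu\), \(\phi\) maps \(\D\) into \(\Omega_{\rho_s}\), with \(\phi(0)=0\) and \(\phi(q)=c\).

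Next, set \(g(u)=F_B(\phi(u))\). It is holomorphic on \(\D\), satisfies \(g(0)=0\), and has \(\Re g\le A:=n\log 4\) by \eqref{alg:test-log-bound}. For the coefficient bound I use the classical Carath\'eodory inequality. Consider \(A-g\), which has nonnegative real part and value \(A\) at zero. By Herglotz, \(A-g\) is the integral of \((1+\zeta u)/(1-\zeta u)\) against a positive measure of mass \(A\), up to an imaginary constant. Expanding gives \(|g_k|\le 2A=2n\log4\le 4n\) for every \(k\ge1\). To avoid boundary issues, I apply this first on \(|u|<r\), i.e.\ to \(g(r\cdot)\), and then let \(r\uparrow1\).

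The tail bound then follows exactly as in Lemma~\ref{alg:map}. We have \(F_B(c)=g(q)=\sum_{k\ge1}g_kq^k\), the series converging because \(q<1\). Hence
\[
 \Bigl|F_B(c)-\sum_{k=1}^pg_kq^k\Bigr|\le\sum_{k>p}4nq^k=\frac{4nq^{p+1}}{1-q}.
\]
Taking \(p=0\) gives \(|F_B(c)|\le 4nq/(1-q)=Kn\).

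The only delicate point is the step from a one-sided bound on \(\Re F_B\) to two-sided coefficient bounds. This needs \(F_B(0)=0\), which holds by the normalization \(G_B(0)=1\), and it needs the branch of the logarithm to be single-valued on the whole image \(\phi(\D)\). Both are supplied by Lemma~\ref{alg:test-domain}: \(G_B\) is nonvanishing there, and the domain is star-shaped about zero.
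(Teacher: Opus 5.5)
Your proposal is correct and takes essentially the same route as the paper. Both arguments apply the Carath\'eodory coefficient bound to a function of nonnegative real part built from \(F_B\circ\phi\), then sum the geometric tail at \(u=q\); the coefficient transform is, in both, the formal identity from Lemma~\ref{alg:map}. The paper uses \(2n-F_B\circ\phi\) and reads off \(-g_kr^k/2\) as a Fourier coefficient of its real part on \(|u|=r\), while you use \(n\log 4-F_B\circ\phi\) with the Herglotz representation, which gives the slightly sharper bound \(|g_k|\le 2n\log 4\).
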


\begin{proof}
Lemma~\ref{alg:map} puts \(\phi(\D)\) in \(\Omega_{\rho_s}\),
where the logarithmic bound makes
\(U(u)=2n-F_B(\phi(u))\) holomorphic with nonnegative real part
and \(U(0)=2n\). On every circle of radius \(r<1\),
\(\Re U(re^{it})\) has average \(2n\) and \(k\)-th Fourier
coefficient \(-g_kr^k/2\) for \(k\ge1\), giving
\(|g_k|r^k\le4n\). Letting \(r\uparrow1\) and summing the
geometric tail proves the bounds; the coefficient transform is the
formal identity from Lemma~\ref{alg:map}.
\end{proof}

\subsubsection{Computing the connected coefficients}

The coefficients of \(F_B\) can be computed on small connected
subsets, without expanding the full eigenvectors, by the method of
connected logarithms for multiplicative analytic graph functions;
compare \cite{PatelRegts17,YYZ22}. Multiplicativity follows from
the formal expansion at the vacuum, so the induced subgraphs may
be used without a stability-preservation assumption of their own.

For \(U\subseteq[n]\), keep just the local terms supported inside
\(U\), obtaining \(V_U\), and let \(B_U=\bigotimes_{i\in U}B_i\).
Define the vacuum-normalized formal vectors \(v_U,w_U\) for
\(2E_U+xV_U\) and its transpose.  Put
\[
 F_{B,U}(x)=\log(w_U(x)^{\mathsf T}B_Uv_U(x)),\qquad
 \Phi_{B,U}(x)=\sum_{W\subseteq U}(-1)^{|U|-|W|}F_{B,W}(x).
\]
For the empty subsystem all logarithms are zero.

\begin{lemma}
\label{alg:connected-tests}
For \(k\ge1\),
\[
 [x^k]F_B(x)=
 \sum_{\substack{U\subseteq[n]\ \mathrm{connected}\\ |U|\le k+1}}
                       [x^k]\Phi_{B,U}(x).
\]
For Gaussian-rational input, the first \(p\) coefficients can
be computed exactly with
\(n\exp(O_d(p))\operatorname{poly}(L_{\mathrm{in}},n,p)\)
bit operations.
\end{lemma}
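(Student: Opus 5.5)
Two facts drive the proof: Möbius inversion of the \(F_{B,W}\), and a \emph{cluster property}. If \(U\) splits into two nonempty parts \(U_1,U_2\) with no local term of \(V_U\) crossing between them, then \(\Phi_{B,U}=0\) identically as a formal power series.

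\textbf{Step 1: Möbius inversion.} Inverting the definition of \(\Phi\) over the Boolean lattice gives
\[
 F_B=F_{B,[n]}=\sum_{U\subseteq[n]}\Phi_{B,U}.
\]

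\textbf{Step 2: Factorization for disconnected \(U\).} The operator \(2E_U+xV_U\) is then a sum of commuting terms on the two factors. The vacuum-normalized germ is unique by the recursion \eqref{alg:formal-energy}--\eqref{alg:formal-vector}. Hence \(v_U=v_{U_1}\otimes v_{U_2}\), and likewise \(w_U=w_{U_1}\otimes w_{U_2}\). The same splitting holds for every \(W\subseteq U\). Since \(B_W\) is a tensor product, the test scalar factorizes, so
\[
 F_{B,W}=F_{B,W\cap U_1}+F_{B,W\cap U_2}.
\]
Here each term is a formal power series with zero constant term, because \(G(0)=\prod_i(B_i)_{00}=1\), so the logarithm is well defined formally. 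In the alternating sum over \(W=W_1\sqcup W_2\), each summand depends on only one of \(W_1,W_2\). Summing the sign over the other, nonempty, factor gives zero. So only induced-connected \(U\) contribute.

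\textbf{Step 3: The support bound \(|U|\le k+1\).} This is the step I expect to be the main obstacle. Plan: write \(F_{B,W}\) via the Kirkwood--Thomas representation, with \(v_W=\exp(-\sum C(M;x)a_M^\dagger)\Omega\) and similarly for \(w_W\).

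\begin{itemize}
\item By the graph locality of the recursion, the coefficient \(C_k(M)\) for the subsystem \(W\) depends only on terms inside a connected set of at most \(k+1\) vertices containing \(M\) (the support assertion in Lemma~\ref{alg:creation-disk}). So the coefficient of \(x^k\) in \(\Phi_{B,U}\) is a sum over "clusters" of creation and annihilation polymers, whose union of supports spans \(U\).
\item Taking the logarithm of the polymer pairing \(w^{\mathsf T}Bv\) (a two-layer polymer gas) gives a cluster expansion. Each connected cluster contributes at order \(x^k\) only if its total order is \(k\), and its covered vertex set is connected.
\item A connected cluster of total order \(k\) spans at most \(k+1\) vertices, since each unit of order adds at most one edge, hence at most one new vertex.
\end{itemize}

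A cleaner alternative would bypass cluster language. Show that \([x^k]F_{B,W}\) is a sum of contributions indexed by connected edge multisets of \(k\) local terms \(V_e\), with one-site terms attached as auxiliary edges as in \eqref{alg:auxiliary}. Then Möbius inversion kills every contribution whose vertex support is not all of \(U\). This needs a Rayleigh--Schrödinger-type linked expansion, in which disconnected products cancel in the logarithm; that cancellation is the point to be verified carefully.

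\textbf{Step 4: Algorithm and bit count.}
\begin{itemize}
\item Enumerate the connected sets of size at most \(p+1\) in a graph of degree at most \(d\). There are \(n\exp(O_d(p))\) of them, found by standard search.
\item For each such \(U\) and each \(W\subseteq U\), there are \(2^{p+1}\) subsets \(W\). For each, compute the vacuum germs \(v_W,w_W\) to order \(p\) exactly, via the recursion in dimension \(2^{|W|}\le 2^{p+1}\).
\item Form the truncated series \(w_W^{\mathsf T}B_Wv_W\) and take its logarithm as a power series to order \(p\), with Gaussian-rational arithmetic. Then combine the results by the alternating sum.
\end{itemize}
Bit lengths grow polynomially in \(p\) and \(L_{\mathrm{in}}\), since every step is a fixed polynomial recursion of depth \(p\). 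This gives the stated \(n\exp(O_d(p))\operatorname{poly}(L_{\mathrm{in}},n,p)\) bound.
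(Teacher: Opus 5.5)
\textbf{Where the proposal matches and where it breaks.} Your Steps 1, 2 and 4 agree with the paper's proof:
\begin{itemize}
\item M\"obius inversion over subsets;
\item factorization of the vacuum-normalized germs and of the product test across components, so that \(\Phi_{B,U}=0\) for disconnected \(U\);
\item the same enumeration-plus-recursion algorithm.
\end{itemize}
The genuine gap is Step~3, which you leave as a plan with the key cancellation unverified.

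The ingredient you cite also does not supply what the inversion needs. Lemma~\ref{alg:creation-disk} says only that a nonzero \(C_k(M)\) has \(M\) inside a connected set of at most \(k+1\) vertices. It does not say that \(C_k(M)\) depends only on the local terms in one such set; in general it depends on terms up to graph distance about \(k\) from \(M\). Nor does it say that \(C_k(M)\) has the same value in every subsystem \(W\) containing that set.

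For the alternating sum over \(W\subseteq U\) to cancel a contribution, that contribution must be literally unchanged in every induced subsystem containing its support. Here ``support'' must mean the union of the supports of the local terms that produced it, not the union of the polymer sets \(M\) that your clusters track. Your cluster bookkeeping would need a linked, subsystem-independent representation of the creation coefficients, plus a formal Mayer expansion for the two-layer gas with the \(B\)-site interactions. Without these it does not yet give \(|U|\le k+1\).

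\textbf{The missing idea.} The paper avoids any cluster expansion by giving each local term \(V_e\) its own formal parameter \(\lambda_e\).
\begin{enumerate}
\item The recursion \eqref{alg:formal-energy}--\eqref{alg:formal-vector}, its transpose, and \eqref{alg:formal-test-log} make \([x^k]F_{B,W}\) a homogeneous polynomial of degree \(k\) in the \(\lambda_e\).
\item Fix one monomial and set every parameter not occurring in it to zero. If the union of the supports of its terms is disconnected, your own Step~2 factorization makes the logarithm a sum of functions of the two separate groups of parameters. Hence that mixed monomial has coefficient zero.
\item Every surviving monomial therefore has a connected support union \(T\) built from at most \(k\) one- or two-site supports, so \(|T|\le k+1\).
\item Its coefficient is the same in every induced subsystem \(W\supseteq T\). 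Sites of \(W\setminus T\) are then decoupled in the vacuum and contribute \(\log (B_i)_{00}=0\).
\item Since \(\sum_{T\subseteq W\subseteq U}(-1)^{|U|-|W|}\) is one if \(T=U\) and zero otherwise, \([x^k]\Phi_{B,U}\) collects exactly the monomials with \(T=U\).
\end{enumerate}
This gives connectedness and the bound \(|U|\le k+1\) at once; setting all \(\lambda_e=1\) recovers the stated formula.

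\textbf{A minor point in Step 4.} Your claim that bit lengths stay polynomial ``because the recursion has depth \(p\)'' should be backed by explicit common denominators and magnitude bounds, as in Appendix~\ref{alg:test-bit-details}. This is routine.
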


\begin{proof}
When the induced graph on \(U\) splits into components, both the
normalized formal vectors and the product test factor across them.
Thus \(F_{B,U}\) is the sum of the component logarithms, and
subset inversion gives \(\Phi_{B,U}=0\) for disconnected \(U\).

For the order bound, give each local perturbation term its own
parameter, so that the formal eigenvector recursion makes every
order-\(k\) coefficient homogeneous of degree \(k\) in these
parameters. If a parameter monomial has a disconnected union of
supports, setting all unused parameters to zero factors the vectors
across those components and makes the logarithm additive; its
coefficient must therefore vanish. A connected union of at most
\(k\) one- or two-site supports has at most \(k+1\) vertices.
Moreover, the coefficient of a monomial with support union \(T\)
is unchanged in every induced subsystem containing \(T\), so the
alternating sum over \(W\subseteq U\) retains it only when
\(T=U\). This gives both vanishings and the stated formula.

For an explicit implementation, apply
\eqref{alg:formal-energy}--\eqref{alg:formal-vector} and their
transposes through order \(p\) in each subsystem of size
\(m\le p+1\). Writing
\[
 a_k=\sum_{i+j=k}w_{U,i}^{\mathsf T}B_Uv_{U,j},
 \qquad \log\left(1+\sum_{k\ge1}a_kx^k\right)
                  =\sum_{k\ge1}f_kx^k,
\]
coefficient comparison after logarithmic differentiation gives
\begin{equation}\label{alg:formal-test-log}
 f_k=a_k-\frac1k\sum_{j=1}^{k-1}j f_j a_{k-j}.
\end{equation}
The vectors have dimension at most \(2^{p+1}\), so even dense
matrix arithmetic and enumeration of all \(W\subseteq U\)
require only \(\exp(O(p))\operatorname{poly}(p)\) operations per
\(U\). Connected subsets of size at most \(p+1\) can be enumerated,
with deduplication, through spanning-tree traversals of length at most
\(2p\), taking \(n\exp(O_d(p))\) time; for \(d=0\), only
singletons occur.

Appendix~\ref{alg:test-bit-details} bounds all denominators and
intermediate numerators by polynomial bit lengths, proving the
asserted bit complexity.
\end{proof}

\begin{proof}[Proof of Theorem~\ref{alg:product-test}]
Use \(c=1/\mu\), \(\rho_s\) from \eqref{alg:state-radii}, and the
rational map parameters of Lemma~\ref{alg:map}.
Given \(0<\tau<1\), choose the least \(p\ge0\) for which
\[
 4nq^{p+1}\le\tau(1-q).
\]
Rational multiplication and comparison find \(p\);
\eqref{alg:contraction-parameter} gives
\(p=O_{d,J,\mu}(\log(2+n/\tau))\).
Use Lemma~\ref{alg:connected-tests}, the triangular transform,
and \eqref{alg:log-tail} to compute a Gaussian rational \(S_B\)
with \(|S_B-F_B(c)|\le\tau\) and \(|S_B|\le Kn+1\),
at polynomial bit cost by the preceding lemma and the rational
coefficient transform.

To obtain a rational approximation to the exponential, set
\(T_0=\lceil Kn+1\rceil\) and choose the least integer \(\ell\) with
\(2^{-\ell}\le\tau\).
Truncate the Taylor series of \(e^{S_B}\) at degree
\[
 M=\left\lceil10(2T_0+\ell+1)\right\rceil
\]
by taking \(\sum_{j=0}^{M}S_B^j/j!\). The remainder is bounded
by \(e^{T_0}T_0^{M+1}/(M+1)!<\tau e^{-T_0}\), using
\(r!\ge(r/3)^r\); since \(|e^{S_B}|\ge e^{-T_0}\), the
relative error is less than \(\tau\). All terms are Gaussian
rational, with \(M=O_{d,J,\mu}(n+\log\tau^{-1})\) and
polynomial bit lengths for their numerators and denominators.
Combining this truncation error with
\(|e^{S_B}/G_B(c)-1|\le e^\tau-1\), we choose
\(\tau=\epsilon/32\) to leave the required margin.
Repeat for \(B=I\), and divide the two nonzero rational
estimates to obtain the estimate of \(G_B/G_I\).
For errors at most \(\epsilon/8\) in each factor, the ratio
error is at most
\((\epsilon/4)/(1-\epsilon/8)<\epsilon\).

Lemma~\ref{alg:test-domain} gives nonvanishing, including for
\(B=I\), whose local kernel is \(1+zw\). The simple-eigenvalue
projection \(vw^{\mathsf T}/(w^{\mathsf T}v)\) identifies the
computed quotient with \(\Tr(B\Pi)\).
\end{proof}

\subsection{Product-state queries and equatorial measurement sampling}
\label{alg:measurements-subsection}

For \(s\in\C\) with \(|s|\le1\), define
\[
 |s\rangle=\frac{|0\rangle+s|1\rangle}{\sqrt{1+|s|^2}} .
\]
These states form the closed northern hemisphere of the Bloch sphere
in our fixed basis, throughout which amplitude and probability
queries are available. For complete two-outcome projective
measurements we use the equator, where both orthogonal states
\(|s\rangle,|-s\rangle\) satisfy the condition.

\begin{corollary}
\label{alg:sf-state}
Fix \(d,J,\mu\) as in Corollary~\ref{alg:sf}.
Let \(H\) be a rational Suzuki--Fisher Hamiltonian on \(n\ge1\)
qubits satisfying that corollary's hypotheses, and let
\(0<\epsilon<1\) be rational.
Its unique unit ground vector \(\psi\) can be normalized by
\(\langle0^n|\psi\rangle>0\).  The query algorithms in (1)--(2)
have bit complexity
\(\operatorname{poly}_{d,J,\mu}(n,\epsilon^{-1},L_{\mathrm{in}})\).
\begin{enumerate}
\item Given \(s_1,\ldots,s_n\in\mathbb Q(i)\) with
      \(|s_i|\le1\), approximate the nonzero complex amplitude
      \(\langle s_1\cdots s_n|\psi\rangle\) with relative
      complex error at most \(\epsilon\).
\item Given \(S\subseteq[n]\) and \(s_i\in\mathbb Q(i)\),
      \(|s_i|\le1\) for \(i\in S\), approximate the positive
      probability
      \[
      \left\langle\psi\left|
        \bigotimes_{i\in S}|s_i\rangle\langle s_i|
                \otimes I_{S^c}\right|\psi\right\rangle
      \]
      by a positive rational with relative error at most
      \(\epsilon\).
\item Given a polynomial-time adaptive strategy that measures
      each site at most once, in a basis
      \(\{|s\rangle,|-s\rangle\}\) with
      \(s\in\mathbb Q(i)\), \(|s|=1\), sample its outcome
      record by a classical randomized algorithm with total
      variation error at most \(\epsilon\).
\end{enumerate}
The strategy in (3) has a finite description, takes previous
outcomes as input, and outputs the next unmeasured site and a
phase.  More precisely, if its total evaluation time along
any record is at most \(T_{\rm strategy}\), and all generated
phases have bit length at most \(L_{\rm phase}\), the sampler
uses at most
\[
 T_{\rm strategy}+
 \operatorname{poly}_{d,J,\mu}
       (n,\epsilon^{-1},L_{\rm in}+L_{\rm phase})
\]
bit operations.  In particular the sampler is polynomial-time
for each fixed polynomial-time strategy.
\end{corollary}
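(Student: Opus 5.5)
The plan is to reduce all three queries to Theorem~\ref{alg:product-test}, applied to the same input as in the proof of Corollary~\ref{alg:sf}. There we set \(V=H+\mu\sum_iZ_i\) and \(A_0=-V\). That proof already checks the local-input and promise hypotheses, and gives \(A=A_0-2\mu E=-H-\mu n\Id\). So the principal right eigenvector \(v\) of \(A\) is a ground vector of \(H\). By Theorem~\ref{eq:strict-field} it is unique up to scalars, and it has a strictly stable polynomial, so \(v_\varnothing\ne0\) and the normalization \(\langle0^n|\psi\rangle>0\) is possible.

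The first step identifies the left vector. Since \(H\) is Hermitian, \(A^{\mathsf T}=\overline A\), and the principal eigenvalue \(-E_0(H)-\mu n\) is real. Hence the vacuum-normalized principal vector of \(A^{\mathsf T}\) is \(w=\overline v\). With \(v_\varnothing=w_\varnothing=1\) this gives
\[
 \psi=\frac{v}{\|v\|},\qquad \|v\|^2=w^{\mathsf T}v=G_I,
\]
and in particular \(G_I>0\).

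For query (1), take the stable product test with \((B_i)_{00}=1\), \((B_i)_{01}=\bar s_i\) and the other entries zero. Its local kernel is \(1+\bar s_i w\), which is nonzero on \(\D^2\) because \(|s_i|\le1\). Then \(w^{\mathsf T}Bv=f_v(\bar s)\), and
\[
 \langle s_1\cdots s_n|\psi\rangle
 =\prod_i(1+|s_i|^2)^{-1/2}\,\frac{f_v(\bar s)}{\sqrt{G_I}}.
\]
The test is nonzero by Lemma~\ref{alg:test-domain}. Theorem~\ref{alg:product-test} gives \(G_B\) and \(G_I\) to relative error \(\epsilon/8\). Rational square roots of the positive quantities \(G_I\) (after taking the real part of its estimate) and \(\prod_i(1+|s_i|^2)\) can be computed to relative error \(\epsilon/8\) by bisection or Newton iteration in polynomially many bit operations. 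The relative errors then combine multiplicatively.

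For query (2), use \(B_i=(1+|s_i|^2)^{-1}\bigl(\begin{smallmatrix}1&\bar s_i\\ s_i&|s_i|^2\end{smallmatrix}\bigr)\) for \(i\in S\), and \(B_i=I\) otherwise. After extracting the rational factor \(\prod_{i\in S}(1+|s_i|^2)^{-1}\), the remaining local kernels are \((1+s_iz)(1+\bar s_iw)\) and \(1+zw\). These are stable with \((B_i)_{00}=1\), so the probability equals that factor times \(\Tr(B\Pi)\), and \(\Pi=|\psi\rangle\langle\psi|\) here. Theorem~\ref{alg:product-test} approximates \(\Tr(B\Pi)\) to relative error \(\epsilon\). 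Taking the real part keeps the relative error, since the true value is positive, and this gives a positive rational.

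For the sampler in (3), I would proceed sequentially. At each step the strategy chooses a site \(j\) and a phase \(s\) with \(|s|=1\). Both \(|s\rangle\) and \(|-s\rangle\) satisfy \(|\pm s|\le1\), so query (2) applies to the two prefix events, namely the previous outcome projectors together with \(|{\pm s}\rangle\langle{\pm s}|_j\). Use relative error \(\epsilon'=\epsilon/(8n)\) for each. Normalize the two estimates, and flip a coin with these rational probabilities using enough random bits that the truncation costs at most \(\epsilon/(4n)\) per step. The probability of each full record under the sampler is then within a factor \((1\pm\epsilon')^{O(n)}\) of the true one, plus the coin errors. Consequently the total variation error is at most \(e^{O(n\epsilon')}-1+\epsilon/4\le\epsilon\). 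The cost is at most \(n\) calls of query (2) plus the strategy time, with inputs of bit length \(L_{\rm in}+L_{\rm phase}\).

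The main obstacle is the identification \(w=\overline v\), which is what turns the bilinear tests \(w^{\mathsf T}Bv\) into the sesquilinear physical quantities. The argument for it uses reality and simplicity of the principal eigenvalue of a Hermitian shift. Everything else is bookkeeping of relative errors and the check that each chosen \(K_{B_i}\) is stable on \(\D^2\).
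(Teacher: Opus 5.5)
Your proposal is correct and follows the paper's proof essentially step by step: the identification $w=\overline v$ from Hermiticity at positive real parameter, the amplitude and probability tests with kernels $1+\bar s_iw$ and $(1+s_iz)(1+\bar s_iw)$, and the sequential coupling argument for the equatorial sampler are all the same. The only difference is cosmetic. The paper combines logarithms before a single exponentiation, using $\exp(F_B(c)-F_I(c)/2)$ for amplitudes and the real part of $F_B(c)-F_I(c)$ for probabilities. You instead take the theorem's output estimates of $G_B$, $G_I$ and $\Tr(B\Pi)$ directly, then apply a real part and a rational square root, which works equally well; note that the sampler makes up to $2n$ calls to query (2), not $n$.
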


\begin{proof}
Use \(A_0=-(H+\mu\sum_iZ_i)\) as in
\eqref{alg:sf-shift}.  Its local norm bound is a fixed rational
\(J'\) depending on the Pauli bound \(J\) and on \(\mu\);
all radii in the preceding lemmas are evaluated with this
bound.
At positive real \(x\), \(H(x)\) is Hermitian, so its
vacuum-normalized transpose eigenvector is \(w=\bar v\).
Consequently \(G_I=\|v\|^2>0\) and
\(\psi=v/\sqrt{G_I}\).  Along the positive real path the
normalized logarithm \(F_I\) is the real logarithm, because
it starts at zero and \(G_I\) stays positive.

For amplitudes take
\[
 B_i^{\mathrm{amp}}=
 \begin{pmatrix}1&\bar s_i\\0&0\end{pmatrix}.
\]
Its kernel \(1+\bar s_iw\) is disk-stable, and
\[
 \langle s_1\cdots s_n|\psi\rangle
 =\left(\prod_i(1+|s_i|^2)^{-1/2}\right)
       \exp(F_{B^{\mathrm{amp}}}(c)-F_I(c)/2).
\]
For the event in (2), take \(B_i=I\) off \(S\) and
\[
 B_i^{\mathrm{prob}}=
 \begin{pmatrix}1&\bar s_i\\s_i&|s_i|^2\end{pmatrix}
 =(1+|s_i|^2)|s_i\rangle\langle s_i|
 \quad(i\in S).
\]
The factorization \((1+s_iz)(1+\bar s_iw)\) makes its kernel
stable, and gives
\[
 P_S=\left(\prod_{i\in S}(1+|s_i|^2)^{-1}\right)
                    \exp(F_B(c)-F_I(c)).
\]
Nonvanishing follows from Lemma~\ref{alg:test-domain};
since a projector expectation is nonnegative, we obtain \(P_S>0\).

Compute the logarithms with a fixed fraction of the requested
error, using the constructive proof of
Theorem~\ref{alg:product-test}.  For (2), take the real part
of the logarithmic difference before exponentiating; its error
does not increase because the exact difference is real on
the positive path.  For either logarithmic difference use
\(T_0=\lceil2Kn+2\rceil\) in that finite Taylor procedure.
In case (2), the rational estimate is real and positive once its
relative error is less than one, and the rational probability
normalization introduces no further error.
For (1), the product \(R_s=\prod_i(1+|s_i|^2)\) is a positive
rational between \(1\) and \(2^n\).  Rational bisection for
\(\sqrt{R_s}\), followed by inversion, computes its reciprocal
square root to any required relative error with
\(O(n+\log\epsilon^{-1}+L_{\mathrm{in}})\) bits and
polynomial arithmetic cost.  Splitting the error budget
between the two logarithms, exponentiation and this factor
proves (1) and (2), including the phase assertion.

For (3), fix an already generated history, whose two extensions have
true probabilities \(p_+,p_->0\), and apply (2) to obtain positive
estimates \(\widehat p_\pm\) with relative error at most
\(\eta=\epsilon/(16n)\).
Normalizing their sum gives a conditional probability with
error at most \(2\eta/(1-\eta)<4\eta\), regardless of the
probability of the parent history.  Round it to a dyadic
probability within \(\epsilon/(4n)\) and draw the corresponding
Bernoulli variable using
\(O(\log(n/\epsilon))\) unbiased random bits.
On a fixed history, projectors on distinct sites commute, so
its probability is exactly the product event in (2), even
though the bases were chosen adaptively.
Coupling the two processes at each step bounds their total
variation distance by the sum of the at most \(n\) conditional
errors, which is less than \(\epsilon\).
There are at most \(2n\) queries, whose accuracy and phase bit
lengths give the displayed polynomial overhead, in addition to the
strategy evaluation time \(T_{\rm strategy}\).
\end{proof}

For Lee--Yang states with a fixed spatial radius \(r>1\),
\cite[Section~4, Theorems~5--6]{WBG26} gives state-dependent
quasipolynomial circuits for relative \(X\)-basis amplitudes,
with probabilities and phases in the state-preparation proof; see also
\cite{WongThesis26}. Corollary~\ref{alg:sf-state} takes the local
Hamiltonian as input and obtains polynomial complexity at fixed positive
field from the common reciprocal-field domain. The connected-coefficient
and interpolation methods have antecedents in
\cite{PatelRegts17,YYZ22,WildAlhambra23}, including thermal product
measurements in \cite[Theorem~15]{YYZ22}.

The allowed projectors have stable kernels and nonzero vacuum entries,
which gives the northern-hemisphere queries and equatorial sampling in
the corollary. In particular, the projector onto \(|1\rangle\) is
excluded. Positive field is used throughout the state-query argument;
at zero field Section~\ref{alg:value-subsection} gives energy values.
The quantum algorithms of \cite{BGLW26,RT26} cover other parameter
ranges and state-preparation tasks.

\section{Sector growth and token-graph spectra}
\label{sector:section}

For evolutions preserving nonnegative coefficients and total degree,
Newton's inequalities constrain the coefficient sums in adjacent
homogeneous sectors, whose long-time growth rates are the spectral
bounds of the sector generators. This gives a concavity theorem
for those bounds, which we apply to weighted token graphs.

\begin{theorem}\label{sector:growth}
Let \(V_\kappa\) be a finite real coordinate box and
\(N=\sum_i\kappa_i\). Suppose that \(A\) has nonnegative off-diagonal
entries in the monomial basis, preserves every homogeneous subspace
\(V_{\kappa,k}\), and that \(e^{tA}\) preserves nonnegative real
stability for every \(t\ge0\). Write
\[
 s_k=\max\{\Re\lambda:\lambda\in
             \operatorname{spec}(A|_{V_{\kappa,k}})\},
 \qquad 0\le k\le N.
\]
Then
\[
 2s_k\ge s_{k-1}+s_{k+1}\qquad(1\le k<N).
\]
Neither irreducibility nor symmetry of \(A\) is assumed.
\end{theorem}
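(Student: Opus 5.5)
Start from a well-chosen positive stable input and watch the coefficient sums of its image. Take \(p(z)=\prod_i(1+z_i)^{\kappa_i}\), or better a strictly positive stable polynomial whose homogeneous components have strictly positive coefficients in every monomial. For \(t\ge0\) set \(f_t=e^{tA}p\), which is positive stable. Since \(A\) preserves each \(V_{\kappa,k}\), we have \(f_t=\sum_k e^{tA_k}p_k\), where \(A_k=A|_{V_{\kappa,k}}\) and \(p_k\) is the degree-\(k\) part of \(p\). Put \(m_k(t)=\sum_{|\alpha|=k}[z^\alpha]f_t\), the sum of the degree-\(k\) coefficients. Then \(g_t(x)=f_t(x,\ldots,x)=\sum_k m_k(t)x^k\) is a diagonal restriction of a real stable polynomial, so it is real-rooted with nonnegative coefficients. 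Newton's inequalities therefore give
\[
 m_k(t)^2\ge \frac{\binom Nk^2}{\binom N{k-1}\binom N{k+1}}\,m_{k-1}(t)m_{k+1}(t)\ge m_{k-1}(t)m_{k+1}(t).
\]
A degree drop is harmless here: Newton's inequalities with binomial normalization \(N\) hold for any real-rooted polynomial of degree at most \(N\) with nonnegative coefficients.

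The remaining task is to identify \(s_k=\lim_{t\to\infty}t^{-1}\log m_k(t)\). Because the off-diagonal entries are nonnegative, \(A_k+cI\) is entrywise nonnegative for large \(c\). So \(e^{tA_k}\) is an entrywise nonnegative matrix, and by Perron--Frobenius for reducible nonnegative matrices, \(s_k\) is itself an eigenvalue, namely the spectral abscissa. Taking the input \(p_k\) entrywise strictly positive, \(m_k(t)=\mathbf 1^{\mathsf T}e^{tA_k}p_k\) is comparable, up to constants depending only on the positive entries of \(p_k\), to the entry sum \(\mathbf 1^{\mathsf T}e^{tA_k}\mathbf 1\). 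That sum is in turn comparable to \(\|e^{tA_k}\|\), since the matrix is nonnegative. Hence \(t^{-1}\log m_k(t)\to s_k\), because \(\|e^{tA_k}\|\) grows like \(t^{j}e^{ts_k}\) for some integer \(j\). Taking logarithms in \(m_k^2\ge m_{k-1}m_{k+1}\), dividing by \(t\), and letting \(t\to\infty\) then yields \(2s_k\ge s_{k-1}+s_{k+1}\).

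The main obstacle is securing a positive stable input all of whose homogeneous components have strictly positive coefficients. The natural choice \(\prod_i(1+z_i)^{\kappa_i}\) works: each degree-\(k\) component has coefficients \(\binom\kappa\alpha>0\) for every \(|\alpha|=k\), \(\alpha\le\kappa\). A second point to check is that \(m_k(t)>0\) for all \(t\), so that the logarithms are defined. This holds because \(e^{tA_k}\) is nonnegative and invertible, so no column vanishes, and a strictly positive vector is therefore sent to a nonzero nonnegative vector. No irreducibility is needed, since the argument uses only the norm growth rate of \(e^{tA_k}\), which equals the spectral abscissa for any matrix.
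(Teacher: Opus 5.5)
Your proposal is correct and follows essentially the paper's own proof. It uses the same input \(\prod_i(1+z_i)^{\kappa_i}\), Newton's inequalities for the diagonal restriction of \(e^{tA}f_0\), and the same identification of \(s_k\) as the exponential growth rate of the sector coefficient sums, via entrywise nonnegativity of \(e^{tA_k}\) and Jordan-form norm bounds. The only differences are cosmetic: you discard the binomial factors at the start rather than in the limit, and you get \(m_k(t)>0\) from invertibility where the paper uses the strictly positive diagonal.
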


\begin{proof}
The stable polynomial \(f_0(z)=\prod_i(1+z_i)^{\kappa_i}\)
has a strictly positive coefficient at every allowed monomial.
Choose \(\alpha\) so that \(A+\alpha I\) is entrywise nonnegative;
then \(e^{tA}=e^{-\alpha t}e^{t(A+\alpha I)}\) is entrywise
nonnegative with strictly positive diagonal, and consequently
\(f_t=e^{tA}f_0\) retains all these positive coefficients.
Its diagonal restriction is a degree-\(N\) polynomial
\[
 P_t(x)=f_t(x,\ldots,x)=\sum_{k=0}^N c_k(t)x^k,
 \qquad c_k(t)>0,
\]
whose roots are all strictly negative.

Newton's inequalities give
\begin{equation}\label{sector:newton}
 \left(\frac{c_k(t)}{\binom Nk}\right)^2
 \ge
 \frac{c_{k-1}(t)}{\binom N{k-1}}
 \frac{c_{k+1}(t)}{\binom N{k+1}}.
\end{equation}
To verify the normalization, write the derivative
\(R=P_t^{(k-1)}\), which has \(m=N-k+1\) negative roots, as
\(R(x)=R(0)\prod_{j=1}^m(1+u_jx)\) with \(u_j>0\).
Cauchy's inequality yields
\[
 \sum_{i<j}u_iu_j
 \le\frac{m-1}{2m}\left(\sum_i u_i\right)^2.
\]
Substituting the first three coefficients of \(R\), namely
\((k-1)!c_{k-1}\), \(k!c_k\), and
\((k+1)!c_{k+1}/2\), gives \eqref{sector:newton} with the stated
binomial normalization.

Let \(A_k=A|_{V_{\kappa,k}}\), and let \(v_k\) be the coefficient
vector of the degree-\(k\) part of \(f_0\). Degree preservation gives
\[
 c_k(t)=\mathbf1^{\mathsf T}e^{tA_k}v_k.
\]
Since every entry of \(v_k\) is positive, this quantity is bounded
above and below by fixed positive multiples of the sum of all
entries of \(e^{tA_k}\). For an entrywise nonnegative \(m_k\)-by-\(m_k\)
matrix \(M\), that sum lies between \(\|M\|_1\) and
\(m_k\|M\|_1\), where \(\|\cdot\|_1\) is the induced maximum
column-sum norm. By spectral mapping,
\(\|e^{tA_k}\|_1\ge e^{ts_k}\), while Jordan normal form implies
\[
 \|e^{tA_k}\|_1
 \le C_k(1+t^{m_k-1})e^{ts_k}\qquad(t\ge0)
\]
for a fixed constant \(C_k\). Therefore
\[
 \lim_{t\to\infty}t^{-1}\log c_k(t)=s_k.
\]
Taking logarithms in \eqref{sector:newton}, dividing by \(t\),
and letting \(t\to\infty\) removes the fixed binomial factors
and gives the asserted concavity; when \(N\le1\), the conclusion
has no inequalities.
\end{proof}

The passage from log-concavity of coefficient sums to concavity of
growth exponents is a form of tropical stability
\cite[Theorem~5 and the discussion following it]{Branden10Discrete}.
The argument above proves the exponential version for sector generators,
including nonsymmetric and reducible blocks.

Token-graph spectral theory includes the nesting of Laplacian spectra
proved by Dalf\'o and coauthors \cite{DalfoEtAl21}, and the study of
adjacency spectral radii and new Laplacian spectral layers by Reyes,
Dalf\'o, and Fiol \cite{ReyesDalfoFiol24}. The monotonicity assertions
addressed below are those for adjacency and signless-Laplacian largest
eigenvalues in \cite[Conjectures~5--6]{APS26}. The new-layer Laplacian
quantities in \cite[Definition~3.1 and Conjecture~3.3]{ReyesDalfoFiol24}
are different spectral invariants.
Jiang independently proved the same weighted concavity and monotonicity
as in Theorem~\ref{sector:token} below, using Lorentzian semigroups
\cite[Theorem~1.1]{Jiang26}.

Let \(G\) be a finite simple graph on \([n]\), with nonnegative edge
weights \(w_{ij}\). Its \(k\)-token graph has the \(k\)-subsets as
vertices. The edge from \(S\) to \(S-\{i\}+\{j\}\) has weight
\(w_{ij}\) whenever \(i\in S\), \(j\notin S\), and \(ij\) is an
edge of \(G\). Denote its adjacency and weighted degree matrices by
\(A_k(G)\) and \(D_k(G)\). The sectors \(k=0,n\) are singletons
with zero matrices.
The matrix \(A_k(G)-D_k(G)\) generates symmetric exclusion with
\(k\) particles. Borcea, Br\"and\'en, and Liggett
\cite[Proposition~5.1]{BBL09} proved that symmetric exclusion preserves
strongly Rayleigh measures, or equivalently the real stability of their
probability generating polynomials.

\begin{theorem}\label{sector:token}
For every \(a\in[-1,1]\), put
\[
 \Lambda_k(a)=\lambda_{\max}(A_k(G)+aD_k(G)).
\]
For every nonnegatively weighted \(G\),
\[
 2\Lambda_k(a)\ge\Lambda_{k-1}(a)+\Lambda_{k+1}(a)
 \qquad(1\le k<n),\qquad
 \Lambda_k(a)=\Lambda_{n-k}(a).
\]
In particular the sequence is nondecreasing up to half filling.
The interval \([-1,1]\) is the exact interval of real parameters
for which this concavity holds for every such graph.
\end{theorem}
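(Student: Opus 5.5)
The plan is to apply Theorem~\ref{sector:growth} on the multiaffine box \(V_{(1,\ldots,1)}\) in \(n\) variables. We identify the \(k\)-subset \(S\) with the monomial \(z^S\), so the homogeneous sector \(V_{\kappa,k}\) is the space of the \(k\)-token graph. The first task is to write \(A_k(G)+aD_k(G)\), simultaneously for all \(k\), as the sector restrictions of one real generator. For each edge \(ij\) we use the binary apolar primitives of Lemma~\ref{gen:binary-primitive} with \(F=z_i\pm z_j\).

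For \(F=s-t\) one has \(\ell_F(f)=f_{10}-f_{01}\) and \(B_F f=-(s-t)(f_{10}-f_{01})\). Hence \(B_F\) sends \(s\mapsto t-s\) and \(t\mapsto s-t\), and kills \(1\) and \(st\). This is the one-edge operator ``exchange minus one if exactly one endpoint is occupied'', that is, \(A-D\).

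For \(F=s+t\) one has \(\ell_F(f)=-(f_{10}+f_{01})\) and \(B_F\) sends \(s,t\mapsto s+t\), again killing \(1\) and \(st\). This is the one-edge operator \(A+D\).

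We therefore set
\[
 L_a=\sum_{ij}w_{ij}\Bigl(\tfrac{1+a}{2}B_{z_i+z_j}+\tfrac{1-a}{2}B_{z_i-z_j}\Bigr),
\]
each term acting on the slots \(i,j\) and as the identity elsewhere. Since \(D_k(S)\) counts the edges with exactly one endpoint in \(S\), the restriction of \(L_a\) to the sector \(k\) is exactly \(A_k(G)+aD_k(G)\).

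Next we verify the hypotheses of Theorem~\ref{sector:growth}. For \(a\in[-1,1]\) both weights \((1\pm a)/2\) are nonnegative. The full mixed coefficient of \(\partial_i\partial_j\) is then
\[
 -w_{ij}\Bigl(\tfrac{1+a}{2}(s+t)^2+\tfrac{1-a}{2}(s-t)^2\Bigr)\le0 .
\]
There are no same-site second derivatives, since the box is binary. So Theorem~\ref{gen:main-real} gives preservation of real stability by \(e^{tL_a}\) for all \(t\ge0\). Degree preservation of \(L_a\) is visible from the formulas above. Its off-diagonal monomial entries are the nonnegative edge weights \(w_{ij}\). Preservation of nonnegative coefficients follows because \(L_a+cI\) is entrywise nonnegative for large \(c\). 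Each \(A_k+aD_k\) is real symmetric, so \(s_k=\Lambda_k(a)\), and Theorem~\ref{sector:growth} yields \(2\Lambda_k\ge\Lambda_{k-1}+\Lambda_{k+1}\). For the symmetry \(\Lambda_k=\Lambda_{n-k}\), complementation \(S\mapsto[n]\setminus S\) is a bijection of token graphs. It preserves the moves \(S\to S-\{i\}+\{j\}\) (reversing the roles of \(i,j\)) together with their weights, and it preserves the number of edges cut. It therefore conjugates \(A_k+aD_k\) to \(A_{n-k}+aD_{n-k}\). A concave sequence symmetric about \(n/2\) is nondecreasing up to \(n/2\), since a descent before the midpoint would, by concavity, persist past it and contradict the symmetry.

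Sharpness is the step I expect to be the main obstacle. For \(a<-1\), the single edge already fails: there \(\Lambda_0=\Lambda_2=0\) and \(\Lambda_1=\lambda_{\max}\bigl(\begin{smallmatrix}a&1\\1&a\end{smallmatrix}\bigr)=a+1<0\), violating concavity at \(k=1\). For \(a>1\), write \(\Lambda_k(a)=a\,m_k+\lambda_{\max}(A_k)/{}\)corrections. As \(a\to\infty\), \(\Lambda_k(a)/a\) tends to the maximal weighted cut \(m_k=\max_{|S|=k}w(\partial S)\). So one needs a graph whose \(k\mapsto m_k\) fails concavity, or is flat enough that the adjacency term decides. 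Failing that, one needs a graph where the first-order perturbation in \(a-1\) breaks concavity near \(a=1\).

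The first thing to try is small weighted graphs: stars with unequal weights, paths, and a heavy edge plus a pendant. I would compute the three relevant spectra exactly, as \(2\times2\) or \(3\times3\) blocks after using symmetry, and check that \(2\Lambda_k-\Lambda_{k-1}-\Lambda_{k+1}\) changes sign precisely as \(a\) crosses \(1\). If no single graph works for all \(a>1\), I would produce a family of graphs, one for each \(a>1\), which suffices for the exactness claim.
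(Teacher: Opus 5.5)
Your concavity, symmetry and monotonicity arguments are correct and follow the paper's proof. Your edge operator \(w_{ij}\bigl(\tfrac{1+a}{2}B_{z_i+z_j}+\tfrac{1-a}{2}B_{z_i-z_j}\bigr)\) fixes \(1\) and \(z_iz_j\) and sends \(z_i\mapsto w_{ij}(az_i+z_j)\), so it is exactly the paper's \(\mathcal B_{ij}\). Your mixed coefficient \(-w_{ij}(s^2+2ast+t^2)\) equals the paper's \(-w_{ij}[(s+at)^2+(1-a^2)t^2]\). You certify its nonpositivity with a different but equivalent sum of squares. Yours has the small advantage of exhibiting the generator directly as a nonnegative combination of apolar primitives, so Lemma~\ref{gen:binary-primitive} applies to each piece.

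The genuine gap is the sharpness clause for \(a>1\). You give only a search plan, and the max-cut heuristic \(\Lambda_k(a)/a\to m_k\) can at best show failure for \(a\) sufficiently large, not for every \(a>1\). The missing example is the unweighted star \(K_{1,3}\) with center \(0\).

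In the one-token sector, \(D_1=\diag(3,1,1,1)\). On the span of \(e_0\) and the normalized leaf sum, the matrix is \(\left(\begin{smallmatrix}3a&\sqrt3\\\sqrt3&a\end{smallmatrix}\right)\). Leaf vectors with zero sum are killed by \(A_1\) and have eigenvalue \(a\). Hence \(\Lambda_1(a)=2a+\sqrt{a^2+3}\).

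The two-token graph is a six-cycle, and every \(2\)-subset cuts exactly two edges. So \(D_2=2I\) and \(\Lambda_2(a)=2a+2\). With \(\Lambda_3=\Lambda_1\) by complementation,
\[
 2\Lambda_2(a)-\Lambda_1(a)-\Lambda_3(a)=2\bigl(2-\sqrt{a^2+3}\bigr)<0\qquad(a>1).
\]
Thus one fixed graph violates concavity at \(k=2\) for every \(a>1\), and your single-edge example already handles \(a<-1\). This is what the paper does. Your large-\(a\) heuristic points to this graph, since the star has \(m_1=m_3=3>m_2=2\). To cover the whole range \(a>1\), however, you need the exact spectra above rather than the asymptotic cut values.
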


\begin{proof}
On the binary polynomial box define, for every unordered edge,
\[
 \mathcal B_{ij}=w_{ij}\bigl[
 (z_j+az_i)\partial_i+(z_i+az_j)\partial_j
 -(z_i^2+2az_iz_j+z_j^2)\partial_i\partial_j\bigr].
\]
It annihilates \(1,z_iz_j\) and sends
\(z_i\) to \(w_{ij}(az_i+z_j)\) and
\(z_j\) to \(w_{ij}(z_i+az_j)\).
Thus \(\mathcal B=\sum_{ij}\mathcal B_{ij}\) preserves total
degree, is a real symmetric Metzler matrix, and restricts on the
degree-\(k\) sector to \(A_k(G)+aD_k(G)\).
Its mixed second-order coefficient is
\[
 -w_{ij}\bigl[(s+at)^2+(1-a^2)t^2\bigr]\le0.
\]
Theorem~\ref{gen:main-real} gives stability preservation, and
Theorem~\ref{sector:growth} yields concavity because the spectral
bound of each real symmetric sector matrix is its largest eigenvalue.

By complementing subsets we obtain a weight-preserving isomorphism
between the \(k\)- and \((n-k)\)-token graphs, proving symmetry.
Set \(\delta_k=\Lambda_k-\Lambda_{k-1}\). Concavity says that
the \(\delta_k\) are nonincreasing, while symmetry gives
\(\delta_{n-k+1}=-\delta_k\). If \(k\le(n+1)/2\), it follows that
\(\delta_k\ge-\delta_k\), proving monotonicity.

For sharpness, take the unweighted star \(K_{1,3}\), whose
two-token graph is a six-cycle. On the equal-leaf subspace, its
one-token matrix is
\(\left(\begin{smallmatrix}3a&\sqrt3\\\sqrt3&a\end{smallmatrix}\right)\);
its orthogonal complement has eigenvalue \(a\). Thus
\[
 \Lambda_1(a)=2a+\sqrt{a^2+3},\qquad
 \Lambda_2(a)=2a+2.
\]
For \(a>1\), \(\Lambda_1(a)>\Lambda_2(a)\); symmetry then
contradicts concavity at \(k=2\). For \(a<-1\), a single edge
has \(\Lambda_0=\Lambda_2=0\) and \(\Lambda_1=a+1<0\), again
violating concavity. At \(a=1\), the star has
\(\Lambda_1=\Lambda_2=4\), so connectedness does not imply
strict monotonicity.
\end{proof}

Local preservation also follows directly from the Borcea--Br\"and\'en
symbol theorem. For \(\tau\ge0\), put
\(b=e^{a\tau}\sinh\tau\) and \(c=e^{a\tau}\cosh\tau\);
the unweighted edge exponential fixes \(1,xy\) and sends
\(x,y\) to \(cx+by,bx+cy\), so its algebraic symbol is
\[
 xy+uv+b(xu+yv)+c(xv+yu).
\]
Writing this symbol as \(\zeta^{\mathsf T}M\zeta/2\) in the
order \(\zeta=(x,y,u,v)^{\mathsf T}\), one has
\[
 M=\begin{pmatrix}
 0&1&b&c\\
 1&0&c&b\\
 b&c&0&1\\
 c&b&1&0
 \end{pmatrix}.
\]
Its eigenvalues are
\[
 1+e^{(a+1)\tau},\quad 1-e^{(a+1)\tau},\quad
 -1-e^{(a-1)\tau},\quad -1+e^{(a-1)\tau}.
\]
For \(-1\le a\le1\), precisely one eigenvalue is positive,
and the quadratic form is positive on every strictly positive real
vector \(Y\). A zero at \(X+\mathrm iY\) would imply
\(X^{\mathsf T}MY=0\) and
\(X^{\mathsf T}MX=Y^{\mathsf T}MY>0\), contradicting the
one-positive-eigenvalue property. The symbol is therefore stable,
and the classical symbol theorem \cite{BB09} gives complete
preservation for each edge. The finite-dimensional product formula
and coefficientwise closure extend preservation to their sum, with
invertibility of the limiting exponential excluding zero outputs.

Taking \(a=1\) and \(a=0\) gives, respectively, monotonicity of
the largest eigenvalues of the signless Laplacian and the adjacency
matrix, as conjectured in \cite[Conjectures~5--6]{APS26}.
Theorem~\ref{sector:token} strengthens these assertions to weighted
concavity throughout the interpolation interval. At \(a=0\), the
preserving generator is already in Purbhoo's construction
\cite[Proposition~4.5]{Purbhoo18}; the additional conclusion here
compares the spectra across sectors.
The bounds in Conjectures~1--4 of \cite{APS26} were settled by
\cite{BBKL26}.

In the occupation basis, the same block matrices arise from
\[
 \mathsf B_a(G)=\frac12\sum_{ij}w_{ij}
 \bigl[X_iX_j+Y_iY_j+a(I-Z_iZ_j)\bigr].
\]
Equivalently, for the XXZ Hamiltonian
\[
 H_\Delta(G)=\frac12\sum_{ij}w_{ij}
   \bigl(X_iX_j+Y_iY_j+\Delta Z_iZ_j\bigr)
 =\mathsf B_{-\Delta}(G)+\frac\Delta2\sum_{ij}w_{ij}I,
\]
the maximum energies in the magnetization sectors
\(\sum_i Z_i=n-2k\) form a concave sequence for \(|\Delta|\le1\).
Indeed, the sector maxima are
\(\Lambda_k(-\Delta)+(\Delta/2)\sum_{ij}w_{ij}\), with an additive
constant independent of \(k\). The negative Hamiltonian has convex
sector ground-state energies. At \(a=1\), the common one-qubit
rotation \(R=e^{-\mathrm i\pi X/4}\), with
\(RYR^*=Z\) and \(RZR^*=-Y\), identifies \(\mathsf B_1(G)\) with the
EPR Hamiltonian in \cite{APS26}. A maximum-energy state therefore exists in a half-filled sector of
the rotated basis. Throughout Theorem~\ref{sector:token},
\(\Lambda_k(a)\) denotes the largest eigenvalue; it agrees with
the spectral radius when \(a\ge0\).

\appendix
\section{All-temperature Hamiltonians at higher spins}\label{spin:section}

In coherent coordinates, the generator classification applies with
\(\kappa_i=2j_i\) and extends Theorem~\ref{eq:sf} to arbitrary
finite local spins. The pair interactions satisfy the same transverse
norm inequality, while higher local spins also admit single-site
quadrupoles.

\subsection{Coherent normalization and the Hamiltonian cone}

At site $i$, fix a positive integer $\kappa_i$ and put $j_i=\kappa_i/2$.
Let $\mathcal H_{\kappa_i}=\C^{\kappa_i+1}$ have orthonormal basis
$|a_i\rangle$, $0\leq a_i\leq\kappa_i$.  For multi-indices $a$ set
\begin{equation}\label{spin:weights}
 b_a=\prod_i\binom{\kappa_i}{a_i},\qquad
 |a\rangle\longleftrightarrow \sqrt{b_a}\,z^a.
\end{equation}
Thus the monomial inner product is
$\langle z^a,z^b\rangle=\delta_{ab}/b_a$.  All adjoints in this section
refer to the physical orthonormal basis, or equivalently to this weighted
polynomial inner product.  The coherent kernel of a matrix $M$ is
\begin{equation}\label{spin:kernel}
 F_M(z,w)=\sum_{a,b}\sqrt{b_a b_b}\,M_{ab}z^a w^b.
\end{equation}
In particular, $F_I(z,w)=\prod_i(1+z_iw_i)^{\kappa_i}$.
We call $M$ coherently Lee--Yang if $F_M$ has no zero when all its variables
belong to $\D$.  

The spin matrices have the following differential realization:
\begin{equation}\label{spin:differential}
 S_i^x=\frac{(1-z_i^2)\partial_i+\kappa_i z_i}{2},\qquad
 S_i^y=\frac{(1+z_i^2)\partial_i-\kappa_i z_i}{2i},\qquad
 S_i^z=\frac{\kappa_i}{2}-z_i\partial_i.
\end{equation}
They satisfy
$(S_i^x)^2+(S_i^y)^2+(S_i^z)^2=j_i(j_i+1)I$.
Write $S_i^\perp=(S_i^x,S_i^y)^{\mathsf T}$.

\begin{theorem}\label{spin:hamiltonian}
For a Hermitian operator $H$ on $\bigotimes_i\mathcal H_{\kappa_i}$, the
following conditions are equivalent:
\begin{enumerate}
\item $e^{-\beta H}$ is coherently Lee--Yang for every $\beta\geq0$;
\item this holds for every $0\leq\beta<\varepsilon$, for some
      $\varepsilon>0$;
\item $H$ admits the expression
\begin{align}\label{spin:hamiltonian-form}
 H={}&h_0 I+\sum_i(h_i^xS_i^x+h_i^yS_i^y+h_i^zS_i^z)
       +\sum_{i:\,\kappa_i\geq2}(S_i^\perp)^{\mathsf T}B_iS_i^\perp
       \notag\\
    &+\sum_{i<j}\bigl((S_i^\perp)^{\mathsf T}J_{ij}S_j^\perp
                         +e_{ij}S_i^zS_j^z\bigr),
\end{align}
where all coefficients are real, $B_i=B_i^{\mathsf T}$, and
\begin{equation}\label{spin:hamiltonian-cone}
 h_i^z\leq0,\qquad B_i\succeq0,\qquad
 e_{ij}\leq-\|J_{ij}\|_{\mathrm{op}}.
\end{equation}
\end{enumerate}
The coefficient $(B_i)_{12}$ multiplies
$S_i^xS_i^y+S_i^yS_i^x$.  At a spin-\(1/2\) site all single-site
quadrupoles reduce to scalars or zero and are omitted from
\eqref{spin:hamiltonian-form}.
\end{theorem}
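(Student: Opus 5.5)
The proof follows the qubit case, Theorem~\ref{eq:sf}, now on the general box \(V_\kappa\) with \(\kappa_i=2j_i\). First I need a spin analogue of Lemma~\ref{eq:cayley-bridge}. The coherent kernel \(F_M\) is exactly the disk kernel \eqref{pre:kernel} of \(M\) acting on \(V_\kappa\) in the weighted coefficient basis. Lemma~\ref{spec:kernel-criterion} therefore identifies the coherent Lee--Yang property of the invertible matrix \(e^{-\beta H}\) with preservation of \(\mathcal C_\kappa\) by \(e^{-\beta H}\) acting on polynomials. The semigroup property turns condition (2) into (1), as in the qubit proof. The Cayley isomorphism \(\mathcal C_\kappa\) of \eqref{pre:cayley} then conjugates \(-H\) to a half-plane generator \(A=-\mathcal C_\kappa H\mathcal C_\kappa^{-1}\), and Theorem~\ref{gen:main-complex} applies to it.

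Next I transport the spin operators. Under the Cayley map, the differential operators \eqref{spin:differential} become the \(\mathfrak{sl}_2\) operators \(J_{i,\kappa_i}(q)\), with the same pattern as \eqref{eq:pauli-cayley}:
\begin{itemize}
\item \(S^x\) goes to a multiple of \(J(2s)\) (the ``\(-Z\)'' role, \(p_Z\));
\item \(S^y\) goes to a real multiple of \(J(1-s^2)\) (the ``\(X\)'' role, \(p_X\));
\item \(S^z\) goes to \(i\) times a multiple of \(J(1+s^2)\) (the ``\(-Y\)'' role, \(p_Y\)).
\end{itemize}
I would check this by a direct computation with \(\phi(x)=(x-i)/(x+i)\). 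Because the Cayley map is a conformal change of variable, it sends first-order box operators to first-order box operators. It therefore preserves the canonical order of products.

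By Proposition~\ref{gen:box-decomposition}, every operator of order at most two is a polynomial of degree at most two in the \(J\)'s. Equivalently, it is a combination of \(I\), the \(S_i^a\), the products \(S_i^aS_i^b\) with \(\kappa_i\ge2\), and the products \(S_i^aS_j^b\). Hermiticity of \(H\) then fixes real coefficients on the symmetrized products.

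For necessity, I split \(A=R+iS\) entrywise. Under the bridge above, physical Hermitian operators correspond to real-symmetric parts plus \(i\) times skew parts, as in the qubit proof.

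\emph{Imaginary part.} Theorem~\ref{gen:main-complex} says \(S\) has order at most one, with \(S=c\Id+\sum_i J(q_i)\) and \(q_i\ge0\). Skew-symmetry forces \(q_i=\alpha_i(1+s^2)\), hence \(h_i^z\le0\), exactly as in \eqref{eq:imaginary-local}. It also excludes every second-order term that carries exactly one factor \(S^z\): the mixed terms \(S_i^zS_j^\perp\), and the one-site terms \(S_i^zS_i^\perp+S_i^\perp S_i^z\). These would contribute imaginary second-order coefficients, which Theorem~\ref{gen:main-complex} forbids.

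\emph{Real part, one site.} The term \((S_i^z)^2\) is real after conjugation. It equals \(j_i(j_i+1)-(S_i^x)^2-(S_i^y)^2\), so it can be absorbed into \(B_i\) and \(h_0\). The unary principal coefficient is then
\[
-(1+s^2)^2\,u(s)^{\mathsf T}B_iu(s),
\]
up to a positive factor, with \(u\) as in \eqref{eq:opnorm-identity}. Condition \eqref{gen:eq-real-unary} requires this to be \(\le 0\) for all real \(s\). Since \(u(\R)\) is dense in the unit circle, this is equivalent to \(B_i\succeq0\). The trace part of \(B_i\) is invisible in the principal coefficient, because \(|u|=1\) and \((S^x)^2+(S^y)^2=j(j+1)-(S^z)^2\). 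It is therefore harmless: shifting \(B_i\) by a multiple of \(I\) can be compensated by \((S^z)^2\), which is itself rewritten as above. I must handle this ambiguity carefully in the statement's normalization, since the theorem asserts \(B_i\succeq0\) for a chosen representation.

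\emph{Real part, pairs.} The mixed coefficient reproduces \eqref{eq:mixed-coefficient}, scaled by \(\kappa\)-independent factors, and \eqref{eq:opnorm-identity} gives \(e_{ij}\le-\|J_{ij}\|_{\mathrm{op}}\).

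Sufficiency reverses these computations and invokes Theorem~\ref{gen:main-complex}.

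The main obstacle is the one-site quadrupole bookkeeping. I need the image of \(S^aS^b\) under the Cayley map with its exact canonical principal coefficient \(q_aq_b\). I also need to confirm that the first-order remainders are real M\"obius terms, which impose no constraint, or imaginary multiples of \(J(1+s^2)\). In particular, the anticommutator \(S^zS^\perp+S^\perp S^z\) must land in the imaginary second-order part and therefore vanish. Finally, I must reconcile the Casimir redundancy with the requirement \(B_i\succeq0\). I would first verify these on the spin-one case \(\kappa=2\) by explicit matrices, and then argue by the \(\mathfrak{sl}_2\) covariance, which is independent of \(\kappa\).
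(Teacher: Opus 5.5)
Your proposal is correct and is essentially the paper's proof. Both arguments conjugate to $A=-CHC^*$ and apply Theorem~\ref{gen:main-complex}. Both use the rotation table $CS^xC^*=-S^z$, $CS^yC^*=S^x$, $CS^zC^*=-S^y$ to separate real from imaginary parts, and the dense range of $u(s)$ for the quadratic inequalities. The one difference of route is the bridge. You invoke Lemma~\ref{spec:kernel-criterion} directly, since $F_M$ is the disk kernel in coherent coordinates, instead of the Dicke-isometry argument of Lemma~\ref{spin:cayley}. This suffices because Theorem~\ref{gen:main-complex} needs only ordinary preservation, and the map $\mathcal C_\kappa$ of \eqref{pre:cayley} is a scalar multiple of the unitary $C$, so your $A$ coincides with the paper's.

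One remark in your one-site paragraph is false and creates a spurious obstacle. The trace part of $B_i$ is not invisible in the principal coefficient: since $u^{\mathsf T}(B_i+\lambda I)u=u^{\mathsf T}B_iu+\lambda$, the shift changes the $\partial_i^2$ coefficient of $A$ by $-\lambda(1+s^2)^2/4$. What is invisible is only the joint shift $(K_i,c_i)\mapsto(K_i+\lambda I,c_i+\lambda)$ in the unabsorbed form $(S_i^\perp)^{\mathsf T}K_iS_i^\perp+c_i(S_i^z)^2$. Once you absorb $(S_i^z)^2$, $B_i$ is uniquely determined by $H$, because the $\partial_i^2$ coefficient $-\tfrac14(1+s^2)^2u^{\mathsf T}B_iu$ fixes the quadratic form on a dense subset of the circle. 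There is therefore no normalization ambiguity, and your preceding sentence already proves $B_i\succeq0$. This is exactly the paper's step $c_i\le\lambda_{\min}(K_i)$, $B_i=K_i-c_iI$.

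The remaining ``bookkeeping'' is immediate:
\begin{itemize}
\item For $\kappa_i\ge2$ the canonical $\partial_i^2$ coefficient of $J(q)J(r)$ is $qr$.
\item Real-coefficient products of the real operators $CS^xC^*$, $CS^yC^*$, together with $(CS^zC^*)^2$, are real, so their first-order remainders are unconstrained.
\item The mixed $S^z$--transverse products have linearly independent imaginary principal coefficients, so each must vanish.
\end{itemize}
Finally, for $\kappa_i\ge2$ the ``skew-symmetry'' argument for the fields must be read in the weighted inner product, where $(J^-)^*=-J^+$ and $(J^0)^*=J^0$; the monomial matrices themselves are not skew. Alternatively, since you already expand $H$ with real coefficients, you can read off the imaginary part directly as $-\tfrac{i}{2}\sum_ih_i^zJ_{i,\kappa_i}(1+s^2)$.
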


To apply Theorem~\ref{gen:main-complex}, we realize the Cayley
transformation by a unitary spin rotation.

\begin{lemma}
\label{spin:cayley}
Define the Dicke isometry
\begin{equation}\label{spin:dicke}
 W_i|a\rangle=\binom{\kappa_i}{a}^{-1/2}
                  \sum_{|x|=a}|x\rangle,\qquad W=\bigotimes_iW_i.
\end{equation}
The binary matrix kernel of $WMW^*$ is the normalized polarization of
$F_M$, separately in each row and column block.  Moreover, let
\begin{equation}\label{spin:cayley-matrix}
 C_1=\frac1{\sqrt2}\begin{pmatrix}i&-i\\1&1\end{pmatrix},\qquad
 C_i=W_i^*C_1^{\otimes\kappa_i}W_i,\qquad C=\bigotimes_iC_i.
\end{equation}
For Hermitian $H$, coherent stability of $e^{-tH}$ for all sufficiently
small $t\geq0$ is equivalent to the condition that
$\exp(-tCHC^*)$ is a complete upper-half-plane stability preserver for
every $t\geq0$.
\end{lemma}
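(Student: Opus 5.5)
The proof has two parts: a polarization identity for the Dicke isometry, and a reduction of the spin semigroup statement to the multiaffine Cayley bridge of Lemma~\ref{eq:cayley-bridge} on the slot space.

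\textbf{Polarization identity.} I would compute directly in coordinates. For \(|x|=a\) with \(x\in\{0,1\}^{\kappa_i}\), the binary kernel entry of \(W_iMW_i^*\) at \((x,y)\) is \(\binom{\kappa_i}{|x|}^{-1/2}\binom{\kappa_i}{|y|}^{-1/2}M_{|x|,|y|}\). Summing against the slot monomials \(u^xv^y\) therefore gives
\[
 \sum_{a,b}\binom{\kappa_i}{a}^{-1/2}\binom{\kappa_i}{b}^{-1/2}M_{ab}\,e_a(u)e_b(v).
\]
This equals the normalized polarization of \(\sum_{a,b}\binom{\kappa_i}{a}^{1/2}\binom{\kappa_i}{b}^{1/2}M_{ab}z^aw^b\), because \(z^a\) is replaced by \(\binom{\kappa_i}{a}^{-1}e_a(u)\). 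The tensor product over sites gives the multi-site statement. By the Grace--Walsh--Szeg\H{o} remark in Section~\ref{pre:section}, together with diagonal specialization, \(F_M\) is \(\D\)-stable if and only if the binary kernel of \(WMW^*\) is.

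\textbf{Cayley conjugation.} Next I would note that \(C_1^{\otimes\kappa_i}\) commutes with slot permutations, so it preserves the symmetric subspace \(WW^*\). Hence \(W_iC_i=C_1^{\otimes\kappa_i}W_i\), and \(C_i\) is unitary because \(C_1\) is unitary. Apply Lemma~\ref{eq:cayley-bridge} on the slot space to \(\widetilde M=We^{-tH}W^*+(I-WW^*)\), which is invertible. Its exponent \(-\widetilde H\), with \(\widetilde H=WHW^*\), generates this family, and \(W\) intertwines \(C_1^{\otimes N}\widetilde H(C_1^{\otimes N})^*\) with \(CHC^*\). The symmetric block of \(\widetilde M\) carries exactly the polarized kernel, so coherent stability of \(e^{-tH}\) matches stability of the polarized kernel on the symmetric subspace. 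The filler \(I-WW^*\) is the point I expect to need care: its kernel is not a product with the symmetric block. To avoid it, I would work directly rather than with the full \(\widetilde M\). The polarized kernel is a contraction of \(F_{e^{-tH}}\)-type data, and I would reuse the proof of Lemma~\ref{eq:cayley-bridge} verbatim with \(W\)-polarization inserted. Write \(\prod_i(s_it_i-1)^{\kappa_i}\) as the diagonal specialization of the slot product; its coefficient matrix is \(CC^T\) restricted to the symmetric sector. Then Lemma~\ref{spec:kernel-criterion} on the box \(V_\kappa\), transported by the degree-\(\kappa\) Cayley isomorphism \eqref{pre:cayley}, identifies coherent \(\D\)-stability of invertible \(e^{-tH}\) with \(\HH\)-stability preservation by \(Ce^{-tH}C^{*}\) on \(V_\kappa\). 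The one-site identity needed is that \(C_i\) acts on polynomials as \(\mathcal C_{\kappa_i}\) up to the coherent weights and a unimodular scalar; this follows because polarization intertwines \(C_1^{\otimes\kappa_i}\) with the Möbius action.

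\textbf{Small times to all times, and completeness.} If stability preservation holds on \([0,\varepsilon)\), semigroup composition gives it for all \(t\ge0\), exactly as in the proof of Theorem~\ref{eq:sf}. Theorem~\ref{gen:main-complex} then upgrades ordinary preservation to complete preservation, since its conditions (1) and (2) are equivalent. For the converse, complete preservation implies ordinary preservation, and the Cayley argument of the previous step runs backward to give coherent stability. I expect the main obstacle to be pinning down the unimodular scalars and coherent weights in the one-site intertwining of \(C_i\) with \(\mathcal C_{\kappa_i}\); I would check it on the monomials \(z^a\) using \(e_a\) polarization.
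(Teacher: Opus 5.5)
Your proposal is correct and follows essentially the paper's route: the same entrywise computation of $WMW^*$ combined with the disk polarization theorem, the identification of $C_i$ with the scaled Cayley substitution via polarization, the kernel criterion for invertible operators, extension from small times to all times by composition, and Theorem~\ref{gen:main-complex} for completeness. Your converse, which runs the necessity half of Lemma~\ref{spec:kernel-criterion} back through the Cayley map, is the disk-coordinate form of the paper's argument with $F_D=\prod_i(s_it_i-1)^{\kappa_i}$, since this is exactly the Cayley image of $\prod_i(1+z_iw_i)^{\kappa_i}$. Two small corrections: the scalar in your one-site identity is the positive factor $2^{-\kappa_i/2}$ rather than a unimodular one, which is harmless, and the discarded filler construction is not needed.
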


\begin{proof}
At binary words of weights $a,b$, the entry of $WMW^*$ is
$M_{ab}/\sqrt{b_ab_b}$; identifying the variables in their respective
blocks multiplies this by $b_ab_b$ and gives \eqref{spin:kernel}.
The disk polarization theorem therefore identifies stability of the
two kernels \cite[Theorem~2.1]{BB10II}. Polarization on the symmetric
subspace, followed by binary tensor contraction and diagonal
specialization, shows that a coherently stable matrix sends a stable
coherent vector to a stable vector or zero, with the latter excluded
for a nonzero input when the matrix is invertible.

The unitary matrix $C_i$ acts on polynomials by
\begin{equation}\label{spin:cayley-action}
 f(z_i)\longmapsto
 \left(\frac{s_i+i}{\sqrt2}\right)^{\kappa_i}
 f\left(\frac{s_i-i}{s_i+i}\right),
\end{equation}
which identifies disk stability with upper-half-plane stability.
Coherent stability of $e^{-tH}$ consequently gives ordinary stability
preservation by $e^{-tCHC^*}$ on the small time interval, and hence at
all times by composition. Theorem~\ref{gen:main-complex} upgrades
this to complete preservation.

For the converse, put
$D_i=\diag_{0\leq a\leq\kappa_i}((-1)^{\kappa_i-a})$ and
$D=\bigotimes_iD_i$.  Symmetric powers of $C_1C_1^{\mathsf T}$ give
$CC^{\mathsf T}=D$, and
\begin{equation}\label{spin:universal-halfplane}
 F_D(s,t)=\prod_i(s_it_i-1)^{\kappa_i}
\end{equation}
is upper-half-plane stable, since $st=1$ is impossible for two
points of the upper half-plane. Applying a complete preserver
$N=e^{-tCHC^*}=Ce^{-tH}C^*$ in the $s$ variables produces the
coherent coefficient matrix $ND=Ce^{-tH}C^{\mathsf T}$, corresponding
to the separate Cayley transforms of the two variable groups in
$F_{e^{-tH}}$. Invertibility of $N$ makes the output nonzero, and
applying the inverse transforms gives the required disk stability.
\end{proof}

\begin{proof}[Proof of Theorem~\ref{spin:hamiltonian}]
For $A=-CHC^*$, Lemma~\ref{spin:cayley} allows us to apply
Theorem~\ref{gen:main-complex}: the generator belongs to the
degree-at-most-two part of the enveloping algebra of the local
$\mathfrak{sl}_2$ actions, and its second-order coefficients are
real and nonpositive on the appropriate real lines or planes.
Writing $A=R+iS$ in the real monomial basis, we also have
\begin{equation}\label{spin:imaginary-wedge}
 S=cI+\sum_iJ_{i,\kappa_i}(q_i),\qquad
 J_{i,\kappa_i}(q)=q\partial_i-\frac{\kappa_i}{2}q',\qquad
 q_i\in\R[z_i]_{\leq2},\quad q_i\geq0\text{ on }\R.
\end{equation}
Conversely these conditions, together with preservation of the finite
degree box, characterize the generators.

The degree-two operator space of Proposition~\ref{gen:box-decomposition}
decomposes as the direct sum of the scalar
space, the local spin spaces, the local traceless symmetric quadrupoles
when $\kappa_i\geq2$, and the spaces spanned by
$S_i^aS_j^b$ for $i<j$.  Its dimension is
\begin{equation}\label{spin:degree-two-dimension}
 1+3n+5\#\{i:\kappa_i\geq2\}+9\binom n2.
\end{equation}
Locally, this follows by decomposing the square of the
three-dimensional spin representation into scalar, antisymmetric,
and traceless symmetric parts: these give the Casimir, the spin
commutator, and the rank-two component, respectively, with the last
nonzero precisely when $\kappa_i\geq2$. Independence of the tensor
factors on distinct sites proves directness. The Cayley action is a
local spin rotation and preserves this operator space, so a Hermitian
$H$ satisfying (2) has total spin degree at most two, with the stated
spin-\(1/2\) exceptions.

Hermiticity of $A$ and the real diagonal monomial inner product give
$R^*=R$ and $S^*=-S$.  For one site write
\[
 J^-=\partial,\qquad J^0=z\partial-\kappa/2,\qquad
 J^+=z^2\partial-\kappa z.
\]
In the weighted inner product,
$(J^-)^*=-J^+$ and $(J^0)^*=J^0$.
Consequently skew-adjointness of
$aJ^-+bJ^0+cJ^+$, for real $a,b,c$, forces $b=0$ and $a=c$.
The scalar in \eqref{spin:imaginary-wedge} vanishes, and hence
\begin{equation}\label{spin:imaginary-fields}
 q_i(s)=\alpha_i(1+s^2),\quad\alpha_i\geq0,
 \qquad iJ_{i,\kappa_i}(q_i)=-2\alpha_iS_i^y.
\end{equation}
The Cayley rotations are
\begin{equation}\label{spin:rotation}
 CS_i^xC^*=-S_i^z,\qquad CS_i^yC^*=S_i^x,\qquad
 CS_i^zC^*=-S_i^y.
\end{equation}
Thus the original longitudinal field contributes $h_i^zS_i^y$ to $A$,
and \eqref{spin:imaginary-fields} gives $h_i^z=-2\alpha_i\leq0$.
Mixed longitudinal--transverse quadratic terms in $H$ give imaginary
second-order coefficients in $A$, and the independent principal
polynomials at each site prevent cancellation among distinct such
terms. Since the imaginary part has order at most one, all these
terms vanish.

For the real second-order inequalities, the principal coefficients
of $(S^x,S^y,S^z)$ in \eqref{spin:differential} are
$((1-s^2)/2,-i(1+s^2)/2,-s)$; put
\[
 u(s)=\left(\frac{2s}{1+s^2},\frac{1-s^2}{1+s^2}\right)^{\mathsf T};
\]
its range is dense in the real unit circle.  For the pair term
$(S_i^\perp)^{\mathsf T}J_{ij}S_j^\perp+e_{ij}S_i^zS_j^z$,
the coefficient $q_{ij}$ of $\partial_i\partial_j$ in $A$ satisfies
\begin{equation}\label{spin:pair-principal}
 \frac{4q_{ij}(s,t)}{(1+s^2)(1+t^2)}
       =e_{ij}-u(s)^{\mathsf T}J_{ij}u(t).
\end{equation}
Its nonpositivity is therefore equivalent to
$e_{ij}\leq-\|J_{ij}\|_{\mathrm{op}}$.
Before fixing the local Casimir gauge, write a surviving local quadratic
term as $(S_i^\perp)^{\mathsf T}K_iS_i^\perp+c_i(S_i^z)^2$, with
$K_i$ real symmetric.  For $\kappa_i\geq2$ its coefficient $q_i$ of
$\partial_i^2$ satisfies
\begin{equation}\label{spin:local-principal}
 \frac{4q_i(s)}{(1+s^2)^2}=c_i-u(s)^{\mathsf T}K_iu(s).
\end{equation}
Nonpositivity gives $c_i\leq\lambda_{\min}(K_i)$, so the Casimir
identity rewrites the term as $(S_i^\perp)^{\mathsf T}B_iS_i^\perp$
plus a scalar with $B_i=K_i-c_iI_2\succeq0$, completing necessity.

Conversely, \eqref{spin:hamiltonian-cone} makes
\eqref{spin:pair-principal} and \eqref{spin:local-principal} real and
nonpositive and makes the imaginary part satisfy
\eqref{spin:imaginary-wedge}.  The spin operators preserve the degree box, so
Theorem~\ref{gen:main-complex} and Lemma~\ref{spin:cayley} give
coherent stability at every time, proving (3)$\Rightarrow$(1).
The implication (1)$\Rightarrow$(2) follows by restriction of the
time interval.
\end{proof}

\begin{corollary}\label{spin:qubit-realization}
Every Hamiltonian in Theorem~\ref{spin:hamiltonian} has a Hermitian qubit
extension $\widehat H$, invariant under permutations within each block
of \(\kappa_i\) qubits, whose pair coefficients satisfy the qubit Lee--Yang cone and such
that
\begin{equation}\label{spin:intertwining}
 \widehat HW=WH,\qquad e^{-\beta\widehat H}W=We^{-\beta H}
 \quad(\beta\in\R).
\end{equation}
\end{corollary}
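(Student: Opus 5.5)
The plan is to lift $H$ term by term through the Dicke isometry $W$, with each spin operator replaced by its block sum of Pauli operators. On the block of $\kappa_i$ qubits at site $i$, put
\[
 \Sigma_i^a=\tfrac12\sum_{m=1}^{\kappa_i}\sigma^a_{i,m},\qquad a\in\{x,y,z\}.
\]
The standard intertwining $\Sigma_i^aW_i=W_iS_i^a$ holds because $W_i$ maps onto the symmetric subspace, where the collective spin acts as spin $j_i$. In the polynomial picture this is exactly the descent identity \eqref{gen:eq-polar-one} applied to the differential realization \eqref{spin:differential}: a one-slot operator $q(z_{i,m})\partial_{i,m}-\tfrac12q'(z_{i,m})$, summed over slots, intertwines $\mathcal P_\kappa$ with $J_{i,\kappa_i}(q)$. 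The same identity is the lift used in Corollary~\ref{gen:slot-lift}.

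The scalar, the one-site fields $h_i^aS_i^a$, and the pair terms $(S_i^\perp)^{\mathsf T}J_{ij}S_j^\perp+e_{ij}S_i^zS_j^z$ are then lifted directly. Their lifts are $h_0I$, $h_i^a\Sigma_i^a$, and
\[
 \tfrac14\sum_{m,m'}\Bigl((\sigma_{i,m}^\perp)^{\mathsf T}J_{ij}\sigma_{j,m'}^\perp+e_{ij}Z_{i,m}Z_{j,m'}\Bigr).
\]
Products of intertwining operators intertwine, so each of these satisfies $\widehat H_{\text{term}}W=WH_{\text{term}}$. Every qubit pair $(i,m),(j,m')$ carries the rescaled coefficients $J_{ij}/4$ and $e_{ij}/4$, so the inequality $e_{ij}\le-\|J_{ij}\|_{\rm op}$ passes unchanged to the qubit cone \eqref{eq:sf-inequality}. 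The lifted $Z$ fields are $h_i^z/2\le0$, and the unrestricted $x,y$ fields are allowed in that cone.

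The main obstacle is the one-site quadrupole $(S_i^\perp)^{\mathsf T}B_iS_i^\perp$ with $B_i\succeq0$, which must be realized by qubit pair terms inside the block. Expanding $(\Sigma_i^\perp)^{\mathsf T}B_i\Sigma_i^\perp$ gives single-slot terms $\tfrac14(\sigma^\perp_{i,m})^{\mathsf T}B_i\sigma^\perp_{i,m}$ plus intra-block pair terms $\tfrac14(\sigma^\perp_{i,m})^{\mathsf T}B_i\sigma^\perp_{i,m'}$. The single-slot terms are scalars plus $XY+YX$ pieces, which vanish because of the anticommutation, so they equal $\tfrac14\Tr B_i\,I$. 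The pair terms, however, have transverse matrix $B_i/2$ per unordered pair and no $ZZ$ part, so they violate the qubit cone unless I add compensating $ZZ$ terms. I would add $-\tfrac12\|B_i\|_{\rm op}Z_{i,m}Z_{i,m'}$ on each unordered pair, since $-\|B_i/2\|_{\rm op}$ is the required bound. I would cancel this on the symmetric subspace using
\[
 \sum_{m<m'}Z_{i,m}Z_{i,m'}=2(\Sigma_i^z)^2-\tfrac{\kappa_i}{2},
\]
and then use the Casimir identity on the symmetric subspace, $(S^x)^2+(S^y)^2+(S^z)^2=j(j+1)$, to rewrite $(S_i^z)^2$ as a scalar minus $(S_i^\perp)^{\mathsf T}S_i^\perp$. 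The added $ZZ$ terms thus descend to $\|B_i\|_{\rm op}(S_i^\perp)^{\mathsf T}S_i^\perp$ up to a scalar. The lifted intra-block term therefore has to be built from the shifted matrix $B_i+\|B_i\|_{\rm op}I_2$ rather than from $B_i$. Its norm is $2\|B_i\|_{\rm op}$, which would demand twice as much $ZZ$ compensation, so this naive lift does not close.

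I expect this exact matching to be the sticking point, and I would try two fixes in order. The first is to use the freedom to add multiples of the block Casimir $(\Sigma_i^\perp)^2+(\Sigma_i^z)^2$. This operator acts as a scalar on the symmetric subspace and equals a sum of the Heisenberg couplings $X X+YY+ZZ$ over pairs, so it shifts the $ZZ$ and transverse parts together. The goal is to choose the shift so that the pair matrix becomes $\tfrac12(B_i-\lambda_{\min}(B_i)I)$ together with a matching $ZZ$ coefficient $-\tfrac12\lambda_{\max}$-type term; I would check that $B_i\succeq0$ is precisely what lets the pair condition hold. If that fails, the fallback is the $\mathfrak{sl}_2$ route. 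There, the slot lift of Corollary~\ref{gen:slot-lift} gives generators $B_F$ on pairs of slots with symmetric bilinear $F$, and applying the Cayley conjugation $C_1^{\otimes}$ back to Pauli form shows, as in Theorem~\ref{eq:sf}, that each such pair generator has nonpositive mixed coefficient and hence lies in the qubit cone. The remaining checks are Hermiticity, which holds if the lift is chosen $\dagger$-symmetric by averaging with its adjoint, and permutation invariance within blocks, which holds by construction.

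Finally, $\widehat HW=WH$ gives $\widehat H^kW=WH^k$ for every $k$, and summing the exponential series yields $e^{-\beta\widehat H}W=We^{-\beta H}$ for all real $\beta$.
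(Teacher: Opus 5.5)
Most of your construction is right and matches the paper. The collective spins $\Sigma_i^a$ intertwine through $W_i$, and products of intertwiners intertwine. The fields and inter-block pairs lift with coefficients $h_i^a/2$, $J_{ij}/4$, $e_{ij}/4$, the single-slot quadrupole pieces collapse to $\tfrac14\Tr B_i\,I$, and exponentiation is immediate.

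The gap is the quadrupole step, where a sign error leads you to discard the correct construction. You correctly compute that $-\tfrac12\|B_i\|_{\rm op}\sum_{m<m'}Z_{i,m}Z_{i,m'}$ descends to $+\|B_i\|_{\rm op}(S_i^\perp)^{\mathsf T}S_i^\perp$ plus a scalar. To recover $(S_i^\perp)^{\mathsf T}B_iS_i^\perp$, the transverse part must therefore be built from $B_i-\|B_i\|_{\rm op}I_2$, not $B_i+\|B_i\|_{\rm op}I_2$. Put $\lambda_i=\lambda_{\max}(B_i)=\|B_i\|_{\rm op}$. Every intra-block pair then has transverse matrix $(B_i-\lambda_iI_2)/2$ and $ZZ$ coefficient $-\lambda_i/2$, and
\[
 \|B_i-\lambda_iI_2\|_{\rm op}=\lambda_i-\lambda_{\min}(B_i)\le\lambda_i
\]
precisely because $B_i\succeq0$. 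So the cone inequality holds with the same $ZZ$ compensation, and no iteration or doubling occurs. This is exactly the paper's proof. It writes the step as the Casimir identity $(S_i^\perp)^{\mathsf T}B_iS_i^\perp=(S_i^\perp)^{\mathsf T}(B_i-\lambda_iI_2)S_i^\perp-\lambda_i(S_i^z)^2+\lambda_ij_i(j_i+1)I$ and then replaces each spin by its block sum. As written, however, your proposal asserts that this lift fails and never establishes the decisive step.

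Neither fallback fills the gap as stated. The block Casimir shifts the transverse matrix and the $ZZ$ coefficient by the same constant. The only lifts it produces are therefore transverse $\tfrac12(B_i-cI_2)$ with $ZZ$ coefficient $-c/2$. Pairing $\tfrac12(B_i-\lambda_{\min}(B_i)I_2)$ with a ``$-\tfrac12\lambda_{\max}$-type'' $ZZ$ term does not descend to $H$. The consistent choice $c=\lambda_{\min}(B_i)$ violates the cone, e.g.\ for rank-one $B_i$. The cone holds exactly when $c\ge\lambda_{\max}(B_i)/2$, and the paper takes $c=\lambda_{\max}(B_i)$.

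The slot-lift route is only sketched. The apolar pair generators $B_F$ need not be Hermitian after Cayley conjugation. Before invoking Theorem~\ref{eq:sf}, you would have to show that Hermitizing the lift keeps it admissible and keeps the intertwining with $W$. Once the sign is corrected, neither detour is needed.
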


\begin{proof}
For $B_i\succeq0$ and $\lambda_i=\lambda_{\max}(B_i)$, the
Casimir relation gives
\begin{equation}\label{spin:casimir-extension}
 (S_i^\perp)^{\mathsf T}B_iS_i^\perp
 =(S_i^\perp)^{\mathsf T}(B_i-\lambda_iI_2)S_i^\perp
   -\lambda_i(S_i^z)^2+\lambda_i j_i(j_i+1)I.
\end{equation}
Replace each spin in this expression and in
\eqref{spin:hamiltonian-form} by
$\widehat S_i^a=\frac12\sum_{r=1}^{\kappa_i}\sigma_{ir}^a$.
Retain the last term of \eqref{spin:casimir-extension} as the indicated
scalar.  Interblock pairs have transverse matrix $J_{ij}/4$ and
longitudinal coefficient $e_{ij}/4$.  Intrablock pairs have transverse
matrix $(B_i-\lambda_iI_2)/2$ and longitudinal coefficient
$-\lambda_i/2$.  If $b_i=\lambda_{\min}(B_i)$, then
\[
 \|B_i-\lambda_iI_2\|_{\mathrm{op}}=\lambda_i-b_i\leq\lambda_i.
\]
All pairs therefore satisfy the qubit cone, while Pauli
anticommutation makes identical-slot symmetric off-diagonal products
vanish and leaves only scalars from the other identical-slot terms.
Each collective spin preserves the symmetric sector and intertwines
the physical spin through $W_i$, giving the first identity in
\eqref{spin:intertwining} and, by exponentiation, the second.
\end{proof}

The reduction of higher spins to spin one-half and the corresponding
anisotropic sufficient Lee--Yang theorems are classical; see, in
particular, \cite[Theorem~2 and equations~(2.38)--(2.44)]{SuzukiFisher71}.
The corollary gives explicit qubit coefficients for each Hamiltonian
in the classified cone, including the single-ion quadrupoles, with
the original operator recovered on the symmetric sector.

\section{Exact arithmetic and bit bounds}\label{arith:section}

To convert the arithmetic-operation counts in
Section~\ref{alg:section} into bit-complexity bounds, we bound the
denominators and intermediate magnitudes in the rational recursions.

\subsection{Eigenvalue continuation}\label{alg:energy-bit-details}

Use the notation of Section~\ref{alg:section} and the coefficient
order \(p\) chosen in the proof of Theorem~\ref{alg:complex}.
Let \(B_0\) bound the bit length of each numerator and denominator
in the original local matrices.
For \(p>0\), the interpolation nodes
\(s_\ell=-1+2\ell/p\) have \(O(\log(p+1))\) bits.
Taking local Hermitian parts and forming \(B+s_\ell C\)
therefore produces Gaussian-rational inputs with bit lengths
polynomial in \(B_0\) and \(\log(p+1)\).

For a Hermitian coefficient computation at one of these nodes, let
\(Q_0\) be the product of the positive denominators of the real
and imaginary parts of the local entries. There are \(O_D(N)\)
such entries, making \(\log Q_0\) polynomial in the input length;
set
\begin{equation}
 K_p=48(p+1)!,\qquad
 D_j=(Q_0K_p)^j\quad(0\le j\le p).
 \label{alg:common-denominator}
\end{equation}
In the coefficient recursion of \cite[Sections~3--5]{BDL08}, an
order-\(j\) creation coefficient \(C_j(M)\) vanishes unless
\(M\) lies in a connected set of at most \(j+1\) vertices.  Each nonzero term in
its recursion contains one local matrix entry, previous creation
coefficients of total order \(j-1\), and a divisor
\[
 k!\,E_0(M),\qquad k\le4,\qquad E_0(M)=2|M|.
 \]
The basis creation matrices
\(\prod_{u\in M}|1\rangle\langle0|_u\) have entries zero or one;
expanding their nested commutators therefore produces signed sums
of local perturbation entries.
The energy recursion has the same form with the outer divisor
\(E_0(M)\) omitted, so the two recursions admit a common
order-by-order denominator bound.

Since \(k!E_0(M)\) divides \(K_p\) for \(j\le p\), induction
in the perturbation order gives
\begin{equation}
 D_j C_j(M)\in\mathbb Z[i],
 \qquad D_j e_j(B+s_\ell C)\in\mathbb Z[i].
 \label{alg:denominator-induction}
\end{equation}
Indeed, the previous coefficients contribute denominator
dividing \((Q_0K_p)^{j-1}\), the new local entry contributes
a factor dividing \(Q_0\), and the remaining divisor divides
\(K_p\).  All terms at a fixed order may be accumulated over
this common denominator.  Its bit length is
\[
 O\bigl(p\log Q_0+p^2\log(p+1)\bigr).
 \]

For the numerators, fully unfold a term of perturbation order at
most \(p\). It has at most \(p\) local insertions and
bounded-arity recursion nodes, whose local edges can be chosen in
at most \((ND)^{O(p)}\) ways. Their union has \(O(p)\) vertices
and contains all intermediate creation sets; choosing those sets
at \(O(p)\) nodes, together with the tree shape, order partitions,
and bounded-depth commutator terms, gives at most
\[
 \exp\bigl(O_D(p^2+p\log(N+1))\bigr)
 \]
fully unfolded summands.  Each summand contains at most \(p\)
bounded local entries; the positive integer energy and factorial
divisors cannot increase its absolute value.
Its magnitude is therefore at most
\(\max(1,2J)^{O(p)}\) times a fixed exponential factor.
After introducing \eqref{alg:common-denominator}, this bounds
the logarithmic magnitude of every intermediate numerator by
a polynomial in \(p,N,B_0\).
The tree expansion bounds the sizes of the integers stored by the
recursion, whose operation count remains \(N\exp(O_D(p))\).

The interpolation step has polynomial bit cost as well: at the nodes
\(s_\ell=-1+2\ell/p\), its weights for evaluation at \(i\) are
\begin{equation}
 L_\ell(i)=
 \frac{\displaystyle
       \prod_{\substack{0\le r\le p\\r\ne\ell}}
           (pi+p-2r)}
      {2^p(-1)^{p-\ell}\ell!(p-\ell)!}.
 \label{alg:interpolation-weights}
\end{equation}
Each numerator and denominator has \(O(p\log(p+1))\) bits, and
\(e_k(V)\) is obtained by multiplying the weights by the computed
node values and summing \(p+1\) terms.
The rational powers and binomial coefficients in
\eqref{alg:coefficient-transform}, and the powers in
\eqref{alg:stopping}--\eqref{alg:output}, increase bit length
only polynomially.
Thus a safe total bit bound is
\[
 N\exp(O_D(p))\operatorname{poly}(N,B_0,p,L_{\mathrm{in}}),
 \]
where \(L_{\mathrm{in}}\) includes the requested accuracy,
establishing the polynomial bit overhead in
Theorem~\ref{alg:complex}.

\subsection{Connected principal-state tests}\label{alg:test-bit-details}

We use the subsystem coefficients and notation from the proof of
Lemma~\ref{alg:connected-tests}.  Let \(Q_0\) be a common positive denominator for all local
entries of \(V\), let \(Q_B\) be the product of the denominators
of the entries of the \(B_i\), and put
\[
 D_p=2(p+1)!,\qquad T=Q_0D_p .
 \]
In a subsystem of size at most \(p+1\), each nonzero diagonal
entry of the reduced inverse \(S_U\) is \(1/(2r)\),
\(1\le r\le p+1\).  Induction in the vector recursion gives
denominators dividing \(T^{2k}\) for \(v_{U,k},w_{U,k}\)
and \(T^{2k-1}\) for the energy coefficient.
It follows that the denominator of \(a_k\) divides
\(Q_BT^{2k}\), and \eqref{alg:formal-test-log} bounds that of
\(f_k\) by \(k!Q_B^kT^{2k}\). Accumulating the sums at each
order over these common denominators uses a number of bits
polynomial in \(p\) and the total input length.

For the intermediate numerators, use
\(\|V_U\|\le m(d+1)J\) on a subsystem \(U\). When
\(|x|\le r_U=[16(1+m(d+1)J)]^{-1}\), the resolvent Neumann
series on \(|z|=1\) bounds the vacuum spectral projection and
its difference from the unperturbed projection.  In particular, the projection has
norm at most \(16/15\), its vacuum entry differs from one by
at most \(1/15\), and both vacuum-normalized eigenvectors have
norm less than two.  Cauchy's estimate gives
\(\|v_{U,k}\|,\|w_{U,k}\|\le2r_U^{-k}\).
As \(\|B_U\|\le2^m\),
\[
 |a_k|\le4(k+1)2^m r_U^{-k}.
\]
For \(k\le p\), choose \(A_p=4(p+1)2^{p+1}\).
The finite formula
\[
 \log(1+a)=\sum_{\ell=1}^p(-1)^{\ell+1}a^\ell/\ell
 \pmod{x^{p+1}}
\]
bounds \(|f_k|\) by \((2A_p)^kr_U^{-k}\).
Matrix products, vector sums and the partial sums in
\eqref{alg:formal-test-log} have the same type of bound, up to
factors \(2^{O(p)}\operatorname{poly}(p)\).
The subset sums contain at most \(2^{p+1}\) terms, and the final
connected sum contains at most \(n\exp(O_d(p))\), so these
magnitude estimates, combined with the common denominators, bound
the bit length of every stored numerator, including before
cancellation. The claimed bit complexity follows.

\section{Connected preservers and the obstruction to flow generation}
\label{glob:section}

The classification in Theorems~\ref{gen:main-real} and
\ref{gen:main-complex} describes one-parameter preserving semigroups.
We compare their products with all invertible preservers on the
multiaffine space
$V_n=\C[z_1,\ldots,z_n]_{\mathrm{ma}}\simeq(\C^2)^{\otimes n}$.
Let $S_n$ be the semigroup of invertible complex-linear operators that
preserve disk stability.  For its ordinary monomial coefficient matrix
$M$, write
\begin{equation}\label{glob:symbol}
 G_M(z,w)=\sum_{a,b\in\{0,1\}^n}M_{ab}z^aw^b.
\end{equation}
For \(M\in\operatorname{GL}(V_n)\), the symbol criterion is
\begin{equation}\label{glob:symbol-equivalence}
 M\in S_n\quad\Longleftrightarrow\quad
 G_M\text{ is stable on }\D^{2n}.
\end{equation}
For necessity, fix $w\in\D^n$ and apply $M$ to
$\prod_i(1+w_iz_i)$, whose image is stable and nonzero by
invertibility. Conversely, tensor contraction of $G_M$ with a stable
input gives preservation, with zero again excluded by invertibility;
this is the disk-symbol convention used throughout the paper.
Hurwitz's theorem makes $S_n$ closed relative to
$\operatorname{GL}(V_n)$, since an invertible limit cannot have zero
symbol, and $S_n$ is also invariant under nonzero scalar multiplication.

Define
\begin{equation}\label{glob:semigroups}
 \begin{split}
 W_n&=\{A:e^{tA}\in S_n\text{ for every }t\geq0\},\\
 E_n&=\langle e^A:A\in W_n\rangle,\qquad
 T_n=\overline{E_n}^{\,\operatorname{GL}(V_n)},\qquad
 H_n=S_n\cap S_n^{-1}.
 \end{split}
\end{equation}
Thus $E_n$ consists of finite products of admissible flows and $T_n$
is its relative closure, while $H_n$ is the group of reversible
preservers, with identity component $H_n^0$.

\begin{theorem}
\label{glob:main}
For every $n\geq1$, $S_n$ is path connected.  Nevertheless,
\begin{equation}\label{glob:reversible-parts}
 E_n\cap H_n=H_n^0,
 \qquad T_n\cap T_n^{-1}=H_n^0.
\end{equation}
For $n\geq2$, every nontrivial coordinate permutation $\pi$ satisfies
\begin{equation}\label{glob:permutation-obstruction}
 \pi\in S_n\setminus T_n.
\end{equation}
Thus a positive-radius matrix neighborhood of $\pi$ contains no finite
product of admissible one-parameter flows, even with unbounded product
lengths and generator norms.
\end{theorem}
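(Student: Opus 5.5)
The plan is to prove the three assertions in order: path connectedness, then the two identities in \eqref{glob:reversible-parts}, and then to read off the permutation obstruction \eqref{glob:permutation-obstruction} as a formal consequence.

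\emph{Path connectedness.} I would use the symbol criterion \eqref{glob:symbol-equivalence} together with dilations. Write \(D_s=s^{E}\), so that \((D_sp)(z)=p(sz)\) for \(0<s\le1\); each \(D_s\) is an invertible preserver. For \(X\in\operatorname{GL}(V_n)\) one has \(G_{D_sXD_s}(z,w)=G_X(sz,sw)\). Given \(M\in S_n\), its entry \(M_{00}=G_M(0,0)\) is nonzero, since \(M1\) is stable and nonzero.
\begin{enumerate}
\item The path \(s\mapsto D_sMD_s\), \(s\in[s_0,1]\), lies in \(S_n\), because it is a product of preservers.
\item Join \(M\) to \(I\) by a path \(X_\lambda\) in the connected set \(\{X\in\operatorname{GL}(V_n):X_{00}\neq0\}\). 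This set is a complex Zariski-open subset of \(\operatorname{GL}\), so it is connected.
\item By compactness of the path, \(|X_{\lambda,00}|\) is bounded below along it. Hence for \(s_0\) small the kernel \(G_{X_\lambda}(s_0z,s_0w)\) stays within \(|X_{\lambda,00}|/2\) of the constant \(X_{\lambda,00}\) on \(\overline{\D}^{2n}\), and is therefore zero-free. The operators \(D_{s_0}X_\lambda D_{s_0}\) are invertible, so by \eqref{glob:symbol-equivalence} they lie in \(S_n\).
\item Finally, \(s\mapsto D_s^2\) joins \(D_{s_0}^2\) to \(I\) inside \(S_n\).
\end{enumerate}

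\emph{The reversible parts.} First I would identify \(H_n\): a bijective preserver whose inverse also preserves sends the stable cone onto itself. The expected description, of Borcea--Br\"and\'en type, is that \(H_n\) consists of scalars times products of sitewise disk M\"obius actions (from \(\mathrm{SU}(1,1)\) together with rotations), composed with coordinate permutations. Then \(H_n^0\) is the permutation-free part, and this part is visibly generated by the lineality flows of Corollary~\ref{gen:gram-cones}. This gives the inclusion \(H_n^0\subseteq E_n\cap H_n\), and hence also \(H_n^0\subseteq T_n\cap T_n^{-1}\).

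For the reverse inclusions I would construct a closed \emph{ordering invariant}. This should be a continuous functional, or a closed condition on \(G_M\), that holds at \(I\), is stable under composition with every flow \(e^{tA}\), \(A\in W_n\), and hence passes to the closure \(T_n\), and that fails for every nontrivial permutation. My first attempt would exploit the fact that every admissible generator is, in half-plane coordinates, a first-order M\"obius term plus nonnegative combinations of rank-one apolar generators \(B_F\) (Corollary~\ref{gen:slot-lift}). By \eqref{gen:eq-apolar-discriminant}, these move the pairwise discriminants \(\Delta\) only upward along the flow. I would try to turn this into a pairwise ``diagonal dominance'' statement for the two-site marginals of \(G_M\), say that \(M\) sends the degenerate stable product \((z_i-x)\) to something still ``attached'' to coordinate \(i\). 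The aim is that any element of \(T_n\cap T_n^{-1}\), being in \(H_n\) and satisfying the invariant in both directions, must have trivial permutation part.

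\emph{The permutation obstruction.} Once \(T_n\cap T_n^{-1}=H_n^0\) is established, \eqref{glob:permutation-obstruction} follows formally. A permutation \(\pi\) lies in \(S_n\), since permuting coordinates maps stable polynomials to stable polynomials, and it satisfies \(\pi^{-1}\in S_n\). If \(\pi\in T_n\), then \(\pi^{-1}\) is also a permutation; the same argument applied to it (the invariant being symmetric under relabeling) puts \(\pi^{-1}\in T_n\). Hence \(\pi\in T_n\cap T_n^{-1}=H_n^0\), which contradicts the fact that \(H_n^0\) contains no nontrivial permutation part. The equality \(E_n\cap H_n=H_n^0\) follows in the same way from \(E_n\subseteq T_n\).

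I expect the main obstacle to be the construction of the ordering invariant. It must be closed, so that it survives unbounded products and generator norms, and it must be monotone under every admissible flow, yet it must distinguish a transposition from the identity. Note that \(\pi\) itself lies in the path-connected \(S_n\), so no purely topological argument on \(S_n\) can work. The invariant has to use the flow structure coming from the classification in Theorem~\ref{gen:main-complex}.
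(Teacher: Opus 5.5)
Your path-connectedness argument is correct and close to the paper's. The paper normalizes \(M_{00}=1\) and works inside the convex ball \(\mathcal B\) of strictly constant-dominated kernels, avoiding determinant zeros near a segment. Your version, using connectedness of the Zariski-open set \(\{X\in\operatorname{GL}(V_n):X_{00}\neq0\}\) and one uniform small dilation along the compact path, does the same job.

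\textbf{The gap is in the reversible parts.} You never construct the ``ordering invariant'', and the route you suggest cannot separate \(\pi\) from \(I\). The swap preserves the discriminant, since \(\Delta(\pi f)=f_{01}f_{10}-f_{00}f_{11}=\Delta(f)\), so monotonicity of \(\Delta\) along the apolar flows says nothing about \(\pi\). Without an invariant you have neither \(T_n\cap T_n^{-1}\subseteq H_n^0\) nor the permutation obstruction. The missing idea is that no hand-built invariant is needed. The paper uses Neeb's theorem: if \(T\) is a closed submonoid of a connected Lie group with \(T=\overline{\langle\exp L(T)\rangle}\), then its unit group \(T\cap T^{-1}\) is connected. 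Here \(T_n\subseteq S_n\) gives \(L(T_n)\subseteq L(S_n)=W_n\), and every flow lies in \(E_n\), so \(L(T_n)=W_n\). Hence \(T_n\cap T_n^{-1}\) is a connected subgroup of \(H_n\) containing \(I\), and therefore lies in \(H_n^0\). For the obstruction, finite order and the monoid property put \(\pi^{-1}=\pi^{k-1}\in T_n\). Your appeal to the invariant being ``symmetric under relabeling'' is not the right justification.

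\textbf{Two further steps need repair.} First, \(E_n\cap H_n=H_n^0\) does not follow ``in the same way'' from \(E_n\subseteq T_n\). An element of \(E_n\cap H_n\) lies in \(T_n\), but nothing puts its inverse in \(T_n\); the finite-order trick is special to permutations. The elementary fix is the unit-factor observation: if \(A,B\in S_n\) and \(AB\in H_n\), then \(A^{-1}=B(AB)^{-1}\) and \(B^{-1}=(AB)^{-1}A\) are preservers, so \(A,B\in H_n\). Apply this to \(e^{A_m}\cdots e^{A_1}\) and to \(e^{A_j}=e^{(1-s)A_j}e^{sA_j}\). It puts every \(e^{sA_j}\), \(0\le s\le1\), in \(H_n\), so each factor is joined to \(I\) inside \(H_n\). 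Second, you identify \(H_n^0\) through an unproved Borcea--Br\"and\'en-type description of all of \(H_n\), including a permutation part. That description is unnecessary. \(H_n\) is a closed subgroup with Lie algebra \(W_n\cap(-W_n)\), which Corollary~\ref{gen:gram-cones} identifies in half-plane coordinates with \(\C I\oplus\bigoplus_i\mathfrak{sl}_2(\R)_i\). So \(H_n^0\) consists of nonzero scalar multiples of one-site tensor products, and no nontrivial coordinate permutation has that form.
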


\begin{proof}[Path connectedness]
Let $D_rz^a=r^{|a|}z^a$, $0<r\leq1$.  It is an invertible preserver,
and
\begin{equation}\label{glob:sandwich}
 G_{D_rMD_r}(z,w)=G_M(rz,rw).
\end{equation}
Since $M_{00}\ne0$ for $M\in S_n$, a path of nonzero scalars
normalizes it to one, after which $D_rMD_r$ converges to the vacuum
projector $|0\rangle\langle0|$ as $r\downarrow0$.

Consider the convex open set in the complex affine hyperplane $X_{00}=1$
given by
\begin{equation}\label{glob:coefficient-ball}
 \mathcal B=\left\{X:X_{00}=1,
        \ \sum_{(a,b)\ne(0,0)}|X_{ab}|<1\right\}.
\end{equation}
Every member has a kernel nonvanishing on the closed polydisk, by strict
constant-term domination.  For small positive $r,s$, both $D_rMD_r$
and $D_s$ belong to $\mathcal B$ and are invertible.  Indeed the
nonconstant coefficient sum for $D_s$ is $(1+s)^n-1$.

The invertible part of $\mathcal B$ is path connected.  To see this,
take invertible $X,Y\in\mathcal B$ and set
$L(\zeta)=(1-\zeta)X+\zeta Y$.  Its determinant is a nonzero
polynomial in $\zeta$, since it is nonzero at zero.  The real segment
$L([0,1])$ is compactly contained in $\mathcal B$, so a sufficiently
thin complex neighborhood of $[0,1]$ still maps into $\mathcal B$.
A path from $0$ to $1$ in this neighborhood can avoid the finitely
many determinant zeros, and its image connects $X$ to $Y$ through
invertible members of $\mathcal B$. We may therefore connect $M$
to $D_rMD_r$ by \eqref{glob:sandwich}, then to $D_s$ inside
$\mathcal B$, and finally to $I$ by increasing $s$ to one, all
through invertible preservers.
\end{proof}

An explicit path for two variables is obtained from the symmetric
and antisymmetric projections $P_\pm$ on
$\operatorname{span}(z_1,z_2)$. Let $U_\theta$ act as
$P_++e^{i\theta}P_-$ there and as the identity on
$\operatorname{span}(1,z_1z_2)$.  For $r=1/4$,
\begin{align}\label{glob:swap-path}
 G_{D_rU_\theta}
 ={}&1+ra_\theta(z_1w_1+z_2w_2)
       +rb_\theta(z_1w_2+z_2w_1)
       +r^2z_1z_2w_1w_2,\notag\\
 a_\theta={}&(1+e^{i\theta})/2,\qquad
 b_\theta=(1-e^{i\theta})/2.
\end{align}
Its nonconstant coefficient sum is at most
$2\sqrt2r+r^2<1$, and its determinant is $r^4e^{i\theta}\ne0$.
The paths $I\to D_r$, $D_rU_\theta$ for $0\leq\theta\leq\pi$,
and $D_s\pi$ for $r\leq s\leq1$ connect the identity to the
coordinate exchange.

\begin{lemma}\label{glob:unit-factors}
If $A,B\in S_n$ and $AB\in H_n$, then $A,B\in H_n$.
Moreover every element of $H_n^0$ is a nonzero scalar times a tensor
product of one-site invertible matrices, whereas no nontrivial coordinate
permutation has this form.
\end{lemma}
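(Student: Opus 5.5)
The first assertion should follow from the semigroup property alone. If \(A,B\in S_n\) and \(AB\in H_n\), then \((AB)^{-1}\in S_n\), so
\[
 B^{-1}=(AB)^{-1}A\in S_n,\qquad A^{-1}=B(AB)^{-1}\in S_n,
\]
since \(S_n\) is closed under composition. Hence \(A,B\in H_n\).

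For the structure of \(H_n^0\), I will pass to the Lie algebra.
\begin{enumerate}
\item \(H_n\) is a closed subgroup of \(\operatorname{GL}(V_n)\). Indeed \(S_n\) is closed relative to \(\operatorname{GL}(V_n)\) by the Hurwitz remark after \eqref{glob:symbol-equivalence}, and inversion is continuous there. Cartan's closed subgroup theorem then makes \(H_n\) a Lie group, and \(H_n^0\) is generated by \(\exp\mathfrak h\), where
\[
 \mathfrak h=\{A: e^{tA}\in H_n\text{ for all }t\in\R\}.
\]
\item An element of \(\mathfrak h\) satisfies \(e^{tA}\in S_n\) and \(e^{-tA}\in S_n\) for \(t\ge0\), so both \(A\) and \(-A\) lie in \(W_n\).
\item Transport by the Cayley map \(C^{\otimes n}\) of Lemma~\ref{eq:cayley-bridge}, which carries disk-stability preservers to half-plane ones. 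Then \(\mathfrak h\) is conjugate to the lineality space of the complex generator cone in Corollary~\ref{gen:gram-cones}, with \(\kappa=(1,\ldots,1)\). That space is \(\C\Id+\sum_iJ_{i,1}(\R[z_i]_{\le2})\), and every summand acts on a single site.
\item Conjugation by the product matrix \(C_1^{\otimes n}\) preserves one-site supports. So every \(A\in\mathfrak h\) is a scalar plus a sum of commuting one-site operators, and \(e^{A}\) is a scalar times \(\bigotimes_i e^{A_i}\). Finite products of such operators keep this form, which gives the claimed description of \(H_n^0\).
\end{enumerate}
The step needing care is step 3: one must check that requiring \(A\) and \(-A\) to generate preserving semigroups is exactly membership in that lineality space. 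Corollary~\ref{gen:gram-cones} states precisely this, using that \(\partial_i^2\) is absent in the multiaffine case. The only remaining bookkeeping is the Cayley conjugation and the locality it preserves.

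To exclude nontrivial permutations, I will use operator-Schmidt rank. Pick a site \(i\) with \(\pi(i)=j\ne i\), and view operators on \(V_n\cong\C^2_i\otimes(\C^2)^{\otimes(n-1)}\) across the cut \(\{i\}\mid[n]\setminus\{i\}\). A scalar times \(\bigotimes_kM_k\) has operator-Schmidt rank one across this cut. The permutation operator transports the state of qubit \(i\) to qubit \(j\). Writing it as \(\mathrm{SWAP}_{ij}\) composed with a permutation \(\pi'\) fixing site \(i\), and using
\[
 \mathrm{SWAP}=\tfrac12\sum_{\sigma\in\{I,X,Y,Z\}}\sigma\otimes\sigma,
\]
one sees that its operator-Schmidt rank across the cut is \(4\). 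Right multiplication by \(\pi'\) acts only on the complement of site \(i\), so it does not change this rank. Since \(4\ne1\), no nontrivial coordinate permutation has the product form.
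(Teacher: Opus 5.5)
Your proposal is correct. The first assertion uses the same pair of identities as the paper. Your treatment of $H_n^0$ also follows the paper's route. The Lie algebra of the closed subgroup $H_n$ lies in $W_n\cap(-W_n)$. After Cayley conjugation this is the complex lineality $\C\Id+\sum_iJ_{i,1}(\R[z_i]_{\le2})$ of Corollary~\ref{gen:gram-cones}, and exponentials of these one-site operators, together with their finite products, are scalar multiples of local tensor products.

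The step where you genuinely differ is the exclusion of permutations. The paper evaluates on product vectors. It states the general case in one line (``varying one input factor varies a different output factor'') and writes out the precise identity only for a two-factor exchange: $Ax\otimes By=c\,y\otimes x$ with $x$ fixed and $y$ varying would force the fixed vector $Ax$ to be proportional to every $y$. You use operator-Schmidt rank as an invariant instead:
\begin{itemize}
\item every scalar multiple of a tensor product has rank one across the cut $\{i\}\mid[n]\setminus\{i\}$;
\item $\mathrm{SWAP}_{ij}$ has rank four across that cut;
\item multiplying by the invertible operator of $\pi'$, which fixes site $i$, leaves the rank unchanged.
\end{itemize}
Together with the factorization $\pi=(i\,j)\pi'$ with $\pi'(i)=i$, this handles every nontrivial permutation uniformly and makes the general case fully explicit. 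The paper's argument is more elementary and needs no Pauli expansion. Yours trades that for a clean numerical invariant that does not depend on the cycle structure of $\pi$.
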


\begin{proof}
The identities $A^{-1}=B(AB)^{-1}$ and $B^{-1}=(AB)^{-1}A$ prove
the first assertion.  As a closed subgroup of $\operatorname{GL}(V_n)$, $H_n$ is a real
Lie group with Lie algebra $W_n\cap(-W_n)$, which in half-plane
coordinates is, by Corollary~\ref{gen:gram-cones},
\begin{equation}\label{glob:unit-algebra}
 \C I+\bigoplus_i\mathfrak{sl}_2(\R)_i.
\end{equation}
The fixed tensor Cayley transformation preserves products and
coordinate permutations, so the fact that a connected Lie group is
generated by its Lie-algebra exponentials applies equally in disk
coordinates. Exponentiating \eqref{glob:unit-algebra} and taking
finite products gives only scalar multiples of local tensor products.
A nontrivial coordinate permutation cannot have this form: varying one
input factor varies a different output factor, whereas an invertible
local tensor product varies only the corresponding output factor.
Equivalently, for a two-factor exchange, an identity
$Ax\otimes By=c\,y\otimes x$ with fixed nonzero $x$ and varying
independent $y$ would force the fixed vector $Ax$ to be proportional
to every $y$.
\end{proof}

\begin{proof}[The exact-product assertion in Theorem~\ref{glob:main}]
If $e^{A_m}\cdots e^{A_1}\in H_n$, with every $A_j\in W_n$,
Lemma~\ref{glob:unit-factors} makes every exponential factor a unit.
For $0\leq s\leq1$, both factors in
$e^{A_j}=e^{(1-s)A_j}e^{sA_j}$ preserve stability, so the same
lemma puts $e^{sA_j}$ in $H_n$. Each exponential, and hence their
product, therefore belongs to $H_n^0$. Conversely, $H_n^0$ is
generated by exponentials of its Lie algebra contained in $W_n$,
which gives $E_n\cap H_n=H_n^0$.
\end{proof}

For the closure assertion we use Neeb's classical connected-unit theorem:
if $G$ is a connected Lie group and $T\subseteq G$ is a closed
submonoid satisfying
\[
 T=\overline{\langle\exp L(T)\rangle}^{\,G},\qquad
 L(T)=\{A:\exp(tA)\in T\text{ for every }t\geq0\},
\]
then $T\cap T^{-1}$ is connected. This is
\cite[Definition~II.1(3) and Proposition~III.2, pp.~171, 178]{Neeb92};
see also \cite[Section~6.1]{SanMartinSantana02}.

\begin{proof}[The closure assertion in Theorem~\ref{glob:main}]
In the connected real Lie group $G=\operatorname{GL}_{2^n}(\C)$,
$T_n$ is a closed submonoid contained in $S_n$. Every $A\in W_n$
generates a one-parameter semigroup in $E_n$, whereas
$L(T_n)\subseteq L(S_n)=W_n$, so
\[
 L(T_n)=W_n,\qquad
 T_n=\overline{\langle\exp L(T_n)\rangle}^{\,G}.
\]
Neeb's theorem therefore makes $T_n\cap T_n^{-1}$ a connected
subgroup of $H_n$, so it is contained in $H_n^0$.
The reverse inclusion follows because the exponentials of both signs of
\eqref{glob:unit-algebra} belong to $T_n$ and generate $H_n^0$.

If a nontrivial coordinate permutation $\pi$ belonged to $T_n$,
its finite order would also put $\pi^{-1}$ in $T_n$, making it a
unit and hence a member of $H_n^0$. This contradicts
Lemma~\ref{glob:unit-factors} and establishes
\eqref{glob:permutation-obstruction}. Since relative closure in the
general linear group agrees with matrix-norm closure at the invertible
point $\pi$, the excluded neighborhood follows as well.
\end{proof}

\begin{corollary}\label{glob:local-divisibility}
Call $M\in S_n$ exactly locally divisible if, for every identity
neighborhood $U$ in $\operatorname{GL}(V_n)$, it is a finite product
of members of $S_n\cap U$.  The exactly locally divisible units of
$S_n$ are precisely $H_n^0$.  In addition, if a continuous path
$M:[0,1]\to S_n$ starts at $I$ and satisfies
$M(t)M(s)^{-1}\in S_n$ for $s\leq t$, then a unit endpoint must
belong to $H_n^0$.
\end{corollary}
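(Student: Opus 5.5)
Both assertions reduce to the factor lemma, Lemma~\ref{glob:unit-factors}, and to the equalities \eqref{glob:reversible-parts} of Theorem~\ref{glob:main}.

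For the first assertion, I would begin with a neighborhood of the identity on which $\log$ is defined and continuous, and choose a small $U$ inside it. If $M$ is a unit and $M=M_m\cdots M_1$ with $M_j\in S_n\cap U$, then Lemma~\ref{glob:unit-factors}, applied repeatedly to the splittings $M=(M_m\cdots M_{j+1})(M_j\cdots M_1)$, shows that every factor lies in $H_n$. It then suffices to show that $H_n\cap U$ lies in $H_n^0$ for $U$ small enough. Since $H_n$ is a closed subgroup, it is a Lie group. Shrinking $U$ to a neighborhood on which $\exp$ restricted to the Lie algebra of $H_n$ is a chart, every element of $H_n\cap U$ lies in the identity component. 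Hence $M\in H_n^0$.

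For the converse, I would use the Lie algebra \eqref{glob:unit-algebra}. Each element of $H_n^0$ is a finite product of exponentials $e^{X}$ with $X$ in this algebra. Writing $e^X=(e^{X/k})^k$ makes every factor lie in any prescribed $U$, and each factor belongs to $S_n$ because $X\in W_n$. So the exactly locally divisible units are precisely $H_n^0$.

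For the path statement, I would use uniform continuity on $[0,1]$. Given $U$, choose a partition $0=t_0<\cdots<t_m=1$ fine enough that every increment $M(t_{j})M(t_{j-1})^{-1}$ lies in $U$; each increment lies in $S_n$ by hypothesis. Then $M(1)$ is the product of these increments, so it is exactly locally divisible. If $M(1)$ is a unit, the first part places it in $H_n^0$. The only delicate step is choosing $U$ small enough that $H_n\cap U$ lies in $H_n^0$. Here I rely on $H_n$ being closed, which the paper established via Hurwitz's theorem, so the closed-subgroup theorem applies to it.
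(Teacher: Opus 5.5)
Your proof is correct. For the first assertion it matches the paper's argument. Lemma~\ref{glob:unit-factors} forces every factor of an exact factorization of a unit to be a unit. Openness of the identity component of the closed Lie subgroup $H_n$ (your chart neighborhood) then gives $H_n\cap U\subset H_n^0$. The converse subdivides exponentials as $e^{X}=(e^{X/k})^k$ with $X\in W_n\cap(-W_n)$.

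For the path assertion you take a different route. You use compactness of $[0,1]$ to write $M(1)$ as a product of increments $M(t_j)M(t_{j-1})^{-1}\in S_n\cap U$. This makes $M(1)$ exactly locally divisible, and the first part then applies.

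The paper instead applies Lemma~\ref{glob:unit-factors} once, for each $t$, to the two-term factorization $M(1)=[M(1)M(t)^{-1}]M(t)$. This shows every $M(t)$ is a unit. The path therefore lies in $H_n$, and being continuous and starting at $I$, it lies in $H_n^0$.

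The paper's version avoids the uniform-continuity step and gives the stronger conclusion that the whole path, not just its endpoint, lies in $H_n^0$. Your version makes the path statement a formal consequence of the first assertion. It also shows that the endpoint of any such path is exactly locally divisible, whether or not it is a unit.

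A minor point: your opening sentence says you reduce to the equalities \eqref{glob:reversible-parts}, but your argument never uses them. It uses only the unit-factor lemma and the Lie-group structure of $H_n$, which is all that is needed.
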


\begin{proof}
The identity component of a Lie group is open, so choose $U$ with
$H_n\cap U\subset H_n^0$.  Any exact factorization of a unit into
$S_n\cap U$ consists entirely of units by
Lemma~\ref{glob:unit-factors}, hence has endpoint in $H_n^0$.
Conversely, subdividing a finite exponential representation of a member
of $H_n^0$ puts all its factors in any prescribed identity neighborhood.
For the path assertion, a unit endpoint in the factorization
$M(1)=[M(1)M(t)^{-1}]M(t)$ makes $M(t)$ a unit for every $t$,
so continuity places the whole path in $H_n^0$.
\end{proof}

\begin{remark}\label{glob:scope}
The identity for the closure in \eqref{glob:reversible-parts} is
$T_n\cap T_n^{-1}=H_n^0$, concerning elements whose inverses also
belong to $T_n$, as finite order ensures for permutations. Static
apolar maps can lie beyond these flows of the infinitesimal cone:
for $F(s,t)=s-t$,
$B_F=\pi-I$, so $I+B_F=\pi$, whereas
\[
 e^{tB_F}=\frac{1+e^{-2t}}2I+\frac{1-e^{-2t}}2\pi.
\]
Allowing such a static permutation as an additional primitive changes
the factorization question.
\end{remark}

\section{Zero-free radii and coefficient norms}
\label{spec:examples-section}

Although the gap bound in Theorem~\ref{spec:gap} is independent of
the undamped generator and the coordinate degrees, the strict radius
and the norm in Corollary~\ref{spec:resolvent} depend on the operator,
as the following examples show.

\begin{example}\label{spec:no-uniform-radius}
Fix \(\mu,t>0\).  On \(V_1\), in the coefficient basis \((1,z)\),
let \(X\) and \(Z\) be the usual Pauli matrices and put
\[
 A_{0,R}=RX,\qquad
 A_R=A_{0,R}-2\mu E=
 \begin{pmatrix}0&R\\R&-2\mu\end{pmatrix},\qquad R>0.
\]
With \(c=\cosh(Rs)\) and \(b=\sinh(Rs)\), the undamped flow
acts by the weighted substitution
\[
 (e^{sA_{0,R}}p)(z)=(c+bz)
 p\!\left(\frac{b+cz}{c+bz}\right).
\]
Since \(c^2-b^2=1\), the prefactor is nonzero on the disk and
the fraction is a disk automorphism, giving preservation of disk
stability even with arbitrary inert variables. Put \(\omega_R=\sqrt{R^2+\mu^2}\).  The identity
\((A_R+\mu I)^2=\omega_R^2I\) gives
\[
 e^{tA_R}=e^{-\mu t}\left[
 \cosh(\omega_Rt)I+
 \frac{\sinh(\omega_Rt)}{\omega_R}(A_R+\mu I)\right].
\]
Thus the image of the fixed input \(1\) has its unique zero at
\[
 z_R=-\frac{\omega_R\coth(\omega_Rt)+\mu}{R}.
\]
For every finite \(R\), \(|z_R|>1\), but \(|z_R|\to1\) as
\(R\to\infty\).  Since \(K_{e^{tA_R}}(z,0)=e^{tA_R}1(z)\),
the same zero obstructs a common kernel radius depending only on
\(\mu,t\), already for Hermitian generators on one binary variable.
The dominant eigenvector has its zero at modulus
\((\omega_R+\mu)/R\), which also tends to one.
\end{example}

\begin{example}\label{spec:nonnormal}
Fix \(\mu>0\), and on \(V_1\) put
\[
 G_R=R(X+iZ),\qquad
 B_R=G_R-2\mu E=
 \begin{pmatrix}iR&R\\R&-iR-2\mu\end{pmatrix},\qquad R>0.
\]
Here \(G_R^2=0\) and \(G_R^*Z+ZG_R=0\).  Its stability-preserving
action is explicit: with \(a=1+iRs\) and \(b=Rs\),
\[
 (e^{sG_R}p)(z)=(a+bz)
 p\!\left(\frac{b+\overline a z}{a+bz}\right),\qquad
 |a+bz|^2-|b+\overline a z|^2=1-|z|^2.
\]
The nonzero denominator and the disk-preserving fraction give
stability preservation, including in the presence of inert variables.

Choose the square root \(\omega_R=\sqrt{\mu^2+2i\mu R}\) with
\(x_R=\Re\omega_R>0\).  The two eigenvalues of \(B_R\) are
\(a_\pm=-\mu\pm\omega_R\), and
\[
 x_R=\sqrt{\frac{\mu\sqrt{\mu^2+4R^2}+\mu^2}{2}}\ge\mu.
\]
The dominant spectral projection and normalized exponential are
\[
 P_R=\frac12\left(I+\frac{B_R+\mu I}{\omega_R}\right),\qquad
 S_R(t)=e^{-ta_+}e^{tB_R}
       =P_R+e^{-2\omega_Rt}(I-P_R).
\]
Their off-diagonal entries give
\[
 \|P_R\|_2\ge\frac{R}{2|\omega_R|},\qquad
 |[S_R(t)]_{10}|
 =\frac{R}{2|\omega_R|}|1-e^{-2\omega_Rt}|
 \ge\frac{R}{2|\omega_R|}(1-e^{-2x_Rt}).
\]
Since \(|\omega_R|=(\mu^4+4\mu^2R^2)^{1/4}\), both quantities
diverge as \(R\to\infty\), the second for every fixed \(t>0\).

To keep the actual spectral gap fixed, pass to \(V_{(1,1)}\) and set
\[
 \mathcal A_R=G_R\otimes I-2\mu(E_1+E_2)
             =B_R\otimes I+I\otimes(-2\mu E).
\]
The displayed weighted substitution still makes the undamped part
a joint stability generator, and the four eigenvalues are
\(a_+,a_-,a_+-2\mu,a_--2\mu\). Thus \(a_+\) is dominant with
real-part gap \(2\mu\) and projector
\(\mathcal P_R=P_R\otimes P_0\).
For the fixed strictly stable input \(p(z_1,z_2)=1+z_2/2\), define
\(\mathcal R_R(t)=e^{-ta_+}e^{t\mathcal A_R}-\mathcal P_R\).
Its \(z_1z_2\) coefficient is
\[
 [z_1z_2]\mathcal R_R(t)p
 =\frac12e^{-2\mu t}\frac{R}{2\omega_R}
   (1-e^{-2\omega_Rt}).
\]
The projector contributes no \(z_2\) term, so the identity follows
from the tensor exponential, and its modulus diverges for every
fixed \(\mu,t>0\). Even after subtraction of the dominant
projector, a coefficient-norm mixing bound therefore cannot depend
only on damping, time, and the spectral gap: the obstruction occurs
in a fixed two-variable box on one fixed strictly stable input.
Equivalence of finite-dimensional norms gives the same obstruction
for every fixed coefficient norm, whereas the operator-dependent
norm of Corollary~\ref{spec:resolvent} retains the contraction bound.
\end{example}

\section{An exact counterexample to a stronger ground-state radius}\label{cx:section}

The qualitative strict radius in Theorem~\ref{eq:strict-field} cannot
in general be replaced by the proposed explicit radius for deformed
EPR ground states. We address the stronger clause of Conjecture~1 in
\cite[Section 5.4]{WBG26}, with its stated unweighted-graph hypothesis.

The computations in \cite[Numerical Study~1, equation~(70)]{WBG26}
test the signed diagonal specializations $z_i=(-1)^{y_i}z$,
$y\in\{0,1\}^n$, of the ground-state polynomial. The zero certified
below lies outside every such slice: its two fixed coordinate values
are neither equal nor opposite. The certificate therefore concerns
points outside the family tested in that numerical study.

\begin{theorem}\label{cx:epr}
Let $G$ have vertices $\{0,1,2,3,4,5\}$ and edges
\[
 \{01,02,03,14,15\}.
\]
Put
\[
 H=-\sum_{ij\in E(G)}
 (|00\rangle+\tfrac12|11\rangle)
 (\langle00|+\tfrac12\langle11|)_{ij}.
\]
The ground state $\psi$ is unique up to scalar. Its multiaffine
polynomial $f_\psi(z)=\sum_b\psi_bz^b$ has a zero satisfying
\[
 |z_i|<\frac{499}{500}\sqrt2\qquad(0\leq i\leq5).
\]
Thus the proposed radius $s^{-1/2}$ fails at $s=1/2$ on an
unweighted six-vertex tree.
\end{theorem}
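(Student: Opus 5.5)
Uniqueness of the ground state is immediate from Corollary~\ref{eq:epr}: the graph has no isolated vertices, the weights are all one, and $s_e=1/2<1$ with $\theta_e=0$. The content of the theorem is therefore the explicit zero. The plan is to give an exact rational certificate for it.

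The first step is to compute $\psi$ exactly. The Hamiltonian commutes with the parity $\prod_iZ_i$, and the vacuum $|0^6\rangle$ lies in the even sector, so by Theorem~\ref{eq:strict-field} the ground vector lies in the 32-dimensional even-parity subspace. Since $H$ is real, $-H$ has nonnegative entries in the computational basis, and the strictly stable ground polynomial has nonzero constant term, the ground vector can be normalized to be real with $\psi_{0^6}=1$.

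I would further reduce the dimension using the tree symmetry. Leaves $2,3$ are exchangeable, and so are leaves $4,5$. The unique ground state is invariant under both swaps, so $\psi$ lives in a symmetric even subspace of roughly a dozen dimensions. In that space I would write the characteristic polynomial of the restricted matrix and isolate the largest eigenvalue $\lambda_\star$ of $-H$ as a real algebraic number. I would bracket it by rational intervals via Sturm sequences, and confirm that it is the global maximum by checking it against the remaining symmetry sectors.

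The zero itself would be sought by specializing all but two variables. Following the remark that the zero has two fixed coordinate values that are neither equal nor opposite, I would try a two-parameter slice such as $z_0=a$ fixed, $z_2=z_3=z_4=z_5=b$ fixed, and $z_1=\zeta$ free. This makes $f_\psi$ affine in $\zeta$, $f=\alpha(a,b)+\beta(a,b)\zeta$, so the zero is $\zeta=-\alpha/\beta$. Numerically I would optimize over $(a,b)$ in the polydisk of radius $\tfrac{499}{500}\sqrt2$ until $|\alpha/\beta|$ drops below that radius with some margin. Then I would choose nearby Gaussian-rational $a,b$ with $|a|^2,|b|^2<2(499/500)^2$, checked exactly.

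The certification is the main obstacle, because $\psi$ has algebraic rather than rational coefficients. I would handle this by writing the coefficients of $\psi$ as rational functions of $\lambda_\star$, using the adjugate or Cramer's rule of $(-H-\lambda)$ restricted to the symmetric even sector. Then $\alpha$ and $\beta$ become explicit polynomials in $\lambda$ with Gaussian-rational coefficients. The required inequality $|\alpha(\lambda)|^2<2(499/500)^2|\beta(\lambda)|^2$ is then a polynomial inequality in the single real variable $\lambda$. I would verify it on the isolating rational interval for $\lambda_\star$, either by interval arithmetic or by a Sturm count showing that the difference polynomial has no root in the interval and has the correct sign at one endpoint. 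If the margin $1/500$ is too thin for this slice, the fallback is to free a second variable and use a one-variable polynomial in $\zeta$ whose root is certified inside the disk by Rouché's theorem, with rational comparison functions on a small circle.
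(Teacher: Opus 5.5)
Your plan is correct in outline, but it certifies the zero by a different mechanism from the paper.

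\textbf{The paper's route.} It never computes $\lambda_\star$ exactly. It splits $K=-H$ using the conserved bipartite charge $Q(b)=b_0+b_4+b_5-b_1-b_2-b_3$. Rational symmetric elimination of $37I-8K$ and Sylvester's law of inertia then show that exactly one eigenvalue exceeds $37/8$. This gives uniqueness without Corollary~\ref{eq:epr}, and a separation of more than $1/2$ from a rational $\widehat\lambda$. The paper then exhibits a rational vector $v$ on ten orbit keys with $\|(K-\widehat\lambda I)v\|_2<10^{-12}$, so $\|v-v_g\|_2<2\cdot10^{-12}$. The zero is certified for $v$ by rational bounds on $|A_v|,|B_v|$ in the slice $x_0=x_1=c$, $x_2=x_3=x_4=\ell$, with $x=z/\sqrt2$ and $x_5$ free. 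A crude termwise estimate transfers these bounds to $v_g$.

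\textbf{Your route.} You take uniqueness from Corollary~\ref{eq:epr}, compute an exact characteristic polynomial, write the eigenvector as an adjugate column polynomial in $\lambda$, and determine the sign of $2(499/500)^2|\beta(\lambda)|^2-|\alpha(\lambda)|^2$ at $\lambda_\star$ by Sturm sequences. This is fully exact and needs no gap or residual estimate. The cost is heavier symbolic computation and a certificate that is less compact to display. Adding the charge $Q$ to your parity and leaf-swap reduction would shrink your sector from $18$ to $10$ dimensions.

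\textbf{Two small remarks.} First, comparing with the other symmetry sectors is unnecessary. The unique ground vector lies in your invariant subspace, so the top eigenvalue of the restriction is automatically the global one.

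Second, your slice $z_0=a$, $z_2=\cdots=z_5=b$, $z_1$ free should not need the fallback. Carrying out the paper's computation numerically, the solved coordinate $x_5$ agrees with $\ell$ to within the precision of the computation. So the paper's zero lies essentially in your family, with $a\approx\sqrt2\,c$ and $b\approx\sqrt2\,\ell$, and your solved $z_1$ comes out near $\sqrt2\,c$. Expect a thin margin, with moduli near $0.9967\sqrt2$. This fits the fact that $f_\psi$ already vanishes exactly at $z_k\equiv i\sqrt2$, on the conjectured boundary: the product functional $\bigotimes_k(1,i\sqrt2)$ annihilates every edge projector, hence annihilates every eigenvector with nonzero eigenvalue.
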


\begin{proof}
The proof uses rational spectral and zero certificates, whose
inequalities are verified by the finite procedure supplied at the
end of this appendix using only integer and fraction arithmetic.

Write $K=-H$ and index its computational basis by
$b\in\{0,\ldots,63\}$, with bit $b_i$ assigned to vertex $i$.
For each edge $ij$ and each input with $b_i=b_j=a$, changing both
bits to $c\in\{0,1\}$ contributes $2^{-(a+c)}$ to the corresponding
entry of $K$, with every other contribution from that edge zero;
these rules specify the symmetric rational matrix completely.

For $M=37I-8K$, the conserved charge
\[
 Q(b)=b_0+b_4+b_5-b_1-b_2-b_3
\]
gives, in charge order $-3,-2,\ldots,3$, blocks of dimensions
\[
 1,\ 6,\ 15,\ 20,\ 15,\ 6,\ 1.
\]
Ordering each block by increasing integer $b$, rational symmetric
elimination gives the following pivot signs:
\[
\begin{array}{c|rrrrrrr}
 Q&-3&-2&-1&0&1&2&3\\ \hline
 \text{negative pivots}&0&0&0&1&0&0&0\\
 \text{positive pivots}&1&6&15&19&15&6&1.
\end{array}
\]
No pivot is zero, and the unique negative pivot is the first in the
charge-zero block; the recurrence at step $i$ is
\[
 p_i=M^{(i)}_{ii},\qquad
 M^{(i+1)}_{jk}
 =M^{(i)}_{jk}-M^{(i)}_{ji}M^{(i)}_{ik}/p_i
 \quad(j,k>i).
\]
Rational comparison determines the pivot signs, so Sylvester's law
of inertia puts one eigenvalue of $K$ above $37/8$ and all others
below it. Since $K_{00}=5$, this is the largest eigenvalue, which
is simple and at least $5$.

Define the rational vector $v$ by assigning to each bit string the key
\[
 (a,b,i,j)=(b_0,b_1,b_2+b_3,b_4+b_5).
\]
The amplitude for each individual bit string is the following
numerator divided by $10^{15}$; a missing key has amplitude zero.
\begin{center}
\begin{tabular}{cr}
\toprule
Key & Numerator\\
\midrule
$(0,0,0,0)$ & $1000000000000000$\\
$(0,0,1,1)$ & $8942179087130$\\
$(0,0,2,2)$ & $136748346620$\\
$(0,1,0,1)$ & $171232824560490$\\
$(0,1,1,2)$ & $2452435031309$\\
$(1,0,1,0)$ & $171232824560490$\\
$(1,0,2,1)$ & $2452435031309$\\
$(1,1,0,0)$ & $97255575326164$\\
$(1,1,1,1)$ & $37858301388951$\\
$(1,1,2,2)$ & $1200949534669$\\
\bottomrule
\end{tabular}
\end{center}
These are computational-basis amplitudes, not normalized orbit-basis
coordinates. Put
\[
 \widehat\lambda=\frac{539109343678406}{10^{14}}.
\]
Direct rational summation gives
\[
 \|(K-\widehat\lambda I)v\|_2^2<10^{-24},
 \qquad \widehat\lambda-37/8>1/2.
\]
Let $v_g$ be the orthogonal projection onto the largest eigenspace
of $K$. Expanding in an orthonormal eigenbasis and using the
preceding separation gives
\[
 \|v-v_g\|_2
 \leq2\|(K-\widehat\lambda I)v\|_2<2\cdot10^{-12}.
\]
Since $v_{000000}=1$, the vector $v_g$ is nonzero and hence a
scalar multiple of the ground state; both $v$ and $v_g$ have
charge zero and therefore even parity.

For an even-parity vector $w$ put
\[
 \widetilde f_w(x)=f_w(\sqrt2\,x)
        =\sum_b2^{|b|/2}w_bx^b.
\]
For the rational-coefficient polynomial $\widetilde f_v$, set
\[
 c=\frac{175105+981180\,\mathrm i}{10^6},\qquad
 \ell=\frac{-379834+921467\,\mathrm i}{10^6}.
\]
Squaring gives $|c|,|\ell|<499/500$, and fixing $x_0=x_1=c$
and $x_2=x_3=x_4=\ell$ reduces the polynomial by multiaffinity to
\[
 \widetilde f_v(c,c,\ell,\ell,\ell,x_5)=A_v+B_vx_5,
 \qquad
 |A_v|^2<(85/1000)^2,\quad |B_v|^2>(852/10000)^2 .
\]
The displayed inequalities are obtained by finite sums over the
amplitude table. Each coefficient $A$ or $B$ contains at most
$32$ terms, whose fixed-variable monomials have modulus at most
one and whose even tilt factors are at most eight, so
\[
 |A_{v_g}-A_v|,\ |B_{v_g}-B_v|
 <32\cdot8\cdot2\cdot10^{-12}<10^{-8}.
\]
In particular $B_{v_g}\neq0$, and
\[
 x_5=-A_{v_g}/B_{v_g},\qquad
 |x_5|<
 \frac{85/1000+10^{-8}}{852/10000-10^{-8}}<499/500.
\]
The final rational inequality places every coordinate of this zero
of $\widetilde f_{v_g}$ below $499/500$ in modulus; rescaling by
$\sqrt2$ gives the required zero of the ground-state polynomial.
\end{proof}

The residual bound transfers the zero certificate from the rational
vector to the ground-state polynomial, placing its radius strictly
below the proposed \(s^{-1/2}\) while leaving the qualitative
strict radius in Theorem~\ref{eq:strict-field} intact.

\subsection*{Reproducible rational verification}
The supplementary Python procedure
\path{epr_six_vertex_certificate.py} reproduces the inertia, residual,
and zero inequalities used above from the displayed rational data.
It uses only standard-library integer and fraction arithmetic;
Gaussian-rational operations are written explicitly.
The supplementary procedure \path{epr_independent_certificate.py}
provides a second exact verification. Its floating-point diagnostics
are separate from the rational assertions used in the proof.
Both procedures are supplied with execution instructions and are
available at the fixed repository version identified below.

\section*{Data and code availability}
All rational certificate data are included in Appendix~\ref{cx:section}.
The two verification procedures are supplied as supplementary material
and available in the \texttt{certificates} directory of a
\href{https://github.com/Isaac-Wei-2026/stability-preserving-semigroups/tree/85527775f5de8af8911941fc422dbe140a63d11a/certificates}{fixed GitHub version}
(certificate snapshot \texttt{8552777}).
The procedures use Python~3 and its standard library.

\section*{Disclosure of artificial intelligence use}
Most of the mathematical content, proofs, computational certificates,
and text in this article were generated with substantial assistance from
OpenAI GPT-6 Pro in ChatGPT chat mode and GPT-6 Astra in Codex. Claude Fable 5.1 in chat mode also
provided several rounds of revision suggestions.
The author directed the research and the selection of material, and
participated in restructuring and rewriting the manuscript; AI tools
were also used for literature searches and proof checking.
The author is responsible for its content.

\raggedbottom
\makeatletter\def\@biblabel#1{#1.}\makeatother
\providecommand{\bysame}{\leavevmode\hbox to3em{\hrulefill}\thinspace}
\providecommand{\MR}{\relax\ifhmode\unskip\space\fi MR }
\providecommand{\MRhref}[2]{%
  \href{http://www.ams.org/mathscinet-getitem?mr=#1}{#2}
}
\providecommand{\href}[2]{#2}

\end{document}